\newcommand*\linenomathpatch[1]{%
	\expandafter\pretocmd\csname #1\endcsname {\linenomath}{}{}%
	\expandafter\pretocmd\csname #1*\endcsname{\linenomath}{}{}%
	\expandafter\apptocmd\csname end#1\endcsname {\endlinenomath}{}{}%
	\expandafter\apptocmd\csname end#1*\endcsname{\endlinenomath}{}{}%
}
\newcommand*\linenomathpatchAMS[1]{%
	\expandafter\pretocmd\csname #1\endcsname {\linenomathAMS}{}{}%
	\expandafter\pretocmd\csname #1*\endcsname{\linenomathAMS}{}{}%
	\expandafter\apptocmd\csname end#1\endcsname {\endlinenomath}{}{}%
	\expandafter\apptocmd\csname end#1*\endcsname{\endlinenomath}{}{}%
}
\let\linenomathAMS\linenomathWithnumbers
\patchcmd\linenomathAMS{\advance\postdisplaypenalty\linenopenalty}{}{}{}
\let\linenomathAMS\linenomathNonumbers
\theoremstyle{plain}
\newtheorem{theorem}{Theorem}[section]
\crefname{theorem}{Theorem}{Theorems}
\newtheorem{proposition}[theorem]{Proposition}
\crefname{proposition}{Proposition}{Propositions}
\newtheorem{corollary}[theorem]{Corollary}
\crefname{corollary}{Corollary}{Corollaries}
\newtheorem{lemma}[theorem]{Lemma}
\crefname{lemma}{Lemma}{Lemmas}
\crefname{conjecture}{Conjecture}{Conjectures}
\crefname{problem}{Problem}{Problem}
\newtheorem{claim}[theorem]{Claim}
\crefname{claim}{Claim}{Claims}
\newtheorem{observation}[theorem]{Observation}
\crefname{observation}{Observation}{Observations}
\crefname{setup}{Setup}{Setups}
\crefname{fact}{Fact}{Facts}
\crefname{algorithm}{Algorithm}{Algorithms}
\crefname{remark}{Remark}{Remarks}
\crefname{example}{Example}{Examples}
\theoremstyle{definition}
\newtheorem{definition}[theorem]{Definition}
\crefname{definition}{Definition}{Definitions}
\crefname{construction}{Construction}{Constructions}
\crefname{question}{Question}{Questions}
\numberwithin{equation}{section}
\crefname{section}{Section}{Sections}
\crefname{appendix}{Appendix}{Appendix}
\newcommand{\rf}[1]{\cref{#1} (\nameref*{#1})}
\newcommand{\fr}[1]{\nameref*{#1} (\cref{#1})}
\definecolor{DarkDesaturatedBlue}{HTML}{3A3556}
\definecolor{VividOrange}{HTML}{F15918}
\definecolor{PureOrange}{HTML}{FFBA00}
\definecolor{LightGrayishPink}{HTML}{EEC5D5}
\definecolor{VerySoftBlue}{HTML}{B5AFDB}
\newcommand{\vecb}{\mathbf}
\DeclareMathOperator{\MT}{\delta}
\newcommand{\es}{\emptyset}
\newcommand{\eps}{\varepsilon}
\renewcommand{\rho}{\varrho}
\newcommand{\sm}{\setminus}
\renewcommand{\subset}{\subseteq}
\newcommand{\NATS}{\mathbb{N}}
\newcommand{\NATSZ}{\mathbb{N}_0}
\newcommand{\INTS}{\mathbb{Z}}
\newcommand{\REALS}{\mathbb{R}}
\newcommand{\maxnorm}[1]{\lVert #1 \rVert_{\infty}}
\newcommand{\Maxnorm}[1]{\left\lVert #1 \right\rVert_{\infty}}
\newcommand{\onenorm}[1]{\lVert #1 \rVert_{1}}
\DeclareMathOperator{\Hom}{{Hom}}
\DeclareMathOperator{\ord}{\mathbf{ord}}
\newcommand{\Prob}{\mathbb{P}}
\newcommand{\Exp}{\mathbb{E}}
\DeclareMathOperator{\crit}{\chi_{\rm cr}}
\DeclareMathOperator{\cFcr}{\cF_{\rm {cr}}}
\newcommand{\fc}{flexi-chromatic}
\newcommand{\Fc}{Flexi-chromatic}
\newcommand{\hcf}{\text{gcd}_\chi}
\newcommand{\hcfc}{\text{gcd}_\text{c}}
\newcommand\restr[2]{{% we make the whole thing an ordinary symbol
  \left.\kern-\nulldelimiterspace % automatically resize the bar with \right
  #1 % the function
  \vphantom{\big|} % pretend it's a little taller at normal size
  \right|_{#2} % this is the delimiter
  }}
\def\le{\leqslant}
\def\leq{\leqslant}
\def\ge{\geqslant}
\def\geq{\geqslant}
\newcommand{\cB}{\mathcal{B}}
\newcommand{\cD}{\mathcal{D}}
\newcommand{\cF}{\mathcal{F}}
\newcommand{\cH}{\mathcal{H}}
\newcommand{\cJ}{\mathcal{J}}
\newcommand{\cK}{\mathcal{K}}
\newcommand{\cL}{\mathcal{L}}
\newcommand{\cP}{\mathcal{P}}
\newcommand{\cR}{\mathcal{R}}
\newcommand{\cV}{\mathcal{V}}
\newcommand{\cW}{\mathcal{W}}
\newcommand{\cX}{\mathcal{X}}
\newcommand{\tth}{^\text{th}}
\newcommand{\tX}{\tilde X}
\newcommand{\tcX}{\tilde{\mathcal{X}}}
\newcommand{\pri}{\pi}
\newcommand{\kk}{{\lceil \chi \rceil}}
\def\COMMENT#1{}
\newcommand{\DeltaRp}{\Delta_{R'}}
\newcommand{\NEW}[1]{#1}
\renewcommand{\NEW}[1]{{\color{magenta}#1}}
\newenvironment{proofclaim}[1][Proof of the claim]{\begin{proof}[#1]}{\end{proof}}
\title[Sufficient conditions for perfect mixed tilings]{Sufficient conditions for perfect mixed tilings}
\date{\today}
\author[E.~Hurley]{Eoin Hurley}
\address[E.~Hurley $\vert$ F.~Joos]{Heidelberg University,
	Institute for Computer Science,
	Im Neuenheimer Feld 205,
	69120 Heidelberg, Germany}
\email{(hurley$\vert$joos)@informatik.uni-heidelberg.de}
\author[F.~Joos]{Felix Joos}
\author[R.~Lang]{Richard Lang}
\address[R.~Lang]{Departament de Matemàtiques, Universitat Politècnica de Catalunya, 08034 Barcelona, Spain}
\email{richard.johannes.lang@upc.edu}
\thanks{The research leading to these results was partially supported by the Deutsche Forschungsgemeinschaft (DFG, German Research Foundation) -- 428212407 and 450397222.
}
\begin{document}

\begin{abstract}
	We develop a method to study sufficient conditions for perfect mixed tilings.
	Our framework allows the embedding of bounded degree graphs $H$ with components of sublinear order.
	As a corollary, we recover and extend the work of K\"uhn and Osthus regarding sufficient minimum degree conditions for perfect $F$-tilings (for an arbitrary fixed graph $F$)
	by replacing the $F$-tiling with the aforementioned graphs $H$.
	Moreover, we obtain analogous results for degree sequences and in the setting of uniformly dense graphs.
	Finally, we asymptotically resolve a conjecture of Koml\'os in a strong sense.
\end{abstract}

\maketitle
\thispagestyle{empty}
\vspace{-0.4cm}

\section{Introduction}\label{sec:introduction}
The search for (optimal) conditions that force  a particular graph to contain another graph as a subgraph is arguably among the most central topics in graph theory.
Consider a host graph~$G$ on $n$ vertices and a guest graph $H$ on at most $n$ vertices (throughout this article we aim to embed $H$ into~$G$).
In this setting, one of the most natural conditions to investigate is surely a minimum degree condition of the host graph.
We define the \emph{minimum degree threshold}  for containing~$H$ as
\begin{align*}
	\MT(n,H)=\min\{m\colon \delta(G)\geq m \text{ implies that $H$ is a subgraph of $G$}\}.
\end{align*}
A classic result of Dirac determines the minimum degree threshold for perfect matchings.
More generally, for a graph $F$, we say that $H$ is an $F$-tiling if $H$ is the disjoint union of copies of $F$;
each such copy of $F$ in $H$ is called a \emph{tile} of $H$.
Moreover, $H$ is a \emph{perfect $F$-tiling} (in $G$) if the order $v(H)$ of $H$ equals $v(G)$.
Hajnal and Szemer\'edi~\cite{HS70} extended Dirac's theorem by proving that $\MT(n,H) = (1-1/k)n$ for perfect $K_k$-tilings $H$ {for all $n$ divisible by $k$.}
We write $\chi(J)$ for the \emph{chromatic number} of a graph $J$ and suppose in the following that $\chi(F)=k\geq 2$.
Resolving the Alon--Yuster conjecture~\cite{AY96},
Koml\'os, S\'ark\"ozy and Szemer\'edi~\cite{KSS01} proved that $\MT(n,H) \leq (1-1/k)n+O(1)$ for perfect $F$-tilings $H$ (for any fixed $F$) {when $n$ is large enough and divisible by $v(F)$};
however, it was known that for some graphs, such as odd cycles of length at least five, this bound was not optimal~\cite{Abb98}.

Thus, the chromatic number of $F$ is not always the correct parameter to determine the asymptotic behaviour of $\MT(n,H)$ for perfect $F$-tilings $H$.
To plug this gap, Koml\'os~\cite{Kom00} introduced the critical chromatic number of a graph.
Let $\alpha(F)$ be the minimum proportion of vertices in a single colour class over all proper $k$-colourings of $F$.
The \emph{critical chromatic number} of~$F$ is
\begin{align}\label{equ:crit}
	\crit(F) = k-1 + \frac{\alpha(F)}{\frac{1}{k-1} (1- \alpha(F))} = {  \frac{k -1}{1-\alpha(F)}};
\end{align}
in other words, the critical chromatic number is $k-1$ plus the ratio of the size of the smallest colour class to the average of the other colour classes (in a proper $k$-colouring minimising the size of the smallest colour class).\footnote{To illustrate this parameter further, consider the complete $k$-partite graph $K_{a,(k-1)\ast b}$ whose parts have sizes $a,b,\dots,b$ with $a\leq b$.
	(In the literature, $K_{a,(k-1)\ast b}$ is often called a \emph{bottle graph}.)
	Then $\crit(K_{a,(k-1)\ast b}) = k-1 + \frac{a}{b}$.
	So by sliding~$\frac{a}{b}$ within $[0,1]$, the critical chromatic number transitions smoothly between $k-1$ and $k$.
	Conversely, for every graph $F$ with $\chi(F)=k$, there exist $a$, $b$ such that $\crit(F) = \crit(K_{a,(k-1)\ast b})$.}
Appropriately chosen complete $k$-partite graphs show that $\MT(n,H) \geq (1-1/\crit(F))n $ for all perfect $F$-tilings $H$.
On the other hand, Koml\'os~\cite{Kom00} proved that $\MT(n,H) \leq (1-1/\crit(F))n$ whenever $H$ is an $F$-tiling on $n-o(n)$ vertices.
Shokoufandeh and Zhao~\cite{SZ03} showed that the same conclusion is true whenever~$H$ is an $F$-tiling on $n-O(1)$ vertices, thereby confirming a conjecture of Koml\'os.
Finally, resolving a major question in this context,
K\"uhn and Osthus~\cite{KO09} determined $\MT(n,H)$ for perfect $F$-tilings~$H$ up to a constant depending only on $F$.

To state the K\"uhn--Osthus Tiling Theorem, we introduce the class of graphs $\cFcr$.
Define $\cD(F) \subset~\NATSZ$ to be the union of all integers {$||\phi^{-1}(i)|-|\phi^{-1}(j)||$ over all proper $k$-colourings $\phi$ of $F$ and all colours $1 \leq i < j \leq k$.}
We denote by $\hcf(F)$ the greatest common divisor of the integers in $\cD(F)\sm \{0\}$ with the convention that $\hcf(F)=\infty$ if $\cD(F)=\{0\}$.
We write $\hcfc(F)$ for the greatest common divisor of all the orders of the components of $F$ (note that $F$ may not be connected).
Finally, let
\begin{align}\label{eq:cFcr}
	F\in \cFcr \text{ if }\
	\begin{cases}
		\chi(F)=2 \text{ and } \hcf(F)=\hcfc(F)=1 \text{, or} \\
		\chi(F)\geq 3 \text{ and } \hcf(F)=1.
	\end{cases}
\end{align}

Given this, K\"uhn and Osthus~\cite{KO09} showed that $\MT(n,H) = (1-1/\crit(F))n + O(1)$ for all perfect $F$-tilings $H$ on $n$ vertices if $F \in \cFcr$ and otherwise $\MT(n,H) = (1-1/\chi(F))n + O(1)$.\footnote{{For some graphs $F$, the term $O(1)$ can even be dropped.}}
Hence, the function $\MT(n,H)$ is well-understood for all graphs $H$ on $n$ vertices that can be partitioned into isomorphic tiles each containing a fixed number of vertices.

\smallskip

The contribution of this paper is to extend the above results in several directions:
firstly, we do not require that the tiles of $H$ are isomorphic,
secondly, the order of the tiles may grow with $n$,
and thirdly, we develop a more general framework beyond minimum degree conditions.
The latter allows us, for example, to easily obtain similar results for host graphs with particular degree sequences and in the setting of uniformly dense graphs.
Along the way, we resolve a conjecture of Koml\'os asymptotically in a strong form.

\smallskip

We continue with four implications of our main result (\cref{thm:frameworks}).
To this end, let us introduce some further notation.
Given a family of graphs~$\cF$,
a graph $H$ is an \emph{$\cF$-tiling} if~$H$ is the disjoint union of copies of graphs in~$\cF$;
again, we call each of these copies a \emph{tile} of~$H$.
Let~$\cF(k,w)$ be the family of properly $k$-colourable graphs of order at most $w$
and let $\cFcr(k,w)=\cFcr\cap \cF(k,w)$.

Initial progress towards perfect mixed tilings was obtained by Ishigami~\cite{Ish02}, who proved an analogue of Koml\'os' theorem in this setting.
More precisely, it was shown that a graph $G$ with $\delta(G) \geq (1-1/\chi)n$ contains any $\cF$-tiling $H$ {on $n-o(n)$ vertices} with $\Delta(H)=O(1)$ and $\crit(H)=\chi$ where the tiles of $\cF$ are allowed to be of order $o(n)$.
Our first result generalises this to perfect tilings.
More precisely, we extend the K\"uhn--Osthus Tiling Theorem by replacing perfect $F$-tilings for $F\in \cFcr$ (where $F$ is fixed) by perfect $\cFcr(k,\sqrt{n}/\log n)$-tilings for any (fixed) {$k\geq 2$.}

\begin{theorem}\label{thm:extended-KO}
	For all $\chi,\Delta > 1$, $\mu >0$ and $n$ sufficiently large, the following holds.
	Let $H$ be an $\cFcr(\lceil \chi \rceil, \sqrt{n}/\log n)$-tiling on $n$ vertices with $\crit(H)\leq \chi$ and $\Delta(H) \leq \Delta$.
	Then $\MT(n,H) \leq (1-1/\chi + \mu)n$.
\end{theorem}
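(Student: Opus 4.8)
The plan is to deduce \cref{thm:extended-KO} from the general framework \cref{thm:frameworks}: a minimum degree bound is one of the host conditions that framework is built to consume, so the real work is to certify that any $G$ with $\delta(G)\ge(1-1/\chi+\mu)n$ carries the structure the framework requires, and then to quote it. To this end I would first fix constants $0<\eta\ll\eps\ll\mu$ and apply a degree form of Szemer\'edi's Regularity Lemma to $G$, producing a reduced graph $R$ on $M$ clusters of common size $m=(1\pm\eta)n/M$ with $\delta(R)\ge(1-1/\chi+\mu/2)M$, every edge of $R$ corresponding to an $\eps$-regular pair of density at least $\eta$, and a small reservoir of vertices set aside for a final clean-up stage.

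The heart of the argument is to show that this minimum degree forces in $R$ a perfect fractional clique cover whose local ``bottle ratios'' can be prescribed. Write $k=\kk$; since $\crit(H)\le\chi$, averaging over a proper $k$-colouring of $H$ that minimises the smallest colour class shows that the aggregate ``small-class demand'' of $H$, after the rescaling built into \eqref{equ:crit}, is at most a $(1/\chi)$-fraction of $n$. Because $\delta(R)/M\ge 1-1/\chi+\mu/2\ge 1-1/\crit(H)+\mu/2$, a Koml\'os-type fractional tiling argument -- an LP/flow computation on the neighbourhood structure of $R$, of the kind that shows the critical chromatic number is the correct threshold -- yields a perfect fractional $K_{\le k}$-tiling of $R$ whose weights can be chosen so that, splitting each cluster accordingly, one partitions almost all of $V(G)$ into ``colour-class pieces'' whose relative sizes realise the colour classes that the components of $H$ demand. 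The bounded-degree and sublinear-order hypotheses play no role here; they enter only in the embedding.

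Next I would turn this fractional cover into an integral blow-up structure in the standard way: within each bottle clique of the cover, replace the clusters by the complete blown-up multipartite graph on the corresponding colour-class pieces, retaining the (slightly sub-$\eps$) regular bipartite graphs of $G$ between pieces. Since every component of $H$ has order at most $\sqrt{n}/\log n\ll m$ and maximum degree at most $\Delta$, the components can be distributed among these gadgets so that each gadget is asked to host a bounded-degree graph whose part sizes match its pieces to within $o(m)$; the residual mismatches are corrected using that each tile lies in $\cFcr$, i.e.\ $\hcf(F)=1$ (and $\hcfc(F)=1$ when $\chi(F)=2$), which lets one slide the colour-class sizes of the tiles and reassign whole small components between gadgets so that the totals match exactly on every cluster while $\crit$ stays $\le\chi$. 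The Blow-up Lemma then embeds the assigned graph into each gadget, and the reservoir together with the $O(\eta n)$ still-uncovered vertices is absorbed by an absorbing structure reserved in the first step, using again that $\delta(G)$ is well above threshold and that the tiles are small. This produces an embedding of $H$ into $G$, hence $\MT(n,H)\le(1-1/\chi+\mu)n$.

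The step I expect to be the main obstacle is the second one: reconciling a \emph{single} fractional clique cover of $R$ with the colour-class bookkeeping of a \emph{mixed} tiling whose tiles have different orders and different critical ratios. One must show that $\crit(H)\le\chi$ -- an aggregate invariant of $H$ -- still translates into a demand that is feasible \emph{pointwise} at every cluster, which forces a careful grouping of the components of $H$ before they are assigned to bottle cliques; the sublinear bound on component orders is exactly what supplies enough granularity for this assignment (and for the subsequent $\cFcr$-based corrections) to succeed without ever violating the $\crit$ constraint. A secondary subtlety is keeping the cumulative $o(m)$-type errors from regularity, fractional-to-integral rounding, and component reassignment below the $\mu n$ budget, which is where the precise choice of the threshold $\sqrt{n}/\log n$ matters.
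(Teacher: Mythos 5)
Your opening sentence announces the paper's actual route (verify that a large minimum degree makes $G$ a robust tiling framework, verify that $H$ lies in the class $\cH$, then quote \cref{thm:frameworks}), but the body of your proposal never does either verification; instead it sketches a direct regularity/blow-up proof. That direct sketch has a genuine gap at exactly the point the paper spends most of its effort on: the step where "residual mismatches are corrected using that each tile lies in $\cFcr$ \dots which lets one slide the colour-class sizes of the tiles and reassign whole small components between gadgets so that the totals match exactly on every cluster." A single tile $F$ with $\hcf(F)=1$ only guarantees that \emph{some} pair of colourings differ by \emph{some} nonzero amount; to realise an arbitrary prescribed imbalance of size up to $w$ one must aggregate $\Omega(w\log w)$ tiles and invoke an additive-combinatorics statement (the paper uses Lev's theorem, via \cref{lem:transversal_contains_interval} and \cref{lem:hcf-gives-flexi-chrom-mixed}) showing that their combined achievable shifts contain a full interval. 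This is also the true reason for the threshold $\sqrt n/\log n$ (one needs $n/w \gtrsim w\log w$ tiles available), not the error-budget bookkeeping you cite; indeed the footnote after \cref{thm:extended-KO} shows the theorem is \emph{false} for tiles of order $10\sqrt n$, so no amount of careful error accounting can rescue a proof that ignores this. Moreover, the corrections cannot be made "gadget by gadget": the imbalance can only be moved between clusters that lie in a common tight component of $K_{\kk}(R)$ (the flow/reachability argument of \cref{prop:flow,lem:allocate-tight}), and when $\kk=2$ there is a further obstruction from loose components that is overcome only by the $\hcfc(F)=1$ half of the definition of $\cFcr$ via topological \fc{} subtilings — your proposal never mentions loose components or $\hcfc$ at all.

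A secondary but real issue is the clean-up: you invoke "an absorbing structure reserved in the first step" to swallow the exceptional vertices and rounding losses, but for a \emph{perfect} mixed tiling with tiles of polynomially growing order this is not a standard off-the-shelf device and would need its own construction; the paper instead covers the exceptional set deterministically using the linkage property of the clique hypergraph (\cref{lem:lemma-for-Exceptional-Vertices}) and then makes the allocation exact \emph{before} a single application of the Blow-Up Lemma. By contrast, the paper's actual proof of \cref{thm:extended-KO} is a short reduction: \cref{lem:deg-gives-framework} (using Kruskal--Katona to bound the number of tight components, plus \cref{lem:degree_frac_bottle_tiling} for the space condition and \cref{obs:min_degree_second_neighbours} for linkage) shows $G$ is a robust framework, \cref{lem:hcf-gives-flexi-chrom-mixed} supplies the \fc{} subtilings of $H$, and \cref{thm:frameworks} finishes. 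To repair your write-up along its own lines you would need to supply precisely these missing ingredients, at which point you would essentially have reproduced the framework machinery.
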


It is worth noting that the upper bound regarding the order of the tiles is (essentially) tight;
while it can be relaxed to $o(\sqrt{n})$ if all components of $H$ are pairwise isomorphic {(see the remark after the proof of \cref{thm:extended-KO} in \cref{sec:2})},
the theorem becomes false (even in the case when all components are isomorphic)
if tiles of order $10\sqrt{n}$ are permitted {(see \cref{sec:sizes-tiles}).}
We also point out that in the statement of \cref{thm:extended-KO},
we in fact only need to assume that a small fraction of (small) tiles belong to  $\cFcr(\lceil \chi \rceil, \sqrt{n}/\log n)$ and all others to $\cF(\lceil \chi \rceil, \sqrt{n}/\log n)$.

{Before we continue, let us make two further remarks.
	First, the original K\"uhn--Osthus Tiling Theorem is proved in terms of an (additive) constant error, which is a more precise statement.
	Our framework does not imply a version of this result, but it is conceivable that it can be obtained with additional work.
	We return to this problem in the conclusion.
	Second, the K\"uhn--Osthus Tiling Theorem also covers the case of arbitrary $F$, that is when $F \notin \cFcr$.
	Here, the necessary degree is determined by $\chi(F)$ as opposed to $\crit(F)$.
	Our main theorem can be used to recover such a statement with linear error term.
	We omit however the details, since this situation is already covered by the Bandwidth Theorem of Böttcher, Schacht and Taraz~\cite{BST09}. (See also \cite{LS23} for extensions beyond the minimum degree setting.)}

Our second result concerns mixed tilings which are near-perfect in the sense that the number of vertices they miss in the host graph is only a multiple of the maximum order of a tile.
Koml\'os~\cite{Kom00} conjectured the following for this setup:
Let $H$ be a graph on $n-f(w)$ vertices with components $F_1,\ldots, F_r$ where $f(\cdot)$ is a function of the maximum component order $w$.
Then
\begin{align}\label{eq:Kom_conj}
	\MT(n,H)\leq \sum_{i\in [r]}v(F_i)\left(1- \frac{1}{\crit(F_i)}\right) + f(w).
\end{align}
We remark that Koml\'os' conjecture, as it is stated, fails in general.\footnote{Suppose $H$ is the union of $4$-cycles on in total $n-C$ vertices and one $K_4$.
	{Note that a balanced complete tripartite graph does not contain a copy of $K_4$.
		Hence $\MT(n,H) \geq \frac{2}{3}n$.
		But~\eqref{eq:Kom_conj} claims that $\MT(n,H)\leq \frac{n-C}{2} + f(w)$.}
}
Moreover, he did not mention the function $f(\cdot)$, but he notes only a few lines earlier that it is even necessary when all $F_i$ are isomorphic; so we think it is likely that he intended to have $f(\cdot)$ included here as well as otherwise the conjecture is false for even more reasons.
However, when $\chi(F_i) = \lceil \crit(H) \rceil$ for all $i\in [r]$, one can rewrite~\eqref{eq:Kom_conj} (see \cref{sec:reformulating-komlos-conjecture}) to obtain\COMMENT{
	Let $n'= v(H)$, $v_i=v(F_i)$, $a_i=a(F_i)$, $\alpha_i=\alpha(F_i)$ and recall that $\alpha(F_i)=\frac{a_i}{v_i}$ and $\crit=\frac{k-1}{1-\alpha_i}$.
	$\sum_i v_i(1-\frac{1-\alpha_i}{k-1})
	=n' - \frac{1}{k-1}\sum_i v_i(1-\alpha_i)
	=n' - \frac{1}{k-1}(n'-\sum_i a_i)
	= n' - \frac{n'-a(H)}{k-1}
	= n' - n'\frac{1-\alpha(H)}{k-1}
	=n'(1- 1/\crit(H))$.
}
\begin{align}\label{eq:Kom_conj2}
	\MT(n,H)\leq \left(1- \frac{1}{\crit(H)}\right)n + \frac{f(w)}{\crit(H)}.
\end{align}

If true, this would in particular generalise the result of Shokoufandeh and Zhao.
Note that the work of Ishigami~\cite{Ish02} implies the conjecture in the special case when $w$ is linear in $n$.
Here, we confirm Koml\'os' conjecture asymptotically for all $w=o(n)$ in a strong form.
{We remark that it seems plausible that one can improve the dependencies of $C$ to $C= O(\kk)$ in \cref{thm:miss_constant_frac-simple}.}

\begin{theorem}\label{thm:miss_constant_frac-simple}
	For all $\chi, \Delta > 1$, $\mu>0$,
	there exist $C=C(\lceil\chi\rceil,\Delta), \xi=\xi(\lceil\chi\rceil,\Delta,\mu)>0$ such that for sufficiently large $n$ the following holds.
	Suppose that $w \leq \xi n$ and~$H$ is an $ \cF(\lceil \chi\rceil, w)$-tiling on at most $n-C w$ vertices with $\crit(H)\leq\chi$ and $\Delta(H)\leq \Delta$.
	Then $\MT(n,H) \leq (1- {1}/{\chi}+\mu)n$.
\end{theorem}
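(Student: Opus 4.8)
The plan is to derive \cref{thm:miss_constant_frac-simple} from our main result, \cref{thm:frameworks}: essentially all the work lies in verifying that a host graph $G$ on $n$ vertices with $\delta(G)\ge(1-1/\chi+\mu)n$ supplies the host-side hypothesis of that theorem with room to spare, while the guest-side hypothesis is read off directly from the assumptions on $H$. Throughout write $k:=\lceil\chi\rceil$, and recall that $\crit(H)\le\chi$ is equivalent to the smallest colour class in the aggregate proper $k$-colouring of $H$ having relative size at most $\alpha^\ast:=1-(k-1)/\chi$, and that $\bigl(1-1/\crit(H)\bigr)n$ is, up to lower-order terms, the minimum degree below which a suitably imbalanced complete $k$-partite graph still obstructs a perfect $H$-tiling; since $\chi\ge\crit(H)$, our degree hypothesis sits comfortably above this.

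For the host side I would apply the Regularity Lemma to $G$ with parameters $\eps\ll d\ll\mu$ to obtain a reduced graph $R$ on $m=O_\eps(1)$ clusters of common size $(1\pm\eps)n/m$ with $\delta(R)\ge(1-1/\chi+\mu/2)m\ge(1-1/\crit(H)+\mu/2)m$. The combinatorial core is a Koml\'os-type fractional tiling lemma: minimum degree at least $(1-1/\crit(H))m$ forces an almost-perfect fractional tiling of $R$ by complete $k$-partite graphs with part ratio $\bigl(\alpha(H),(k-1)\ast\tfrac{1-\alpha(H)}{k-1}\bigr)$, that is, matched to $\crit(H)$ rather than to the individual components. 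The surplus $\mu/2$ in the minimum degree then lets us round this to an integral family of vertex-disjoint blown-up complete $k$-partite subgraphs of $G$, with parts enlarged slightly so the regular pairs meet the embedding conditions, that together cover all but an $O(\eps)$-fraction of $V(G)$; this family, together with the regularity of its pairs, is the structure \cref{thm:frameworks} takes as input, and \cref{thm:frameworks} itself supplies the absorption that deals with the $O(\eps n)$ leftover vertices.

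On the guest side, $H$ has $\Delta(H)\le\Delta$, components of order at most $w$, and $\crit(H)\le\chi$, while $v(H)\le n-Cw$ where $C=C(k,\Delta)$ is chosen to dominate the bounded wastage incurred when the small components of $H$ are distributed among the host structures --- rounding at cluster boundaries and correcting the divisibility between the colour-class sizes of $H$ and the parts of the structures, each costing $O_{k,\Delta}(1)$ vertices per missing slot. The distribution itself goes through because routing the smallest colour class of each component to the small part of its host structure loads those parts to at most $\alpha(H)\,v(H)\le\alpha^\ast v(H)$ in total while the remaining classes balance across the large parts, and because $\xi=\xi(k,\Delta,\mu)$ is taken small enough that all error terms proportional to $w\le\xi n$ are absorbed by the slack $\mu n$. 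Feeding these two inputs into \cref{thm:frameworks} embeds $H$ in $G$, which gives $\MT(n,H)\le(1-1/\chi+\mu)n$.

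I expect the main obstacle to be the tension between the heterogeneity of the components and the rigidity of the host structure: one must produce a fractional tiling of $R$ that is flexible enough to absorb components with widely differing critical chromatic numbers --- subject only to the aggregate constraint $\alpha(H)\le\alpha^\ast$ --- yet robust enough that rounding, component distribution and absorption together waste only $O_{k,\Delta}(w)$ rather than $\Omega(n)$ vertices; it is precisely this quantitative ``miss only $Cw$'' conclusion, rather than the softer ``miss $o(n)$'' that suffices elsewhere, that forces one through the absorption machinery of \cref{thm:frameworks}. A secondary point needing care is the regime $w=\Theta(n)$, where a single component need not fit inside one host structure and must instead be embedded globally along a suitably connected portion of the reduced graph.
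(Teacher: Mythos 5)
There is a genuine gap, and it sits exactly at the point your proposal waves through: the claim that ``the guest-side hypothesis is read off directly from the assumptions on $H$.'' It is not. \cref{thm:frameworks} requires $H\in\cH(n,w,\Delta;\chi;t,\ell,w')$, and membership in $\cH$ demands that $H$ contain $t$ disjoint proper $(\lceil\chi\rceil,\lceil\chi\rceil w)$-\fc{} subtilings and one topological $(\ell,w)$-\fc{} subtiling. An arbitrary $\cF(\lceil\chi\rceil,w)$-tiling with $\crit(H)\le\chi$ has no such subtilings in general, and without them the conclusion is simply false: the host can be, say, a complete $\lceil\chi\rceil$-partite graph whose part sizes are incompatible with every proper colouring of $H$ modulo a fixed residue (this is precisely the divisibility obstruction of \ref{item:framework-divisibility}, illustrated by the $K_{2,4}$ examples in Section~\ref{sec:1.1} and by the footnote explaining why Koml\'os' conjecture needs the deficiency term $f(w)$ at all). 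Your account of $C$ as covering ``rounding at cluster boundaries'' misses its actual purpose: the $Cw$ missing vertices are spent by adding $Cw\ge(t+\ell)k^2w$ isolated vertices to $H$, which by \cref{obs:isolated-vertices-flexible} supply the required \fc{} subtilings. This only works when $t$ can be bounded independently of $\mu$, which forces the paper's case distinction: when $\chi+\mu/2\ge\frac{\Delta+1}{\Delta}(k-1)$ one bounds $t\le(\Delta k)^{4k}$ and uses the isolated-vertex trick; when $\chi+\mu/2\le\frac{\Delta+1}{\Delta}(k-1)$ the bound on $t$ degenerates, but \cref{lem:miss_constant_frac} shows that the low critical chromatic number itself (via \cref{prop:max_degree_gives_wildcards,prop:max_degree_bipartite_isolated}) already makes $H$ sufficiently flexible. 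Neither mechanism appears in your proposal.

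A secondary confusion: you propose to run the Regularity Lemma, build a fractional $k$-partite tiling of the reduced graph, round it, and feed ``this structure'' into \cref{thm:frameworks}. But \cref{thm:frameworks} does not take a regularity decomposition as input; it takes a $\mu$-robust tiling framework, and the entire host-side verification here is a single application of \cref{lem:deg-gives-framework} (minimum degree $(1-1/\chi+\mu)n$ gives space, bounded tight/loose components, and linkage). The regularity, allocation and blow-up machinery you describe lives \emph{inside} the proof of \cref{thm:frameworks} and need not be repeated. Your worry about $w=\Theta(n)$ is likewise already handled there (via \cref{thm:mixed-Komlos-for-frameworks} and \cref{lem:lemma-for-H}), but your worry about heterogeneous components is a red herring compared to the divisibility issue, which is the real reason the theorem needs both the $Cw$ slack and the \fc{} formalism.
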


A third implication of our main result concerns host graphs with particular degree sequences.
In the tradition of Pósa's theorem~\cite{Pos62} for Hamilton cycles, Hyde, Liu and Treglown~\cite{HLT19} investigated degree sequences that force perfect tilings.
{Note that for a graph $F$, solving \eqref{equ:crit} for $\alpha$ gives $\alpha(F) = 1- (\lceil \crit(F)\rceil-1)/\crit(F)$.
	Motivated by this, we define $\alpha = \alpha(\chi) = 1 - (\lceil \chi \rceil-1)/\chi$ for given $\chi > 1$.}
For $\mu > 0$, we say that the degree sequence $d_1\leq \dots \leq d_n$ of a graph $G$ is $(\chi,\mu)$-\emph{strong} if
\begin{equation}\label{equ:deg-seq-def}
	d_i \geq \left(1- \frac{1}{\chi} - \alpha\right)n +  \chi\alpha \cdot i + \mu n\quad  \text{ for all } {1 \leq i \leq n/\chi.}
\end{equation}
In particular, if $\delta(G) \geq (1-1/\chi + \mu) n$, then $G$ has a $(\chi,\mu)$-\emph{strong} degree sequence.
Hyde, Liu and Treglown~\cite{HLT19} proved an extension of Koml\'os' theorem,
which states that a graph~$G$ on~$n$ vertices with a {$(\chi,\mu)$-strong} degree sequence contains any $F$-tiling on $n-o(n)$ vertices for fixed $F$.
In another paper, Hyde and Treglown~\cite{HT20} subsequently strengthened this to perfect $F$-tilings, {which constitutes a generalisation of the Kühn-Osthus Theorem with linear error term~\cite{KO09}.
	(It is an open question, whether the work of Hyde and Treglown can be improved to a constant error term.)}
We use the first result of Hyde, Liu and Treglown~\cite{HLT19} as a black box to give the following extension of the latter result~\cite{HT20} (and \cref{thm:extended-KO}).

\begin{theorem}\label{thm:extended-KO-deg-seq}
	For all $\chi,\Delta > 1$, $\mu >0$ and $n$ sufficiently large, the following holds.
	Let $H$ be an $\cFcr(\lceil \chi \rceil,\sqrt{n}/\log n)$-tiling on $n$ vertices with $\crit(H)\leq \chi$ and $\Delta(H) \leq \Delta$
	and let $G$ be a graph on $n$ vertices with a $(\chi,\mu)$-strong degree sequence.
	Then $H \subset G$.
\end{theorem}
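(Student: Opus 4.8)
The plan is to deduce \cref{thm:extended-KO-deg-seq} from the main framework result \cref{thm:frameworks} in essentially the same way as \cref{thm:extended-KO}, but with the minimum degree hypothesis replaced by the degree-sequence hypothesis, using the result of Hyde, Liu and Treglown~\cite{HLT19} as a black box in exactly the role the Koml\'os--S\'ark\"ozy--Szemer\'edi-type (or Ishigami-type) near-perfect tiling result plays in the proof of \cref{thm:extended-KO}. Concretely, a graph with a $(\chi,\mu)$-strong degree sequence in particular satisfies $\delta(G)\geq (1-1/\chi-\alpha+\chi\alpha+\mu)n = (1-1/\chi+\mu')n$ for a smaller but still positive $\mu'$ (taking $i=1$ in~\eqref{equ:deg-seq-def}); so one should first check that the ``absorbing'' part of the framework, which only needs a minimum degree bound of the form $(1-1/\chi)n+o(n)$, goes through verbatim. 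The genuinely new input is needed only for the ``almost-cover'' step, where previously one invoked Ishigami's theorem to find an $\cF$-tiling covering all but $o(n)$ vertices; here one instead invokes the degree-sequence extension of Koml\'os' theorem from~\cite{HLT19}, which gives precisely such an almost-perfect $\cF$-tiling under the $(\crit(H),\mu)$-strong hypothesis, and hence a fortiori under the $(\chi,\mu)$-strong hypothesis since $\crit(H)\leq\chi$ and the strength condition~\eqref{equ:deg-seq-def} is monotone in $\chi$ in the appropriate sense.

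The steps I would carry out are: \emph{(i)} Set up parameters $\mu' \ll \mu$ and reduce to finding, inside the given $G$, an absorbing structure of size $o(n)$ together with an $\cF$-tiling of the remainder that misses only $o(n)$ vertices, then absorb. \emph{(ii)} For the absorber, quote the relevant lemma from the main body (the one used for \cref{thm:extended-KO}), observing it requires only $\delta(G)\geq(1-1/\chi)n+o(n)$, which follows from $(\chi,\mu)$-strongness. \emph{(iii)} For the almost-cover, split $H$ (as in \cref{thm:extended-KO}) so that all but a sublinear number of tiles lie in $\cF(\kk,\sqrt n/\log n)$, bundle the tiles into a single auxiliary bounded-degree graph $H'$ with $\crit(H')\leq\chi$ and components of order $o(n)$, and apply the Hyde--Liu--Treglown result~\cite{HLT19} to the graph $G$ (minus the few vertices used by the absorber, whose removal perturbs the degree sequence by $o(n)$ and so preserves $(\chi,\mu/2)$-strongness) to obtain an embedding of an $\cF$-tiling covering all but $o(n)$ vertices. \emph{(iv)} Finish by absorbing the leftover $o(n)$ vertices using the absorber from step (ii), exactly as in the proof of \cref{thm:extended-KO}.

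The main obstacle I anticipate is bookkeeping rather than a new idea: one must verify that every place the proof of \cref{thm:extended-KO} used ``$\delta(G)$ large'' it in fact only used a consequence that survives passing to a $(\chi,\mu)$-strong sequence, and in particular that deleting the $o(n)$ absorber vertices leaves a graph whose degree sequence is still $(\chi,\mu/2)$-strong — this is where the extra $+\mu n$ slack in~\eqref{equ:deg-seq-def} is spent, and one should be slightly careful that the shift in the index set $[n/\chi]$ caused by deleting vertices does not break the inequality (it does not, since removing $o(n)$ vertices only shifts indices by $o(n)$ and lowers each $d_i$ by at most $o(n)$, both absorbed by $\mu n/2$). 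A secondary point is that~\cite{HLT19} is stated for $F$-tilings with $F$ fixed, so one must check — as is done for \cref{thm:extended-KO} with Ishigami's theorem — that the argument there is robust enough to handle a bounded-degree guest graph with components of sublinear (rather than bounded) order; if the paper's treatment of \cref{thm:extended-KO} already extracts such a ``robust Koml\'os/Ishigami'' statement, then one simply quotes the degree-sequence analogue, and otherwise one reproves it by the standard reduction (regularity lemma plus blow-up, or the absorption-free argument of~\cite{HLT19}) with the component-size bound tracked through. In short, the proof is: take the proof of \cref{thm:extended-KO}, replace Ishigami's theorem by the Hyde--Liu--Treglown degree-sequence theorem, and check the degree sequence is only mildly disturbed by removing the absorber.
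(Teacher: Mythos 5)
Your proposal founders on its very first concrete step. You claim that a $(\chi,\mu)$-strong degree sequence implies $\delta(G)\geq(1-1/\chi+\mu')n$ by ``taking $i=1$'' in~\eqref{equ:deg-seq-def}. This is false, and the implication runs the other way (the paper notes only that $\delta(G)\geq(1-1/\chi+\mu)n$ \emph{implies} a $(\chi,\mu)$-strong sequence). At $i=1$ the condition guarantees merely $d_1\geq(1-1/\chi-\alpha+\mu)n+O(1)$, which for non-integer $\chi$ (so $\alpha>0$) is strictly below $(1-1/\chi)n$; e.g.\ for $\chi=2.5$ one has $\alpha=0.2$ and the guaranteed minimum degree is about $0.4n$ versus the Koml\'os threshold $0.6n$. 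The term $\chi\alpha\cdot i$ only reaches $\alpha n$ at the top index $i=n/\chi$. The whole point of degree-sequence theorems is that a positive fraction of vertices may fall below the minimum-degree threshold, so your step (ii) --- ``the absorbing part goes through verbatim since it only needs $\delta(G)\geq(1-1/\chi)n+o(n)$'' --- has no basis, and the analogue of \cref{lem:deg-gives-framework} cannot be invoked.

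What the paper actually does is verify all three tiling-framework properties of \cref{def:tiling-framework} directly from the degree sequence (\cref{lem:deg-seq-mixed-framework}): the Hyde--Liu--Treglown theorem is used as a black box only for the space property~\ref{item:framework-Geometry} (a near-perfect fractional $B_{\chi}$-tiling, applied to the fixed bottle graph, so your worry about extending~\cite{HLT19} to sublinear components is moot); the linkage property~\ref{item:framework-linkage} and the single loose component require a separate argument via ``big'' vertices (\cref{pro:deg-seq-linked-edges}), and the bound on tight components comes from the backstop \cref{pro:backstop} rather than from any degree computation. Your proposal supplies none of these; it silently assumes they follow from a minimum-degree bound that the hypothesis does not provide. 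Once $G$ is known to be a robust tiling framework, the rest (flexi-chromatic subtilings of $H$ via \cref{lem:hcf-gives-flexi-chrom-mixed} and an application of \cref{thm:frameworks}) is indeed identical to the proof of \cref{thm:extended-KO}, as you anticipate, but the missing verification of linkage and divisibility under the degree-sequence hypothesis is the actual content of the deduction and constitutes a genuine gap.
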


An extension of Koml\'os' theorem concerns optimal degree conditions for embedding tilings~$H$ that cover a fixed proportion of the host graph $G$~\cite{Kom00}.
Similar questions, were later studied for mixed tilings by Ishigami~\cite{Ish02b} and in other settings by Piguet and Saumell~\cite{PS19}.
We remark that one can derive analogous (optimal) results of this type from \cref{thm:extended-KO-deg-seq} by adding isolated vertices to $H$ until it has the order of $G$.

Our fourth implication concerns host graphs that exhibit weak quasirandom properties.
For a set of vertices $U$ in a graph $G$, we denote by $e(U)$ the number of edges of $G$ which are contained in $G[U]$.
For $\rho,d >0$, we say that a graph $G$ on $n$ vertices is \emph{(uniformly)} $(\rho,d)$-\emph{dense} if $e(U) \geq d|U|^2/2-\rho n^2$ for every $U \subset V(G)$.
Note that $G$ could still have some isolated vertices, which presents an obstacle to embedding perfect tilings.
A natural way to overcome this is to require linear minimum degree in $G$.

Uniformly dense graphs have been investigated in the context of (powers of) Hamilton cycles and graphs of sublinear bandwidth by Staden and Treglown~\cite{ST20} as well as Ebsen, Maesaka, Reiher, Schacht and Schülke~\cite{EMR+20}.
However, in order to guarantee such \emph{connected} guest graphs one needs to impose further assumptions on the host graphs, which they call `robust inseparability' and which are stronger than linear minimum degree.
Here we show that for perfect tilings, a lower bound on the minimum degree suffices.

Unfortunately, we can not hope to find {perfect} $F$-tilings even for fixed $F \in \cFcr$
because a uniformly dense graph $G$ may be the union of disjoint cliques,
and if the orders of these cliques are not a multiple of $v(F)$, then $G$ trivially has no {perfect} $F$-tiling.
Therefore, as in the case $\kk=2$ in~\eqref{eq:cFcr}, we restrict ourselves to tiles $F$ with $\hcfc(F)=1$ (but arbitrary chromatic number).
On the other hand, it turns out that (unlike above) no requirement on $\hcf(F)$ is necessary
and we no longer require strict bounds on the critical chromatic number.
Formally, let $\cF_c$ be the family of graphs~$F$ with $\hcfc(F)=1$, and let {$\cF_c(k,w)=\cF_c\cap \cF(k,w)$.}

\begin{theorem}\label{thm:quasirandom}
	For all $k,\Delta > 1$ and $d,\mu > 0$, there is $\rho>0$ such that for sufficiently large~$n$ the following holds.
	Let $G$ be a $(\rho,d)$-dense graph on $n$ vertices with $\delta(G) \geq \mu n$.
	Let $H$ be an $\cF_c(k, \sqrt{n}/\log n)$-tiling on $n$ vertices with $\chi(H)\leq k$ and $\Delta(H) \leq \Delta$.
	Then $H \subset G$.
\end{theorem}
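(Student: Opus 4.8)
Here is how I would approach the statement.

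I would obtain \cref{thm:quasirandom} as an application of the main result \cref{thm:frameworks}, so the real task is to verify that a $(\rho,d)$-dense graph $G$ on $n$ vertices with $\delta(G)\ge\mu n$ provides the structural input required by \cref{thm:frameworks}; the core of this is a structural lemma for uniformly dense graphs. Fix a hierarchy $\rho\ll\eps\ll\eta\ll\mu,d,1/k,1/\Delta$. First apply the degree form of Szemer\'edi's regularity lemma to $G$ to obtain clusters $V_1,\dots,V_t$ of common size $m$ and an exceptional set $V_0$ with $|V_0|\le\eps n$, and let $R$ be the reduced graph retaining those pairs that are $\eps$-regular of density at least $\eta$. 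The uniform density of $G$ is inherited by $R$: for $S\subseteq[t]$ with $|S|\ge\rho^{1/3}t$, comparing the lower bound $d(|S|m)^2/2-\rho n^2$ on the number of edges of $G$ inside $\bigcup_{i\in S}V_i$ with the contributions of intra-cluster edges, irregular pairs and low-density pairs (each negligible relative to $d|S|^2m^2/2$) against dense regular pairs (each of size at most $m^2$) forces $e_R(S)\ge(d-\sqrt\eta)\binom{|S|}{2}$. Consequently every linear-size union of clusters spans a positive density of copies of $K_k$ in $R$, which is what allows bounded-degree $k$-colourable components to be embedded into regular $k$-tuples.

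The feature separating the uniformly dense setting from the minimum-degree settings of \cref{thm:extended-KO,thm:extended-KO-deg-seq} is that $R$ need not have large minimum degree: $G$ may carry a sublinear set of vertices attached to the rest only through sparse or irregular pairs, so one cannot simply pass to $R$ and invoke a degree-based framework. Let $P$ consist of $V_0$ together with all clusters of $R$-degree below $\eta t$; applying the previous paragraph to $N_R(i)\cup\{i\}$ gives $|P|\le\sqrt\rho\,n$, and after moving a further $o(n)$ vertices into $P$ we may assume that every cluster outside $P$ has $R$-degree at least $\eta t/2$ and lies in a copy of $K_k$ in $R$. Since $\delta(G)\ge\mu n$, every vertex of $P$ still has at least $\mu n/2$ neighbours in the core $V(G)\setminus P$. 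I would then cover $P$ by embedding a small sub-collection of the components of $H$ that straddle $P$ and the core, placing them greedily one at a time: each component has at most $w=\sqrt n/\log n$ vertices and maximum degree $\Delta$, so it can be routed through a handful of peripheral vertices (spent against their core-neighbourhoods) while consuming only an $o(1)$-fraction of every cluster.

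What remains is a core set $G'$ of at least $(1-\sqrt\rho)n$ vertices, split into the components — the ``blobs'' — of the pruned reduced graph; each blob has linear order, is uniformly dense, and has reduced minimum degree at least $\eta t/2$. The hypothesis $\hcfc(F)=1$ for every tile $F$ now enters through its consequence that the component orders of $H$ have overall greatest common divisor $1$ and are all at most $w$; since every blob has order $\gg w^2$ and $w^2$ exceeds the relevant threshold (polynomial in $w$), every blob size lies in the range of sub-multiset sums of the components, and a short inductive argument — realising the blobs one at a time and keeping the remaining component-multiset inside the realisable range — partitions the components of $H$ not used in the previous step into groups matching the blob sizes exactly. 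Inside each blob one then runs the standard scheme underlying \cref{thm:frameworks}: build a small absorbing structure from vertex-disjoint copies of $K_k$ in $R$ together with a reservoir (the requisite supersaturation and connecting statements follow from uniform density together with reduced minimum degree $\eta t/2$), embed an almost-perfect part of the assigned components into regular $K_k$-tuples of the remaining clusters greedily (valid as the components are bounded-degree and of order $o(\sqrt n)$), and close up with the absorbers. Reassembling the blobs with the straddling components from the periphery step yields $H\subseteq G$.

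The main obstacle I anticipate is the interface between the periphery step and the exact divisibility: the $O(\sqrt\rho\,n)$ peripheral vertices have to be folded into the tiling ``by hand'', and this must be done so that the components removed in the process still leave intact enough of the component-multiset of $H$ for the per-blob bookkeeping to go through. Getting $\rho$ small enough that the straddling components genuinely fit, and that their removal does not spoil the realisable range for the blobs, is the delicate point — and it is also what is responsible for the sublinear ceiling $w\le\sqrt n/\log n$ on the component orders.
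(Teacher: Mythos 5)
Your opening sentence matches the paper's strategy — \cref{thm:quasirandom} is indeed a short corollary of \cref{thm:frameworks}, obtained by checking that $G$ is a robust tiling framework (\cref{pro:quasirandom-framework}, which rests on the facts that uniform density plus linear minimum degree gives many $k$-cliques in every linear set, a perfect fractional clique tiling, and linkage) and that $H$ lies in the relevant class $\cH$ (\cref{prop:quasirandom-wildcards}). But the body of your proposal then abandons this plan and sketches a bespoke regularity-plus-absorption argument, and in doing so it skips the central difficulty. You only ever verify \emph{host-side} structure; the \emph{guest-side} hypothesis of \cref{thm:frameworks} — that $H$ contains the required proper and topological \fc{} subtilings — is never established or even mentioned. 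Concretely, your use of $\hcfc(F)=1$ (matching blob sizes to subset sums of component orders) addresses only the loose-component, vertex-count obstruction. It says nothing about the tight-component obstruction: after the almost-perfect embedding, the leftover cluster sizes inside each clique structure form an unbalanced vector that must be matched exactly by redistributing colour classes of the remaining components of $H$, and for a general $F\in\cF_c$ with no condition on $\hcf(F)$ there is no reason a proper $k$-colouring of $H$ can realise an arbitrary such imbalance. "Close up with the absorbers" does not supply a mechanism for this; divisibility obstructions are exactly what absorption cannot overcome.

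The idea you are missing is the one that makes the theorem true without any $\hcf$ hypothesis: uniform density with $\rho\ll d$ forces positive edge density inside \emph{every} linear vertex set, hence cliques of every fixed order, so one can run the framework with $K_{k+1}$ (indeed $K_{k+2}$) rather than $K_k$. Since $\chi(H)\le k$, the guest graph is properly $(k+1,\cdot)$-\fc{} essentially for free — the $(k+1)$st colour class is empty in the central colouring and can absorb arbitrary recolourings (\cref{lem:k+1_wildcard}) — which is precisely what resolves the within-component balancing. Your scheme embeds into $K_k$-tuples throughout and therefore cannot access this flexibility; as written, the final exact allocation step would fail for tiles whose proper $k$-colourings are rigid.
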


\smallskip
All the above results are derived from a more general framework that goes beyond local conditions (such as degrees and uniform density) and is formulated in terms of abstract properties of our guest and host graphs.
This also {allows us to} untangle different conditions that are needed in the proof and address them individually.
In the following,
we discuss these, often necessary, properties for the guest and host graphs separately.

\subsection{Guest graph properties}\label{sec:1.1}
Let us begin by motivating the properties of guest graphs that allow us to overcome certain divisibility obstructions in the host graph.
We provide two concrete examples to highlight the necessity of this soon-to-be-defined structure.
Consider a large complete bipartite graph $G=K_{a,b}$ with $a+b=n$, $6\mid n$ and {$a\approx b$.}
Clearly, if $a,b$ are odd, then $G$ does not contain a perfect $K_{2,4}$-tiling.
However, if~$H$ is the union of $\frac{n}{6}-1$ disjoint copies of $K_{2,4}$ and one $K_{3,3}$ with in total $n$ vertices,
then~$H$ is a subgraph of $G$.
Unfortunately, then~$H$ is no longer a subgraph of $G$ if $a,b$ are even.
Instructively, if instead of a $K_{3,3}$, the graph~$H$ is the union of $\frac{n}{6}-1$ disjoint copies of $K_{2,4}$ and two copies of $K_{1,2}$, then $H$ is a subgraph of~$G$ regardless of the parity of $a,b$.
This is because the union of the two copies of $K_{1,2}$ is what we soon call a proper \fc{} graph.

Similarly, suppose that $G$ is the disjoint union of $K_a$ and $K_b$ with $a+b=n$
and $6\mid n$ as well as {when} $a,b \not\equiv 0 \bmod 6$.
Again, $G$ cannot contain a perfect $K_{2,4}$-tiling.
On the other hand,
if~$H$ is the union of $\frac{n}{6}-1$ disjoint copies of $K_{2,4}$ and six isolated vertices,
then~$H$ is a subgraph of $G$ regardless of whether $a,b\equiv 0 \bmod 6$ or not.
This is because the six isolated vertices are  what we soon call a topological \fc{} graph.

We now formally define \fc{} graphs thereby capturing the correct properties of the examples above.
For a (not necessarily proper) $k$-colouring $\phi\colon V(W)\to [k]$ of a graph $W$,
we denote the tuple of colour class sizes by
$\ord(\phi)=\left(|\phi^{-1}(1)|,|\phi^{-1}(2)|,\dots,|\phi^{-1}(k)|\right).$
We say that~$\phi$ is \emph{proper} if no edges in~$W$ are monochromatic and \emph{topological} if all edges are monochromatic.
Roughly speaking, a graph is \fc{} with a central colouring $\phi$ if all colourings `close' to $\phi$ can be realised by this graph.

\begin{definition}[\Fc{} graphs]\label{def:wildcardS}
	A graph $W$ is \emph{$(k,s)$-\fc{}} with \emph{central} $k$-colouring $\phi\colon V(W) \to [k]$
	if for all $\vecb w\in\INTS^k$ with $\sum_{i \in [k]}\vecb w(i)=0$ and $\maxnorm{\vecb w} \leq s$, there {is a $k$-colouring} $\phi'$ of $W$ with $\ord(\phi)-\ord(\phi')=\vecb w$.
	We say that $W$ is \emph{properly}  {$(k,s)$-\fc{}} if $\phi, \phi'$ are proper and \emph{topologically} if $\phi, \phi'$ are topological.
\end{definition}

One key insight that follows from our proofs is that if $H$ contains enough disjoint \fc{} graphs with the correct parameters, then $\crit(H)$ determines the minimum degree threshold for containing~$H$, as is the case in Theorem~\ref{thm:extended-KO}.

The most flexible graphs with respect to colourings are surely isolated vertices and we exploit this, for instance, in the proof of Theorem~\ref{thm:miss_constant_frac-simple} because we simply add to $H$ isolated vertices until $H$ has $n$ vertices (call this new graph $H'$).
Then $H'$ contains \fc{} graphs and their strength and number depends on the number of isolated vertices added;
moreover, $\crit(H) \approx \crit(H')$.
Therefore, $\crit(H)$ determines the minimum degree threshold for containing~$H$ for almost spanning guest graphs~$H$, because $H'$ always contains \fc{} subgraphs.

For $k\geq 3$, examples of proper $(k,s)$-\fc{} graphs are (odd) cycles of length at least, say, $10ks$ or the disjoint union of at least $10ks$ cycles each of length at least $4$.
More generally, \cref{lem:hcf-gives-flexi-chrom-mixed} states that sufficiently large $\cFcr$-tilings are properly \fc{}.

In fact, whether a collection of graphs from $\cFcr$ forms a \fc{} graph quickly reveals itself to be a problem in additive combinatorics;
it boils down to the question of whether the set of all potential sums of a certain set of integers contains a long interval.
In this paper we adopt a result of Lev~\cite{Lev10} to our setting.
Questions of this type have recently been investigated by Conlon, Fox and Pham in~\cite{CFP21}.

\smallskip

Next we introduce some notation to refer to four graph classes $\cH$, $\cB$, $\cJ$, $\cW$ in order to conveniently state our results throughout the article.
The main graph class is $\cH$, which is the intersection of the other three graph classes.
Informally, $\cH$ consists of graphs that
admit an upper bound on their maximum degree and order of a tile (asserted by~$\cB$),
bounded critical chromatic number (asserted by~$\cJ$)
and contain a set of \fc{} subgraphs (asserted by~$\cW$).
For an $\cF$-tiling $H$ for some graph class $\cF$,
we refer to a \emph{subtiling} of $H$ as an induced subgraph of $H$ that contains each component either completely or not at all.
Observe that a subtiling may contain a tile only partly.
Let
\begin{itemize}  \setlength\itemsep{0.3em}
	\item $\cB(n,w,\Delta)$ be the family of graphs $H$ on at most $n$ vertices with $\Delta(H) \leq \Delta$ where each tile has order at most $w$;
	\item $\cJ(\chi)$ be the family of graphs $H$ with $\crit(H) \leq  \chi$;
	\item $\cW(t,\ell,\chi,w,w')$ be the family of graphs $H$ such that $H$ contains $t$ disjoint proper $(\lceil \chi\rceil,\lceil \chi\rceil w)$-\fc{} subtilings  and one {(disjoint of the former)} topological $(\ell,w)$-\fc{} subtiling each of order at most $w'$; and
	\item $\cH(n,w,\Delta; \chi;t,\ell,w') = \cB(n,w,\Delta) \cap \cJ(\chi) \cap \cW(t,\ell,\chi,w,w').$
\end{itemize}

\subsection{Host graph properties}\label{sec:host-graphs}

We now consider properties of the host graph.
By moving beyond degree conditions we can unravel  the consequences of the degree conditions that were implicitly used in earlier results and clearly articulate how these consequences interact with the properties of the guest graphs.

To this end, we introduce the concept of \emph{tiling frameworks}, which describe a set of necessary conditions for a host graph $G$ to contain all elements of {$\cH(n,w, \Delta; \chi;t,\ell,w')$.}
Our main result states that if~$G$ satisfies these properties in a robust way, then it contains the entirety of $\cH(n,w, \Delta; \chi;t,\ell,w')$.
The encoding of the aforementioned picture with this framework leverages ideas from the the third author and Sanhueza-Matamala~\cite{LS23}.
A related approach has been developed by Keevash and Mycroft~\cite{KM15} regarding perfect matchings in hypergraphs.

Let us start with a first necessary condition for a graph $G$ for containing a perfect tiling;
it clearly must contain an (almost) perfect fractional tiling of the same type.
We refer to this condition (and, at times, obstacle) as a \emph{space} constraint.
Let us introduce this notion in more detail.
We denote by $\Hom(F;G)$ the set of \emph{homomorphisms} from $F$ to~$G$, that is, the set of functions $\phi\colon  V(F) \to V(G)$, where $uv\in E(F)$ implies $\phi(u)\phi(v)\in E(G)$ for all $u,v\in V(F)$.
A \emph{fractional $F$-tiling} of $G$ is a function $\omega \colon \Hom(F;G)\rightarrow [0,1]$ such that for all $v\in V(G)$, we have $\sum_{\theta\in\Hom(F;G)} \omega(\theta)|\theta^{-1}(v)|\leq 1$.
The \emph{weight} of the tiling is $v(F)\sum_{\theta\in\Hom(F;G)}\omega(\theta) $.
A fractional tiling of $G$ is \emph{$(1-\rho)$-perfect} (or \emph{perfect} if $\rho=0$) if its weight is at least $(1-\rho)v(G)$.
Note that $\{0,1\}$-valued fractional tilings correspond to tilings.
Finally, consider a rational $\chi>1$.
Then there are unique coprime integers $a<b$ with $\chi = \lfloor \chi \rfloor + a/b$ (if $\chi\in \NATS$, set $a=0$, $b=1$).
{We write $B_\chi$ for the complete $\kk$-partite graph $B_{a,(\kk-1)\ast b}$, noting that $\crit(B_\chi) = \chi$.}

One can show that in order to find almost perfect tilings, it suffices that the host graph has a `robust' perfect fractional tiling (\cref{thm:Komlos-for-frameworks}) and is large.
However, for a perfect tiling we must ask for more properties of the host graph.

Another trivial necessary condition for a graph $G$ to contain a perfect $F$-tiling is that every vertex lies in a copy of $F$.
Suppose that $F$ contains the $k$-partite graph $K_{2,\ldots,2}$ as a subgraph.
Then it is certainly necessary that for every vertex $v\in V(G)$, there is a $k$-clique $K$ disjoint from~$v$ such that~$v$ has at least $k-1$ neighbours into $K$.
This example shows that for $G$ to contain a perfect $F$-tiling, in some cases it is necessary that every vertex is `linked' to a $k$-clique.

We define this formally in terms of hypergraphs.
A \emph{$k$-graph} $J$ is a \emph{$k$-uniform} hypergraph, meaning that every edge contains $k$ vertices.
We say that a vertex $v \in V(J)$ and an edge $f \in E(J)$ with $v \notin f$ are \emph{linked} in $J$ if there is an edge $e \in E(J)$ with $v \in e$ and  $|e \cap f| = k-1$.
Moreover, $J$ is \emph{linked} if every vertex is linked to an edge.

In order to model graph structures with hypergraphs,
we define the \emph{$k$-clique hypergraph}~$K_k(G)$ of a graph $G$ to be the $k$-graph with vertex set $V(G)$
where $\{v_1,\ldots,v_k\}$ is an edge if $\{v_1,\ldots,v_k\}$ induces a $k$-clique in~$G$.
Properties of~$K_k(G)$ encode {the} potential divisibility issues when one aims to embed a $k$-chromatic graph $H$ into $G$.
These properties are captured by the tight and loose component structure of the host graph, which we introduce now.

For a $k$-graph $J$, we define by $T(J)$ the \emph{dual graph} whose vertices are the edges of $J$, where $e,f \in E(J)$ are adjacent whenever $|e\cap f| = k-1$.
The \emph{tight components} of~$J$ are the subgraphs $T \subset J$, whose edges form the components of $T(J)$.
For loose connectivity, let $L(J)$ be the graph whose vertices are the tight components of $J$ and two tight components $T,T'$ form an edge whenever their vertex sets intersect.
The \emph{loose components} of $J$  are unions $L= T_1 \cup \dots \cup T_q$ of tight components $T_1, \dots, T_q$ which form a component in $L(J)$.
Given this, we say that $J$ is $(t,\ell)$-\emph{connected} if there is a (vertex) spanning subgraph $J' \subset J$ such that $J'$ has at most~$t$ tight components and at most~$\ell$ loose components.

We remark that each individual tight component and the combined loose components of~$K_k(G)$ can potentially form a divisibility obstruction to embedding a tiling $H$.
Each proper and topological \fc{} subtiling of $H$ helps us to overcome one of these obstacles.
(This we illustrated for $k=2$ by some examples in Section~\ref{sec:1.1}.)
A more involved construction shows that graphs satisfying the degree conditions of \cref{thm:miss_constant_frac-simple} can contain many tight components, {where each tight component comes with its own divisibility obstruction (see \cref{sec:constructions}).}

The following definition packages the structural properties we have just introduced into what we call a tiling framework.
\begin{definition}[Tiling framework]\label{def:tiling-framework}
	Let {$\chi \in \mathbb{Q}$ with $\chi > 1$,} $\rho\in [0,1]$ and $t,\ell \in \NATS$.
	A graph $G$ is a \emph{$(\chi,\rho;t,\ell)$-tiling framework} if
	\begin{enumerate}[\upshape (F1)]
		\item  \label{item:framework-Geometry} $G$ has a $(1-\rho)$-perfect fractional {$B_\chi$}-tiling, \hfill (space)
		\item \label{item:framework-divisibility} $K_{\lceil \chi \rceil}(G)$ is $(t,\ell)$-connected and
		\hfill (divisibility)
		\item \label{item:framework-linkage} $K_{\lceil \chi \rceil}(G)$ is linked. \hfill (linkage)
	\end{enumerate}
\end{definition}

We remark here that, in particular, the host graphs in {\cref{thm:extended-KO,thm:miss_constant_frac-simple,thm:extended-KO-deg-seq,thm:quasirandom}} are tiling frameworks for an appropriate choice of the parameters (see Lemmas~\ref{lem:deg-gives-framework} and~\ref{lem:deg-seq-mixed-framework}).

\subsection{Main result}\label{sec:main-result}
To state our main result, we need one final piece of notation.
For $\eps,d\in [0,1]$, a subgraph $G'$ of a graph $G$ is an \emph{$(\eps,d)$-approximation of $G$} if $\deg_{G'}(v) \geq \deg_{G}(v) - d v(G)$ for all $v \in V(G')$ and $v(G') \geq (1 - \eps)v(G)$.
For $\mu>0$, we say that $G$ $\mu$-\emph{robustly} admits a graph property~$\cP$
if every $(\mu,\mu)$-approximation $G' \subseteq G$ also admits~$\cP$.

Now we are ready to state our main result.
Informally, it states that if the host graph~$G$ is a robust tiling framework, then $G$ contains all graphs of $\cH$ (quantified appropriately).
Note that the number of \fc{} subtilings required of $H$ matches the number of tight and loose components of the host (clique) graph.

\begin{theorem}[Tiling framework theorem]\label{thm:frameworks}
	For all $\chi, \Delta>1$, $\mu>0$ with $\chi, \mu \in \mathbb{Q}$,
	there exist $\rho,\xi>0$ such {that} the following holds for all $w,w'$ and sufficiently large $n$
	with $w\leq \xi n$ and $w'\leq \sqrt\xi n$.
	Suppose $G$ is a $\mu$-robust $({\chi+\mu},\rho;t,\ell)$-tiling framework on $n$ vertices and
	$H \in \cH(n,w,\Delta; \chi; t,\ell,w')$ {for some $t,\ell\geq 1$.}
	Then $H\subseteq G$.
\end{theorem}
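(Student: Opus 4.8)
The plan is to run the regularity--blow-up method, letting the three framework axioms play complementary roles: (F1) (space) supplies a near-perfect tiling, (F2) (divisibility) constrains the residue of the leftover, which the \fc{} subtilings of $H$ then neutralise, and (F3) (linkage) absorbs the last few individual leftover vertices.

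\emph{Setup and reduction to the reduced graph.} We may assume $v(H)=n$ by appending isolated vertices (these are $(k,s)$-\fc{} for all $k,s$, so the result still lies in $\cH(n,w,\Delta;\chi;t,\ell,w')$). Put $k:=\kk$ and assume $\lceil\chi+\mu\rceil=k$; the case $\lceil\chi+\mu\rceil=k+1$ forces $\chi\in\NATS$, and then $H$ is $k$-colourable and fits with room to spare into a $B_{\chi+\mu}$-blow-up, so it is handled analogously. Fix a hierarchy $1/n\ll\rho\ll\xi\ll\eps\ll d\ll 1/M\ll\mu,1/\Delta,1/t,1/\ell,1/k$, where $M=M(\eps)$ bounds the number of parts in an $\eps$-regular partition; thus $w\le\xi n$ and $w'\le\sqrt\xi n$ are far smaller than a cluster. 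Since $H\in\cW(t,\ell,\chi,w,w')$, write $H=H_0\cup W_1\cup\dots\cup W_t\cup W^*$, where $W_1,\dots,W_t$ are disjoint proper $(k,kw)$-\fc{} subtilings, $W^*$ is a topological $(\ell,w)$-\fc{} subtiling, each of order at most $w'$, and $H_0$ is the remainder. Apply the regularity lemma to $G$ and pass to the $(\eps,d)$-reduced graph $R$ on clusters $V_1,\dots,V_m$ of common size $L$. The first task is to show that $R$ inherits the framework structure with slightly weaker parameters: averaging the $(1-\rho)$-perfect fractional $B_{\chi+\mu}$-tiling of $G$ over the $\eps$-regular pairs yields a near-perfect fractional $B_{\chi+\mu}$-tiling of $R$, and since every edge of $R$ is a dense regular pair -- so every $k$-clique of $R$ lifts to $\Theta(L^k)$ $k$-cliques of $G$ -- one carries $(t,\ell)$-connectivity and linkedness of $K_k(G)$ over to $K_k(R)$ by applying (F1)--(F3) to the $(\mu,\mu)$-approximation of $G$ consisting of the dense regular pairs and using $\mu$-robustness.

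\emph{Absorbing structure and near-perfect embedding.} Reserve at random a collection $\cR$ of components of $H_0$ with $v(\cR)=\Theta(\sqrt\xi n)$, together with a matching family of $B_{\chi+\mu}$-blow-ups in $R$, and use $(t,\ell)$-connectivity of $K_k(R)$ to turn this into an absorbing structure with the property that for every $U\subseteq V(G)$ with $|U|\le\rho n$ whose index vector $(|U\cap V_i|)_{i\in[m]}$ lies in the correct coset of the edge lattice $\Lambda$ of $K_k(R)$, the structure together with $U$ has a perfect tiling by $\cR$. Next, apply the companion almost-perfect tiling result for robust fractional-tiling frameworks (cf.\ \cref{thm:Komlos-for-frameworks}) to $R$ minus the reserved clusters and to $H_0\setminus\cR$ -- which still has bounded degree, bounded tile order and critical chromatic number at most $\chi+o(1)$ -- obtaining an embedding of $H_0\setminus\cR$ into $G$ whose uncovered set $U$ satisfies $|U|\le\rho n$. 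Carrying this out cluster by cluster with the blow-up lemma on super-regular pairs (after the standard clean-up that moves $o(L)$ vertices per cluster to restore super-regularity), we may further steer the embedding so that the index vector of $U$ is congruent modulo $\Lambda$ to a vector of $\ell_\infty$-norm $O(w)$.

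\emph{Divisibility correction and completion.} The residue modulo $\Lambda$ of the combined deficit of $U$ and of the absorbing structure decomposes, by the $(t,\ell)$-connectivity of $K_k(R)$, into at most $t$ contributions -- one per tight component of $K_k(R)$, each (after reducing modulo the tight-edge swaps available within that component) a balanced vector supported on at most $k$ clusters -- together with one contribution recording the deficits among the at most $\ell$ loose components; by the last paragraph every such contribution has a representative of $\ell_\infty$-norm $O(w)$. Here flexi-chromaticity enters: by \cref{def:wildcardS}, re-colouring $W_i$ inside a $B_{\chi+\mu}$-blow-up based at a suitable $k$-clique of the $i$-th tight component realises \emph{any} balanced shift of the colour-class sizes of $\ell_\infty$-norm at most $kw$, which cancels that tight component's residue, while re-colouring the topological $W^*$ (an $\ell$-colouring, so it moves whole components among the $\ell$ loose components) cancels the loose residue. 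After these choices the total deficit lies in $\Lambda$ and can therefore be shifted -- via tight-edge swaps inside tight components and via vertices shared between loose components -- into the coset that the absorbing structure was built to handle. Finally, embed $H_0\setminus\cR$, $W_1,\dots,W_t$ and $W^*$ by the blow-up lemma along this exactly balanced allocation; use linkedness of $K_k(R)$ to incorporate the at most $\rho n$ remaining vertices one by one -- each such vertex $v$ lies, through a linking edge, in a $k$-clique meeting the absorbing structure, and a bounded reconfiguration there swallows $v$ -- and then embed $\cR$ into what remains of the absorbing structure, giving $H\subseteq G$. I expect the divisibility step to be the main obstacle: making precise how $(t,\ell)$-connectivity of $K_k(R)$ bounds the residue lattice $\Lambda$ and supplies the $O(w)$-size representatives that fit within the \fc{} ranges, and dovetailing this with the construction of the absorbing structure; a secondary, more technical point is transferring (F1)--(F3) faithfully from $G$ to $R$, especially the inheritance of the fractional $B_{\chi+\mu}$-tiling.
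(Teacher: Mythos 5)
Your skeleton — regularity, transferring the framework axioms to the reduced graph $R$, an almost-perfect allocation via a Koml\'os-type theorem for frameworks, a divisibility correction organised by the tight and loose components of $K_{\kk}(R)$ using the proper and topological \fc{} subtilings, and a final Blow-Up Lemma embedding — is the paper's architecture. But there is a genuine gap at the heart of the divisibility step. After the almost-perfect stage the per-cluster deficit is of order $\rho n$ (the fractional tiling is only $(1-\rho)$-perfect, and the mismatch between $\crit(H)\le\chi$ and $\crit(B_{\chi+\mu})$ plus integer rounding force imbalances of this magnitude); no amount of ``steering'' brings it below that. You then ask the proper $(\kk,\kk w)$-\fc{} subtilings to cancel residues which you claim have representatives of $\ell_\infty$-norm $O(w)$ modulo the swap lattice, but when $w=O(1)$ — the classical K\"uhn--Osthus case, which the theorem must cover — the part of the deficit that lies \emph{across} the reachability components of a tight component can itself be of order $\rho n\gg \kk w$, and it is exactly this part that only a recolouring (not a tile swap) can fix. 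The paper bridges this by \emph{boosting}: \cref{lem:Lat_gap} extracts from the slack $\crit(H)\le\chi<\kk$ approximately $(\kk,\Theta(n),\kk w)$-\fc{} subtilings, which \cref{prop:wild_sum} combines with the exact $(\kk,\kk w)$-\fc{} ones to get strength $\Theta(\sqrt{\rho'}n)$ with zero error; and it reserves explicit $(\beta n, T_j)$-surjective allocations (\cref{prop:flow}, \cref{lem:allocate-tight}) so that the tight-edge swaps of total magnitude $\Theta(\rho n)$ can actually be performed. Your proposal contains neither ingredient, so the correction step fails for small $w$.

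A secondary but still real problem is the absorbing structure. You assert, without construction, a reservoir $\cR$ that perfectly tiles together with \emph{any} admissible $U\subseteq V(G)$ of size $\rho n$; for mixed tilings with tiles of order up to $\xi n$ this is not a routine gadget, and local degree conditions on the vertices of $U$ (not just a coset condition on its index vector) would be needed. The paper avoids absorption entirely: the ``leftover'' never materialises as an uncovered vertex set of $G$ but stays a deficit vector on $V(R)$, corrected at the allocation level before a single application of the Blow-Up Lemma embeds everything; the only genuine vertex-level exceptional set is $V_0$ from the Regularity Lemma, which is handled \emph{early} via linkage (\cref{lem:lemma-for-Exceptional-Vertices}) by embedding small subtilings into $k$-cliques extended through linked edges, not by end-of-proof one-by-one absorption. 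If you replace your absorption layer by this allocation-level bookkeeping and add the boosting of the \fc{} strength, your outline becomes the paper's proof.
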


All our previously stated results follow fairly easily from \cref{thm:frameworks} (see Section~\ref{sec:2}).
We also  remark that under the assumptions of \cref{thm:frameworks},
that is, assuming $G$ is a $\mu$-robust $(\chi+\mu,\rho;t,\ell)$-tiling framework on $n$ vertices,
there are $t'(\mu),\ell'(\mu)$ such that $G$ is also a $\mu$-robust $(\chi+\mu,\rho;t'(\mu),\ell'(\mu))$-tiling framework
(see \cref{pro:backstop}).

\subsection{Organisation of the paper}
The paper is organised as follows.
{In the next section, we provide some more details on how different relevant objects are related.
	In \cref{sec:outline}, we give an overview of our proof.}
In \cref{sec:notation}, we introduce some basic notation.
We derive the result{s} for the host and guest graphs in \cref{sec:host-graph-properties,sec:guest-graph-properties}, respectively.
In \cref{sec:tools}, we introduce a few tools such as the Regularity Lemma and the Blow-Up Lemma that will help us to embed large graphs.
In \cref{sec:intermediate}, we show a few technical observations that will prepare us for the upcoming proofs.
In \cref{sec:komlos-framework,sec:mixed-komlos-framework}, we prove (mixed) versions of Koml\'os' theorem for frameworks.
Finally, \cref{sec:proof-framework-theorem} is dedicated to the proof of \cref{thm:frameworks}.
Further arguments used in the proof of \cref{thm:frameworks} are found in \cref{sec:lemma-for-Exceptional-Vertices}--\ref{sec:lemma-for-H}.
Finally, we conclude in \cref{sec:conclusion} with some remarks.

\section{More details on the bigger picture}\label{sec:2}

In this section we deduce Theorems~\ref{thm:extended-KO}--\ref{thm:quasirandom} assuming some observations about tiling frameworks and \fc{} graphs.
This may also help the reader to better capture the concepts introduced in the introduction.

In order to simplify the statements of our results we use the following notation.
We write \emph{$\alpha\ll\beta$} to mean that there is a non-decreasing function~$\alpha_0\colon(0,1]\rightarrow(0,1]$ such that for any $\beta\in(0,1]$, the subsequent statement holds for~$\alpha\in(0,\alpha_0(\beta)]$.
Hierarchies with more constants are defined similarly and should be read from right to left.
Constants in hierarchies are always rational numbers in $(0,1]$.
Moreover, if we write $1/a$ in such a case {then we implicitly mean} that $a\in \NATS$ with the only exception of the letter $\chi$ which we always assume to be a rational number larger than $1$.

We start with a closer look at guest graphs and now substantiate the claim that isolated vertices make \fc{} graphs.
For any $k,w\in \NATS$, the collection of $kw$ isolated vertices are both topologically and properly $(k,w)$-\fc{}, indeed one can take the central colouring to be the balanced $k$-colouring. This has the following useful consequence.
\begin{observation}\label{obs:isolated-vertices-flexible}
	Let $n,w,t,\ell,\Delta \in \NATS$ and $\chi>1$ with $n>(t+\ell)k^2w$ where $k:=\kk$.
	Suppose~$H \in \cB(n-(t+\ell)k^2w,w,\Delta) \cap \cJ(\chi)$. Let $H'$ be obtained from $H$ by adding $(t+\ell)k^2w$ isolated vertices.
	Then $H' \in \cH(n,w,\Delta; \chi;t,\ell,k^2w)$.
\end{observation}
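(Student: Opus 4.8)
The plan is to verify directly that $H'$ lies in each of the three classes $\cB(n,w,\Delta)$, $\cJ(\chi)$, and $\cW(t,\ell,\chi,w,k^2w)$, whose intersection is $\cH(n,w,\Delta;\chi;t,\ell,k^2w)$. The memberships in $\cB$ and $\cJ$ are essentially immediate: adding $(t+\ell)k^2w$ isolated vertices to $H$ does not change the maximum degree or introduce any new tiles of order greater than $w$ (each isolated vertex is its own tile, of order $1\le w$), so $H'\in\cB(n,w,\Delta)$; and since isolated vertices can be absorbed into any colour class of a proper colouring of $H$ without affecting which class is smallest in proportion (one places them to keep the optimal colouring optimal, or simply notes $\crit$ is monotone under adding isolated vertices in the relevant direction — more carefully, $\crit(H')\le\crit(H)\le\chi$ because adding isolated vertices can only decrease the minimum colour-class proportion relative to the average, hence can only decrease the critical chromatic number, or leave it unchanged), we get $H'\in\cJ(\chi)$. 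I would state this last point as a one-line consequence of the formula~\eqref{equ:crit} together with the remark that isolated vertices are freely colourable.

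The substantive step is the membership $H'\in\cW(t,\ell,\chi,w,k^2w)$, i.e.\ exhibiting inside $H'$ a collection of $t$ disjoint proper $(\lceil\chi\rceil,\lceil\chi\rceil w)$-\fc{} subtilings and one topological $(\ell,w)$-\fc{} subtiling, each of order at most $k^2w$. Here I would partition the $(t+\ell)k^2w$ newly added isolated vertices into $t+\ell$ blocks, each consisting of exactly $k^2w$ isolated vertices (note $k^2w\ge \lceil\chi\rceil\cdot\lceil\chi\rceil w = k\cdot kw$, so each block has at least $\lceil\chi\rceil w\cdot \lceil\chi\rceil \ge kw$ vertices — in fact exactly $k^2w$, comfortably enough). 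Each such block is a subtiling of $H'$ (it is an induced subgraph consisting of whole components — singletons). By the fact recalled just before the observation, a collection of $k\cdot m$ isolated vertices is both properly and topologically $(k,m)$-\fc{} with the balanced $k$-colouring as central colouring; applying this with $k=\lceil\chi\rceil$ and $m=\lceil\chi\rceil w$ shows each of the first $t$ blocks (having $k^2w = k\cdot(kw)$ vertices) is properly $(\lceil\chi\rceil,\lceil\chi\rceil w)$-\fc{}, and applying it with $k=\ell$ and $m=w$ — using $k^2w\ge \ell\cdot w$ and discarding surplus vertices into an arbitrary colour class if $\ell \nmid k^2 w / \text{something}$, or more cleanly, noting $\ell \le k = \lceil\chi\rceil$... — here one should be slightly careful: $\ell$ need not divide $k^2w$, so to get exactly $\ell m'$ isolated vertices for the topological subtiling one simply takes a sub-collection of $\ell w$ of the $k^2w$ vertices in the last block (which is fine since $\ell w \le k^2 w$, as $\ell\le n$ and we may assume $\ell\le k^2w/w=k^2$ — if $\ell$ is larger, the hypotheses are anyway only applied with $\ell\le$ the number of loose components, bounded appropriately elsewhere). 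I expect the only genuine bookkeeping issue is confirming that the order bound `$\le k^2w$' is met with room to spare and that $\ell w\le k^2 w$ and $\lceil\chi\rceil\cdot\lceil\chi\rceil w\le k^2 w$ hold — both of which are trivial from $k=\lceil\chi\rceil$.

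The main (mild) obstacle, then, is purely organisational: making sure the $(t+\ell)k^2w$ isolated vertices can be carved into the right number of blocks of the right sizes to simultaneously furnish $t$ proper \fc{} subtilings at strength $\lceil\chi\rceil w$ and one topological \fc{} subtiling at strength $w$, all with order at most $k^2w$, while keeping the subtilings pairwise disjoint from each other and (automatically) from the rest of $H$. I would handle this by the explicit block decomposition above and cite the displayed fact about isolated vertices being $(k,w)$-\fc{}; no additional tools are needed, so this should be a short paragraph in the final write-up.
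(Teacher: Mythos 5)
Your proposal is correct and is exactly the argument the paper intends: the paper offers no proof of this observation beyond the preceding remark that a collection of $km$ isolated vertices is both properly and topologically $(k,m)$-\fc{} with the balanced colouring as central colouring, and your block decomposition together with the routine checks for $\cB$ and $\cJ$ (adding isolated vertices preserves the maximum degree and the tile-order bound, and can only decrease the critical chromatic number) is precisely how one fills that in.

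The one wrinkle you flag deserves comment, because your resolution of it is the only shaky spot. You are right that the topological part only works cleanly when $\ell\le k^2$: taking $\vecb w$ with one coordinate equal to $w$ and another equal to $-w$ in \cref{def:wildcardS} shows that every colour class of the central colouring of a topological $(\ell,w)$-\fc{} graph has size at least $w$, so any such graph has at least $\ell w$ vertices and hence cannot have order at most $k^2w$ once $\ell>k^2$ (and $w\ge1$). This is an imprecision in the statement of the observation itself rather than a gap you need to close — as literally stated the observation fails for $\ell>k^2$, and the honest fix is either to assume $\ell\le k^2$ or to replace the final parameter $k^2w$ by $\max(\ell,k^2)\,w$ (you have $\ell$ spare blocks, totalling $\ell k^2w\ge\ell w$ isolated vertices, from which to build the topological subtiling). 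In the paper's applications this is harmless since $\ell$ is a bounded constant and the order bound on the subtilings is only ever compared against $\sqrt{\xi}n$. So: your argument is the paper's argument; just state the $\ell\le k^2$ caveat (or the adjusted order bound) explicitly rather than deferring it to ``the hypotheses elsewhere''.
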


The next lemma extends the fact that long cycles are \fc{} graphs when dealing with at least three colours.
It states that if $\crit(H)\approx \chi(H)-1$, where this approximation has to be the better the larger the maximum degree of $H$ is,
then $H$ is \fc.
{(Note that the property of being topologically {$(\ell,\cdot)$-\fc{}}-chromatic is trivial when $\ell =1$.)}
We defer its proof to \cref{sec:guest-graph-properties}.

\begin{lemma}\label{lem:miss_constant_frac}
	Let $1/n \ll \xi \ll \mu,1/\chi,1/\Delta,1/t$ with $\chi + \mu \leq \frac{\Delta+1}{\Delta}(k -1)$ where $k := \kk$.
	Let $\ell=t$ if $k=2$ and $\ell=1$ if $k \geq 3$.
	Suppose $H\in \cB(n,w,\Delta) \cap  \cJ(\chi)$ where $w \leq \xi n$.
	Then  $H \in \cH(n,w,\Delta;\chi;t,\ell,\sqrt \xi n)$.
\end{lemma}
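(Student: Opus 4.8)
The goal is to show that a bounded-degree graph $H$ whose critical chromatic number is close to $\chi(H)-1$ (quantitatively, $\chi+\mu \le \frac{\Delta+1}{\Delta}(k-1)$) contains many disjoint proper flexi-chromatic subtilings and one topological one, all of small order. Since $H \in \cB(n,w,\Delta)\cap\cJ(\chi)$ already, the content is entirely the membership in $\cW(t,\ell,\chi,w,\sqrt\xi n)$, i.e.\ producing the required flexi-chromatic subtilings.

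The plan is as follows. First I would reduce to finding flexi-chromatic \emph{subtilings} by working component-wise: a subtiling is a union of components of $H$, so it suffices to locate, inside $H$, a collection of $t+1$ pairwise vertex-disjoint sets of components, each small (order at most $\sqrt\xi n$), such that $t$ of these unions are proper $(k,kw)$-flexi-chromatic and one is topological $(\ell,w)$-flexi-chromatic. Since $w\le\xi n$ and the number of components in a set of total order $\le \sqrt\xi n$ can be taken to be roughly $\sqrt\xi n/$(something), and $\xi\ll\mu,1/\chi,1/\Delta,1/t$, there is plenty of room to carve out $t+1$ disjoint blocks of components as long as $H$ has total order $n$ large enough — which it does. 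The topological $(\ell,w)$-flexi-chromatic subtiling is easy when $k\ge 3$ (there $\ell=1$, and a single component, $1$-colourable topologically with the trivial colouring, already works up to checking $\maxnorm{\vecb w}\le w$ with $\vecb w=0$), and when $k=2$ we have $\ell=t$ and need a topological flexi-chromatic structure; here the standard device (as flagged in \rf{obs:isolated-vertices-flexible}) is that a union of $\ell w$ isolated-vertex-like pieces is topologically flexi-chromatic — but $H$ need not have isolated vertices, so one instead uses that each component, recoloured topologically, contributes its whole order to a single colour class, and a large enough union of components whose orders generate (via subset sums) a long interval of integers gives the needed flexibility. This is exactly the additive-combinatorics reduction the authors mention (Lev's theorem \cite{Lev10}): with components of order $\le w$ and total order $\asymp \sqrt\xi n \gg w$, the set of achievable colour-class-size vectors contains a box of side $\ge w$, hence $\ge \lceil\chi\rceil w$.

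The heart of the lemma is the \emph{proper} $(k,kw)$-flexi-chromatic subtilings. Here is where the hypothesis $\crit(H)\le\chi\le\frac{\Delta+1}{\Delta}(k-1)-\mu$ and $\Delta(H)\le\Delta$ enter. Solving \eqref{equ:crit}, the bound $\crit(H)$ close to $k-1$ translates into: in the optimal proper $k$-colouring, the smallest colour class has density $\alpha(H)$ close to $1/(k-1)$, i.e.\ the colour classes are close to balanced among $k-1$ of the colours with the $k$-th essentially empty — equivalently $H$ is "nearly $(k-1)$-chromatic" in a fractional sense. Combined with $\Delta(H)\le\Delta$, one can, component by component, perform local recolouring moves (e.g.\ Kempe-chain type swaps, or moving a small independent set between colour classes) that shift $\ord(\phi)$ by small vectors; the quantitative slack $\mu$ versus $\frac{\Delta+1}{\Delta}$ is precisely what guarantees each component — or at least a positive fraction of the total order in components — admits a recolouring changing a given colour class by $\pm 1$ while staying proper. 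Iterating and accumulating over $\asymp \sqrt\xi n/w \gg k^2 w$ components in a block yields, via the subset-sum/Lev argument again, that the union realises every target vector $\vecb w\in\INTS^k$ with $\sum\vecb w(i)=0$ and $\maxnorm{\vecb w}\le kw$. Doing this for $t$ disjoint blocks gives the $t$ required proper flexi-chromatic subtilings. I would carry out the steps in this order: (1) fix the optimal colourings and extract from $\crit(H)\le\chi$ the near-balanced structure; (2) prove the single-component / small-union recolouring lemma using $\Delta$ and the $\frac{\Delta+1}{\Delta}$ slack; (3) partition (most of) the components of $H$ into $t+1$ disjoint blocks of order $\le\sqrt\xi n$; (4) apply Lev's theorem within each block to upgrade "many small shifts" into "every shift of size $\le kw$"; (5) handle the topological block separately via the $k\ge3$ triviality or the $k=2$ subset-sum argument.

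The main obstacle I expect is step (2): making the recolouring argument both \emph{local} (so it can be localised to individual components, whose orders are only $\le w = o(n)$, hence too small to absorb anything globally) and \emph{quantitatively tight} against the threshold $\frac{\Delta+1}{\Delta}(k-1)$. One must be careful that $\crit(H)\le\chi$ is a statement about the whole (disconnected) graph $H$, not about each component, so individual components could be badly unbalanced while the average is fine; the fix is to first discard or set aside the "bad" components and show the good ones carry a $1-o(1)$ fraction of the order, which is where $\xi\ll\mu$ is used. Bookkeeping the interaction between "which components are flexible", "how the blocks are formed", and "the subset-sum interval length" is the delicate part, but none of it should require ideas beyond Lev's theorem plus elementary recolouring, so I would expect the proof to go through as deferred to \cref{sec:guest-graph-properties}.
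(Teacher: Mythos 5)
Your plan for $k\geq 3$ is essentially a component-wise refinement of what the paper does: the engine in both cases is the gap $\tfrac{1}{\Delta+1}-\alpha(H)\geq \mu/k$ extracted from \eqref{equ:crit}, which guarantees vertices that are neither in nor adjacent to the smallest colour class and can therefore be recoloured into it. The paper, however, runs this greedy recolouring on whole subtilings $L$ of order about $\sqrt{\xi}n$ chosen so that $\alpha(L)$ stays small (\cref{prop:max_degree_gives_wildcards}): one moves $\tfrac{v(L)}{k}(\tfrac{1}{\Delta+1}-\alpha(L))\geq k^2w$ vertices into class $k$ one at a time, pigeonholes a single source class, and then assembles $k^2$ such one-directional subtilings into a \fc{} graph via \cref{prop:build_wildcards-simple}. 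No additive combinatorics is needed here --- Lev's theorem is used only for \cref{lem:hcf-gives-flexi-chrom-mixed} --- and your route would in any case need the \cref{prop:build_wildcards-simple}-style bookkeeping to upgrade ``a $\pm1$ shift between \emph{some} pair of colours, depending on the component'' to ``every shift'', which you do not address. These are differences of execution rather than correctness, and your observation that only a constant (not $1-o(1)$) fraction of the order can be guaranteed to lie in ``good'' components is harmless since each block needs only $\sqrt{\xi}n$ vertices.

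The genuine gap is the case $k=2$. You assert that ``$H$ need not have isolated vertices'' and propose to obtain the topological $(\ell,w)$-\fc{} subtiling (with $\ell=t$ here) from subset sums of component orders; but a union of components of order at most $w$ and total order $\gg w$ need \emph{not} have subset sums containing a long interval --- if every component has order exactly $w$, or any common order $d>1$, the achievable class sizes lie in a coset of $d\INTS$ --- so this step is unjustified without an arithmetic input. Likewise, your ``local recolouring moves'' for the proper $(2,2w)$-\fc{} subtilings do not exist on connected bipartite components: their proper $2$-colouring is unique up to swapping sides, so the only per-component shifts are $0$ and $|A|-|B|$. The missing ingredient is precisely \cref{prop:max_degree_bipartite_isolated}: for $k=2$ the hypothesis forces $\alpha(H)\leq\tfrac{1}{\Delta+1}-\mu/2$, whence all but $(\Delta+1)\alpha(H)v(H)$ vertices of the bipartite graph $H$ are isolated, i.e.\ $H$ has $\Omega(\mu n)$ isolated vertices. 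These order-one components supply both the gcd-one condition your subset-sum step secretly requires and the freely recolourable vertices for the proper subtilings; the paper then just invokes \cref{obs:isolated-vertices-flexible}. (A smaller imprecision: the consequence of $\crit(H)\leq\chi$ is $\alpha(H)\leq\tfrac{1}{\Delta+1}-\mu/k$, not ``$\alpha(H)$ close to $1/(k-1)$''.)
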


Recall that the K\"uhn--Osthus Tiling Theorem states that for all (fixed) graphs $F\in \cFcr$,
the minimum degree threshold for containing a perfect $F$-tiling is determined by the critical chromatic number.
The following lemma, also proved in \cref{sec:guest-graph-properties}, states that in fact any $\cFcr$-tiling admits many \fc{} subtilings provided the components are not too large.
Moreover, for $F$-tilings with fixed $F \in \cFcr$ we obtain slightly better quantifications.

\begin{lemma}\label{lem:hcf-gives-flexi-chrom-mixed}
	Let $n,w,w',t,\Delta\in \NATS$, $\chi > 1$ and $k = \kk$.
	Let $\ell=t$ if $k=2$ and $\ell=1$ if $k \geq 3$.
	Let $H \in \cB(n,w,\Delta) \cap  \cJ(\chi)$ and suppose further that one of the following holds
	\begin{enumerate}[\upshape (a)]
		\item \label{itm:Fcr_for_mixed}$H$ is an {$\cFcr(k,w)$-tiling} and $w'\geq (k+\ell)^{10} w^2\log w$,
		\item \label{itm:Fcr_for_non-mixed} $H$ is an $F$-tiling for some $F\in \cFcr(k,w)$ and $w'\geq (k+\ell)^{10} w^2$.
	\end{enumerate}
	Then $H \in \cH(n,w,\Delta; \chi;t,\ell,w')$ provided that $v(H)\geq(t+\ell)w'$.
\end{lemma}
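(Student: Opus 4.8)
The plan is to reduce the claim that a large $\cFcr$-tiling contains many proper $(k, kw)$-\fc{} subtilings to a purely additive-combinatorial statement about the sets $\cD(F_i)$ and the component orders $v(F_i)$. The key point is \cref{def:wildcardS}: to certify that a subtiling $W = F_{i_1} \cup \dots \cup F_{i_m}$ (a disjoint union of some tiles of $H$) is properly $(k, kw)$-\fc{}, I need a central proper $k$-colouring $\phi$ of $W$, and for every integer vector $\vecb w \in \INTS^k$ with $\sum_i \vecb w(i) = 0$ and $\maxnorm{\vecb w} \le kw$, another proper $k$-colouring $\phi'$ with $\ord(\phi) - \ord(\phi') = \vecb w$. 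Since the colouring of a disjoint union is the concatenation of colourings of the parts, the achievable difference vectors $\ord(\phi) - \ord(\phi')$ form the sumset $\sum_j D_{i_j}$, where $D_i \subset \INTS^k$ is the set of difference vectors between two proper $k$-colourings of $F_i$ (with a fixed reference colouring). So the task is to show that if $W$ gathers enough tiles, this sumset contains the whole ball $\{\vecb w : \sum \vecb w(i) = 0, \maxnorm{\vecb w} \le kw\}$ inside the hyperplane $\sum_i x_i = 0$.

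First I would isolate the two cases by chromatic number. When $k \ge 3$ (so $\ell = 1$), set membership in $\cFcr$ forces $\hcf(F)$ to be $1$, which means that for each tile $F_i$, the set $\cD(F_i) \sm \{0\}$ (differences between the first two colour-class sizes over proper $k$-colourings) has gcd $1$ when we combine across all tiles; actually I need the combined gcd over all tiles in the chosen subtiling to be $1$, which holds once we include tiles realising a coprime collection of values. The three-colour (or more) case is easier because we may freely permute colours, so from a single value $\delta \in \cD(F_i) \sm \{0\}$ we generate difference vectors $\delta(e_a - e_b)$ for all pairs $a \ne b$; by Bezout, taking a bounded number of tiles whose $\cD$-values have gcd $1$ we can reach $e_a - e_b$ for every pair, and these generate the full hyperplane lattice; iterating (i.e. taking $O(kw)$ copies of this bounded block) then covers the ball of radius $kw$. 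This is where I would invoke the result of Lev~\cite{Lev10}: it guarantees that the set of subset sums (or the sumset of translates of a set with gcd $1$) contains a long interval once we have enough summands, which is exactly what converts "gcd $1$" into "covers $[-kw, kw]$", and the $\log w$ factor in $w' \ge (k+\ell)^{10} w^2 \log w$ is the price of this interval-length bound in the mixed (non-isomorphic) case. In case \eqref{itm:Fcr_for_non-mixed}, all tiles are copies of a single $F$, so the sumset is $m \cdot \cD(F)$ (an $m$-fold sumset of one fixed set), and Lev's theorem (or an elementary Cauchy–Davenport-type argument) gives the interval already at $m = O(w)$ without the logarithmic loss, explaining the improved bound $w' \ge (k+\ell)^{10} w^2$.

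When $k = 2$ (so $\ell = t$), we no longer have colour permutations to exploit and the hyperplane is just $\{(x, -x)\}$, so a proper $(2, 2w)$-\fc{} subtiling needs its $\cD$-values to generate $[-2w, 2w]$ — again Lev, using $\hcf(F) = 1$. But now $\cFcr$ additionally requires $\hcfc(F) = 1$, i.e. the component orders have gcd $1$; this is what lets us build the $\ell = t$ required \emph{topological} $(\ell, w)$-\fc{} subtiling, since a topological colouring of a disjoint union assigns each component entirely one colour, so the reachable class-size difference vectors are exactly the subset sums of the multiset of component orders $\{v(F_i)\}$ — and gcd $1$ plus Lev gives that these subset sums cover the required ball. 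Then I would do the bookkeeping: the subtilings must be vertex-disjoint and each of order $\le w'$; since each "block" achieving the generating set uses $O(k w)$ tiles of order $\le w$, hence $O(k w^2)$ (resp. $O(k w^2 \log w)$) vertices, the bound on $w'$ accommodates one block, and the hypothesis $v(H) \ge (t + \ell) w'$ guarantees there is room to carve out $t$ proper blocks and one topological block disjointly. Finally, the remaining tiles of $H$ (not used by any \fc{} subtiling) together with these subtilings still constitute an $\cFcr(k,w)$-tiling on $\le n$ vertices of max degree $\le \Delta$, so $H \in \cB(n,w,\Delta) \cap \cJ(\chi)$ is preserved and therefore $H \in \cH(n,w,\Delta;\chi;t,\ell,w')$, as required.

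The main obstacle I expect is the precise additive-combinatorial step: extracting from "$\hcf(F) = 1$" (a gcd condition on a \emph{union} of difference-sets across possibly many non-isomorphic small tiles) a guarantee that a \emph{bounded} sub-collection of tiles already has difference-sets with gcd $1$, and then controlling how many further copies are needed so that Lev's interval reaches length $2kw$ rather than merely being nonempty — keeping the total vertex count within $(k+\ell)^{10} w^2 \log w$. Handling the case where individual $\cD(F_i)$ may be $\{0\}$ (so that tile contributes nothing and must be parked among the "unused" tiles) without breaking the counting is the fiddly part; everything else is routine lattice/gcd manipulation and disjointness bookkeeping.
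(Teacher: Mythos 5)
Your overall architecture matches the paper's: reduce the flexibility of a subtiling to a subset-sum statement about the difference sets $\cD(F_i)$, invoke Lev's interval theorem, bootstrap the resulting one-dimensional shift to the full hyperplane $\{\vecb w:\sum_i\vecb w(i)=0\}$ via a decomposition over pairs of colour classes (this is exactly the paper's \cref{prop:build_wildcards-simple}), and obtain the topological subtiling for $k=2$ by rerunning the argument on component orders using $\hcfc=1$. However, the step you yourself flag as ``the main obstacle'' is precisely where the paper does its real work, and your sketch does not close it. Lev's theorem (\cref{lem:sumset_contains_interval}) needs each summand set to have at least three elements and to \emph{not} be contained in an arithmetic progression of common difference at least $2$; what you actually have is only that each individual tile satisfies $\gcd(\cD(F_i)\sm\{0\})=1$ (note this holds \emph{per tile} by the definition of $\cFcr$, not merely for the union across tiles, so your worry about extracting a bounded sub-collection with gcd $1$ is moot), and gcd $1$ neither rules out containment in such a progression (e.g.\ $\{3,5\}\subset\{1,3,5,\dots\}$) nor prevents $|\cD(F_i)|\in\{1,2\}$. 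The paper bridges this in \cref{lem:transversal_contains_interval}: it groups tiles into blocks, choosing one value $x_i\in\cD(F_i)$ per tile so that the running intersection of prime-factor sets $\pri(\{x_3,\dots,x_j\})$ strictly decreases; this caps each block at $O(\log w)$ tiles and yields translated sets $B_j$ of size at least $3$ admitting no nontrivial common difference, to which Lev's theorem applies with $n=3$. The $\log w$ in the bound on $w'$ in case \ref{itm:Fcr_for_mixed} is the cost of this block construction (it disappears in case \ref{itm:Fcr_for_non-mixed} because identical tiles allow $B_1=\dots=B_q$ of size $|\cD(F)|+1$), not of Lev's interval-length bound as you suggest.

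A second, quantitative gap: your fallback of reaching each lattice generator $e_a-e_b$ by Bezout and then ``iterating $O(kw)$ copies of this bounded block'' does not fit the vertex budget. Bezout coefficients for values in $[0,w]$ with gcd $1$ can be of order $w$, so a block realising a net shift of exactly $1$ may need $\Theta(w)$ tiles, i.e.\ $\Theta(w^2)$ vertices, and iterating $\Theta(kw)$ times costs $\Theta(kw^3)$ vertices, exceeding $w'=O\bigl((k+\ell)^{10}w^2\log w\bigr)$. The efficient route is the paper's: a single block of $O(w\log w)$ tiles whose subset sums contain an interval of length $k^2w$ \emph{somewhere} (one then recentres the central colouring at the left endpoint of that interval), after which only a factor $k^2$ is spent on the coordinate-pair decomposition. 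The disjointness bookkeeping and the use of $v(H)\geq(t+\ell)w'$ in your last paragraph are fine.
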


We show a similar lemma for  $\cF_c$-tilings.

\begin{lemma}\label{prop:quasirandom-wildcards}
	Let $1/n \ll \xi \ll 1/\ell, 1/k,1/\Delta$.
	Let $H$ be an $\cF_c(k, \sqrt{n}/\log n)$-tiling on $n$ vertices with $\chi(H)\leq k$ and $\Delta(H) \leq \Delta$.
	Then $H \in \cH(n, \xi n,\Delta;k+1;\ell,\ell,\sqrt \xi n)$.
\end{lemma}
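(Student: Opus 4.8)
The plan is to verify the three membership conditions $H \in \cB(n,\xi n,\Delta)$, $H \in \cJ(k+1)$, and $H \in \cW(\ell,\ell,k+1,\xi n,\sqrt\xi n)$ separately; the first two are essentially immediate and all the work is in the third. Membership in $\cB(n,\xi n,\Delta)$ holds because every tile has order at most $\sqrt n/\log n \le \xi n$ (for $n$ large) and $\Delta(H)\le\Delta$. For $\cJ(k+1)$, observe that $\chi(H)\le k$ forces $\crit(H) < k \le k+1$: indeed from \eqref{equ:crit}, since $\alpha(H) \le 1/\chi(H)$ we get $\crit(H) = \chi(H) - 1 + \frac{\alpha(H)}{\frac{1}{\chi(H)-1}(1-\alpha(H))} < \chi(H) \le k$ (treating the degenerate case $\chi(H)=1$, i.e.\ $H$ edgeless, by inspection). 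So it remains only to produce the \fc{} subtilings: $\ell$ disjoint proper $(k+1, (k+1)\xi n)$-\fc{} subtilings and one topological $(\ell, \xi n)$-\fc{} subtiling, each of order at most $\sqrt\xi n$.

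First I would extract $\ell+1$ disjoint subtilings $H_0, H_1, \dots, H_\ell$ of $H$, each a union of whole components, each of order at most $\sqrt\xi n$ but still of order $\Omega(\sqrt\xi n / ?)$ — large enough to carry the required flexibility. This is possible because each component has order at most $\sqrt n/\log n = o(\sqrt\xi n)$, so one can greedily accumulate components into buckets of total order roughly $\tfrac12\sqrt\xi n$ each; since $v(H)=n$, we have more than enough components for $\ell+1$ such buckets. For the topological subtiling $H_0$: a topological $k'$-colouring assigns one colour per component, so $\ord(\phi)$ ranges over all ways of partitioning the (multiset of) component orders of $H_0$ into $\ell$ groups. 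Since $H_0$ is the disjoint union of many connected graphs each of order $\le \sqrt n/\log n$, and $\hcfc(F)=1$ for each tile $F$ (that is the content of $H$ being an $\cF_c$-tiling), the set of achievable colour-class-size vectors, suitably centred, contains every integer vector of $\ell_\infty$-norm at most $\xi n$ with coordinate sum $0$ — this is precisely the kind of additive-combinatorics statement the authors invoke via Lev's theorem (cited in Section~\ref{sec:1.1}): the subset sums of a set of positive integers whose gcd is $1$ contain a long interval once there are enough of them and they are not too large relative to their number. I would cite that machinery (or the already-proved \cref{lem:hcf-gives-flexi-chrom-mixed}-style argument for the topological case) rather than reprove it.

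For the $\ell$ disjoint proper $(k+1,(k+1)\xi n)$-\fc{} subtilings $H_1,\dots,H_\ell$: each tile $F$ in $H_i$ has $\chi(F)\le k \le k+1$, so it has a proper $(k+1)$-colouring, and in fact one can freely move vertices between the "spare" colour class $k+1$ (which starts empty) and, component by component, redistribute. Concretely, within a single component $F$ one can realise a range of colour-class-size vectors by permuting which colour classes absorb which independent sets, and because $H_i$ has many components of bounded order whose orders have gcd $1$ (inherited from $\hcfc=1$ on tiles, or simply because a single tile of coprime order suffices to generate all residues), the union over components gives, after centring at a balanced-ish central colouring $\phi$, every integer perturbation $\vecb w$ with $\maxnorm{\vecb w}\le (k+1)\xi n$ and $\sum_i \vecb w(i)=0$. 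Again this is an instance of the same Lev-type lemma applied with $k+1$ colours. The order bound $v(H_i)\le\sqrt\xi n$ is arranged in the bucketing step; the flexibility parameter $(k+1)\xi n$ is far smaller than $v(H_i)=\Theta(\sqrt\xi n)$ is large — wait, one must check $(k+1)\xi n \le$ (something like) $v(H_i)$, which holds since $\xi \ll 1$ makes $(k+1)\xi n \ll \sqrt\xi n$ for the relevant range; I would make sure the quantifier $1/n\ll\xi\ll 1/\ell,1/k,1/\Delta$ gives exactly this slack.

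The main obstacle is the additive-combinatorics core: showing that the centred set of achievable colour-class-size vectors of a bounded-degree, many-component, small-component $\cF_c$-tiling contains the full $\ell_\infty$-ball of radius $\xi n$ (resp.\ $(k+1)\xi n$) in the zero-sum hyperplane. The subtlety is that $H$ is a \emph{mixed} tiling — tiles need not be isomorphic — so one cannot just quote a single-graph computation; one needs that the multiset of "colour shift increments" available across all components generates $\INTS^{k}$ (or $\INTS^{k+1}$) densely enough, which is where $\hcfc(F)=1$ and the sheer number of components (of order $o(\sqrt\xi n)$ inside a subtiling of order $\Theta(\sqrt\xi n)$, hence $\omega(1)$ of them, in fact polynomially many in $n$) enter. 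I expect this to be handled exactly as in the proof of \cref{lem:hcf-gives-flexi-chrom-mixed}\eqref{itm:Fcr_for_mixed} (which already treats mixed $\cFcr$-tilings with the weaker hypothesis and an extra $\log w$ factor in $w'$), with the simplification here that we do not need $\hcf(F)=1$ and we allow $\chi(H)\le k$ rather than controlling $\crit(H)$ tightly — so I would structure the proof to reuse that lemma's additive argument as a black box, applied to the subtiling of the $(k+1)$-colourable graph $H$ regarded as a $(k+1)$-colourable graph (giving the extra colour class needed for proper flexibility) and to $H$ regarded with topological colourings (giving the loose flexibility).
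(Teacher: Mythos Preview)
Your overall structure is right, and your treatment of the topological $(\ell,\xi n)$-\fc{} subtiling matches the paper: one reuses the additive-combinatorics argument from the proof of \cref{lem:hcf-gives-flexi-chrom-mixed}, which for topological colourings relies precisely on $\hcfc(F)=1$ (this is where the $\cF_c$ hypothesis is actually used).

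Where you diverge from the paper is in the proper $(k+1,\cdot)$-\fc{} part, and here your argument is muddled. You correctly state the key idea --- a $k$-colourable graph has a proper $(k+1)$-colouring with class $k+1$ empty, and vertices of class $k$ can be moved to class $k+1$ one at a time --- but you then drift into ``many components of bounded order whose orders have gcd~$1$'' and ``the same Lev-type lemma applied with $k+1$ colours''. Neither is needed, and the gcd you invoke ($\hcfc$) is not even the relevant one for \emph{proper} colourings (that would be $\hcf$, which is not assumed here). The spare-colour trick alone already gives, for any subtiling $S$ of order $n'$, colourings $\theta,\theta'$ with $\ord(\theta')(j)=\ord(\theta)(j)$ for $j\le k-1$ and $\ord(\theta')(k+1)=\ord(\theta)(k+1)+y$ for every $0\le y\le n'/(k+1)$. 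Feeding $(k+1)^2$ such subtilings into \cref{prop:build_wildcards-simple} yields a proper $(k+1,n'/(k+1)^2)$-\fc{} graph directly. This is exactly the content of \cref{lem:k+1_wildcard}, which the paper simply cites. No gcd condition, no Lev argument, no additive combinatorics are needed for the proper subtilings; the $\cF_c$ hypothesis plays no role there.

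A smaller point: Lev's result is one-dimensional (subset sums of integers). The passage from one-dimensional flexibility to full $k$-dimensional \fc{} behaviour is always handled by the combinatorial \cref{prop:build_wildcards-simple}, not by a ``multidimensional Lev lemma''; your phrasing suggests you have not separated these two steps.
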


Let us now turn to host graphs.
As we indicated already above, if the host graph has a sufficiently large minimum degree or uniform density, then it must be  a (robust) tiling framework.
The same is true for lower bounds on the degree sequence.
The following two lemmas formalise this claim.
Theorem \cref{thm:frameworks} in conjunction with the above this allows us to derive \cref{thm:extended-KO,thm:extended-KO-deg-seq,thm:quasirandom}  from our main theorem.

\begin{lemma}\label{lem:deg-gives-framework}
	Let $1/n  \ll 1/t\ll \mu,1/\chi$ with $\chi+\mu/4 \leq k$ where $k := \kk$.
	Let $\ell=t$ if $k=2$ and $\ell=1$ if $k\geq 3$.
	Suppose $G$ is a graph on $n$ vertices with $\delta(G) \geq (1-1/\chi  + \mu)n$.
	Then $G$ is a $\mu/4$-robust $(\chi+\mu/4,{0};t,\ell)$-tiling framework.
\end{lemma}

\begin{lemma}\label{lem:deg-seq-mixed-framework}
	Let $1/n \ll \rho \ll 1/t \ll \mu,1/\chi$ with $\chi+\mu/4 \leq k$ where $k := \kk$.
	Let $\ell=t$ if $k=2$ and $\ell=1$ if $k\geq 3$.
	Suppose~$G$ is a graph on $n$ vertices with $(\chi,\mu)$-strong degree sequence.
	Then $G$ is a $\mu/4$-robust $(\chi+\mu/4,\rho;t,\ell)$-tiling framework.
\end{lemma}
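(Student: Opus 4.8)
The plan is to verify the three conditions (F1)--(F3) of a tiling framework for every $(\mu/4,\mu/4)$-approximation $G'$ of $G$ simultaneously, since the conclusion asks for $\mu/4$-robustness. Fix such a $G'$; it has $n' := v(G') \geq (1-\mu/4)n$ vertices, and every $v \in V(G')$ satisfies $\deg_{G'}(v) \geq \deg_G(v) - (\mu/4)n$. The first step is to translate the $(\chi,\mu)$-strong degree sequence bound \eqref{equ:deg-seq-def} from $G$ to $G'$: if $d_1 \leq \dots \leq d_n$ are the degrees in $G$ and $d'_1 \leq \dots \leq d'_{n'}$ those in $G'$, then deleting at most $\mu n/4$ vertices and losing at most $\mu n/4$ further degree at each surviving vertex still leaves a $(\chi,\mu/2)$-strong (say) degree sequence on $n'$ vertices, after absorbing the linear error terms and rescaling the index range from $[n/\chi]$ to $[n'/\chi]$. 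This is a routine computation with the linear function on the right-hand side of \eqref{equ:deg-seq-def}; the key point is that the slope $\chi\alpha$ and the additive slack $\mu n$ are chosen precisely so that such perturbations are harmless.

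For \ref{item:framework-Geometry} (space), I would invoke the first result of Hyde, Liu and Treglown~\cite{HLT19} as a black box, exactly as announced in the paragraph preceding \cref{thm:extended-KO-deg-seq}: a graph with a $(\crit(F),\mu')$-strong degree sequence contains any $F$-tiling on $n-o(n)$ vertices. Applying this with $F = B_{\chi+\mu/4}$ (a fixed complete $\lceil\chi+\mu/4\rceil$-partite graph, noting $\lceil \chi+\mu/4\rceil = k$ since $\chi+\mu/4 \leq k$) to the approximation $G'$, whose degree sequence is $(\chi,\mu/2)$-strong and hence certainly $(\crit(B_{\chi+\mu/4}),\mu')$-strong for a suitable $\mu'$ because $\crit(B_{\chi+\mu/4}) \leq \chi + \mu/4$, gives an almost-perfect (integral, hence fractional) $B_{\chi+\mu/4}$-tiling covering all but $o(n)$ vertices; a fractional tiling covering the leftover by fractional weight then upgrades this to a $(1-\rho)$-perfect fractional $B_{\chi+\mu/4}$-tiling, using $\rho \ll 1/t \ll \mu,1/\chi$ and $n$ large. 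For \ref{item:framework-linkage} (linkage), a linear minimum degree — which follows from the degree sequence since $d'_1 \geq (1-1/\chi-\alpha+\mu/2)n' \geq \Omega(n')$ — forces every vertex to lie in many $k$-cliques and, by a standard counting/greedy argument, to be linked to some edge of $K_k(G')$: given $v$, pick a $k$-clique $f$ avoiding $v$ in the common neighbourhood of an edge through $v$, using that any $k-1$ vertices of high degree have a common neighbourhood of linear size. The main work is \ref{item:framework-divisibility} (divisibility): I would show $K_k(G')$ is $(t,\ell)$-connected with $t = t(\mu,\chi)$ as in the statement. For $k \geq 3$ we need $\ell = 1$, i.e. $K_k(G')$ is loosely connected; for $k = 2$ we need $\ell = t$. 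The strategy is the one used for \cref{lem:deg-gives-framework}: partition (a spanning subgraph of) $K_k(G')$ into few tight components by a stability-type argument. Using the degree sequence, one argues that either $G'$ is "close" to a complete $k$-partite graph plus something, in which case the $k$-clique structure is tightly connected except for a bounded number of pieces, or else the minimum-degree portion already guarantees tight connectivity; the bound $t = k^{4k}/(\chi+1-k+\mu/2)^k$ arises from counting the number of possible "types" of near-extremal configurations, exactly as in the minimum-degree case, and the degree-sequence hypothesis is strong enough (the $d_i$ for small $i$ are only slightly below $(1-1/\chi)n$) that the same dichotomy goes through. The loose connectivity for $k \geq 3$ follows because any two tight components, living in a host of linear minimum degree, share a vertex through a common high-degree vertex lying in $k$-cliques of both.

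The hard part will be making the stability/partition argument for \ref{item:framework-divisibility} work uniformly over all approximations $G'$ and from a degree-sequence hypothesis rather than a flat minimum-degree hypothesis: the low-degree vertices (small index $i$) contribute the delicate cases, since they may fail to lie in the "bulk" structure, and one must check that there are few enough of them (at most roughly $n/\chi$, with degrees climbing linearly) that they can be absorbed into the existing tight components without creating new ones beyond the allotted budget $t$. I expect this to reduce, as in~\cite{KO09} and~\cite{HT20}, to a finite case analysis on the "reduced" structure obtained after applying the Regularity Lemma (or a direct extremal argument), and the cleanest route is probably to mirror the proof of \cref{lem:deg-gives-framework} line by line, replacing each use of $\delta(G) \geq (1-1/\chi+\mu)n$ by the corresponding consequence of \eqref{equ:deg-seq-def} for the relevant index range, and checking that the parameter $1/t$ sitting between $\rho$ and $\mu,1/\chi$ in the hierarchy leaves enough room.
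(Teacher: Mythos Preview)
Your approach to \ref{item:framework-Geometry} (space) and \ref{item:framework-linkage} (linkage) is essentially the paper's: both are handled via the Hyde--Liu--Treglown result (\cref{thm:deg-seq-komlos}) and a direct argument (\cref{pro:deg-seq-linked-edges}), after checking that an approximation $G'$ still has a strong degree sequence.

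Your plan for \ref{item:framework-divisibility}, however, has a real gap. You propose to ``mirror the proof of \cref{lem:deg-gives-framework} line by line''. But the tight-component bound there is not a stability argument; it is \cref{lem:reachability-components}, which rests on \cref{prop:decomposition} and hence on the hypothesis $\delta(G') > \tfrac{k-2}{k-1}n'$. Under a $(\chi,\mu)$-strong degree sequence the minimum degree is only about $\bigl(\tfrac{k-2}{\chi}+\mu\bigr)n$, and since $\chi>k-1$ this is \emph{strictly below} $\tfrac{k-2}{k-1}n$ for small $\mu$. So \cref{lem:co-degree} fails for $(k-1)$-sets containing low-degree vertices, and the Kruskal--Katona lower bound on tight components breaks. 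Your suggested absorption fix (``the low-degree vertices\dots can be absorbed into the existing tight components'') is where all the work would lie, and you have not indicated how it goes.

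The paper's route avoids this entirely. It makes no attempt to bound the number of tight components directly from the degree sequence: it uses the trivial bound $t\le n^k$ and then invokes \cref{pro:backstop}, a general statement that a $\mu$-robust framework automatically has at most $1/\mu'^k$ tight components for $\mu'\ll\mu$. The bound comes from supersaturated linkage (\cref{prop:supersaturated-linkage}) plus \cref{prop:dense-tight-component}: every vertex is linked to $\Omega(n^k)$ edges, which forces the tight components hosting them to have large shadow, hence there are few of them. This backstop argument is the key idea you are missing, and it is exactly why the hierarchy in the statement reads $\rho \ll 1/t \ll \mu,1/\chi$ rather than pinning $t$ to a specific formula.
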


\begin{lemma}\label{pro:quasirandom-framework}
	Let $1/n \ll \rho, 1/\ell \ll \mu' \ll \mu,d,1/k$.
	Suppose $G$ is a $(\rho, d)$-dense graph on $n$ vertices with $\delta(G) \geq \mu n$.
	Then $G$ is a $\mu'$-robust $(k,0;\ell,\ell)$-tiling framework.
\end{lemma}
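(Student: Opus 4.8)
The plan is to strip the robustness clause---reducing to a single uniformly dense graph with linear minimum degree---and then verify the three axioms (F1)--(F3) of a tiling framework in turn. Write $k=\lceil k\rceil$ and recall that $B_k$ is the complete $k$-partite graph with a part of size $0$ and $k-1$ parts of size $1$, i.e.\ a copy of $K_{k-1}$; so (F1) asks for a \emph{perfect} fractional $K_{k-1}$-tiling. First I would fix an arbitrary $(\mu',\mu')$-approximation $G'\subseteq G$, put $n':=v(G')\ge(1-\mu')n$, and check that $G'$ inherits the hypotheses: $\delta(G')\ge(\mu-\mu')n\ge\tfrac\mu2 n'$, and since each edge of $G[U]$ absent from $G'$ is counted at both of its endpoints in $\sum_{v\in U}(\deg_G(v)-\deg_{G'}(v))\le|U|\mu'n$, one has $e_{G'}(U)\ge d|U|^2/2-\rho n^2-\tfrac12\mu'n^2\ge d|U|^2/2-2\mu'(n')^2$ for all $U\subseteq V(G')$. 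Hence it suffices to show that \emph{every $(\rho_1,d)$-dense graph $G'$ on $n'$ vertices with $\rho_1\ll d,\mu,1/k$, $\delta(G')\ge\tfrac\mu2 n'$ and $n'$ large is a $(k,0;\ell,\ell)$-tiling framework}; we use only that $\ell$ exceeds any prescribed constant depending on $d,\mu,k$, which holds since $1/\ell\ll\mu'\ll d,\mu,1/k$. The case $k=2$ is trivial---$B_2$ is a single vertex so (F1) is automatic, and $K_2(G')=G'$ has at most $2/\mu$ components and is linked since $\delta(G')\ge2$---so assume $k\ge3$.

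The main tool is the standard supersaturation fact that a $(\rho',d)$-dense graph with $\rho'\ll d,1/j$ on sufficiently many vertices contains $\Omega_{d,j}(m^j)$ copies of $K_j$; I would prove it by induction on $j$, passing to $G'[N(v)]$ for a vertex $v$ of degree $\ge(d/4)m$ (most vertices qualify) and noting that $G'[N(v)]$ is $(16\rho'/d^2,d)$-dense on $\ge(d/4)m$ vertices. Two consequences will be used repeatedly: (i) every $U\subseteq V(G')$ with $|U|\ge\eps n'$ contains a copy of $K_{k+1}$, where $\eps=\eps(d,k)\le\mu/(4k)$ can be taken tiny as $G'[U]$ is $(\rho_1/\eps^2,d)$-dense; and (ii) since $\delta(G')\ge\tfrac\mu2 n'$, $G'[N(v)]$ is $(O(\rho_1/\mu^2),d)$-dense on $\ge\tfrac\mu2 n'$ vertices, so every vertex of $G'$ lies in a $K_{k+1}$ and in fact $\Omega_{d,\mu,k}(n')$ of its neighbours lie in a common $K_k$ with it. Axiom (F3) is then immediate: given $v$, take a $K_{k+1}$ on $\{v\}\cup T$ with $|T|=k$, split $T=S\cup\{w\}$, and observe that $e:=\{v\}\cup S$ and $f:=T$ are edges of $K_k(G')$ with $v\notin f$ and $|e\cap f|=k-1$.

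For (F1) I would use LP duality. The primal maximises the weight $(k-1)\sum_e\omega_e$ over $(k-1)$-cliques $e$ subject to $\sum_{e\ni v}\omega_e\le1$ and $\omega\ge0$; it is feasible and bounded by $n'$, and its dual minimises $\sum_v y_v$ over $y\ge0$ with $\sum_{v\in e}y_v\ge k-1$ for every $(k-1)$-clique $e$. So it suffices to show every such $y$ satisfies $\sum_v y_v\ge n'$. Put $L=\{v:y_v<1\}$; if $|L|\ge\eps n'$ then (i) produces a $(k-1)$-clique inside $L$ violating feasibility, so $|L|<\eps n'$. For $v\in L$, applying the dual constraint to $S\cup\{v\}$ gives $\sum_{u\in S}(y_u-1)\ge1-y_v>0$ for every $(k-2)$-clique $S\subseteq N(v)$; extracting greedily $\ge\tfrac\mu{4k}n'$ pairwise disjoint $(k-2)$-cliques inside the uniformly dense graph $G'[N(v)]$ and summing yields $\sum_{u\in N(v)}(y_u-1)^+\ge\tfrac\mu{4k}n'(1-y_v)$. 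Summing this over $v\in L$ and reorganising by $u$ gives $\eps n'\sum_u(y_u-1)^+>\tfrac\mu{4k}n'\sum_v(1-y_v)^+$, hence $\sum_u(y_u-1)^+\ge\sum_v(1-y_v)^+$ because $\eps\le\mu/(4k)$, so $\sum_v y_v=n'+\sum_v(y_v-1)^+-\sum_v(1-y_v)^+\ge n'$. Strong LP duality then supplies a fractional $(k-1)$-tiling of weight $n'$; spreading each clique weight equally over the $(k-1)!$ orderings turns it into the perfect fractional $B_k$-tiling the definition demands.

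It remains to handle (F2), which is the main obstacle. As the number of loose components never exceeds the number of tight components, it is enough to find a spanning sub-$k$-graph $J'\subseteq K_k(G')$---covering every vertex---with at most $\ell$ tight components. The strategy: in each connected component $C$ of $G'$ (there are at most $2/\mu$, each uniformly dense with $\delta(G'[C])\ge\tfrac\mu2|C|$ by the argument above) build, greedily and using consequence (ii), a tightly connected sub-$k$-graph covering all but at most $\eps|C|$ vertices of $C$; then, for every remaining vertex $v$, use (ii) to pick a $(k-1)$-clique $S\subseteq N(v)$ with common neighbourhood of size $\Omega_{d,\mu,k}(n')$, so that $S\cup\{w\}$ lies in the already-built structure for some $w$ and $\{v\}\cup S$ is tight-adjacent to it; adjoining all such cliques keeps the structure in $C$ tightly connected. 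Letting $J'$ be the union over $C$ of these structures gives a spanning sub-$k$-graph whose number of tight components is at most (the number of connected components of $G'$) times a constant depending only on $d,\mu,k$: within a connected component the $k$-clique hypergraph can be ``tightly fragmented''---think of cliques glued at single vertices---but only into boundedly many pieces, each of linear order, which the linear minimum degree forces. Since $\ell$ exceeds this bound, (F2) follows. The delicate point, and where the real work lies, is precisely this last structural claim; the remaining two axioms reduce, as shown, to a short LP-duality computation and a one-line consequence of supersaturation.
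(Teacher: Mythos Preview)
Your treatments of (F1) and (F3) are correct. The LP-duality argument for (F1) is a pleasant self-contained alternative to the paper, which simply imports two results from~\cite{LS21} (stated here as \cref{cor:dense-many-well-connected-cliques} and \cref{prop:dense-inseperable-matching}) to obtain a perfect fractional $K_k$-tiling of $G'$ and the clique supersaturation underlying (F3).

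The gap is in (F2), and it is more than ``delicate''. Your attachment step does not work: that $w\in V(T)$ only guarantees some edge $e\in E(T)$ with $w\in e$, and $|e\cap(S\cup\{w\})|$ may well equal~$1$, so neither $S\cup\{w\}$ nor $\{v\}\cup S$ need be tight-adjacent to anything in $T$. The same obstruction undermines the initial greedy cover claim---nothing in your consequence~(ii) prevents the tightly connected piece from stalling on a small set. And your closing assertion (``only boundedly many pieces, each of linear order, which the linear minimum degree forces'') is exactly the statement to be proved; linear minimum degree by itself does not force it.

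The paper sidesteps any direct construction. It verifies (F1)--(F3) for every $(\mu',\mu')$-approximation $G'$ using only the \emph{trivial} bound $n^k$ on the number of tight and loose components, so that $G$ is a $\mu'$-robust $(k,0;n^k,n^k)$-tiling framework, and then invokes the backstop \cref{pro:backstop}. The missing idea is as follows. Your consequence~(ii), pushed slightly further, yields $\Omega_{d,\mu,k}((n')^{k})$ copies of $K_k$ inside $G'[N(v)]$, so every vertex of $G'$ has $\Omega((n')^{k})$ linked edges in $K_k(G')$. A Kruskal--Katona argument (\cref{prop:dense-tight-component}) then shows that these linked edges contain a tight component $T_v$ of $K_k(G')$ whose shadow $\partial T_v$ has $\Omega((n')^{k-1})$ elements. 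Let $J'$ be the union over all $v$ of the full tight components of $K_k(G')$ containing the respective $T_v$; then $J'$ is vertex-spanning (each $v$ is linked to an edge of $T_v$, hence lies in that tight component), and since the shadows of distinct tight components are pairwise disjoint, $J'$ has at most $O_{d,\mu,k}(1)\le\ell$ of them. This replaces your greedy-plus-attachment scheme entirely.
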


\begin{proof}[Proof of \cref{thm:extended-KO}]
	Suppose we are given $\chi',\Delta,\mu'$ and $H$ as input.
	Let $G$ be a graph on $n$ vertices with $\delta(G) \geq (1-1/\chi' + \mu')n$.
	To prove that $\MT(n,H) \leq (1-1/\chi' + \mu')n$, it suffices to show that $H \subset G$.

	In order to apply \cref{lem:deg-gives-framework}, we need to calibrate our parameters slightly.
	We define $\chi$ and~$\mu$ as follows:
	If $\lceil \chi' \rceil - \mu'/16 \leq \chi'\leq \lceil \chi' \rceil$, set $\chi = \chi' + \mu'/4$ and $\mu = \mu'/4$.
	Otherwise, set $\chi = \chi'$ and $\mu = \mu'/4$.
	Note that in both cases, we have $\chi + \mu/4 \leq \lceil \chi \rceil$.
	Moreover, $H$ is still a $\cFcr(\lceil \chi \rceil, \sqrt{n}/\log n)$-tiling with $\crit(H)\leq \chi$, and we have $\delta(G) \geq (1-1/\chi + \mu)n$.
	For the rest of the proof, we use $\chi,\mu$ instead of $\chi',\mu'$.

	Let $k=\kk$.
	Clearly, $G$ satisfies the assumptions of \cref{lem:deg-gives-framework}.
	It follows that $G$ is a $\mu/4$-robust $(\chi+\mu/4,\rho;t,\ell)$-tiling framework with $t = k^{4k}/(\chi + 1 -k +\mu/2)^k$
	and $\ell=t$ if $k=2$ and $\ell=1$ if $k\geq 3$ as well as sufficiently small $\rho>0$.

	Concerning the guest graph, consider a partition of $H$ into collections of tiles such that each collection contains between $\xi n/2$ and $\xi n$ vertices
	for sufficiently small $\xi>0$.
	One of these collections $H'$ satisfies $\crit(H') \leq \chi$.
	Let $\xi'\in [\xi/2,\xi]$ be such that $\xi' n = v(H')$.
	Then $H' \in \cB(\xi' n,w,\Delta) \cap  \cJ(\chi)$ where $w := \sqrt{n} / \log n$.
	Let $w' = v(H')/(t+\ell) = \xi' n / (t+\ell)$, and note that $w' \geq (\lceil \chi \rceil + \ell)^{10} n / \log n \geq (\lceil \chi \rceil + \ell)^{10} w^2 \log w$ as $n$ is sufficiently large.
	It follows by \cref{lem:hcf-gives-flexi-chrom-mixed}\ref{itm:Fcr_for_mixed} that $H' \in \cH(\xi' n,w,\Delta; \chi; t,\ell, w')$,
	which in turn implies that $H \in \cH(n,w,\Delta; \chi; t,\ell,w')$.

	To finish, note that $w \leq \xi'^2 n$ and $w' \leq \xi' n$.
	So we can apply \cref{thm:frameworks} with $\mu/4,\xi'^2$ playing the role of~$\mu,\xi$ to conclude that $H \subset G$.
\end{proof}
We remark that, one can use \cref{lem:hcf-gives-flexi-chrom-mixed}\ref{itm:Fcr_for_non-mixed} in the same way to show a version of \cref{thm:extended-KO} for $F$-tilings with fixed $F \in \cFcr(\kk,w)$ of order $w \leq \xi'\sqrt{n}$.
The {proof} of \cref{thm:extended-KO-deg-seq} follows analogously.
\cref{thm:miss_constant_frac-simple} is proved in almost the same way, by adding $C$ isolated vertices to $H$ and then using \cref{obs:isolated-vertices-flexible}.
We provide the details for {the} sake of completeness.

\begin{proof}[Proof of \cref{thm:miss_constant_frac-simple}]
	Given $\chi,\mu,\Delta$, we can assume that $\chi + \mu/4 \leq \kk$ without loss of generality.
	(Otherwise use a preprocessing argument as in the proof of \cref{thm:extended-KO}.)

	Set $k=\kk$, $C= (\Delta k)^{8k}$, $t = k^{4k}/(\chi+1 -k +\mu/2)^k$ and $\xi=\xi(\lceil\chi\rceil,\Delta,\mu)>0$ sufficiently small.
	Let $\ell=t$ if $k=2$ and $\ell=1$ if $k\geq 3$.
	Suppose that $G$ is a graph on $n$ vertices with $\delta(G) \geq (1-1/\chi + \mu)n$.
	Consider $H$ with $w\leq \xi n$ as in the statement of the theorem.
	We have to show that $H \subset G$.

	To this end, we apply \cref{lem:deg-gives-framework} to conclude that $G$ is a $\mu/4$-robust $(\chi+\mu/4,\rho;t,\ell)$-tiling framework with $t = k^{4k}/(\chi+1 -k +\mu/2)^k$.
	We then distinguish between two cases.
	First, suppose that $\chi +\mu/2 \geq  \frac{\Delta+1}{\Delta}(k-1)$.
	We bound
	\begin{align*}
		t \leq \frac{k^{4k}}{(\chi-k+1+ \mu/2 )^k}
		\leq \frac{k^{4k}}{(k-1)^k} \Delta^k
		\leq (\Delta k)^{4k}.
	\end{align*}
	By adding $Cw \geq (t+\ell)k^2w$ isolated vertices to $H$ and using \cref{obs:isolated-vertices-flexible}, we obtain a graph $H' \in \cH(n,\xi n,\Delta;\chi;t,\ell,k^2w)$.
	Since $k^2 w \leq  k^2 \xi n\leq  \sqrt{\xi} n$, we can apply \cref{thm:frameworks} to obtain $H' \subset G$ and so $H\subset G$.

	Now suppose that $\chi + \mu/2 \leq \frac{\Delta+1}{\Delta}(k-1)$.
	Then by \cref{lem:miss_constant_frac} applied with $n-Cw,2\xi, \mu/2$ playing the role of $n,\xi, \mu$, we see that  $H \in \cH(n-Cw,2\xi n,\Delta;\chi;t,\ell,\sqrt{2\xi} n)$.
	Thus we can apply \cref{thm:frameworks} again to obtain $H' \subset G$ and so $H\subset G$.
\end{proof}

Finally, we show our result on uniformly dense graphs.
\begin{proof}[Proof of \cref{thm:quasirandom}]
	We choose $\ell,\rho, \mu'$ such that $1/n\ll \rho \ll 1/\ell \ll \mu'\ll d, \mu,1/k,1/\Delta$.
	By \cref{pro:quasirandom-framework} applied with $k+2$ playing the role of $k$,
	the graph $G$ is a $\mu'$-robust $(k + 1 + \mu',0;\ell,\ell)$-tiling framework.
	(We remark that due to the quantification of \cref{pro:quasirandom-framework}, we could even ask for a $(k+2,0;\ell,\ell)$-tiling framework.)
	Moreover, $H \in \cH(n,\xi n,\Delta;k+1;\ell,\ell,\sqrt \xi n)$ by \cref{prop:quasirandom-wildcards} {for sufficiently small $\xi$.}
	Combining this with \cref{thm:frameworks} gives the desired result.
\end{proof}

\section{Proof outline}\label{sec:outline}
Here we sketch the main ideas of our approach.
Suppose the assumptions made in \cref{thm:frameworks};
in particular, suppose $G$ is a $\mu$-robust $(\chi+\mu,\rho;t,\ell)$-tiling framework on $n$ vertices and $H$ has $\ell$ loose and $t$ tight \fc{} subtilings.
Our approach simultaneously works at the two ends of the problem;
on the one hand, we elaborate on the structure of $G$ and on the other hand, we prepare $H$ for an embedding into $G$.

For the host graph $G$,
we apply the Regularity Lemma to $G$ and obtain a vertex partition $V_0,V_1,\ldots, V_r$ of $G$ and a reduced graph $R$ with vertex set $[r]$ that captures the global structure of $G$ (see \fr{lem:lemma-for-G-true-form}).
Importantly, it can be shown that $R$ inherits the property of being a $(\chi+\mu,\rho;t,\ell)$-tiling framework.
We then eliminate the `exceptional vertices' $V_0$ of $G$ by suitably embedding a few components of $H$ to cover all of them.
At this point, we require the linkage property of frameworks.

Let $G'$ and $H'$ be the remaining graphs.
Let $R^\ast$ be the blow-up of $R$, where each vertex $i$ of $R$ has been replaced with the vertex set $V_i$ (adjusted to $G'$) and the edges with complete bipartite graphs.
(We can assume that $G' \subset R^\ast$.)
The well-known Blow-Up Lemma states that in order to show that $H' \subset G'$, it suffices to show that $H' \subset R^\ast$.

We show that $H' \subset R^\ast$ in three steps (see \fr{lem:lemma-for-H});
that is, as all vertices in a single cluster of $R^\ast$ have the same neighbourhood, we only need to decide for each vertex of~$H'$ to which cluster it is assigned.
The first step equips us with a certain flexibility used in a later stage.
For each $k$-clique in $R$, we embed a few components of $H$ in the corresponding $k$-partite subgraph of~$R^\ast$.
This has the following advantage.
For any two hyperedges $e,f\in K_k(R)$ with $|e\cap f|=k-1$, $i\in e\sm f$ and $j\in f\sm e$, we can easily modify our embedding by moving a few vertices from $V_i$ to $V_j$.
Roughly speaking, the same modifications can be done between clusters $V_i,V_j$ that are further apart in $K_k(R)$ as long as $i$ and $j$ are covered by the edges of the same tight component of $K_k(R)$.

In the second step, we utilise a framework version of Koml\'os' theorem (\cref{thm:mixed-Komlos-for-frameworks}) to obtain an almost embedding of $H'$ into $G'$ (ignoring the reduced graph) and thus also into $R^\ast$.
This is somewhat counter-intuitive, since we aim to utilise the Blow-Up Lemma which will embed $H'$ back into $G'$. But it efficiently deals with the issue that the clusters $V_i$ might at this point be quite unbalanced.
Next we ignore that we actually found an embedding and only memorise its induced allocation of almost all the vertices in $V(H')$ to the clusters of~$R^\ast$.
Note that we did not use any property of $R$ here, but that~$G'$ is still a robust tiling framework (due to robustness of $G$) and the space property of the framework.

In the last step, we allocate  the remaining few vertices of $G'$.
Here we work again on the level of $R$.
As we will see, every tight and loose component of $K_k(R)$ constitutes an obstacle for embedding the rest of~$G'$.
However, as $R$ is a $(\chi+\mu,\rho;t,\ell)$-tiling framework, its divisibility property ensures that {the} number of these components matches the number of \fc{} subtilings contained in $H$.
Using these subtilings and exploiting the flexibility within tight components created by our first step, we can then finish our embedding of $H'$ into $R^\ast$ as desired.
The details of the last two steps can be found in \cref{sec:proof-framework-theorem}.

Some of our ideas have been inspired by the recent work of the third author and Sanhueza-Matamala~\cite{LS23}.

\section{Notation}\label{sec:notation}

We introduce and recapitulate some standard notation.
Given $n \in \NATS$, we write $[n]=\{1,\dots,n\}$.
For $x,y,z \in \REALS$, we write $x= y \pm z$ to mean $y-z \leq x \leq y+z$.
Logarithms are natural unless indicated otherwise.
Vectors will be represented by bold, lower-case Latin letters, such as $\vecb w \in \REALS^n$, while the elements of $\vecb w$ will be denoted $\vecb w(i)\in \REALS$ for $i \in [n]$.
{We treat vector always as column vectors.}
It will often be convenient to speak of vectors $\vecb w \in \INTS^n$, or $\vecb u \in \NATSZ^n$, and we will do so, bearing in mind that the vector structure comes from the ambient vector space $\REALS^n$.
For a set $S$, we write $\vecb w \in \REALS^S$ for the vector {indexed} by the elements of $S$.
For a set $T \subset S$ and a vector $\vecb w \in \REALS^{S}$, we obtain the restricted vector $\restr{\vecb w}{T}$ by setting all entries that do not correspond to an element of $T$ to be zero.
Further we define $\onenorm{\vecb w}=\sum_{i\in [n]}|\vecb w(i)|$ and  $\maxnorm{\vecb w}=\max_{i\in[n]}|\vecb w(i)|$ as usual.
For $c \in \REALS$, we write $\vecb w \geq c$ whenever $\vecb w$ is pointwise at least $c$.

Given a graph $G$ and a subset $U\subset V(G)$, we define $G[U]$ to be the subgraph of $G$ induced by the vertices in $U$ and $G-U$ to be the subgraph of $G$ induced by the vertices in $V(G)\sm U$.
For a collection $\cV'=\{V_i'\}_{i \in [r]}$ of subsets of $V(G)$, we abbreviate $G[\cV']=G[\bigcup_{i \in [r]}V_i']$.
The number of vertices and edges in a graph are denoted $v(G)$ and $e(G)$, respectively.
The \emph{$m$-blow-up} $R^\ast$ of a graph $R$, is obtained by replacing each vertex of $R$ with a set $X_i$ of order $m$ and the edges are replaced by complete bipartite graphs.
For a graph $F$,  we denote by $\alpha(F)$ the minimum proportion of vertices in a single colour class over all proper $\chi(F)$-colourings of $F$.
Finally, we denote by $\gcd(S)$ the greatest common divisor of a set of integers $S \subset \INTS$.

\section{Host graph properties}\label{sec:host-graph-properties}
The goal of this section is to prove \cref{lem:deg-gives-framework,lem:deg-seq-mixed-framework,pro:quasirandom-framework}.
We begin with a few general observations on how minimum degree conditions affect the number of tight and loose components.
Given a $k$-graph $H$, we define the \emph{shadow graph} $\partial H$ of $H$ to be the $(k-1)$-graph with vertex set $V(H)$ and edge set consisting of all sets of $k-1$ vertices in $V(H)$ that are contained in some edge of $H$.
In our argumentation, we use the following consequence of the Kruskal-Katona
Theorem to bound from below the number of edges in $\partial H$ (see \cite[Theorem~2.14]{FranklTokushige2018}).

\begin{theorem}\label{thm:KK}
	For $x \geq 1, k \geq 2$, if $H$ is a $k$-graph with $e(H) \geq \binom{x}{k}$, then $e(\partial H) \geq \binom{x}{k-1}$.
\end{theorem}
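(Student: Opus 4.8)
The statement to prove is Theorem~\ref{thm:KK}, which I recognize as a standard consequence of the Kruskal--Katona theorem. The plan is to derive it from the classical Kruskal--Katona inequality in its binomial (cascade) form, which is exactly how one typically packages this clean corollary.

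\textbf{Approach.} First I would recall the Kruskal--Katona theorem in its sharp form: if $H$ is a $k$-graph with $e(H) = \binom{a_k}{k} + \binom{a_{k-1}}{k-1} + \dots + \binom{a_j}{j}$ in the (unique) $k$-cascade representation with $a_k > a_{k-1} > \dots > a_j \geq j \geq 1$, then $e(\partial H) \geq \binom{a_k}{k-1} + \binom{a_{k-1}}{k-2} + \dots + \binom{a_j}{j-1}$. The task is then purely to translate this into the stated ``$\binom{x}{k}$ in, $\binom{x}{k-1}$ out'' form for real $x \geq 1$, and this is a monotonicity/continuity argument about generalized binomial coefficients $\binom{x}{k} = x(x-1)\cdots(x-k+1)/k!$, defined for real $x \geq k-1$ (and set appropriately, e.g.\ $0$, on $[1,k)$ if needed — one should be slightly careful here, but for $x \geq k$ everything is the honest polynomial).

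\textbf{Key steps.} (1) Observe that $x \mapsto \binom{x}{k}$ is continuous and strictly increasing on $[k-1,\infty)$, so the hypothesis $e(H) \geq \binom{x}{k}$ lets us assume (replacing $x$ by the largest real with $\binom{x}{k} = e(H)$, using that $e(H)$ is an integer and $\binom{k}{k}=1$) that in fact $e(H) = \binom{x}{k}$ for some real $x \geq k$. (2) Write $e(H)$ in its integer $k$-cascade form $\sum_{i=j}^{k}\binom{a_i}{i}$ and apply Kruskal--Katona to get $e(\partial H) \geq \sum_{i=j}^{k}\binom{a_i}{i-1}$. (3) The remaining inequality is $\sum_{i=j}^{k}\binom{a_i}{i-1} \geq \binom{x}{k-1}$ given $\sum_{i=j}^{k}\binom{a_i}{i} = \binom{x}{k}$; this is the ``shifting'' inequality for cascades, most cleanly proved by the standard lemma that if $\sum \binom{a_i}{i} \leq \binom{y}{k}$ then $\sum \binom{a_i}{i-1} \geq \binom{?}{k-1}$ — or, more directly, by noting $a_k \leq x$ is forced (since $\binom{a_k}{k} \leq \binom{x}{k}$) combined with the fact that the cascade map $\partial_k(m) = \sum\binom{a_i}{i-1}$ is monotone in $m$ and that $\partial_k(\binom{x}{k}) = \binom{x}{k-1}$ for integer-valued $\binom{x}{k}$. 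I would quote this last monotonicity as the content of \cite[Theorem~2.14]{FranklTokushige2018} rather than reprove it.

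\textbf{Main obstacle.} The only real subtlety is handling the case where $x$ is not an integer, i.e.\ making precise what $\binom{x}{k}$ and $\binom{x}{k-1}$ mean and checking the comparison $e(\partial H) \geq \binom{x}{k-1}$ survives passing from the integer cascade bound to the real-valued target. Since $e(H) \geq \binom{x}{k}$ and $e(H)$ is a nonnegative integer, one reduces to the case $e(H) = \binom{x'}{k}$ with $x' = $ (the unique real $\geq k-1$ with this value) $\leq x$ when $e(H) \geq 1$, and then $\binom{x'}{k-1} \leq \binom{x}{k-1}$ by monotonicity — wait, that direction is wrong, so instead one uses $x' \geq x$? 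No: $e(H) \geq \binom{x}{k}$ gives $\binom{x'}{k} \geq \binom{x}{k}$ hence $x' \geq x$ hence $\binom{x'}{k-1} \geq \binom{x}{k-1}$, which is exactly what we want. So the clean reduction is: replace $x$ by $x'$ with $\binom{x'}{k} = e(H)$ and $x' \geq x$, prove the statement for $x'$, and conclude. This bookkeeping is the crux; everything else is a citation. I would present the proof as: reduce to equality via this monotonicity remark, invoke Kruskal--Katona in cascade form, and cite \cite[Theorem~2.14]{FranklTokushige2018} for the resulting binomial inequality $\sum_{i}\binom{a_i}{i-1} \geq \binom{x'}{k-1}$.
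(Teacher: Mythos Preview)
The paper does not prove this theorem at all: it is stated as a known consequence of the Kruskal--Katona theorem and imported directly from \cite[Theorem~2.14]{FranklTokushige2018}. So there is no ``paper's own proof'' to compare against --- a one-line citation is all that appears.

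Your proposal is therefore more ambitious than what the paper does, but it is not self-contained either. The crux is your step (3): given the cascade representation $e(H)=\sum_{i=j}^k\binom{a_i}{i}=\binom{x}{k}$ and the Kruskal--Katona bound $e(\partial H)\geq\sum_{i=j}^k\binom{a_i}{i-1}$, you need the inequality $\sum_{i=j}^k\binom{a_i}{i-1}\geq\binom{x}{k-1}$. You propose to cite \cite[Theorem~2.14]{FranklTokushige2018} for exactly this --- but that reference \emph{is} the Lov\'asz form of Kruskal--Katona, i.e.\ the very statement you are proving. So the argument, as written, is circular: the only nontrivial step is delegated back to the target theorem. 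If you want a genuine derivation, either (a) prove the cascade-to-binomial inequality directly (this is doable but not a one-liner), or (b) bypass cascades entirely and give Lov\'asz's original proof of the real-$x$ form, which is a short induction on $k$ using a compression/shifting argument and does not go through the integer cascade representation at all. Option (b) is the standard route and is what the cited reference actually contains.
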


The following result will also be useful.

\begin{lemma}\label{lem:co-degree}
	For $k \geq 2$, let  $G$ be a graph on $n\geq k	$ vertices with $\delta(G)= \delta n >
		\frac{k-2}{k-1}n$.
	Let $\delta' = \delta - \frac{k-2}{k-1}$.
	Then every $(k-1)$-set $S\subset V(G)$ satisfies $|\bigcap_{v\in
			S}N_G(v)|>(k-1)\delta'n$.
\end{lemma}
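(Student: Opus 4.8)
\textbf{Proof plan for \cref{lem:co-degree}.}
The plan is to prove this by a straightforward inclusion-exclusion (union bound) argument on the complements of the neighbourhoods. Write $k-1 = s$ for brevity and let $S = \{v_1,\dots,v_s\}$ be an arbitrary $s$-set in $V(G)$. For each $v_j \in S$, let $\overline{N}(v_j) = V(G) \sm N_G(v_j)$ be the non-neighbourhood; since $\delta(G) = \delta n$, we have $|\overline{N}(v_j)| \leq n - \delta n = (1-\delta)n$ for every $j$ (one should be slightly careful about whether $v_j$ itself counts, but $|\overline N(v_j)| \le n - \delta n$ holds regardless, and this only helps). The set of common neighbours satisfies
\begin{align*}
	\Big|\bigcap_{v \in S} N_G(v)\Big| \;\geq\; n - \sum_{j=1}^{s} |\overline{N}(v_j)| \;\geq\; n - s(1-\delta)n \;=\; \big(1 - (k-1)(1-\delta)\big)n.
\end{align*}

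The remaining step is just to rewrite the bound $\big(1-(k-1)(1-\delta)\big)n$ in the form claimed. Using $\delta' = \delta - \frac{k-2}{k-1}$, so that $\delta = \delta' + \frac{k-2}{k-1}$ and hence $1 - \delta = \frac{1}{k-1} - \delta'$, we compute
\begin{align*}
	1 - (k-1)(1-\delta) \;=\; 1 - (k-1)\Big(\tfrac{1}{k-1} - \delta'\Big) \;=\; 1 - 1 + (k-1)\delta' \;=\; (k-1)\delta'.
\end{align*}
Therefore $\big|\bigcap_{v\in S} N_G(v)\big| \geq (k-1)\delta' n$. To get the strict inequality claimed in the statement, note that the union bound above is strict unless the non-neighbourhoods $\overline N(v_1),\dots,\overline N(v_s)$ are pairwise disjoint and each has size exactly $(1-\delta)n$; but even in that extremal case one can argue the bound is strict because, for instance, $v_1 \in \overline N(v_1)$ forces an overlap with how the count is set up, or more cleanly: the hypothesis $\delta n > \frac{k-2}{k-1}n$ gives $\delta' > 0$, and one simply verifies that equality in the union bound cannot occur for $n \ge k$ — alternatively, one can simply replace each $|\overline N(v_j)| \le (1-\delta)n$ by the strict statement that at least one vertex of $S$ lies in $N_G(v_j)$ for $j\ne 1$ when $\delta > \frac{k-2}{k-1}$, forcing a non-trivial intersection among the non-neighbourhoods. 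The cleanest route is: since $\delta(G) > \frac{k-2}{k-1}n \geq \frac{s-1}{s}n$, any $s$ neighbourhoods have non-empty common intersection by a standard counting argument with strict inequality propagating through, which yields the strict bound directly.

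I do not expect any genuine obstacle here; this is a one-line union bound plus an arithmetic simplification. The only minor care needed is the bookkeeping for the strict inequality (versus a weak one) and the harmless question of whether vertices of $S$ are counted among their own non-neighbours, neither of which affects the argument in any essential way.
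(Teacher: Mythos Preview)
Your union-bound argument is essentially the paper's proof rephrased: the paper double-counts edges between $S$ and $V(G)\setminus S$, bounding above by $t(k-1)+(k-2)(n-k+1-t)$ and below via $\delta(G)$, and after rearranging this gives exactly your $t\ge(k-1)\delta' n$.

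The one gap is your handling of the strict inequality: none of the three suggestions you offer actually works. If $S$ happens to be a clique, then $v_j\in\overline N(v_j)$ but $v_j\notin\overline N(v_i)$ for $i\ne j$, so the non-neighbourhoods need not overlap; and ``$\delta'>0$'' or ``strict inequality propagating through'' merely restates what is to be shown. The paper extracts strictness for $k\ge 3$ from a lower-order correction in the double-count---subtracting only $\binom{k-1}{2}$ for the edges inside $S$---which after simplification leaves an additive $\tfrac{(k-1)(k-2)}{2}$ that pushes the bound strictly above $(k-1)\delta' n$. Your bare union bound discards this term. That said, for $k=2$ the strict form actually fails at a minimum-degree vertex, and in the paper's applications only the weak inequality is ever needed, so the gap is cosmetic rather than substantive.
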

\begin{proof}
	Let $S$ be as in the statement and write $T=\bigcap_{v\in S}N_G(v)$
	and $t=|T|$.
	Every vertex in $V(G)\sm (S\cup T)$ has at most $k-2$ neighbours
	in $S$.
	So double-counting the edges between $S$ and $V(G)\sm
		S$ gives
	\begin{align*}
		t(k-1)  + (k-2)(n-k+1-t) \geq \delta n (k-1) - \binom{|S|}{2}.
	\end{align*}
	Rearranging and using that $|S| = k-1$ gives
	\begin{align*}
		t\geq \delta n (k-1)- (k-2)(n-k+1) - \binom{k-1}{2} >
		(k-1)n\left(\delta - \frac{k-2}{k-1}\right),
	\end{align*}
	which completes the proof.
\end{proof}

\subsection{Minimum Degree Conditions}
Here we show \cref{lem:deg-gives-framework}.
To this end, we need the fractional analogue of Koml\'os' theorem, which is implicit in the work of Piguet and Saumell~\cite{PS19}.
See also~\cite[Lemma 6.6]{Lan23} for a full proof.

\begin{lemma}\label{lem:degree_frac_bottle_tiling}
	Let $\chi\geq 1$ and $F$ be a graph with $\crit(F)\leq \chi$.
	Let $G$ be a graph on $n$ vertices with $\delta(G) \geq  (1- {1}/{\chi})n$.
	Then $G$ has a perfect fractional $F$-tiling.
\end{lemma}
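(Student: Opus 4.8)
\textbf{Proof plan for \cref{lem:degree_frac_bottle_tiling}.}
The plan is to reduce the statement to the known fractional version of Komlós' theorem due to Piguet and Saumell~\cite{PS19}, and the only work is to check that a minimum degree of $(1-1/\chi)n$ is enough to force a \emph{perfect} fractional $F$-tiling when $\crit(F)\le\chi$. First I would recall the shape of the extremal examples: for the complete $\kk$-partite bottle graph $B_\chi$ with part sizes in the ratio matching $\crit(F)$, the threshold for a perfect fractional $B_\chi$-tiling is exactly $(1-1/\crit(B_\chi))n = (1-1/\chi)n$ when $\chi=\crit(F)$; for general $F$ with $\crit(F)\le\chi$ one inherits this because a fractional $F$-tiling can be averaged into a fractional $B_{\crit(F)}$-tiling and, conversely, a perfect fractional $B_{\crit(F)}$-tiling can be pulled back using a single proper $\kk$-colouring of $F$ realising $\alpha(F)$ (distribute weight $1/v(F)$ suitably along each homomorphism obtained by mapping the colour classes of $F$ into the parts of a $B_{\crit(F)}$-copy). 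So it suffices to handle $F=B_{\crit(F)}$, or more precisely to observe that $\crit(F)\le\chi$ only helps, since larger $\chi$ means a weaker hypothesis on $G$ but a wider margin.

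The core step is the fractional tiling existence for bottle graphs under minimum degree. Here I would invoke the LP/duality framework of Komlós as adapted by Piguet and Saumell: the maximum weight of a fractional $B_\chi$-tiling equals $v(G)$ minus the optimum of a covering LP, and a minimum degree of $(1-1/\chi)n$ certifies that this covering optimum is $0$, i.e. no vertex is "wasted". Concretely, one shows every vertex $v$ lies in a suitable fractional clique-like structure: using \cref{lem:co-degree} (with $k=\kk$) one finds, for each vertex and each $(k-1)$-set in its neighbourhood, a large common neighbourhood, which lets one build many copies of $B_\chi$ through $v$ with total weight $1$ at $v$ while keeping the load at every other vertex at most $1$. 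Averaging these local tilings over all vertices (and normalising) yields a genuinely perfect fractional tiling. Alternatively, and more cleanly, one simply cites the Piguet–Saumell result in the form "if $\delta(G)\ge(1-1/\crit(F))n$ then $G$ has a perfect fractional $F$-tiling" — which is what the paper says it is doing ("essentially due to Piguet and Saumell", proof deferred to \cref{sec:frac-komlos}).

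The main obstacle, such as it is, is bookkeeping rather than conceptual: one must be careful that "perfect" means the fractional tiling has weight exactly (or at least $(1-o(1))$, then exactly by a compactness/LP-integrality-of-the-optimum argument) equal to $v(G)$, and that the reduction from general $F$ to $B_{\crit(F)}$ does not lose a multiplicative constant in the weight. Both are routine: the first follows since the fractional tiling polytope is compact and the relevant LP optimum is attained, and the $(1-1/\chi)n$ bound is exactly the complementary-slackness threshold; the second follows from the explicit pull-back map described above, which preserves total covered weight. I would therefore present the argument as: (i) state that it suffices to prove it for $F=B_{\crit(F)}$ with $\chi=\crit(F)$; (ii) apply \cref{lem:co-degree} to produce, for every vertex, a family of $B_\chi$-homomorphisms through it carrying unit load at that vertex and bounded load elsewhere; (iii) average and rescale to get the perfect fractional tiling; and (iv) pull back along a proper colouring of $F$ minimising the smallest class. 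Since the paper explicitly defers the full proof to \cref{sec:frac-komlos} and attributes it to~\cite{PS19}, in the main text it is legitimate to simply cite that reference and omit the details.
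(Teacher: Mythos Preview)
Your reduction to bottle graphs and the pull-back step are in the right spirit (though the pull-back needs averaging over permutations of the $k-1$ large colour classes, not ``a single proper colouring''), and they roughly match how the paper begins: it reduces to $F$ complete $k$-partite with $\crit(F)=\chi$.

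The genuine gap is your step~(iii). Averaging local fractional tilings does \emph{not} produce a perfect fractional tiling. Suppose for each $v$ you have a valid fractional $B_\chi$-tiling $\omega_v$ with load exactly~$1$ at~$v$ and at most~$1$ elsewhere. Setting $\omega=\frac1n\sum_v\omega_v$ gives load $\frac1n\sum_v(\text{load of }u\text{ in }\omega_v)$ at each~$u$; this is at most~$1$ (so $\omega$ is valid) but there is no reason it equals~$1$ unless every $\omega_v$ already gave load~$1$ to every vertex, which is the statement you are trying to prove. No amount of ``normalising'' rescues this: scaling up to make the minimum load~$1$ would push other loads above~$1$. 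The existence, for each $v$, of a fractional tiling saturating $v$ does not by itself force the LP optimum to be perfect.

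The paper's argument in \cref{sec:frac-komlos} is genuinely different and works on the \emph{dual} side. It takes a minimum fractional $F$-cover $c\colon V(G)\to\REALS_{\ge0}$ and greedily selects a $k$-clique $v_1,\dots,v_k$ minimising $(c(v_1),\dots,c(v_k))$ lexicographically. Because $v_{i+1}$ is chosen in $\bigcap_{j\le i}N(v_j)$ (non-empty by the degree hypothesis via \cref{lem:co-degree}), every vertex in that common neighbourhood has $c$-value at least $c(v_{i+1})$; summing these lower bounds and using $|\bigcap_{j\le i}N(v_j)|\ge(1-i(1-\delta))n$ gives
\[
\sum_{v}c(v)\;\ge\;\frac{n}{f}\Bigl(\frac{f-f_k}{k-1}\sum_{i<k}c(v_i)+f_k\,c(v_k)\Bigr)\;\ge\;\frac{n}{f}\sum_i f_i\,c(v_i)\;\ge\;\frac{n}{f},
\]
where the middle inequality is a rearrangement estimate using $f_1\ge\cdots\ge f_k$ and $c(v_1)\le\cdots\le c(v_k)$, and the last uses that the clique supports a homomorphic image of $F$. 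LP duality then yields a fractional $F$-tiling of size $n/f$, hence weight~$n$, hence perfect. If you want an argument rather than a citation, this dual/cover computation is what you should reproduce.
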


The following result shows that under certain minimum degree conditions, the tight components are substantially large.

\begin{proposition}\label{prop:decomposition}
	For $k \geq 2$ and $\delta >\frac{k-2}{k-1}$, let $G$ be a
	graph on $n$ vertices with $\delta(G) \geq \delta n$ and define $\delta'
		= \delta-\frac{k-2}{k-1}$.
	Then for any tight component $T$ of $K_k(G)$, we have $e(T)> {\binom{\delta'n}{k} }$.
	\begin{proof}
		Observe that by \cref{lem:co-degree} each $(k-1)$-set $S\subset
			V(G)$ that induces a clique in $G$
		is contained in at least $(k-1)\delta'n$ edges in $K_k(G)$.
		Let $T$ be a tight component of $K_k(G)$ and $x \geq 0$ be such that $\binom{xn}{k}=e(T)$.
		Recall that if two edges $e,e' \in E(T)$ intersect  in $k-1$ vertices, then they are in the same tight component.
		Therefore, each $(k-1)$-edge $f\in E(\partial T)$ is contained in at
		least $\delta'n$ edges in $E(T)$, and each $k$-edge of $T$ contains $k$
		distinct $(k-1)$-edges of $\partial T$. Hence $e(T)\geq e(\partial
			T)\delta'n/k$.
		By combining these facts and applying Lemma~\ref{thm:KK}, we obtain
		\[
			\frac{\delta'n}{k} \binom{xn}{k-1} \leq \frac{\delta'n}{k} e(\partial
			T) \leq e(T) = \binom{xn}{k}.
		\]
		From this we conclude that $\delta'n \leq xn-k+1$ and hence $x > \delta'$.
	\end{proof}
\end{proposition}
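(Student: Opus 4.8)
\textbf{Proof proposal for \cref{prop:decomposition}.}
The plan is to argue exactly as indicated by the surrounding text: combine the co-degree bound from \cref{lem:co-degree} with the shadow bound from the Kruskal--Katona Theorem (\cref{thm:KK}) to derive a self-improving inequality for the size of a single tight component $T$ of $K_k(G)$. First I would fix a tight component $T$ and write $e(T) = \binom{xn}{k}$ for the (unique, nonnegative real) parameter $x$ determined by this equation --- this is well defined since $e(T)$ is a nonnegative integer and $\binom{yn}{k}$ is continuous and increasing in $y$ for $yn \ge k-1$ (and one can dispose of the trivial small cases, e.g. $e(T) < \binom{k}{k}$, separately, though the minimum degree hypothesis makes $T$ large anyway).

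Next I would establish the two structural estimates. For the lower bound on the shadow: every $(k-1)$-edge $f$ of $\partial T$ is, by definition of $\partial T$, contained in some edge $e$ of $T$; but $f$ spans a clique in $G$, so by \cref{lem:co-degree} the common neighbourhood of the vertices of $f$ has size more than $(k-1)\delta' n \ge \delta' n$, and \emph{every} such common neighbour $v$ gives a $k$-clique $f \cup \{v\}$ whose defining $k$-set shares $k-1$ vertices with $e \in E(T)$, hence lies in the same tight component $T$. Thus each $f \in E(\partial T)$ lies in at least $\delta' n$ edges of $T$. Since each $k$-edge of $T$ contains exactly $k$ distinct $(k-1)$-subsets, double counting incidences between $E(T)$ and $E(\partial T)$ yields $k \cdot e(T) \ge \delta' n \cdot e(\partial T)$, i.e. $e(\partial T) \le k \cdot e(T)/(\delta' n)$. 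On the other hand, $e(T) = \binom{xn}{k} \ge \binom{xn}{k}$ lets us apply \cref{thm:KK} with the real parameter $xn$ (note $xn \ge 1$ once $T$ is nontrivial) to get $e(\partial T) \ge \binom{xn}{k-1}$. Chaining these gives
\[
	\frac{\delta' n}{k}\binom{xn}{k-1} \le \frac{\delta' n}{k}\, e(\partial T) \le e(T) = \binom{xn}{k}.
\]

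Finally I would simplify: using $\binom{m}{k} = \binom{m}{k-1}\cdot \frac{m-k+1}{k}$ with $m = xn$, the displayed inequality becomes $\delta' n \le xn - k + 1 \le xn$, so $x \ge \delta' $; in fact $x > \delta'$ strictly because $-k+1 < 0$. Hence $e(T) = \binom{xn}{k} > \binom{\delta' n}{k}$, as claimed. I do not anticipate a genuine obstacle here --- the argument is a short double-counting plus Kruskal--Katona --- the only points requiring a little care are (i) justifying that one may feed a non-integer argument $xn$ into \cref{thm:KK} (its statement already allows real $x \ge 1$), and (ii) handling the degenerate regime where $T$ is too small for the manipulation $e(T) = \binom{xn}{k}$ to be meaningful, which is ruled out by $\delta > \frac{k-2}{k-1}$ forcing $\delta' > 0$ and hence every clique-extending $(k-1)$-set to have many extensions.
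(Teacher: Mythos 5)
Your proposal follows the paper's proof essentially verbatim: the same lower bound of $\delta'n$ on the number of edges of $T$ containing each $(k-1)$-edge of $\partial T$ via \cref{lem:co-degree} and tight connectivity, the same double counting giving $e(T)\ge e(\partial T)\delta'n/k$, the same application of \cref{thm:KK} with $e(T)=\binom{xn}{k}$, and the same simplification $\delta'n\le xn-k+1$ yielding $x>\delta'$. The argument is correct and no further comment is needed.
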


Since the tight components of a graph partition its edge set, we immediately obtain the following consequence of \cref{prop:decomposition}.

\begin{corollary}\label{lem:reachability-components}
	For $k\geq 2$ and $\delta >\frac{k-2}{k-1}$,
	let $G$ be a graph on $n$ vertices with $\delta(G) \geq \delta n$ and
	define $\delta' = \delta-\frac{k-2}{k-1}$.
	Suppose $n$ is large in terms of $\delta'$.
	Then there are at most $2\delta'^{-k}$  tight components in $K_k(G)$.
\end{corollary}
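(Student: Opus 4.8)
The statement to prove is \cref{lem:reachability-components}: under $\delta(G)\ge \delta n$ with $\delta > \frac{k-2}{k-1}$ and $n$ large in terms of $\delta' = \delta - \frac{k-2}{k-1}$, the clique hypergraph $K_k(G)$ has at most $2\delta'^{-k}$ tight components. The natural approach is a counting argument combining \cref{prop:decomposition} with the trivial fact that the tight components of a $k$-graph partition its edge set.

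\textbf{Main steps.} First, I would invoke \cref{prop:decomposition}: every tight component $T$ of $K_k(G)$ satisfies $e(T) > \binom{\delta' n}{k}$. Second, since distinct tight components are edge-disjoint and their union is all of $E(K_k(G))$, if there are $N$ tight components then
\begin{align*}
	N \cdot \binom{\delta' n}{k} < \sum_{T} e(T) = e(K_k(G)) \le \binom{n}{k}.
\end{align*}
Third, I would bound the ratio $\binom{n}{k}/\binom{\delta' n}{k}$ from above. Writing this out,
\begin{align*}
	\frac{\binom{n}{k}}{\binom{\delta' n}{k}} = \prod_{j=0}^{k-1} \frac{n-j}{\delta' n - j} \le \prod_{j=0}^{k-1} \frac{n}{\delta' n - j} \le \left(\frac{n}{\delta' n - k}\right)^{k} = \left(\frac{1}{\delta' - k/n}\right)^k.
\end{align*}
Since $n$ is large in terms of $\delta'$, we have $k/n \le \delta'/2$ (say, $n \ge 2k/\delta'$), so $\delta' - k/n \ge \delta'/2$, giving $\binom{n}{k}/\binom{\delta' n}{k} \le (2/\delta')^k = 2^k \delta'^{-k}$. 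Hence $N < 2^k \delta'^{-k}$, which is weaker than claimed — so I would need to be slightly more careful. Better: use $\binom{\delta' n}{k} \ge \frac{(\delta' n - k)^k}{k!}$ is not the cleanest route either; instead observe that for $n$ large, $\binom{\delta' n}{k} \ge \frac{1}{2}\left(\frac{\delta' n}{k}\right)^k$ is false in the wrong direction. The right move is to keep the ratio estimate tight: for each $j \in \{0,\dots,k-1\}$ we have $\frac{n-j}{\delta' n - j} \le \frac{n}{\delta' n - k} \le \frac{1}{\delta'}\cdot\frac{1}{1 - k/(\delta' n)}$, and choosing $n$ large enough that $(1 - k/(\delta' n))^{-k} \le 2$ (possible since the left side tends to $1$), we conclude $N < 2\delta'^{-k}$ as required.

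\textbf{Expected obstacle.} There is essentially no conceptual obstacle here — it is a short consequence of \cref{prop:decomposition} plus edge-disjointness of tight components. The only thing requiring minor care is the binomial ratio estimate and making the "$n$ large in terms of $\delta'$" hypothesis absorb the lower-order terms, so that the bound comes out as the clean $2\delta'^{-k}$ rather than $2^k\delta'^{-k}$; this is handled by bounding $\prod_{j=0}^{k-1}\frac{n-j}{\delta' n-j}$ by $\delta'^{-k}$ times a factor that tends to $1$ as $n\to\infty$, and picking $n$ large enough to make that factor at most $2$.
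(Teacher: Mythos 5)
Your proposal is correct and follows exactly the paper's (omitted) argument: the tight components partition $E(K_k(G))$, each has more than $\binom{\delta' n}{k}$ edges by \cref{prop:decomposition}, and the ratio $\binom{n}{k}/\binom{\delta' n}{k}$ is at most $2\delta'^{-k}$ for $n$ large. Your care with the binomial ratio estimate is exactly the right way to absorb the lower-order terms into the factor $2$.
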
\COMMENT{\begin{proof}
		We can partition the edge set of $K_k(G)$ into tight components
		$T_1,\dots,T_m$.
		As $K_k(G)$ has at most ${n\choose k}$ edges and by
		\cref{prop:decomposition} each $T_j$ contains at least ${\delta'n\choose
					k}$ edges, we obtain  $m\leq (1/{\delta'})^k/2$.
	\end{proof}}

Finally, a simple observation that yields a sufficient condition for being linked.
\begin{observation}\label{obs:min_degree_second_neighbours}
	For $k \geq 2$, let $G$ be a graph on $n$ vertices with $\delta(G)= \delta n >
		\frac{k-2}{k-1}n+1$.
	Then every vertex of $G$ is linked in $K_k(G)$.
\end{observation}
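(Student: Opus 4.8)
The plan is to fix an arbitrary vertex $v\in V(G)$ and produce two $k$-cliques $e,f$ of $G$ with $v\in e$, $v\notin f$ and $|e\cap f|=k-1$; by definition this shows $v$ is linked in $K_k(G)$, and since $v$ is arbitrary, $K_k(G)$ is linked. The natural candidates are $e=S\cup\{v\}$ and $f=S\cup\{w\}$, where $S$ is a $(k-1)$-clique contained in $N_G(v)$ and $w$ is a common neighbour of all vertices of $S$ with $w\notin S\cup\{v\}$. Indeed, then $e$ and $f$ are $k$-cliques (as $S$ is a clique and each of $v,w$ is adjacent to all of $S$), $v\notin f$, and $e\cap f=S$.

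First I would construct $S$ greedily inside $N_G(v)$. Write $\delta'=\delta-\frac{k-2}{k-1}$, so that the hypothesis $\delta(G)=\delta n>\frac{k-2}{k-1}n+1$ gives $\delta' n>1$. Starting from $A_0=N_G(v)$ (of size $\geq\delta n$), at step $j$ pick any $u_j\in A_{j-1}$ and set $A_j=A_{j-1}\cap N_G(u_j)$. Since $|A\cap N_G(u)|\geq|A|+\deg_G(u)-n\geq|A|-(1-\delta)n$, one gets $|A_j|\geq\delta n-j(1-\delta)n$, and in particular $|A_{k-2}|\geq\delta n-(k-2)(1-\delta)n=(k-1)\delta' n>k-1\geq1$. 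Hence the sets stay non-empty through the first $k-1$ picks, the $u_j$ are pairwise distinct and pairwise adjacent, all lie in $N_G(v)$, so $S=\{u_1,\dots,u_{k-1}\}$ is a $(k-1)$-clique with $S\subseteq N_G(v)$.

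Next I would apply \cref{lem:co-degree} to the $(k-1)$-set $S$; its hypotheses are met because $\delta(G)>\frac{k-2}{k-1}n$ and $n\geq k$ (from $n-1\geq\delta(G)>\frac{k-2}{k-1}n+1$ one deduces $n>2(k-1)\geq k$). This yields $\bigl|\bigcap_{u\in S}N_G(u)\bigr|>(k-1)\delta' n>k-1$, hence this common neighbourhood has at least $k\geq2$ vertices. As $S\subseteq N_G(v)$ forces $v\in\bigcap_{u\in S}N_G(u)$, there exists $w$ in this common neighbourhood with $w\neq v$; moreover $w\notin S$ automatically, since $w$ is adjacent to each $u_i$. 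Setting $e=S\cup\{v\}$ and $f=S\cup\{w\}$ completes the argument.

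The only real subtlety is quantitative: one cannot simply read off $w$ from the tracked set $A_{k-1}=N_G(v)\cap\bigcap_{u\in S}N_G(u)$, whose guaranteed size $(k\delta-(k-1))n$ may be non-positive under the hypothesis $\delta>\frac{k-2}{k-1}$. The remedy is to decouple the two tasks — use the greedy process only to produce the clique $S$, and then invoke \cref{lem:co-degree} on the \emph{unrestricted} common neighbourhood $\bigcap_{u\in S}N_G(u)$, which is large precisely because the co-degree estimate does not pay for the extra intersection with $N_G(v)$. The case $k=2$ is covered uniformly by the same argument (then $S$ is a single neighbour of $v$ and $w$ any other neighbour of that vertex).
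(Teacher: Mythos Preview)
Your proposal is correct and follows essentially the same approach as the paper: build a $k$-clique containing the given vertex, then apply \cref{lem:co-degree} to the $(k-1)$-subclique avoiding it to find a second common neighbour. The only cosmetic difference is that the paper also cites \cref{lem:co-degree} for the iterative construction of the clique (implicitly with varying $k$), whereas you spell out the greedy intersection argument directly; the final step and the quantitative check that the common neighbourhood has size at least $k\geq 2$ are identical.
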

\begin{proof}
	Fix an arbitrary vertex $v_1\in V(G)$.
	For $i\in [k-1]$, choose iteratively vertices $v_{i+1}\in \bigcap_{j\in
			[i]}N(v_j)$, which {exist} by \cref{lem:co-degree}.
	This yields a $k$-clique with vertices $v_1,\ldots,v_k$.
	Now we apply \cref{lem:co-degree} and obtain that $|\bigcap_{j\in
			\{2,\ldots,k\}}N(v_j)| {> k-1\geq 1}$.
	Thus $v_1$ is linked in $K_k(G)$.
\end{proof}
We are now ready to prove that minimum degree conditions imply that a
graph is a tiling framework.

\begin{proof}[Proof of \cref{lem:deg-gives-framework}]
	Set $t = k^{4k}/(\chi+1-k + \mu/2)^k$.
	To show that $G$ is a $\mu/4$-robust $(\chi,\rho;t,\ell)$-tiling framework, we verify the properties detailed in \cref{def:tiling-framework} for an arbitrary $(\mu/4,\mu/4)$-approximation $G' \subset G$.
	Let $\chi'=\chi+\mu/4$, $\mu'=\mu/4$ and $n' = v(G')$.
	It is {straightforward} to check that $\delta({G'}) \geq (1-1/\chi'  + \mu')n'$.
	Furthermore, we have  $\lceil \chi' \rceil \leq k$ by assumption.

	Now for the properties of \cref{def:tiling-framework}.
	To begin, note that property~\ref{item:framework-Geometry} follows from \cref{lem:degree_frac_bottle_tiling} applied to
	$G'$ with $B_{\chi'}$ in place of $F$.

	Now we establish property~\ref{item:framework-divisibility} by
	considering tight and loose components in turn.
	A quick computation gives
	\urldef\myurlx\url{https://www.wolframalpha.com/input/?i=%28x-1%29%2Fx+-+%28k-2%29%2F%28k-1%29}
	\begin{align*}
		\frac{\delta(G')}{n'} - \frac{k-2}{k-1} & \geq 1-\frac{1}{\chi'}- \frac{k-2}{k-1} + \mu'
		= \frac{\chi'+1-k}{(k-1)\chi'} + \mu' \geq \frac{1}{k^2} (\chi'+1-k + \mu').
	\end{align*}\COMMENT{\myurlx}
	Thus by \cref{lem:reachability-components}, there are at most $2(k^2/(\chi'+1-k + \mu'))^k$ tight components in~$G'$.

	We now consider loose components.
	For $k=2$, tight components and loose components are equivalent
	and, for $k\geq 3$, all vertices have degree at least $n'/2$.
	Thus the graph is connected.
	Note that each edge of $G'$ is contained in a $k$-clique, and hence its endvertices are contained in the same loose component.
	Together this implies that $G'$ has just one loose component.
	So, in the case $k=2$, property~\ref{item:framework-divisibility} is satisfied with
	$\ell=t$ and, in the case $k\geq 3$,
	we have \ref{item:framework-divisibility} is
	satisfied with $\ell=1$.

	Finally we check property~\ref{item:framework-linkage}.
	This follows from \cref{obs:min_degree_second_neighbours} as
	$1-1/\chi' \geq (k-1)/(k-2) $ and $\mu' n'/2>1$.
\end{proof}

\subsection{Degree sequences}
\label{sec:degree-sequences}
In this section, we prove \cref{lem:deg-seq-mixed-framework}, which states that strong degree sequences guarantee tiling frameworks.
\urldef\myurls\url{%https://www.wolframalpha.com/input/?i=%281-2m-a%29%2F%281-m-a%29+for+m%3D%281-a%29%2F%28k-1%29}
We use the following result of Hyde, Liu and Treglown~\cite{HLT19}.
\begin{theorem}\label{thm:deg-seq-komlos}
	Let  $1/n \ll \rho \ll \mu,1/\chi,1/f$.
	Let $F \in \cFcr$ be a graph on $f$ vertices and $\crit(F) \leq \chi$.
	Let $G$ be a graph on $n$ vertices with {a $(\chi,\mu)$-strong} degree sequence.
	Then $G$ contains a $(1-\rho)$-perfect $F$-tiling.
\end{theorem}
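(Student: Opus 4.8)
The plan is to run the regularity method, exactly as in Koml\'os' proof of his almost-perfect tiling theorem, the only genuinely new ingredient being that the degree-sequence hypothesis must be transferred to the reduced graph and then exploited there in place of a minimum-degree bound. Concretely: choose $1/n \ll 1/M \ll \eps \ll \rho$ and apply Szemer\'edi's Regularity Lemma to $G$ with parameter $\eps$, obtaining a partition $V_0,V_1,\dots,V_r$ with $|V_0|\le\eps n$, $1/\eps\le r\le M$ and $|V_i|=m:=(1\pm\eps)n/r$; let $R$ be the reduced graph on $[r]$ in which $ij$ is an edge whenever $(V_i,V_j)$ is $\eps$-regular of density at least $\sqrt\eps$.

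The first substantive step is to show that, after discarding at most $\rho r/10$ clusters, $R$ has a $(\chi,\mu/2)$-strong degree sequence; that is, if its degrees are $d^R_1\le\cdots\le d^R_r$, then $d^R_i \ge (1-1/\chi-\alpha)r + \chi\alpha\, i + (\mu/2)r$ for all $i\le r/\chi$. This is a standard counting argument: for a cluster $V_j$ and a typical vertex $v\in V_j$, all but at most $(\eps+\sqrt\eps)n+|V_0|$ of the neighbours of $v$ lie in clusters adjacent to $j$ in $R$, so $m\cdot\deg_R(j)\ge \deg_G(v)-3\sqrt\eps n$ for all but an $\eps$-fraction of the vertices of $V_j$; sorting the clusters by $R$-degree, deleting the $O(\eps r)$ clusters that lie in too many irregular pairs, and matching the survivors against the corresponding blocks of the sorted $G$-degree sequence, inequality \eqref{equ:deg-seq-def} survives the rescaling from $n$ to $r$ with $\mu$ degrading only to $\mu/2$.

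Next I would feed this into the degree-sequence analogue of \cref{lem:degree_frac_bottle_tiling}: a graph $R$ with a $(\chi,\mu/2)$-strong degree sequence has a $(1-\rho/2)$-perfect fractional $B_\chi$-tiling. Clearing denominators and rounding (which costs only $o(n)$ vertices since $\eps\ll\rho$) turns this into a family of copies of $K_{\kk}$ in $R$ — a cluster may belong to several — together with an allocation of the $m$ vertices of each cluster among the copies containing it, so that the allocated vertex subsets are pairwise disjoint, within each copy the $\kk$ parts receive vertices in the size ratios of the parts of $B_\chi$ (ratios realised by a proper $\kk$-colouring of $F$ precisely because $\crit(F)\le\chi$), and all but $\rho n$ vertices of $G$ are allocated. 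For each copy, the $\kk$-partite subgraph of $G$ induced on its allocated vertices is, after deleting $o(n)$ further vertices, super-regular, so the Blow-Up Lemma — or a direct greedy packing, since $v(F)=f$ is bounded — embeds $\lfloor(\text{size})/f\rfloor$ disjoint copies of $F$ into it. The union over all copies of $K_{\kk}$ is an $F$-tiling of $G$ covering at least $(1-\rho)n$ vertices.

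The genuinely hard step is the degree-sequence version of \cref{lem:degree_frac_bottle_tiling}, where the exact calibration of \eqref{equ:deg-seq-def} against $\alpha(\chi)$ and $\kk$ is forced; this is essentially the heart of the Hyde--Liu--Treglown argument \cite{HLT19}. I would prove it by LP duality: by strong duality it suffices to show that every $y\colon V(R)\to\REALS_{\ge 0}$ with $\sum_v y_v|\theta^{-1}(v)|\ge v(B_\chi)$ for all $\theta\in\Hom(B_\chi;R)$ satisfies $\sum_v y_v\ge(1-\rho/2)r$. The degree-sequence hypothesis enters because every vertex $v$, extended greedily by $\kk-1$ common neighbours from its (sufficiently large) neighbourhood, lies in a copy of $K_{\kk}$, and — placing $v$ in the largest part of $B_\chi$ — this copy witnesses a homomorphism of $B_\chi$ carrying a definite fraction of its weight at $v$; summing these constraints against the sorted degree sequence, with increments matching the $\chi\alpha\, i$ term of \eqref{equ:deg-seq-def}, forces $\sum_v y_v$ to be all but a $\rho/2$-fraction of $r$. (Membership $F\in\cFcr$ plays no role in this almost-perfect conclusion — it is only relevant for the companion perfect-tiling statement — so one could in fact state the lemma for every $F$ with $\crit(F)\le\chi$.)
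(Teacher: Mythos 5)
First, a point of orientation: the paper does not prove \cref{thm:deg-seq-komlos} at all. It is explicitly imported as a black box from Hyde, Liu and Treglown~\cite{HLT19} (see the sentence immediately preceding the statement in \cref{sec:degree-sequences}, and the remark in the introduction that this result ``is used as a black box''). So there is no in-paper proof to match your argument against; what you are really attempting is a proof of the main theorem of~\cite{HLT19}.

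Judged on its own terms, your outline has the right architecture (regularity lemma, transfer of the degree sequence to the reduced graph, fractional tiling in $R$, integer tiling via the Blow-Up Lemma), and steps 1, 2 and 4 are routine. But the entire difficulty of the theorem is concentrated in your third step --- the degree-sequence analogue of \cref{lem:degree_frac_bottle_tiling} --- and there your argument is not a proof. The dual argument in the minimum-degree case (\cref{sec:frac-komlos}) works by ordering vertices by their cover value $c(v)$, greedily building a $\kk$-clique $v_1,\dots,v_{\kk}$ along nested common neighbourhoods, and using that $|\bigcap_{j\le i}N(v_j)|\ge (1-i(1-\delta))n$ for \emph{every} starting vertex. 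Under \eqref{equ:deg-seq-def} this collapses: the vertex minimising $c$ may be one of the $n/\chi$ vertices of degree only about $(1-1/\chi-\alpha+\mu)n$, its iterated common neighbourhoods are correspondingly smaller, and the resulting bound on $\sum_v c(v)$ falls short of $n/f$. Your proposed fix --- one constraint per vertex coming from a single copy of $K_{\kk}$ through it, ``summed against the sorted degree sequence'' --- does not close this: a single homomorphism constraint bounds a weighted sum of $y$-values over only $\kk$ vertices, and assembling such constraints into a lower bound on $\sum_v y_v$ of the form $(1-\rho/2)r$ is precisely the fractional covering/tiling problem you set out to solve. Making this work requires matching the ordering by cover value against the ordering by degree and exploiting the increment $\chi\alpha\cdot i$ quantitatively; this is the actual content of~\cite{HLT19} and cannot be waved through. (Your closing parenthetical is correct, though: $F\in\cFcr$ is irrelevant for the almost-perfect conclusion, and indeed the paper's own application of the theorem to $B_{\chi'}$ tacitly relies on this.)
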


We also require the following proposition.
\begin{proposition}\label{pro:deg-seq-linked-edges}
	Let $1/n \ll \mu, 1/\chi$ and $k=\kk$.
	Suppose $G$ is a graph on $n$ vertices with a $(\chi,\mu)$-strong degree sequence.
	Then $K_k(G)$ is linked.
	Moreover, if $k\geq 3$, then $K_k(G)$ has only one loose component.
\end{proposition}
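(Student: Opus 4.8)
The plan is to mimic the minimum-degree arguments (\cref{lem:co-degree}, \cref{obs:min_degree_second_neighbours}) but feeding in the degree-sequence hypothesis instead of a uniform lower bound on $\delta(G)$. The key observation is that \eqref{equ:deg-seq-def} forces most vertices of $G$ to have large degree: if we set $i_0 = \lceil n/\chi \rceil$, then all but at most $i_0 - 1 < n/\chi$ vertices have degree at least $d_{i_0} \ge (1 - 1/\chi - \alpha)n + \chi\alpha\cdot(n/\chi) + \mu n = (1 - 1/\chi + \mu)n$. Write $S$ for this set of "high-degree" vertices; so $|V(G) \setminus S| < n/\chi \le n/(k-1)$ (using $\chi > 1$; one should double-check the regime $\chi < 2$ here, where $\lceil\chi\rceil = 2$ and $n/\chi$ could exceed $n/2$ — but then $k = 2$ and the argument below only needs $k$-cliques = edges, which is even easier). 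More importantly, every vertex in $S$ has $\ge (1 - 1/\chi + \mu)n \ge \frac{k-2}{k-1}n + 1$ neighbours, since $1 - 1/\chi \ge (k-2)/(k-1)$ follows from $\chi \ge k - 1$ (which holds because $k = \kk$), and $\mu n > 1$.

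First I would establish that every vertex lies in a $k$-clique all of whose vertices are in $S$. Given $v_1 \in V(G)$ arbitrary: if $v_1 \notin S$, note $\deg(v_1) \ge d_1 \ge (1 - 1/\chi - \alpha + \mu)n + \chi\alpha$; one checks this is still more than $\frac{k-2}{k-1}n + |V(G)\setminus S|$, so $v_1$ has a neighbour $v_1' \in S$ — actually it is cleaner to just pick $v_2 \in N(v_1) \cap S$ directly, which is possible as $\deg(v_1)$ exceeds $|V(G) \setminus S|$ by a linear amount. Then, working inside $S$, iteratively extend: having chosen a clique $v_2, \dots, v_{j} \in S$ together with $v_1$, the common neighbourhood $\bigcap_{i=1}^{j} N(v_i)$ has size at least $\big(\sum_i \deg(v_i)\big) - (j-1)n$; since each $\deg(v_i) \ge (1-1/\chi+\mu)n$ for $i \ge 2$ and $\deg(v_1) \ge \frac{k-2}{k-1}n$ in the worst case, a short computation (the same one as in \cref{lem:co-degree}, using $j \le k - 1$) shows this common neighbourhood still meets $S$ and in fact has linear size. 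This produces a $k$-clique $v_1, \dots, v_k$ with $v_2, \dots, v_k \in S$, hence an edge $e = \{v_1, \dots, v_k\}$ of $K_k(G)$ through $v_1$.

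For linkage, I would then apply the common-neighbourhood bound once more to the $(k-1)$-set $\{v_2, \dots, v_k\} \subseteq S$: since all these vertices have degree $\ge (1 - 1/\chi + \mu)n > \frac{k-2}{k-1}n$, the set $T = \bigcap_{i=2}^{k} N(v_i)$ has size $> (k-1)\mu n \ge 2$ (this is exactly \cref{lem:co-degree} applied to $G[S \cup \{v_1\}]$ or, more simply, the same double-counting). Pick $v_k' \in T \setminus \{v_1\}$; then $\{v_2, \dots, v_k, v_k'\}$ is a $k$-clique, giving an edge $e' \in E(K_k(G))$ with $v_1 \in e$, $|e \cap e'| = k - 1$, and $e' \ne e$, so $v_1$ is linked to $e'$. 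Since $v_1$ was arbitrary, $K_k(G)$ is linked. For the "moreover" statement when $k \ge 3$: here $n/\chi \le n/2$ so $S$ contains all but fewer than $n/2$ vertices and every vertex of $S$ has degree $> n/2$; hence $G[S]$ is connected and every vertex of $V(G)\setminus S$ has a neighbour in $S$ (its degree exceeds $|V(G)\setminus S|$), so $G$ is connected. Since every edge of $G$ extends to a $k$-clique (by the clique-extension argument above, which only used that one endpoint had large degree — one should verify it works starting from an arbitrary edge), both endpoints of each edge lie in a common loose component of $K_k(G)$; connectivity of $G$ then forces a single loose component.

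The main obstacle I anticipate is the bookkeeping in the small-$\chi$ regime ($1 < \chi < 2$, so $k = 2$): here $n/\chi$ can be close to $n$, so the set $S$ of "high-degree" vertices may be small, and one cannot assume most vertices have degree near $(1-1/\chi)n$. However, when $k = 2$ the $k$-clique hypergraph $K_2(G)$ is just $G$ itself, "linked" means every vertex has a neighbour-of-a-neighbour (automatic unless $G$ has an isolated edge or vertex), and the degree-sequence condition with $d_1 \ge (\text{something}) + \mu n > 0$ already rules out isolated vertices; since $i = 1$ gives $d_1$ positive and linear, in fact $\delta(G)$ is linear, so this case is easy and should be dispatched separately at the start. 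A secondary point to be careful about is that \eqref{equ:deg-seq-def} only controls $d_i$ for $i \le n/\chi$, so when extracting the common-neighbourhood bounds I must only ever invoke degree lower bounds for vertices I have placed in $S$, never for the arbitrary starting vertex beyond the crude bound $\deg(v_1) \ge d_1$.
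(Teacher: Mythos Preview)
Your approach is essentially the same as the paper's: isolate the set $S$ of ``big'' vertices (degree at least $(1-1/\chi)n$), build a $k$-clique through an arbitrary $v_1$ using big vertices, and then use a common-neighbourhood count to produce the linked edge. The loose-component argument is also along the paper's lines. However, there is a genuine gap in your clique-building step.

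The problem is your claim that $\deg(v_1) \ge \frac{k-2}{k-1}n$ ``in the worst case''. Plugging $i=1$ into \eqref{equ:deg-seq-def} gives only $d_1 \ge (1-1/\chi-\alpha+\mu)n + O(1) = \frac{k-2}{\chi}n + \mu n + O(1)$, and since $\chi > k-1$ this is strictly smaller than $\frac{k-2}{k-1}n$ once $\mu$ is small. Tracing this through your greedy extension, the common neighbourhood of $v_1,\dots,v_{k-1}$ (with $v_2,\dots,v_{k-1}\in S$) is only guaranteed to have size at least $(k-1)\mu n$, whereas $|V(G)\setminus S|$ can be as large as roughly $n/\chi \ge n/k$. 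For $\mu \ll 1/k^2$ you therefore cannot ensure that the last vertex $v_k$ lands in $S$, and without $v_k \in S$ your common-neighbourhood bound on $\{v_2,\dots,v_k\}$ collapses.

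The paper's fix is to relax the requirement: it works inside $G' = G[N(v)]$ and shows that big vertices have relative degree at least $\frac{k-3}{k-2}$ in $G'$ (this is where the identity $\chi(1-\alpha) = k-1$ does real work). That only suffices to greedily build a $(k-1)$-clique $K \subset N(v)$ containing $k-2$ big vertices and one arbitrary vertex. The linked edge is then found by summing degrees over $K$: even with one low-degree vertex contributing just $d_1 \ge (1-\alpha-1/\chi+\mu)n$, the total $(k-2)(1-1/\chi)n + (1-\alpha-1/\chi+\mu)n$ exceeds $(k-2)n$, which forces a second common neighbour $w \ne v$. Your argument can be repaired in exactly this way --- demand only $k-2$ of $v_2,\dots,v_k$ to lie in $S$ --- but the ``short computation'' you defer is precisely the nontrivial part.
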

\begin{proof}
	We start by showing that $K_k(G)$ is linked.
	Note that the case $k=2$ is trivial as every vertex has degree at least $2$.
	So let $k\geq 3$.
	Label the vertices of $G$ by $v_1,\dots,v_n$ in ascending order of their degrees.
	Let $\alpha = 1-\frac{k-1}{\chi}$.
	By {assumption}, we have
	\begin{equation}
		\deg (v_i) \geq \left(1- \frac{1}{\chi} - \alpha\right)n +  \chi\alpha \cdot i + \mu n\text{ for all } i \in [n/\chi].
	\end{equation}
	Let $v=v_i$ be an arbitrary vertex of $G$, and set $G'=G[N(v)]$ with $n' = v(G')$.
	Since $G$ has a $(\chi,\mu)$-strong degree sequence, we have $n' \geq (1-\alpha - 1/\chi + \mu) n$.
	Say that a vertex $u \in V(G)$ is \emph{big} if $\deg(u) \geq \left(1- 1/\chi \right)n$.
	Denote by $B$ the set of big vertices.
	Since $G$ has a $(\chi,\mu)$-strong degree sequence, we have $|B| \geq (1-1/\chi)n$.
	Set $B' = B\cap N(v)$.
	It follows that $|B'| \geq (1 - 1/\chi)n - (n-n') = n' - n/\chi$.
	Similarly, $\deg_{G'}(u) \geq \deg_G(u) - (n-n') \geq n' - n/\chi$  for each $u \in B'$.
	We therefore have
	\begin{align*}
		\frac{d_{G'}(u)}{n'}, \frac{|B'|}{n'} & \geq \frac{n' - n/\chi}{n'}
		=1-\frac{n/\chi}{n'}
		\geq 1-\frac{1/\chi}{1- \alpha - 1/\chi + \mu}
		\\&\geq 1-\frac{1/\chi }{1- \alpha - 1/\chi} + \frac{k}{n}
		= \frac{1  -\alpha -2/\chi}{1  -\alpha - 1/\chi} + \frac{k}{n}
		= \frac{k-3}{k-2} + \frac{k}{n}
	\end{align*}
	where we used $n' \geq (1- \alpha - 1/\chi + \mu)n$ and $1/n \ll \mu,1/k$.\COMMENT{\myurls}
	So in particular, $v$ has a big neighbour.
	In fact, we can find a $(k-1)$-clique $K \subset G'$ with at least $k-2$ big vertices, by  greedily selecting  first the big vertices and then one more arbitrary vertex.
	At least $(k-2)(1-1/\chi)n+(1-\alpha - 1/\chi +\mu)n \geq (k-2)n + k$ edges are leaving $K$ in $G$.
	Hence there is some $w \neq v$ such that $V(K) \cup \{w\}$ forms the desired linked edge of $v$ in $K_k(G)$.

	Now for the loose components.
	We assume that $k\geq3$, since there is nothing to show otherwise.
	So in particular, any two big vertices share a neighbour in $G$.
	By the above argument, every edge that contains at least one big vertex can be extended to a $k$-clique.
	Together, this implies that all big vertices and their neighbours are in the same loose component.
	We have also shown above that each vertex has a big neighbour (as $v$ was arbitrary).
	It follows that $K_k(G)$ has one loose component.
\end{proof}

Finally, we use the following technical lemma, which provides a backstop to the number of tight components in robust frameworks.
The short proof can be found in \cref{sec:intermediate}.

\begin{proposition}\label{pro:backstop}
	Let $1/n \ll \mu' \ll \mu,1/\chi$ with $k = \lceil \chi\rceil$ and $\rho >0$.
	Suppose $G$ is a $\mu$-robust $(\chi,\rho;n^k,\ell)$-tiling framework on $n$ vertices.
	Then $G$ is a $\mu$-robust $(\chi,\rho;t,\ell)$-tiling framework on~$n$ vertices where $t \leq 1/\mu'^k$.
\end{proposition}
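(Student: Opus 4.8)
\textbf{Proof plan for \cref{pro:backstop}.}
The plan is to show that if a robust framework has more than $1/\mu'^k$ tight components in \emph{every} spanning subgraph, then one can find a $(\mu,\mu)$-approximation $G'$ that no longer satisfies property~\ref{item:framework-Geometry} (the space constraint), contradicting the robustness of the space property. The key observation is that each tight component $T$ of $K_k(G)$, whose vertex set is $V(T):=\bigcup_{e\in E(T)}e$, is responsible for a chunk of the fractional $B_\chi$-tiling: any homomorphic copy of $B_\chi$ (which contains $K_k$) uses a $k$-clique, and that clique lies in a single tight component, so the ``$B_\chi$-mass'' supported on a tight component $T$ lives entirely inside $V(T)$. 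If there are more than $1/\mu'^k$ tight components, a counting/averaging argument shows that some tight component $T$ has $|V(T)| \leq \mu' n \cdot (\text{small factor})$, since the vertex sets of distinct tight components can overlap but not too wildly once we pass to a spanning subgraph witnessing $(t,\ell)$-connectedness with $t$ minimal.

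The main steps, in order, are as follows. First I would pass to a vertex-spanning subgraph $J' \subseteq K_k(G)$ achieving the minimum number of tight components, call it $t_0$; if $t_0 \leq 1/\mu'^k$ we are done, so assume $t_0 > 1/\mu'^k$. Second, I would argue that in $J'$ the tight components $T_1,\dots,T_{t_0}$ have the property that $\{V(T_j)\}_j$ covers $V(G)$ but with bounded overlap — more precisely, by minimality of $t_0$, no tight component is contained in the union of the others (otherwise we could delete its private edges and merge), so each $V(T_j)$ contains a vertex private to it; combined with $t_0 > 1/\mu'^k$, by averaging there is some $T_{j^\ast}$ with $|V(T_{j^\ast})| < \mu'^{k} n \cdot k$ or thereabouts (I would need to be slightly careful here: the right statement is that the \emph{private} part is small, and then use that every vertex of a tight component is within the clique structure, so the whole component's vertex set cannot be much larger than its private part times a constant depending only on $k$ — this is the delicate combinatorial point). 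Third, let $G'$ be obtained from $G$ by deleting all edges of $G$ inside $V(T_{j^\ast})$; since $|V(T_{j^\ast})|$ is tiny, this changes at most $|V(T_{j^\ast})| \cdot n \leq \mu' k n^2 \ll \mu n^2 / $ something, and in particular each vertex loses at most $\mu n$ neighbours and no vertices are deleted, so $G'$ is a $(\mu,\mu)$-approximation. Fourth, in $G'$ the vertices of $V(T_{j^\ast})$ lie in no $k$-clique at all (every $k$-clique through such a vertex would have been an edge of $K_k(G)$ inside some tight component; I must check it would have to be inside $T_{j^\ast}$, which follows because such a clique shares $\geq k-1$ vertices with... — actually the cleaner route is: after deletion, $V(T_{j^\ast})$ induces an independent set, hence lies in no $K_k$ for $k \geq 2$), so no homomorphic image of $B_\chi$ covers them, and the fractional $B_\chi$-tiling of $G'$ has weight at most $(n - |V(T_{j^\ast})|) + \text{(overcount)}$; by choosing $\mu'$ small relative to $\rho$... wait — here I need $|V(T_{j^\ast})|$ to exceed $\rho n$ to violate $(1-\rho)$-perfectness, which is the \emph{opposite} of small.

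Let me correct the direction: the right argument picks $T_{j^\ast}$ to be a \emph{large} share of the space, not a small one. Since the $B_\chi$-mass is partitioned among $t_0 > 1/\mu'^k$ tight components and totals $\geq (1-\rho)n$, \emph{most} components carry little mass, but that is not directly useful either. The correct approach: iterate the robustness. Group the tight components into two classes and delete the edges inside the union of vertex sets of all ``light'' tight components (those each supporting $B_\chi$-mass at most $\mu' n$); if there are many tight components, the light ones together support mass more than $\rho n$ (provided the total number exceeds $\rho/\mu'$, which holds since $t_0 > 1/\mu'^k \geq \rho/\mu'$), so after deleting all edges internal to their combined vertex set — which, by a defect version of the linkage/clique structure, is an independent set in $G'$ of size at least the supported mass $> \rho n$, and the deletion is an admissible $(\mu,\mu)$-approximation because... hmm, the deletion could be large. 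So in fact the honest statement is: delete edges to kill exactly the light components one at a time, each deletion being tiny (since each light component has small vertex set, bounded by $O_k(\mu' n)$ by the privacy argument above), and after killing enough of them the fractional tiling weight drops below $(1-\rho)n$; as long as we only delete $O(1/\mu'^k)$ of them in total and each costs $O_k(\mu' n)$ vertices' worth of incident edges, the cumulative change is $O_k(\mu'^{1-k} \cdot \mu' n \cdot n) = O_k(\mu'^{2-k} n^2)$ — too big for $k \geq 3$.

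\textbf{The main obstacle}, then, is precisely this bookkeeping: making the deletion simultaneously (i) large enough in \emph{fractional-tiling-weight} terms to break the $(1-\rho)$-perfect space property, yet (ii) small enough in \emph{degree} terms to remain a $(\mu,\mu)$-approximation. I expect the resolution is that one does \emph{not} delete incident edges at all but rather deletes the \emph{vertices} of one carefully chosen light tight component and then observes $v(G') \geq (1-\mu)n$ is easy while the fractional tiling on $G-V(T_{j^\ast})$ plus the forced waste still cannot cover — no, vertex deletion also fails since $v(G') \geq (1-\eps)n$ forces $\eps$ small. Given these tensions, I suspect the actual proof uses the hierarchy $\mu' \ll \mu$ together with \cref{lem:reachability-components}-style bounds applied \emph{to the approximation}: if $G$ robustly has a $(1-\rho)$-perfect fractional $B_\chi$-tiling, then (by a fractional-to-integral / LP-duality argument) $G$ robustly has linear minimum degree on a $(1-\rho')$-fraction of its vertices into a ``core'', and then \cref{lem:reachability-components} applied to that core bounds the tight components by $O_k(\mu'^{-k})$ outright — the point being that too many tight components is itself incompatible with the robust space property because space forces density which forces few tight components. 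I would therefore structure the proof as: (1) robust space $\Rightarrow$ a robust ``almost linear minimum degree in a core'' statement via the defect form of Hall/LP duality on the fractional $B_\chi$-tiling; (2) apply the counting in \cref{prop:decomposition}/\cref{lem:reachability-components} to the core to bound the number of tight components that meet the core; (3) handle the $o(n)$ vertices outside the core by noting they contribute at most $o(n) \ll 1/\mu'^k$ further tight components, or absorb them. This keeps all deletions implicit inside the already-granted robustness and sidesteps the degree-versus-weight tension entirely.
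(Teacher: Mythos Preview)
Your proposal has a genuine gap: you are trying to derive the bound on the number of tight components from the robust \emph{space} property~\ref{item:framework-Geometry}, and this is why you keep running into the tension you describe between ``large enough to break space'' and ``small enough to stay an approximation''. The paper's proof ignores the space property entirely and instead uses the robust \emph{linkage} property~\ref{item:framework-linkage}.

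Here is what you are missing. By \cref{prop:supersaturated-linkage}, robustness upgrades ``every vertex is linked to some edge'' to ``every vertex $v$ is linked to at least $\mu' n^k$ edges of $J=K_k(G)$''. Call this set of edges $J_v$. Since $J_v$ is a $k$-graph with $\Omega(\mu')\binom{n}{k}$ edges, \cref{prop:dense-tight-component} (a Kruskal--Katona consequence) says $J_v$ contains a tight component $T_v$ whose shadow has at least $\mu'^{k-1}\binom{n}{k-1}$ edges. Let $T'_v$ be the tight component of $J$ containing $T_v$, and set $J'=\bigcup_v T'_v$. Then $J'$ is vertex-spanning (by linkage, $v$ lies in an edge of $T'_v$), and its tight components $T_1,\dots,T_t$ each satisfy $e(\partial T_i)\geq \mu'^{k-1}\binom{n}{k-1}$. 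Now comes the clean counting step you never reached: the shadows $\partial T_1,\dots,\partial T_t$ are \emph{edge-disjoint} $(k-1)$-graphs (distinct tight components cannot share a $(k-1)$-set in their shadow), so $t\cdot\mu'^{k-1}\binom{n}{k-1}\leq \binom{n}{k-1}$, giving $t\leq \mu'^{1-k}$. A short merging argument then fixes the loose-component count back to $\ell$ at the cost of at most doubling $t$.

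Your final suggestion --- ``robust space $\Rightarrow$ almost-linear minimum degree on a core, then apply \cref{lem:reachability-components}'' --- is not obviously salvageable either: a $(1-\rho)$-perfect fractional $B_\chi$-tiling does not by itself force any minimum degree on a linear-sized core (consider graphs that are unions of many small cliques), so step~(1) of that plan would need an additional argument that you have not supplied.
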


\begin{proof}[Proof of \cref{lem:deg-seq-mixed-framework}]
	We show that $G$ is a $\mu/4$-robust $(\chi+\mu/4,\rho;n^k,\ell')$-tiling framework with $\ell' = n^k$ if $k=2$ and $\ell'=1$ if $k \geq 3$.
	It then follows by \cref{pro:backstop} that $G$ is a $\mu/4$-robust $(\chi+\mu/4,\rho;t,\ell)$-tiling framework.
	(Note that when $k=2$, we may replace $\ell'$ by $\ell=t$ since loose and tight components are the same in graphs.)

	To show the above claim, let $G'$ be an arbitrary $(\mu/4,\mu/4)$-approximation of $G$ on $n' \geq (1-\mu/4)n$ vertices.
	Further let $\mu'=\mu/4$ and $\chi'=\chi+\mu/4$.
	It is easy to see that $G'$ still has a $(\chi',\mu')$-strong degree sequence.
	To show that $G'$ is a $(\chi',\rho;t,{\ell'})$-tiling framework, we verify the properties detailed in \cref{def:tiling-framework}.
	We apply \cref{thm:deg-seq-komlos} with $B_{\chi'}$ playing the role of $F$ to obtain property~\ref{item:framework-Geometry}.
	Next, we apply \cref{pro:deg-seq-linked-edges} to show that $K_k(G')$ is linked, which gives property~\ref{item:framework-linkage}.
	Moreover, by the second part of \cref{pro:deg-seq-linked-edges}, if $k\geq 3$ then $K_k(G')$ has precisely one loose component.
	We can trivially observe that $K_k(G')$ has at most $\binom{n'}{k}\leq n^k$ tight and loose components, thus applying \cref{pro:backstop} gives the result {(recall that $\chi+\mu/4\leq k$)}.
\end{proof}

\subsection{Universally dense graphs}\label{sec:quasirandom-host}
Here we show \cref{pro:quasirandom-framework}.
The result easily follows from the next two observations, whose proofs are sketched in \cref{sec:uni-dense-explanation}.
We remark that \cref{prop:dense-inseperable-matching} was originally stated with $G$ satisfying an inseparability condition.
However, the (short) proof only requires the weaker assumption of a linear minimum degree.

\begin{restatable}[{Corollary 5.27~\cite{LS23}}]{proposition}{cordensemanywellconnectedcliques}\label{cor:dense-many-well-connected-cliques}
	Let $1/n\ll\rho \leq \eta \ll \mu, d,1/k$.
	Let $G$ be a $(\rho, d)$-dense graph on $n$ vertices with $\delta(G) \geq \mu n$.
	Then $G[S]$ has at least $\eta n^{k}$ $k$-cliques for every $S \subset V(G)$ of size at least $\eta^{1/(2k)} n$.
\end{restatable}

\begin{restatable}[{Proposition 5.28~\cite{LS23}}]{proposition}{propdenseinseperablematching}\label{prop:dense-inseperable-matching}
	Let $1/n \ll \mu' \ll \rho \ll \eta \ll {\mu, d, 1/k}$.
	Let $G$ be a $(\rho, d)$-dense graph on $n$ vertices with $\delta(G) \geq \mu n$.
	Let $G' \subset G$ be a $(\mu',\mu')$-approximation of $G$.
	Then $G'$ contains a perfect fractional $K_k$-tiling.
\end{restatable}

\begin{proof}[Proof of \cref{pro:quasirandom-framework}]
	Let $G' \subset G$ be an arbitrary $(\mu',\mu')$-approximation of $G$ on $n'$ vertices.
	By \cref{prop:dense-inseperable-matching}, $G'$ has a perfect fractional $K_k$-tiling.
	{This implies that $G'$ has a perfect fractional $B_\chi$-tiling.
	Indeed, whenever a $k$-clique $K$ of $G'$ receives weight $x$, we can consider $k$ homomorphisms from $B_\chi$ to $K$, where each homomorphism maps the small part of $B_\chi$ to a distinct vertex of $K$.
	Giving each of these homomorphisms weight $x/k$ results in a fractional $B_\chi$-tiling of $K$ of weight $x$.
	Repeating this for all $k$-cliques of $G'$ turns the perfect fractional $K_k$-tiling of $G'$ into a perfect fractional $B_\chi$-tiling of $G'$.}

	Consider any vertex $v \in V(G')$.
	By assumption, we have $|N_{G'}(v)| \geq \mu n- \mu' n \geq {\mu n'/2}$.
	By \cref{cor:dense-many-well-connected-cliques}, $v$ has at least $\eta n'^{k} \geq \mu' n^k$ linked edges in $K_k(G')$.
	Finally, we can trivially observe that $K_k(G')$ has at most $\binom{n'}{k}\leq n^k$ tight and loose components.
	This shows that $G$ is a $\mu'$-robust $(k,0;n^k,n^k)$-tiling framework.
	Thus applying \cref{pro:backstop} gives the desired result {(recall that there are at most as many loose components as tight components)}.
\end{proof}

\section{Guest graph properties}\label{sec:guest-graph-properties}

This section is dedicated to the proofs of \cref{lem:hcf-gives-flexi-chrom-mixed,lem:miss_constant_frac,prop:quasirandom-wildcards}.

\subsection{Graphs in $\cFcr$ are flexible}\label{sec:hcf-gives-flexi-chrom}
Recall that in the statement of \cref{lem:hcf-gives-flexi-chrom-mixed}, we are given a graph $H$ consisting of (possibly different) tiles $F \in \cFcr$, and we have to show that $H$ contains \fc~subtilings.
This problem can be solved with the help of additive combinatorics.
An \emph{arithmetic progression} is a set of integers $\{x_i\}_{i\in\INTS}$ such that $x_{i+1}=x_i+d$  for all $i\in \INTS$ and some common \emph{difference} $d\in\NATS$.
Note that if a set of integers $D$ has greatest common divisor $\gcd (D) > 1$, then $D$ is contained in an arithmetic progression of difference at least $2$.
(The converse does not hold as can be seen by considering the set of all odd integers).

We derive \cref{lem:hcf-gives-flexi-chrom-mixed} from the following two propositions.
For a (multi)subset $X \subset \INTS$, define the \emph{set of subset sums} as $\mathbf{\Sigma}X = \{\sum_{x\in X'} x \colon X'\subset X\}$.

\begin{proposition}\label{lem:transversal_contains_interval}
	Suppose that $s,w\geq 3$ are integers and $A_1,\dots,A_s\subset [{1},w]$ are non-empty integer sets with $\gcd(A_i)=1$ for all $i\in[s]$.
	Suppose further that one of the following holds,
	\begin{enumerate}[\upshape (a)]
		\item \label{itm:transversal_contains_interval-mixed}	$s \geq {4}0 w\log w$,
		\item \label{itm:transversal_contains_interval-fixed}	$s\geq {2}0 w$ and $A_1 = \dots = A_s$.
	\end{enumerate}
	Then there exists a multiset $X=\{x_i\}_{i \in [s]}$ with $x_i\in A_i$
	such that $\mathbf{\Sigma}X$ contains an interval of length $w$.
\end{proposition}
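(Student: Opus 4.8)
The plan is to reduce the statement to a clean additive-combinatorial fact: if $A\subset[0,w]$ is a non-empty integer set with $\gcd(A)=1$, then sums of not-too-many elements drawn from copies of $A$ fill an interval of length $w$. The first step is a normalisation. For each $i$, since $\gcd(A_i)=1$ and $A_i\subset[0,w]$, I can pick two elements $a_i<b_i$ of $A_i$ with $\gcd(b_i-a_i,\dots)=1$ after finitely many choices; more precisely, a classical fact is that some subset of at most $\log_2 w$ elements of $A_i$ has gcd equal to $1$ (each new element that does not already lie in the subgroup generated by the previous ones at least halves that subgroup's index), and then the nonnegative integer combinations of those elements contain every sufficiently large integer. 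I would make this quantitative via a Frobenius/Chicken-McNugget-type bound: once we have elements $c_1,\dots,c_m\in A_i$ (with $m\le\log_2 w$) of gcd $1$ and all $\le w$, their set of nonnegative combinations contains every integer $\ge w^2$ (a crude bound suffices), and in fact $\mathbf{\Sigma}$ of the multiset consisting of $w$ copies of each $c_j$ already contains the interval $[w^2, w^2+w]$ or similar.

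The second step assembles these local choices across the $s$ indices. Group the indices into blocks; in case \ref{itm:transversal_contains_interval-fixed}, all $A_i$ are equal, so a single block of $\Theta(w)$ copies of the fixed set $A$ suffices by the normalisation above — here we only need $m\le\log_2 w$ distinct ``generators'' repeated $O(w)$ times, which is why $s\ge 10w$ is enough. In case \ref{itm:transversal_contains_interval-mixed} the sets differ, so for each $i$ we may only be able to extract a \emph{single} useful pair $\{0\text{-ish},d_i\}$ worth of flexibility per index; here the relevant mechanism is that of Lev's theorem (cited in the introduction as \cite{Lev10}): from $s$ sets each of gcd $1$ in $[0,w]$, the ``transversal subset sums'' contain a long interval once $s\gtrsim w\log w$, the $\log w$ factor being exactly the cost of needing up to $\log_2 w$ indices to nail down each residue. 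I would invoke (an adaptation of) Lev's result as the engine for \ref{itm:transversal_contains_interval-mixed}, having first translated each $A_i$ so that $0\in A_i$ (replacing $A_i$ by $A_i-\min A_i$, which only shifts the final interval and preserves both $\gcd$ and the bound $A_i\subset[0,w]$).

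Concretely the steps are: (1) translate so $0\in A_i$ for all $i$; (2) for case \ref{itm:transversal_contains_interval-fixed}, select a bounded gcd-$1$ subfamily of $A$ and use a Frobenius bound to get an interval of length $w$ from $O(w)$ copies, padding the remaining indices by choosing $x_i=0$; (3) for case \ref{itm:transversal_contains_interval-mixed}, apply the Lev-type theorem to the family $A_1,\dots,A_s$ directly to obtain a transversal $X=\{x_i\}$ with $\mathbf{\Sigma}X$ containing an interval of length $\ge w$; (4) undo the translation. The main obstacle I expect is step (3): getting the precise constant and the exact $w\log w$ threshold requires either citing Lev's theorem in a form matching our hypotheses or reproving the key lemma that a product family of gcd-$1$ sets has subset sums covering a long interval — in particular one must be careful that it is the \emph{transversal} sums (one element per $A_i$) and not arbitrary subset sums of $\bigcup A_i$ that are controlled, and that the bound degrades gracefully when many $A_i$ are, say, $\{0,1\}$ versus when they are sparse. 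The $\log w$ loss is genuinely necessary (consider all $A_i=\{0,2^j,2^j+1\}$ for varying $j$), so the proof must be organised to extract it cleanly rather than hope to avoid it.
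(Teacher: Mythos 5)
Your high-level instinct (reduce to Lev's sumset theorem, treat the two cases with different budgets) matches the paper, but there are three concrete gaps, two of them fatal as written. First, the normalisation in step (1) is wrong: replacing $A_i$ by $A_i-\min A_i$ does not preserve $\gcd(A_i)=1$ (e.g.\ $\{3,5\}\mapsto\{0,2\}$), and it does not merely shift $\mathbf{\Sigma}X$, because the shift $\sum_{i\in S}\min A_i$ depends on which subset $S$ you sum over; so step (4) "undo the translation" has no meaning. Second, and more importantly, in case \ref{itm:transversal_contains_interval-mixed} you propose to "apply the Lev-type theorem to the family $A_1,\dots,A_s$ directly", but Lev's theorem (the paper's \cref{lem:sumset_contains_interval}) concerns the \emph{sumset} $A_1+\dots+A_s$ of sets of size at least $3$ that are not contained in an arithmetic progression of difference at least $2$ --- a condition on differences --- whereas your sets may be singletons or sets like $\{3,5\}$ lying in an AP of difference $2$; only the \emph{element}-gcd is $1$. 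Citing "Lev in a form matching our hypotheses" is circular: that form is exactly the proposition to be proved. The entire content of the paper's proof is the missing reduction: it groups the $A_i$ into blocks of size $O(\log w)$, picks one element $x_i$ from each, and uses the prime-factor argument ($|\pi(x_3)|\le\log_2 w$, and each new index strictly shrinks the set of common primes) to build from each block a set $B_j=\{x_1\}\cup(\{x_1\}+\{x_2,\dots,x_t\})$ of size $\ge 3$ whose internal differences have gcd $1$ (hence $B_j$ lies in no AP of difference $\ge 2$) and which satisfies $B_j\subseteq\mathbf{\Sigma}(\text{block})$; only then does Lev apply, to $B_1+\dots+B_q$ with $q\ge s/(10\log w)-1\ge w-1$.

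Third, your case \ref{itm:transversal_contains_interval-fixed} does not fit its budget: $m\le\log_2 w$ generators repeated $\Theta(w)$ times each uses $\Theta(w\log w)$ indices, not $O(w)$, and the Frobenius-type claim that subset sums of a \emph{bounded} multiset (as opposed to the full numerical semigroup) contain an interval of length $w$ is not justified. The paper instead exploits that all $A_i$ are equal by taking $B_1=\dots=B_q=\{x_1\}\cup(x_1+A_1)$ with block size $n=|A_1|+1$: a large $|A_1|$ means fewer blocks are needed in Lev's theorem ($q\ge 2\lceil(w-1)/(n-2)\rceil$), which is exactly why $s\ge 10w$ suffices there while $s\ge 10w\log w$ is needed in the mixed case.
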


The proof of \cref{lem:transversal_contains_interval} is deferred to the end of this section.
We also need the following result, which we prove under  more general circumstances in \cref{sec:wildcards} (see \cref{prop:build_wildcards}).
\begin{proposition}\label{prop:build_wildcards-simple}
	Let $k,w \in \NATS$.
	Let $H_1,\dots,H_{k^2}$ be graphs such that $H_i$ has a proper (topological) $k$-colouring $\theta_i$ for each $i\in[k^2]$ satisfying the following property.
	For all $y \in [w]$, there exists a proper (topological) $k$-colouring $\theta'_i$ of $H_i$ such that $\ord(\theta'_i)(j)=\ord(\theta_i)(j)$ for $j\in[k-2]$ and $\ord(\theta'_i)(k)=\ord(\theta_i)(k)+y$.
	Then $\bigcup_{i \in [k^2]}H_i$ is properly (topologically) $(k,w/k)$-\fc{}.
\end{proposition}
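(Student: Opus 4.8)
\textbf{Proof proposal for \cref{prop:build_wildcards-simple}.}

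The plan is to build, from the $k^2$ ``one-directional'' colouring moves on the individual $H_i$, all moves $\vecb w \in \INTS^k$ with $\sum_i \vecb w(i) = 0$ and $\maxnorm{\vecb w} \le w/k$. First I would record the basic reformulation: it suffices to realise, for the fixed central colouring $\phi := \bigcup_i \theta_i$ of $W := \bigcup_{i\in[k^2]} H_i$, every target vector $\vecb w$ of the stated form as $\ord(\phi) - \ord(\phi')$ for some proper (resp.\ topological) $k$-colouring $\phi'$ of $W$. Since the $H_i$ are vertex-disjoint, a colouring of $W$ is just a choice of colouring on each $H_i$, and $\ord$ is additive over the $H_i$; so it is enough to write $\vecb w = \sum_{i\in[k^2]} \vecb w_i$ where each $\vecb w_i$ is a move realisable on $H_i$.

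The building blocks available are, for each $i$ and each $y\in[w]$, a colouring $\theta_i'$ with $\ord(\theta_i) - \ord(\theta_i') = y(\vecb e_k - \vecb e_{k-1})$ in the notation where $\vecb e_j$ is the $j$-th standard basis vector (it moves $y$ vertices from colour class $k$ to colour class $k-1$, or rather the class sizes change by $-y$ in coordinate $k$ and $+y$ in coordinate $k-1$, leaving coordinates $1,\dots,k-2$ fixed); taking $y=0$ (i.e.\ $\theta_i' = \theta_i$) is also allowed. By relabelling colours, for any ordered pair $(a,b)$ of distinct colours and any $y\in\{0\}\cup[w]$ we likewise get a move $y(\vecb e_b - \vecb e_a)$ on $H_i$ --- here I would just note that a permutation of the colour set sends a proper colouring to a proper colouring and a topological one to a topological one, and permutes the coordinates of $\ord$ accordingly, so the hypothesis is symmetric under relabelling. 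Thus each $H_i$ can realise any single ``transfer'' move of size at most $w$ between one ordered pair of colour classes.

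The combinatorial heart is then: decompose an arbitrary $\vecb w\in\INTS^k$ with $\onenorm{\vecb w}$ small and $\sum_j\vecb w(j)=0$ into at most $k^2$ such transfer moves, each of size at most $w$. I would do this greedily: while $\vecb w\neq 0$, pick a coordinate $a$ with $\vecb w(a)<0$ and a coordinate $b$ with $\vecb w(b)>0$ (both exist since the entries sum to $0$ and $\vecb w\neq 0$), let $y = \min\{-\vecb w(a),\vecb w(b)\}\le \maxnorm{\vecb w}\le w/k \le w$, assign to the next unused $H_i$ the move $y(\vecb e_b - \vecb e_a)$, and replace $\vecb w$ by $\vecb w - y(\vecb e_b - \vecb e_a)$; this zeroes out at least one coordinate per step (and never creates new nonzero coordinates or increases $\maxnorm{\cdot}$), so it terminates in at most $k-1\le k^2$ steps. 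Assign the trivial move $\theta_i=\theta_i$ to any leftover $H_i$. Concatenating these colourings gives $\phi'$ with $\ord(\phi)-\ord(\phi')=\vecb w$, and $\phi'$ is proper (resp.\ topological) because each piece is. Finally one checks the range: the definition of $(k,s)$-\fc{} with $s = w/k$ asks exactly for all $\vecb w$ with $\maxnorm{\vecb w}\le w/k$, which is what the above handles. The main obstacle is really just the bookkeeping --- making sure the greedy decomposition uses at most $k^2$ of the $H_i$ and that each transfer stays within the allowed magnitude $w$ (this is where the bound $\maxnorm{\vecb w}\le w/k\le w$ is used, comfortably); everything else is routine once the additive/relabelling reductions are in place.
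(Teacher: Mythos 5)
Your overall strategy --- decompose the target vector $\vecb w$ into pairwise colour-class transfers of magnitude at most $w$ and realise each transfer on a separate $H_i$ --- is the same as the paper's, and your greedy decomposition into at most $k-1$ transfers of magnitude at most $\maxnorm{\vecb w}\le w/k$ is fine (the paper instead routes $\vecb w$ as a flow on $K_k$ via \cref{pro:weights_to_flows}, one transfer per pair of colours, of magnitude at most $\onenorm{\vecb w}\le w$). The genuine gap is in the relabelling step. You fix the central colouring to be $\phi=\bigcup_i\theta_i$ and then claim that, ``by relabelling colours'', each $H_i$ can realise the move $y(\vecb e_b-\vecb e_a)$ for \emph{any} ordered pair $(a,b)$. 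Relabelling by a permutation $\sigma$ turns the guaranteed pair $(\theta_i,\theta_i')$ into $(\sigma\circ\theta_i,\sigma\circ\theta_i')$, which realises a transfer between colours $\sigma(k-1)$ and $\sigma(k)$ only \emph{relative to the relabelled reference colouring} $\sigma\circ\theta_i$. Since $\ord(\sigma\circ\theta_i)\neq\ord(\theta_i)$ in general, the difference $\ord(\theta_i)-\ord(\sigma\circ\theta_i')$ is not of the form $y(\vecb e_b-\vecb e_a)$. Relative to your fixed $\phi$, the only moves the hypothesis guarantees on $H_i$ are $y(\vecb e_{k-1}-\vecb e_k)$ with $y\in\{0\}\cup[w]$: one ordered pair, one direction.

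Consequently the greedy assignment of an arbitrary transfer ``to the next unused $H_i$'' cannot work: the central colouring must be fixed before $\vecb w$ is revealed, and once it is fixed each $H_i$ is committed to a single ordered pair and direction. The repair is to make this commitment explicit: assign to each ordered pair $(a,b)$ of colours its own graph $H_{(a,b)}$ (possible since $k(k-1)\le k^2$ --- this is why the proposition asks for $k^2$ graphs rather than the $k-1$ your greedy step would suggest), and take as central colouring on $H_{(a,b)}$ the colouring $\theta$ with classes $k-1$ and $k$ relabelled to $a$ and $b$. Your greedy decomposition uses each ordered pair at most once (each step permanently zeroes one of the two coordinates involved), so every transfer can then be routed to its dedicated graph. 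This is essentially what the paper's proof of \cref{prop:build_wildcards} does: it relabels the \emph{central} colouring of $H_{i,j}$ so that the flexible classes become $i$ and $j$, fixes one graph per pair in advance, and only then decomposes $\vecb w$.
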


Now we are ready to show the first main result of this section.

\begin{proof}[Proof of \cref{lem:hcf-gives-flexi-chrom-mixed}]
	Suppose without loss of generality that $v(H)=n$.
	Indeed by assumption $v(H)\leq n $ and so this can be achieved by decreasing $n$.
	We have to show that $H$ contains $t$ disjoint proper $(k,kw)$-\fc{} subtilings and one topological $(\ell,w)$-\fc{} subtiling each of order at most $w'$.
	We will use \cref{lem:transversal_contains_interval} to build coloured subtilings that can be recoloured to give a controlled increase and decrease in the number of vertices coloured $k$ and~$k-1$ respectively.
	Applying \cref{prop:build_wildcards-simple} then gives the result.

	We begin by considering case \ref{itm:Fcr_for_mixed}.
	Let $w^\ast = k^2w$ and $s = k^{8} w \log w$.
	Note that $s \geq 2^4 k^2 w k^2\log w \geq 10 {(w^\ast+1) \log (w^\ast+1)}$ as $k \geq 2$.
	We begin by showing that $H$ contains $t$ disjoint proper $(k,kw)$-\fc{} subgraphs.
	Our plan is to apply \cref{prop:build_wildcards-simple}.
	Hence, we have to find subtilings $H_1,\dots, H_{k^2}$ satisfying its assumptions.
	To this end, denote the tiles of $H$ by $F_1,\dots,F_q$ and let $A_i=\cD(F_i){\setminus \{0\}}$ for each $i\in [q]$.
	{We remark that $A_i\subseteq [1,w]\subseteq [1,w^\ast]$.}
	Note that by definition of $\cFcr$ we have $\gcd(A_i)=1$.
	Consider tiles $F_1,\dots, F_{s} \subset H$, which is possible as $n \geq s w$ and each $F_i \in \cFcr(k,w)$ has order at most $w$.
	We apply \cref{lem:transversal_contains_interval}\ref{itm:transversal_contains_interval-mixed} to $A_1,\dots,A_{s}$ with {$w^\ast+1$} playing the role of $w$ to obtain a multiset $X=\{x_i\}_{i \in [s]}$ with $x_i\in A_i$ such that $\mathbf{\Sigma}X$ contains an interval $\{z, \dots, z+w^\ast\}$ for some $z \in \NATSZ$.

	For each $i\in[s]$, fix a proper $k$-colouring $\phi_i$ of $F_i$ such that  $x_i = |{\phi}_i^{-1}(k-1)| - |{\phi}_i^{-1}(k)|$.
	By definition of $\mathbf{\Sigma}X$, there is a set $I\subset [s]$ with $\sum_{i\in I}x_i = z$.
	We obtain a proper $k$-colouring $\theta_i$ from each $\phi_i$ by permuting colour classes $k$ and $k-1$ if and only if $i \in I$.
	Let $H_1=\bigcup_{i \in [s]} F_i$ and $\theta = \bigcup_{i \in [s]} \theta_i$.
	Since $n \geq k^2 s w$, we can choose (pairwise disjoint) further subgraphs $H_2,\dots,H_{k^2} \subset H$ with the same property.
	Let $W_1 =\bigcup_{i \in [k^2]} H_i$.

	We aim to apply \cref{prop:build_wildcards-simple} to $H_1,\dots,H_{k^2}$ with $w^\ast$ playing the role of $w$.
	To see that,~$H_1$ say, satisfies the conditions consider an arbitrary $y \in [w^\ast]$.
	We have to show that there exists a second proper $k$-colouring $\theta'$ of $H_1$ such that $\ord(\theta')(j)=\ord(\theta)(j)$ for each $j\in[k-2]$ and $\ord(\theta')(k)=\ord(\theta)(k)+y$.
	By definition of $\mathbf{\Sigma}X$, there exists $I' \subset [s]$ such that $\sum_{i\in I'}x_i = z+y$.
	Let $\theta'$ be the colouring of $H_1$ with restriction $\theta'_i$ to $F_i$ where $\theta'_i$ is $\theta_i$ with colour classes $k-1$ and $k$ permuted if $i$ is in the symmetric difference of $I$ and $I'$ and $\theta'_i=\theta_i$ otherwise.
	Then by construction
	\[\ord(\theta')(k)-\ord(\theta)(k) =  \sum_{i\in I'\sm I}x_i -  \sum_{i\in I\sm I'}x_i  = \sum_{i\in I'}x_i - \sum_{i\in I}x_i = z+y - z = y.\]
	Moreover, we have $\ord(\theta)(j)=\ord(\theta')(j)$ for each $j \in [k-2]$.
	Hence, \cref{prop:build_wildcards-simple} applied with $w^\ast=k^2w$ playing the role of $w$ reveals that $W_1 =\bigcup_{i \in [k^2]} H_i$ is proper $(k,kw)$-\fc{}.
	Moreover, we have $v(W_1) \leq  k^2sw \leq w'$, since {$W_1$} consists of $k^2 s$ tiles of $H$, which have order at most $w$ each.
	We then select $W_2,\dots,W_t \subset H$ pairwise disjoint further proper $(k,kw)$-\fc{} subgraphs in the same way, which is possible since $n \geq {(t+\ell)w'}$.

	Finally, we have to show that $H$ contains a topological $(\ell,w)$-\fc{} subtiling $W$ (disjoint of $W_1,\dots,W_t$).
	If $\ell=1$ this is trivial (any graph is $(1,w)$-\fc{} for any $w\in \NATS$), otherwise we simply repeat the same argument as above, which is possible by definition of $\cFcr$ for $k=2$ and since $w'\geq \ell^{10} w^2\log w$.
	Observe that we only swapped certain colour classes in certain tiles; this preserves proper and topological colourings.
	\COMMENT{We replace all proper colourings by topological colourings and $k$ by $\ell$ in the above arguments. In particular we replace $\cD(F_i)$ by $\cD_\text{top}(F_i)$.}

	For \ref{itm:Fcr_for_non-mixed} the argument is largely the same as above.
	Since $F_1 = \dots =F_q$, we have $A_1 = \dots =A_q$ where $A_i=\cD(F_i)$.
	Set $s = k^{8} w$ and (again) $w^\ast = k^2 w$.
	As $s \geq {20} w^\ast$, we can conclude by \cref{lem:transversal_contains_interval}\ref{itm:transversal_contains_interval-fixed} that $\mathbf{\Sigma}X$ contains the desired interval.
	The rest of the proof is identical.
\end{proof}

It remains to show \cref{lem:transversal_contains_interval}.
To this end, let us introduce some further notation.
Let $A_1,\dots,A_{q}$ be subsets of $\INTS$.
We define the \emph{sumset} $A_1 + \dots + A_{q}$ as $\{a_1+ \dots + a_{q}\colon  a_i\in A_i \text{ for } i\in[{q}]\}$.
Similarly, let $A_1-A_2 = \{a_1 -a_2 \colon  a_1\in A_1 \text{ and } a_2\in A_2\}$.
We require the following result.
\begin{lemma}[Lev~\cite{Lev10}]\label{lem:sumset_contains_interval}
	Suppose that ${q}\geq 1$, ${m}\geq 3$ are integers and $A_1,\dots,A_{q}\subset [0,w]$ are integer sets of order at least {$m$} such that none of them are contained in {an} arithmetic progression  with common difference at least $2$.
	If ${q}\geq 2\lceil (w-1)/({m}-2) \rceil$, then the sumset $A_1+ \dots +A_{q}$ contains an interval of length ${q}({m}-1)$.
\end{lemma}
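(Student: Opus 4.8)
The plan is to normalise, record the elementary Cauchy--Davenport bound, and then run a two--phase ``seed and grow'' argument: a bounded number of the $A_i$ build a first long interval, and the remaining ones lengthen it, each by at least $n-1$.

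First I would normalise. Replacing each $A_i$ by $A_i-\min A_i$ merely translates $A_1+\dots+A_s$ by $\sum_{i\in[s]}\min A_i$, so it affects neither the hypotheses nor whether the sumset contains an interval of a prescribed length; hence we may assume $0\in A_i\subseteq[0,w]$ for all $i$. In this normalisation the hypothesis that $A_i$ lies in no arithmetic progression of common difference at least $2$ is exactly $\gcd(A_i)=1$. Iterating the inequality $|X+Y|\ge|X|+|Y|-1$ (valid for finite non-empty $X,Y\subseteq\INTS$) gives
\[
  |A_1+\dots+A_s|\ \ge\ \sum_{i\in[s]}|A_i|-(s-1)\ \ge\ s(n-1)+1,
\]
so the target length $s(n-1)$ is (a lower bound for) the cardinality of the sumset itself; morally we must show that this large, aperiodic set of small diameter cannot avoid a macroscopic interval.

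The \emph{growing} step is the routine half. \emph{Claim.} If a set $S$ contains an interval $I$ of length at least $w$ and $A\subseteq[0,w]$ with $0\in A$ and $|A|\ge n$, then $S+A$ contains an interval of length at least $|I|+\max A\ge|I|+(n-1)$. Indeed, for every $a\in A$ the translate $I+a$ is an interval of length $|I|$, and for consecutive elements $a<a'$ of $A$ we have $a'-a\le\max A\le w\le|I|$, so $I+a$ and $I+a'$ meet; hence $\bigcup_{a\in A}(I+a)$ is a single interval, of length $|I|+\max A$, and $\max A\ge n-1$ because $A$ contains $n$ distinct non-negative integers. Consequently, once a partial sumset $A_1+\dots+A_p$ contains an interval of length $L_p\ge w$, applying the claim with each of $A_{p+1},\dots,A_s$ produces in $A_1+\dots+A_s$ an interval of length at least $L_p+(s-p)(n-1)$.

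The \emph{seeding} step is the crux, and this is where $\gcd(A_i)=1$ and the bound $s\ge 2\lceil(w-1)/(n-2)\rceil=:2r$ are really used. Writing $L_p$ for the length of the longest interval in $A_1+\dots+A_p$, the goal is to reach a stage $p^\ast\le 2r$ with $L_{p^\ast}\ge w$ and $L_{p^\ast}\ge p^\ast(n-1)$ simultaneously; the growing step then gives $L_s\ge L_{p^\ast}+(s-p^\ast)(n-1)\ge s(n-1)$ for all $s\ge p^\ast$, which is the conclusion. One reaches such a $p^\ast$ in two sub-steps. First, $L_p$ climbs to at least $w$ within about $r$ steps: a partial sumset of $\gcd$ equal to $1$ whose diameter is bounded in terms of the number of summands used so far cannot have all of its runs of consecutive integers short, so a definite amount — of order $n-2$ per step — is added to the longest interval until it has length $w$. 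Second, the deficit $p(n-1)-L_p$ accrued during that phase (at most of order $r(n-1)$) is recovered within about $r$ further steps, because from the moment $L_p\ge w$ every further set $A_i$ adds $\max A_i$ to the longest interval while only $n-1$ is ``charged'' against it, and one bounds below the resulting surplus $\sum(\max A_i-(n-1))$; the factor $2$ in the hypothesis is precisely this budget of ``up to $r$ steps to seed, up to $r$ more to catch up''. Making this bookkeeping precise with all constants in the right place is the hard part: it is essentially the content of Lev's theorem~\cite{Lev10}, and I would carry it out following his argument, a refinement of the Freiman-- and Kneser--type structure theory for integer sumsets of bounded diameter.
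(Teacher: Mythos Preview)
The paper does not prove this lemma at all: it is quoted verbatim from Lev~\cite{Lev10} and used as a black box in the proof of \cref{lem:transversal_contains_interval}. So there is no ``paper's own proof'' to compare against; both the paper and you ultimately defer to~\cite{Lev10}.

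That said, your outline is a faithful sketch of the standard approach. The normalisation to $0\in A_i$ and the identification of the hypothesis with $\gcd(A_i)=1$ are correct; the trivial sumset bound $|A_1+\dots+A_s|\ge s(n-1)+1$ is right; and your ``growing'' claim --- that once a partial sumset contains an interval of length at least $w$, each further summand $A_i\subseteq[0,w]$ extends it by at least $\max A_i\ge n-1$ --- is exactly the mechanism that drives the second phase. You correctly isolate the seeding phase as the genuine content of Lev's result and honestly flag that you would follow his argument there rather than reproduce it. This is appropriate: the paper itself makes no attempt to do more.
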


For $x\in \NATS$, define $\pri(x)$ to be the set of prime factors of $x$ and for $X\subset \NATS$ define $\pri(X)=\bigcap_{x\in X}\pri(x)$.
Thus $\gcd(X)=1$ if and only if $\pri(X)=\es$ since $1$ is not prime.

\begin{proof}[Proof of \cref{lem:transversal_contains_interval}]
	The idea is to construct a set $X$ such that $\mathbf{\Sigma} X$ contains $B_1+\dots+B_q$ where $B_1,\dots,B_q\subset\INTS$ satisfy \cref{lem:sumset_contains_interval}.

	Let us start by proving~\ref{itm:transversal_contains_interval-mixed}.
	We begin constructing the set $X=\{x_i\}_{i \in [s]}$ as follows.
	Suppose first that there are pairwise distinct $x_1 \in A_1,$ $x_2 \in A_2$, $x_3 \in A_3$.
	(We deal with the other case below.)
	We then take a set $\{x_3,x_4,\dots,x_t\}$ with $x_{i+3}\in A_{i+3}$ for $i \in [t-3]$ which is maximal among all sets satisfying $\pri(\{x_{3},\dots,x_j\})\not\subset \pri(x_{j+1})$ for all  $j\in \{3,\dots,t\}$.
	Such a set has $\pri(\{x_{3},\dots,x_t\})= \es$ by maximality and as $\gcd(A_{t+1})=1$.
	Note that, $A_{t+1}$ exists, because $t\leq 10\log w$.
	The latter bound is obtained as follows.
	We have $|\pri(\{x_{3},\dots,x_j\})|\leq |\pri(\{x_{3},\dots,x_{j-1}\})| -1 $ for all $j\in \{4,\dots,t\}$.
	It follows that $t - 3 \leq |\pi(x_3)| \leq \log_2 x_3 \leq 4 \log w$ and so $t\leq 10\log w$.
	Further it is clear that $\{x_3,\dots, x_t\}$ are distinct.
	Denote $B'_1 = \{x_2,\dots,x_t\}$ and let $B_1= \{x_1\} \cup (\{x_1\} + B'_1)$.
	Let $d$ be the difference of an arbitrary arithmetic progression containing $B_1$.
	We claim that $d = 1$.
	This follows since $d$ by definition divides $\gcd(B_1-B_1)$ and we have $B_1' \subset (B_1-B_1)\sm \{0\}$ and $\gcd  (B_1')=1$ by construction.
	{Observe that $B_1\subseteq [1,2w]$.}

	Now suppose that $x_1 \in A_1$, $x_2 \in A_2$, $x_3 \in A_3$ cannot be chosen distinctly.
	Then either we have $|A_i| = 1$ and thus  $ A_i = \{ 1 \}$ for some $i\in [3]$ (say $i=1$), {because $\gcd(A_1)=1$ for all $i\in[s]$}, or we have $|A_i| = 2 $ for all $i\in [3]$ and so $A_1=A_2=A_3 = \{a_1,a_2\}$.
	In the former case we let $B_1 = \{x_2,x_2+x_3,x_2 + x_3+ 1\}$ and in the latter case we let $B_1 = \{a_1,a_1+a_1,a_1+a_2\} $.
	It is easy to check that the common difference of any arithmetic progression containing $B_1$ is also $1$ as in the other case.

	Next we define, in the same way, a set $B_2$ using only the sets $A_i$ that were not used above.
	We continue this way until sets $B_1,\dots,B_{q+1}$ are defined, where $B_{q+1}$ acts as a leftover set.
	So ${B_j \subset [1,2w]}$ is not contained in an arithmetic progression with common difference at least $2$ and $|B_j| \geq 3$ for each $j \in [q]$.
	Moreover, $|B_j| \leq 10 \log w$ for each $j \in [q+1]$.
	The {latter} implies that $q \geq s/(10 \log w) -1 \geq {4}w-1\geq {2(2w-1)}$ by assumption of~\ref{itm:transversal_contains_interval-mixed}.
	Hence, we can apply \cref{lem:sumset_contains_interval} with ${2w}$ playing the role of $w$, ${m}=3$ and $q$ playing the role of $s$ to reveal that $B_1+ \dots +B_q$ contains an interval of length $2q \geq w$.
	Finally, note that $\mathbf{\Sigma} X$ contains $B_1+\dots+B_q$ by construction.
	This shows~\ref{itm:transversal_contains_interval-mixed}.

	The proof of~\ref{itm:transversal_contains_interval-fixed} follows the same pattern.
	However, since $A_1=\dots=A_s  $, we can choose $B_1=\dots=B_q$ with $q=\lfloor \frac{s}{{m}} \rfloor$\COMMENT{$B_1 = x_1 \cup \{x_1 + A_1\}$ for some arbitrary $x_1\in A_1$. Thus $(B_1-B_1)\sm \{0\}$ contains $A_1$} and ${m}=|A_1|+1$.
	By assumption of~\ref{itm:transversal_contains_interval-fixed}, we have $q \geq {10} (w-1) / {m} \geq 2\lceil (w-1)/({m}-2) \rceil$.
	If $|A_1|=1$, then $A_1=\{1\}$ and we are done trivially, so we can apply \cref{lem:sumset_contains_interval} with $w$, ${m}=|A_1|+1\geq 3$ and $q$ playing the role of $s$ to see that $B_1+\dots+B_q$ contains an interval of length $q({m}-1) \geq s/4{\geq w}$.
\end{proof}

\subsection{Graphs of low critical chromatic number are flexible}
\label{sec:miss_constant_frac}

In this section we show \cref{lem:miss_constant_frac}.
We require the following simple result, which states that graphs of bounded maximum degree and sufficiently low critical chromatic number allow for flexible proper colourings.
Recall that for a graph $G$ with chromatic number $k$, we define $\alpha(G)$ to be the minimum proportion of vertices in a single colour class over all proper $k$-colourings of $G$.

\begin{proposition}\label{prop:max_degree_gives_wildcards}
	Let $F$ be a graph on $w$ vertices with  $\Delta(F) = \Delta$,	$\alpha(F) < \frac{1}{\Delta + 1}$ and $\chi(F)=k \geq 3$.
	Then there exists a proper $k$-colouring $\phi$ of $F$ with the following properties.
	For all $y\in \NATSZ$ with
	$y < \frac{w}{k}(\frac{1}{\Delta+1}-\alpha(F))$, there exists a proper $k$-colouring $\phi'$ of $F$ with $\ord(\phi')(i)=\ord(\phi)(i)$ for $i\in [k-2]$ and $\ord(\phi')(k)=\ord(\phi)(k)+y$.
\end{proposition}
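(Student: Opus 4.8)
The plan is to start from a proper $k$-colouring of $F$ that is as unbalanced as possible between colours $k-1$ and $k$, and then argue that we can iteratively move vertices one at a time from colour class $k-1$ to colour class $k$, each such move being legal because the relevant colour classes are never too large. Concretely, fix a proper $k$-colouring $\psi$ of $F$ realising $\alpha(F)$, so that some colour class of $\psi$ has exactly $\alpha(F) w$ vertices. By permuting colour names we may assume that $|\psi^{-1}(k)| = \alpha(F)w$ is the smallest class; set $\phi := \psi$. I claim this $\phi$ works.

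To recolour, take $y \in \NATSZ$ with $y < \tfrac{w}{k}\bigl(\tfrac{1}{\Delta+1} - \alpha(F)\bigr)$ and build $\phi'$ from $\phi$ by a greedy one-vertex-at-a-time procedure. Suppose at some intermediate stage we have a proper $k$-colouring $\phi^{(j)}$ with $\ord(\phi^{(j)})(i) = \ord(\phi)(i)$ for $i \in [k-2]$, and with $j$ vertices moved from class $k-1$ into class $k$ so far, where $0 \le j < y$. The key estimate is that after $j < y$ moves the class $\phi^{(j)-1}(k)$ has size $\alpha(F)w + j < \alpha(F)w + \tfrac{w}{k}\bigl(\tfrac{1}{\Delta+1}-\alpha(F)\bigr) \le \tfrac{w}{\Delta+1}$ (using $\alpha(F) \le 1/k$ and $k \ge 3$, so the bound is at most $\tfrac{w}{\Delta+1}$ — this is the arithmetic I would do carefully). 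Hence there are more than $\Delta \cdot |\phi^{(j)-1}(k)|$ many... rather: pick any vertex $v \in \phi^{(j)-1}(k-1)$; its at most $\Delta$ neighbours occupy at most $\Delta$ vertices, so they cannot fill up all of $\phi^{(j)-1}(k)$ provided $|\phi^{(j)-1}(k)| > \Delta$... The cleaner way: since $v$ has at most $\Delta$ neighbours and colour class $k$ currently has fewer than $w/(\Delta+1)$ vertices, if $v$ had a neighbour in every... Let me just say: recolour $v$ to colour $k$ if no neighbour of $v$ currently has colour $k$. I must ensure such a $v$ exists. If every vertex of colour $k-1$ has a neighbour of colour $k$, then each of the (at least $|\phi^{(j)-1}(k-1)|$) vertices of colour $k-1$ sends an edge to colour $k$, but each colour-$k$ vertex has degree $\le \Delta$, forcing $|\phi^{(j)-1}(k-1)| \le \Delta |\phi^{(j)-1}(k)|$; on the other hand $|\phi^{(j)-1}(k-1)| + |\phi^{(j)-1}(k)| \ge \alpha(F)w + (\text{average of remaining classes}) \ge \tfrac{2w}{k}$ roughly, which together with $|\phi^{(j)-1}(k)| < w/(\Delta+1)$ contradicts the bound for $k\ge 3$. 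So a valid $v$ exists, we recolour it, and continue; after $y$ steps we obtain the desired $\phi'$.

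I would organise the write-up as: (1) define $\phi$; (2) the greedy step lemma, stated as: as long as the class $k$ has size $< w/(\Delta+1)$ and is not the unique largest class, some colour-$(k-1)$ vertex has no colour-$k$ neighbour; (3) the counting argument for (2) via double-counting edges between classes $k-1$ and $k$, using $\Delta(F) = \Delta$; (4) check the size bound $\alpha(F)w + y < w/(\Delta+1)$ survives all $y$ steps, which is exactly the hypothesis on $y$ (here is where $\alpha(F) < \tfrac{1}{\Delta+1}$ and the factor $\tfrac1k$ and $k\ge3$ enter — note also we never touch classes $1,\dots,k-2$, and class $k-1$ only shrinks, so properness of those classes is automatic). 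The main obstacle is step (3)/(4): one must be careful that the inequality $|\phi^{(j)-1}(k-1)| \le \Delta\,|\phi^{(j)-1}(k)|$ really is contradicted, which requires a decent lower bound on $|\phi^{(j)-1}(k-1)| + \dots$ — the cleanest route is to observe that since $\phi$ was chosen to minimise the smallest class, all of classes $k-1,\dots$ have size $\ge \alpha(F)w$, and in fact we only need that classes $k-1$ and $k$ together contain at least $\tfrac{2w}{k} - (k-2)\cdot 0$... I would instead directly bound using $v(F)=w$ and $|\phi^{(j)-1}(i)|$ fixed for $i\le k-2$: then $|\phi^{(j)-1}(k-1)| + |\phi^{(j)-1}(k)| = w - \sum_{i\le k-2}|\phi^{-1}(i)| \ge w - (k-2)\cdot \tfrac{w}{k}\cdot(\text{something})$; since the smallest class has $\alpha(F)w$ vertices and there are $k$ classes, $\sum_{i \le k-2}|\phi^{-1}(i)| \le w - 2\alpha(F)w$, giving $|\phi^{(j)-1}(k-1)|+|\phi^{(j)-1}(k)| \ge 2\alpha(F)w$, hmm that's too weak. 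The honest fix: classes $k-1$ and $k$ are the two used for slack, and a short case analysis or a slightly smarter choice of $\phi$ (e.g. make $1,\dots,k-2$ the largest classes) yields the needed bound; pinning this down is the one genuinely fiddly point, and I expect the intended argument simply picks $\phi$ so that colours $k-1,k$ are the two smallest classes, each of size between $\alpha(F)w$ and $\tfrac{2}{k-1}(1-\alpha(F))w/2 \approx \tfrac{w}{k}$, making the contradiction in (3) clean.
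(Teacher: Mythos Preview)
Your overall strategy---start from a colouring realising $\alpha(F)$ with smallest class labelled $k$, then greedily recolour vertices into class $k$---is exactly right, and matches the paper. But you commit to the donor class $k-1$ \emph{before} running the greedy step, and this is where the argument breaks. Your step (3) requires that some vertex of colour $k-1$ has no colour-$k$ neighbour, and the double-count you propose only gives $|\phi^{(j)-1}(k-1)| \le \Delta\,|\phi^{(j)-1}(k)|$ in the bad case. This is not a contradiction in general: take $k=3$, $\Delta=5$, $\alpha(F)=0.1$, and a colouring with class sizes $(0.8w,\,0.1w,\,0.1w)$. Then $|\phi^{-1}(2)| = 0.1w \le 0.5w = \Delta|\phi^{-1}(3)|$, so it is entirely possible that every vertex of colour $2$ has a colour-$3$ neighbour, and your greedy step stalls immediately. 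Neither of your suggested patches (making $k-1,k$ the two smallest classes, or bounding $|\phi^{-1}(k-1)|+|\phi^{-1}(k)|$ from below) rescues this, since you only ever get $|\phi^{-1}(k-1)| \ge \alpha(F)w$.

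The paper's fix is to postpone the choice of $k-1$. First run the greedy step allowing the recoloured vertex to come from \emph{any} class: while $|\phi^{(j)-1}(k)| < w/(\Delta+1)$, the closed neighbourhood of class $k$ has size at most $(\Delta+1)|\phi^{(j)-1}(k)| < w$, so some vertex outside it exists and can be recoloured $k$. This moves at least $w\bigl(\tfrac{1}{\Delta+1}-\alpha(F)\bigr)$ vertices in total. Only now apply pigeonhole over the $k-1$ possible source classes (this is where the factor $1/k$ comes from, not from any inequality you were missing): some single class donated at least $\tfrac{w}{k}\bigl(\tfrac{1}{\Delta+1}-\alpha(F)\bigr)$ vertices; rename that class $k-1$ and let $X$ be those donated vertices. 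Since $X$ lay in one colour class of the original $\phi$, it is independent, and each $x\in X$ had no neighbour in the original $\phi^{-1}(k)$; hence for any $y$ one may recolour any $y$ vertices of $X$ from $k-1$ to $k$ and stay proper.
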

\begin{proof}
	Let $\phi$ be a proper $k$-colouring of $F$ such that $\ord(\phi)(k)=\alpha(F)w$.
	Observe that the number of vertices that are adjacent to a vertex in $\phi^{-1}(k)$ is at most $\Delta\alpha(F)w$.
	Note that $\Delta\alpha(F)w + \alpha(F) w = (\Delta + 1) \alpha(F)w < w$ by assumption.
	Thus some vertices in $V(F) \sm \phi^{-1}(k)$  are not adjacent to any vertex in $\phi^{-1}(k)$.
	We recolour one of these vertices with the colour $k$.
	As long as the order of the colour class $k$ is less than $w/(\Delta+1)$, we can find another vertex to add to it.
	Thus we can greedily add a total of at least $w(\frac{1}{\Delta+1}-\alpha(F))$ vertices to the colour class $k$.
	By the pigeonhole principle at least ${\frac{w}{k-1}(\frac{1}{\Delta+1}-\alpha(F))\geq \frac{w}{k}(\frac{1}{\Delta+1}-\alpha(F))}$ of these vertices came from a single colour class.
	Without loss of generality let this colour class be $k-1$ (this can be achieved by permuting colours of $\phi$), and let $X$ be the set of vertices  that were previously (but are no longer) coloured with $k-1$.

	Now suppose we are given $y\in \NATSZ$ with
	$y < \frac{w}{k}(\frac{1}{\Delta+1}-\alpha(F))$. Then we build $\phi'$ from $\phi$ by recolouring $y$ vertices in $X$ with $k$ instead of $k-1$.
	It is clear that $\phi'$ is the desired proper $k$-colouring of $F$.
\end{proof}

\begin{proposition}\label{prop:max_degree_bipartite_isolated}
	Suppose $F$ is a bipartite graph with $\Delta(F) \leq \Delta$.
	If $\alpha(F)\leq \frac{1}{\Delta + 1}$, then $F$ contains $v(F)(1-(\Delta+1)\alpha(F))$ isolated vertices.
\end{proposition}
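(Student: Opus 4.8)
The plan is to estimate $\alpha(F)$ component by component. If $F$ has no edges then every vertex is isolated and the claimed quantity is at most $v(F)$, so the statement is trivial; hence I may assume $\chi(F)=2$. I would write the components of $F$ as its $I$ isolated vertices together with the nontrivial components $G_1,\dots,G_c$, and record the bipartition of $G_i$ (unique, since $G_i$ is connected and bipartite) as $(X_i,Y_i)$ with $|X_i|\le|Y_i|$; thus $v(F)=I+\sum_{i\in[c]}(|X_i|+|Y_i|)$. The goal is to prove $I\ge v(F)\bigl(1-(\Delta+1)\alpha(F)\bigr)$. Note that the hypothesis $\alpha(F)\le 1/(\Delta+1)$ only makes the right-hand side non-negative and will not otherwise be used.

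The first key step is to show $\alpha(F)\,v(F)\ge\sum_{i\in[c]}|X_i|$. In any proper $2$-colouring of $F$, fix the smaller of the two colour classes; within each $G_i$ it receives either all of $X_i$ or all of $Y_i$, hence at least $|X_i|=\min(|X_i|,|Y_i|)$ vertices, so in total at least $\sum_{i\in[c]}|X_i|$ vertices, and minimising over all proper $2$-colourings gives the bound. (In fact equality holds, as one sees by colouring all isolated vertices with one fixed colour, but only this inequality is needed. Accounting for the freedom of where to place the isolated vertices is the one mildly delicate point of the argument — and the reason it is convenient to use the one-sided bound rather than an exact formula.)

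The second key step is an edge count inside each nontrivial component: since $G_i$ is connected and has an edge, every vertex of $Y_i$ has positive degree, whence
\[
	|Y_i|\ \le\ \sum_{y\in Y_i}\deg_{G_i}(y)\ =\ e(G_i)\ =\ \sum_{x\in X_i}\deg_{G_i}(x)\ \le\ \Delta|X_i|.
\]
Combining this with the first step and the identity $v(F)=I+\sum_{i\in[c]}(|X_i|+|Y_i|)$ then yields
\[
	v(F)\bigl(1-(\Delta+1)\alpha(F)\bigr)\ =\ v(F)-(\Delta+1)\,\alpha(F)\,v(F)\ \le\ I+\sum_{i\in[c]}\bigl(|Y_i|-\Delta|X_i|\bigr)\ \le\ I,
\]
since every summand on the right is at most $0$. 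This is exactly the assertion, so $F$ contains at least $v(F)\bigl(1-(\Delta+1)\alpha(F)\bigr)$ isolated vertices. I do not anticipate any genuine obstacle here; the only care needed is the bookkeeping around $\alpha(F)$ described above.
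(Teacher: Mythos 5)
Your proof is correct. It takes a slightly different route from the paper's, which is shorter and more direct: the paper fixes a proper $2$-colouring $\phi$ whose colour class $1$ has exactly $\alpha(F)v(F)$ vertices, observes that at most $\Delta\alpha(F)v(F)$ vertices are adjacent to $\phi^{-1}(1)$, and notes that any vertex of colour $2$ with no neighbour in $\phi^{-1}(1)$ must be isolated (its neighbours would all have to be coloured $1$); this immediately leaves at least $v(F)\bigl(1-(\Delta+1)\alpha(F)\bigr)$ isolated vertices. Your version replaces this global neighbourhood count by a component decomposition with two separate estimates, namely $\alpha(F)v(F)\ge\sum_i|X_i|$ and $|Y_i|\le\Delta|X_i|$ via edge counting inside each nontrivial component. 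The underlying double count is the same (non-isolated vertices are charged to the smaller side of the bipartition at rate $\Delta+1$), but your bookkeeping is cleaner on one point the paper glosses over: by using the one-sided inequality $\alpha(F)v(F)\ge\sum_i|X_i|$ you never need to worry about where the isolated vertices sit in the minimising colouring, whereas the paper implicitly assumes the minimum colour class is realised by a specific colouring and counts from there. Both arguments are elementary and complete; the paper's is the more economical, yours the more explicit.
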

\begin{proof}
	Let $\phi$ be a proper $k$-colouring of $F$ such that $\ord(\phi)(1)=\alpha(F)v(F)$.
	Observe that the number of vertices that are adjacent to a vertex in $\phi^{-1}(1)$ is at most $\Delta\alpha(F)v(F)$.
	However every vertex not coloured with $1$ is coloured with $2$.
	Thus if a vertex is not coloured $1$, and is not adjacent to a vertex of colour $1$, it is an isolated vertex.
	We calculate that therefore all but $(\Delta+1)\alpha(F)v(F)$  vertices are isolated.
\end{proof}

\begin{proof}[Proof of \cref{lem:miss_constant_frac}]
	{By assumption, we have $\chi \leq \frac{\Delta+1}{\Delta}(k -1) (1+ \frac{\mu}{\chi})^{-1}$.
		Solving \eqref{equ:crit} for $\alpha(H)$, we obtain
		\begin{align*}
			\alpha(H) 
			& = 1 - \frac{k-1}{{\crit(H)}} \leq 1-\frac{k-1}{\chi} \leq 1-\frac{\Delta}{\Delta+1} \left(1+\frac{\mu}{\chi}\right) \\
			&\leq 1-\frac{\Delta }{\Delta + 1} - {\frac{\Delta}{\Delta+1}}\frac{\mu}{\chi} 
			= \frac{1}{\Delta+1} - {\frac{\Delta}{\Delta+1}}\frac{\mu}{\chi}.
		\end{align*}
	Hence $\alpha(H) + {\frac{\Delta}{\Delta+1}}\frac{\mu}{{\chi}} \leq \frac{1}{\Delta + 1}$.}
	First suppose $k=2$, then by \cref{prop:max_degree_bipartite_isolated}, $H$ contains $v(H)(1-(\Delta+1)\alpha(H)) \geq \mu n/{2}$ isolated vertices.
	Without loss of generality, we assume that $H$ has $n$ vertices (adding isolated vertices if necessary.)
	As $n$ is sufficiently large we can build (disjointly) $t$ proper $(k,kw)$-\fc{} subtilings and one topological $(\ell,w)$-\fc{} subtiling as in \cref{obs:isolated-vertices-flexible}.

	Now suppose $k\geq 3$.
	We immediately construct the $t$ proper $(k,kw)$-\fc{} subtilings of order at most $w'=\sqrt \xi n$.
	{Observe that $\frac{1}{\Delta+1}-\alpha(H)\geq \frac{\Delta}{\Delta+1}\frac{\mu}{\chi}\geq \frac{\mu}{2(\Delta+1)}$}.
	As we can choose $\xi$ small in terms of ${\mu}$, $k$ and $t$, we may find~$k^2 t$ disjoint subtilings $L$ of $H$ each with $\frac{v(L)}{k}(\frac{1}{\Delta+1}-\alpha(L))\geq k^2w$.
	Indeed, we can find these subtilings greedily by observing that removing a subtiling of order $\sqrt\xi n$ can only increase $\alpha(H)$ and thus~$\crit(H)$ by a function of $\xi$.
	By \cref{prop:max_degree_gives_wildcards}, {each set of $k^2$ subtilings} satisfies the assumptions of
	\cref{prop:build_wildcards-simple} and so the union of said subtilings is $(k,kw)$-\fc{}.
	Using all $k^2t$ of the reserved tilings, we obtain $t$ proper $(k,kw)$-\fc{} subtilings of $H$ each of order at most $w'$.
	{Recall again that (trivially) any graph is a topological $(1,w)$-\fc{} graph.}
	It follows that $H\in \cH(n,w,\Delta;\chi;t,1,\sqrt\xi n)$, as desired.
\end{proof}

\subsection{Graphs in $\cF_c$ are flexible}\label{sec:Fc-gives-flexi-chrom}
Here we show \cref{prop:quasirandom-wildcards}.
We require the following result, which states that $k$-colourable graphs are properly $(k+1,s)$-\fc{} for suitably $s$.
Its proof is deferred to \cref{sec:wildcards}.

\begin{proposition}\label{lem:k+1_wildcard}
	Let $1/n\leq \xi \ll 1/k$.
	Let $w\leq \xi n$ and $H$ be an {$\cF(k,w)$-tiling}
	on $n$ vertices with $\crit(H) \leq k$.
	Then $H$ is properly $(k+1, n/(k+1)^5)$-\fc{}.
\end{proposition}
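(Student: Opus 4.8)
The plan is to reduce to \cref{prop:build_wildcards-simple}, exploiting the one crucial feature of the situation: we have one more colour available, namely $k+1$, than $H$ needs. Since $H$ is properly $k$-colourable (each of its tiles lies in $\cF(k,w)$, so one may colour the tiles independently), every colour class of a proper $k$-colouring is an independent set, and recolouring an arbitrary subset of such a class with the unused colour $k+1$ leaves the colouring proper. This is exactly the one-directional flexibility that \cref{prop:build_wildcards-simple} promotes to full flexi-chromaticity, once it is available on $(k+1)^{2}$ disjoint pieces of $H$.

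Concretely, set $K=k+1$ and $W=\lceil n/K^{4}\rceil$. First I would split the tiles of $H$ into $K^{2}$ groups and let $H_{i}$ be the union of the tiles in the $i$-th group, arranging by a routine greedy distribution (always adding the next tile to a currently smallest group) that $v(H_{i})\ge n/K^{3}$ for all $i$; this uses only that each tile has at most $w\le\xi n$ vertices and that $\xi\ll 1/k$. Next, for each $i$ I would fix a proper $k$-colouring of $H_{i}$, let $C_{i}$ be a largest colour class (so $|C_{i}|\ge v(H_{i})/k\ge W$ by pigeonhole, using $n$ large), and define $\theta_{i}\colon V(H_{i})\to[K]$ by relabelling that $k$-colouring so that $C_{i}$ receives colour $k$, leaving colour $K$ unused. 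For any $y\in[W]$ we recolour an arbitrary $y$-element subset of $C_{i}$ with colour $K$; since $C_{i}$ is independent and the neighbours of its vertices use only colours $1,\dots,k-1$, the resulting colouring $\theta_{i}'$ is proper, and by construction $\ord(\theta_{i}')(j)=\ord(\theta_{i})(j)$ for $j\in[k-1]=[K-2]$ while $\ord(\theta_{i}')(K)=\ord(\theta_{i})(K)+y$. Hence $H_{1},\dots,H_{K^{2}}$ satisfy the hypotheses of \cref{prop:build_wildcards-simple} with $K$ and $W$ playing the roles of $k$ and $w$.

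Applying \cref{prop:build_wildcards-simple} then yields that $H=\bigcup_{i\in[K^{2}]}H_{i}$ is properly $(K,W/K)$-\fc{}, and since $W/K\ge n/K^{5}=n/(k+1)^{5}$, this is exactly the assertion of \cref{lem:k+1_wildcard}. I do not anticipate a real obstacle: the only care required is bookkeeping --- tracking the index shift $k\mapsto k+1$ against the statement of \cref{prop:build_wildcards-simple} (there the coordinates in $[k-2]$ are held fixed and coordinate $k$ is increased; here the coordinates in $[k-1]$ are held fixed, coordinate $k+1$ is increased and coordinate $k$ decreased), and verifying the size inequalities $v(H_{i})/k\ge W$ and $W/K\ge n/K^{5}$, both of which follow comfortably from $\xi\ll 1/k$ and $n$ being large. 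The argument in fact only uses that $H$ is $k$-colourable with tiles of order at most $\xi n$.
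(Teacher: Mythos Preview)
Your proposal is correct and follows essentially the same route as the paper's proof: split $H$ into $(k+1)^{2}$ subtilings, give each a proper $k$-colouring with colour $k+1$ empty, move vertices from the largest class into colour $k+1$ to realise any increment $y$, and then invoke \cref{prop:build_wildcards-simple} (the paper uses the equivalent $p=0$ case of \cref{prop:build_wildcards}). Your closing remark that only $k$-colourability and the tile-size bound are used is also in line with the paper's argument.
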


{We conclude the section with the proof of \cref{prop:quasirandom-wildcards}.}

\begin{proof}[Proof of \cref{prop:quasirandom-wildcards}]
	Consider $H' \subset H$ of order $\sqrt{\xi} n$.
	Then $H' \in \cB(\sqrt \xi n,w,\Delta) \cap  \cJ(k)$ where $w := \sqrt{n} / \log n$.
	Let $w' = v(H')/\ell = \sqrt \xi n / \ell$, and note that $w' \geq  \ell^{10} n / \log n \geq  \ell^{10} w^2 \log w$.
	We can therefore find a topological $(\ell,w)$-\fc~subtiling $W \subset H'$ with $v(W) \leq w'$ as in the proof of \cref{lem:hcf-gives-flexi-chrom-mixed}.
	We omit the details.
	Moreover, using \cref{lem:k+1_wildcard}, it is easy to see that $H$ has $\ell$ pairwise disjoint (and disjoint of $W$) properly $(k+1,w)$-\fc~subtilings each of size at most $w'$.
	It follows that $H \in \cH(n, \sqrt{n}/\log n,\Delta;k+1;\ell,\ell,\sqrt   \xi n)$.
\end{proof}

\section{Tools for embedding spanning subgraphs}\label{sec:tools}

This section contains a summary of a few standard tools regarding embeddings of large sparse substructures in dense graphs,
which are needed later in our proofs.

\subsection{Regular and super-regular pairs}

The Regularity Lemma guarantees that every large enough graph $G$ admits a reduced graph $R$ of bounded complexity, which encodes the approximate structure of $G$.
Before we can make this precise, we introduce the notion of regular pairs.

Let $G$ be a graph and~$A$ and~$B$ be non-empty, disjoint subsets of $V(G)$.
We write $e_G(A,B)$ for the number of edges in~$G$ with one vertex in~$A$ and one in~$B$ and define the \emph{density} of the pair $(A,B)$ to be $d_{G}(A,B)=e_G(A,B)/(|A||B|)$.
The pair $(A,B)$ is \emph{$\eps$-regular} in~$G$ if $|d_{G}(A',B') - d_{G}(A,B)| \leq \eps$ for all $A'\subseteq A$ with $|A'|\ge\eps|A|$ and $B'\subseteq B$ with $|B'|\ge\eps |B|$.
We say that $(A,B)$ is \emph{$(\eps,d)$-regular} if it is $\eps$-regular and has density at least~$d$.
A vertex $b$ in~$B$ (and analogously in $A$) has \emph{typical degree} in an $(\eps,d)$-regular pair $(A,B)$ if $b$ has at least $(d-\eps)|A|$ neighbours (in $G$); and we say $b$ has \emph{atypical degree} otherwise.
We say a pair $(A,B)$ is \emph{$(\eps,d)$-super-regular} if it is $(\eps,d)$-regular
and contains no vertices of atypical degree (in $A$ or $B$).

Let~$G$ be a graph and $\cV=\{V_i\}_{i \in [r]}$ be a partition of $V(G)$ of \emph{size} $r$.
We refer to the elements of $\cV$ as \emph{clusters}.
We say that the partition~$\cV$ is \emph{$\kappa$-balanced} (or just \emph{balanced} when $\kappa=1$) if there exists $m\in\mathbb N$ such that  $m\le|X_i|\le\kappa m$ for all $i \in [r]$.
Let $R$ be a graph on vertex set~$[r]$.
We say that $(G,\cV)$ is an \emph{$R$-partition} if no cluster of~$\cV$ is empty, {$G$ has no edge with both ends in the same cluster,} and whenever there are edges of~$G$ between~$V_i$ and~$V_j$, the pair~$ij$ is an edge of~$R$.
(Equivalently, we could define $\cV$ by $V_i=\phi^{-1}(i)$ for $i\in [r]$, where   $\phi\colon V(G)\rightarrow V(R)$ is a surjective homomorphism.)
{Moreover, $(G,\cV)$ is \emph{$\kappa$-balanced} if $\cV$ is $\kappa$-balanced.}
We say that $(G,\cV)$ is an \emph{$(\eps,d)$-regular} $R$-partition if for each $ij\in E(R)$, the pair $(V_i,V_j)$ is \emph{$(\eps,d)$-regular}.
Moreover, $(G,\cV)$ is \emph{$(\eps,d)$-super-regular on~$R$} if for each $ij\in E(R)$, the pair $(V_i,V_j)$ is \emph{$(\eps,d)$-super-regular}.

It is a well-known fact that every dense regular pair contains a large super-regular subpair.
The following proposition of Kühn, Osthus and Taraz~\cite[Proposition 8]{KOT05} generalises this insight to bounded degree subgraphs of the reduced graph.

\begin{proposition}[Super-regularising $R'$] \label{pro:super-regularising-R'}
	{For $1/m  \ll \eps \ll d, 1/{\Delta_{R'}}$ and $r \geq 2$.}
	Let $R$ be a graph on vertex set $[r]$ and let $R'\subset R$ be a spanning subgraph with $\Delta(R')\leq \DeltaRp$.
	Let~$G$ be a graph with a balanced vertex partition $\cV=\{V_i\}_{i \in [r]}$ {with $|V_i|=m$ for each $i \in [r]$}.
	Suppose that $(G,\cV)$ is an $(\eps,d)$-regular $R$-partition.
	Then there is a balanced family $\cV'=\{V_i'\}_{i \in [r]}$ of subsets $V_i' \subset V_i$ of size $\lfloor (1 - \sqrt{\eps})m \rfloor$ such that  $(G[\cV'],\cV')$ is a $(2\eps,d/2)$-regular $R$-partition, which is $(2\eps,d/2)$-super-regular on~$R'$.
\end{proposition}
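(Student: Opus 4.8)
The plan is to run the standard two-step argument behind ``super-regularising'' statements of this kind: first delete from each cluster the handful of vertices that behave atypically towards their $R'$-neighbours, and then note that restricting a regular pair to large subsets keeps it regular (with slightly worse parameters), while the $R'$-pairs become super-regular by a crude degree count.

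First I would fix $\eps$ small enough in terms of $d$ and $\DeltaRp$ --- which the hierarchy $\eps \ll d, 1/\DeltaRp$ allows --- so that $\DeltaRp\, \eps \le \tfrac12\sqrt\eps$ and $\eps + 2\sqrt\eps \le \tfrac{d}{2}$. Recall that, since $\cV$ is balanced, $|V_i| = m$ for every $i$. For $i \in [r]$ and $j$ with $ij \in E(R')$, the $(\eps,d)$-regularity of $(V_i,V_j)$ together with $d_G(V_i,V_j) \ge d$ forces all but at most $\eps m$ vertices of $V_i$ to have at least $(d-\eps)m$ neighbours in $V_j$: otherwise the set of offending vertices, paired with all of $V_j$, would witness a density deviation larger than $\eps$. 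Taking the union over the at most $\DeltaRp$ relevant $j$, the set $B_i \subseteq V_i$ of vertices atypical towards some $R'$-neighbour has $|B_i| \le \DeltaRp\, \eps m \le \tfrac12 \sqrt\eps\, m$. Since $|V_i \sm B_i| \ge (1 - \tfrac12\sqrt\eps)m \ge \lfloor(1-\sqrt\eps)m\rfloor$, I can choose $V_i' \subseteq V_i \sm B_i$ with $|V_i'| = \lfloor(1-\sqrt\eps)m\rfloor$; the family $\cV' = \{V_i'\}_{i\in[r]}$ is then balanced of the required size.

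It remains to check the two regularity conditions. For $(2\eps,d/2)$-regularity of $(V_i',V_j')$ with $ij \in E(R)$: here $|V_i'| = |V_j'| \ge (1-2\sqrt\eps)m \ge \eps m$, so any $A' \subseteq V_i'$, $B' \subseteq V_j'$ with $|A'| \ge 2\eps|V_i'|$ and $|B'| \ge 2\eps|V_j'|$ satisfy $|A'|, |B'| \ge \eps m$; applying $\eps$-regularity of $(V_i,V_j)$ twice gives $|d_G(A',B') - d_G(V_i,V_j)| \le \eps$ and $|d_G(V_i',V_j') - d_G(V_i,V_j)| \le \eps$, hence $|d_G(A',B') - d_G(V_i',V_j')| \le 2\eps$ and $d_G(V_i',V_j') \ge d - \eps \ge d/2$. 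For super-regularity on $R'$: take $ij \in E(R')$ and $v \in V_i'$. Since $v \notin B_i$, the vertex $v$ has at least $(d-\eps)m$ neighbours in $V_j$, and at most $m - |V_j'| \le 2\sqrt\eps\, m$ of these lie outside $V_j'$, so $v$ has at least $(d - \eps - 2\sqrt\eps)m \ge \tfrac{d}{2}m \ge (d/2 - 2\eps)|V_j'|$ neighbours in $V_j'$, i.e.\ $v$ has typical degree in the $(2\eps,d/2)$-regular pair $(V_i',V_j')$. The same holds by symmetry for the vertices of $V_j'$, so $(V_i',V_j')$ is $(2\eps,d/2)$-super-regular, as needed.

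I do not expect a genuine obstacle here --- the statement is essentially \cite[Proposition~8]{KOT05}, and the argument is routine. The only points requiring care are the constant bookkeeping: that the ``deletion budget'' of about $\sqrt\eps\, m$ vertices per cluster comfortably absorbs the $\DeltaRp\, \eps m$ atypical vertices (which is precisely where $\eps \ll 1/\DeltaRp$ enters) and that the twofold loss $\eps \mapsto 2\eps$, $d \mapsto d/2$ is harmless given $\eps \ll d$.
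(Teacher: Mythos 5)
Your proof is correct, and the constant bookkeeping (deleting at most $\DeltaRp\,\eps m \le \tfrac12\sqrt\eps\,m$ atypical vertices per cluster, then the standard $\eps\mapsto 2\eps$, $d\mapsto d/2$ loss from restricting to large subsets) all checks out. The paper does not prove this proposition itself but cites it as \cite[Proposition~8]{KOT05}; your argument is precisely the standard one behind that result, so there is nothing further to compare.
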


The next proposition by Böttcher, Schacht and Taraz~\cite[Proposition 8]{BST08} states that regular and super-regular pairs are robust regarding small alterations on their vertex sets.

\begin{proposition}[Robust regular and super-regular pairs]\label{proposition:robust-regular}
	Let $(A,B)$ be an $(\eps, d)$-regular pair, and let $(A', B')$ be a pair such that $|A \triangle A'| \leq \alpha |A'|$ and $|B \triangle B'| \leq \beta |B'|$ for some $0 \leq \alpha, \beta \leq 1$.
	Then $(A', B')$ is an $(\eps', d')$-regular pair where
	\[ \eps' = 2\eps  + 6 (\alpha^{1/2} + \beta^{1/2}) \quad \text{and} \quad d' = d - 2 (\alpha + \beta). \]
	If, moreover, $(A,B)$ is $(\eps, d)$-super-regular, each vertex in $A'$ has at least $(d-\eps')|B'|$ neighbours in $B'$ and each vertex in $B'$ has at least $(d-\eps') |A'|$ neighbours in $A'$, then $(A', B')$ is $(\eps', d')$-super-regular.
\end{proposition}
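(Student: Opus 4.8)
The plan is to deduce both the $\eps'$-regularity of $(A',B')$ and the density bound $d'$ from an edge-counting argument that passes through the ``trimmed'' pairs $(A\cap A',B\cap B')$ and $(A''\cap A,B''\cap B)$, on which the $\eps$-regularity of $(A,B)$ may legitimately be invoked, and then transfers the conclusion back. First I would dispose of the trivial case: if $\eps'\ge 1$ there is nothing to prove for regularity, so we may assume $\eps'<1$, which in particular forces $\alpha,\beta<1/36$. I would then record the elementary relative-size estimates coming from $|A\triangle A'|\le\alpha|A'|$ and $|B\triangle B'|\le\beta|B'|$: one has $|A\cap A'|\ge(1-\alpha)|A'|$ and $|A|\le(1+\alpha)|A'|$ (and symmetrically for $B$), and, crucially, every $A''\subseteq A'$ with $|A''|\ge\eps'|A'|$ satisfies $|A''\cap A|\ge(\eps'-\alpha)|A'|\ge\eps|A|$ (using $\eps'\ge 2\eps+6\sqrt{\alpha}$ and $\alpha<1$), and likewise for subsets of $B'$. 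Thus every trimmed set that we feed into the regularity of $(A,B)$ is nonempty and of relative size at least $\eps$.

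The heart of the argument is a density-transfer estimate. For $U\subseteq A'$ and $W\subseteq B'$, every edge of $G$ between $U$ and $W$ that does not lie between $U\cap A$ and $W\cap B$ has an endpoint in $U\setminus A$ or in $W\setminus B$, so there are at most $|U\setminus A|\,|W|+|U|\,|W\setminus B|$ of them; dividing by $|U|\,|W|$ (and bounding the normalisation factor $\tfrac{|U\cap A|\,|W\cap B|}{|U|\,|W|}$ from below by $1-\tfrac{|U\setminus A|}{|U|}-\tfrac{|W\setminus B|}{|W|}$) gives
\[
\bigl|\, d_G(U,W)-d_G(U\cap A,\,W\cap B)\,\bigr|\ \le\ \frac{|U\setminus A|}{|U|}+\frac{|W\setminus B|}{|W|}.
\]
The right-hand side is at most $\alpha+\beta$ when $(U,W)=(A',B')$, and at most $\tfrac{\alpha+\beta}{\eps'}\le\tfrac16(\sqrt{\alpha}+\sqrt{\beta})$ when $(U,W)=(A'',B'')$ for test sets of relative size at least $\eps'$ (using $|A''\setminus A|\le\alpha|A'|$, $|A''|\ge\eps'|A'|$, $\eps'\ge 6(\sqrt\alpha+\sqrt\beta)$ and $\alpha+\beta\le(\sqrt\alpha+\sqrt\beta)^2$). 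This square-root loss at the smallest admissible test sets is exactly what forces the $6(\sqrt\alpha+\sqrt\beta)$ term into $\eps'$.

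To conclude $\eps'$-regularity I would apply the $\eps$-regularity of $(A,B)$ to the trimmed pairs $(A''\cap A,B''\cap B)$ and $(A\cap A',B\cap B')$, obtaining $|d_G(A''\cap A,B''\cap B)-d_G(A\cap A',B\cap B')|\le 2\eps$, and then combine this with the two instances of the transfer estimate via the triangle inequality to get $|d_G(A'',B'')-d_G(A',B')|\le 2\eps+2(\sqrt{\alpha}+\sqrt{\beta})\le\eps'$ for all admissible $A''\subseteq A'$, $B''\subseteq B'$. An analogous (and simpler) edge count, using $d_G(A\cap A',B\cap B')\ge d-\eps$, yields the density bound $d_G(A',B')\ge d'$. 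Finally, for the super-regularity statement we are essentially done: we have just shown $(A',B')$ is $(\eps',d')$-regular, while by hypothesis every $a\in A'$ has at least $(d-\eps')|B'|\ge(d'-\eps')|B'|$ neighbours in $B'$ and every $b\in B'$ at least $(d-\eps')|A'|\ge(d'-\eps')|A'|$ neighbours in $A'$ (since $d'\le d$), which are precisely the conditions that $(A',B')$ has no vertex of atypical degree.

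I expect the main obstacle --- really the only subtlety --- to be the bookkeeping in the density-transfer step: one must check that for \emph{every} admissible pair of test subsets $A''\subseteq A'$, $B''\subseteq B'$ the trimmed sets $A''\cap A$, $B''\cap B$ stay above the $\eps$-relative-size threshold of $(A,B)$, so that $\eps$-regularity may be invoked, and one must track the relative-size denominators so that the error inflates only by the factor $1/\eps'$; it is the resulting replacement of $\alpha+\beta$ by $\sqrt{\alpha}+\sqrt{\beta}$ that renders this inflation harmless and dictates the exact shape of $\eps'$.
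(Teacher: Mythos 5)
The paper does not prove this proposition at all: it is quoted verbatim from B\"ottcher, Schacht and Taraz~\cite[Proposition 8]{BST08}, so there is no in-paper argument to compare against. Your reconstruction of the $\eps'$-regularity part is the standard one and is correct: the trimmed test sets $A''\cap A$, $B''\cap B$ do clear the $\eps$-relative-size threshold of $(A,B)$ (your check $\eps'-\alpha\geq\eps(1+\alpha)$ is the right one), the density-transfer inequality is valid, and the $1/\eps'$ inflation of $\alpha+\beta$ is indeed what produces, and is absorbed by, the $6(\sqrt{\alpha}+\sqrt{\beta})$ term. The super-regularity clause is, as you say, vacuous beyond the regularity claim, since the degree conditions are hypotheses and $d'\leq d$.

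There is, however, a genuine gap in your density step. You propose to derive $d_G(A',B')\geq d'$ from $d_G(A\cap A',B\cap B')\geq d-\eps$ (i.e.\ from the regularity of $(A,B)$ applied to the trimmed pair) together with the transfer estimate $|d_G(A',B')-d_G(A\cap A',B\cap B')|\leq\alpha+\beta$. That chain only yields $d_G(A',B')\geq d-\eps-(\alpha+\beta)$, which is weaker than the claimed $d'=d-2(\alpha+\beta)$ whenever $\eps>\alpha+\beta$ --- and in the applications in this paper one typically has $\alpha,\beta\ll\eps$, so the bound you actually establish is useless there. The fix is to bypass regularity entirely for this step and count edges directly against $e_G(A,B)\geq d\,|A|\,|B|$: from
\[
e_G(A',B')\ \geq\ e_G(A\cap A',B\cap B')\ \geq\ e_G(A,B)-|A\setminus A'|\,|B\cap B'|-|A|\,|B\setminus B'|
\]
one gets $d_G(A',B')\geq d(1-\alpha)(1-\beta)-\alpha-\beta(1+\alpha)$, and since $d\leq 1$ implies $d(1-\alpha)(1-\beta)\geq d-\alpha-\beta+\alpha\beta$, the $\alpha\beta$ terms cancel exactly and you land on $d-2(\alpha+\beta)$. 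With that replacement your argument is complete.
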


\subsection{The Regularity Lemma and the Blow-Up Lemma}
In the following we introduce our two main tools for embedding large subgraphs.
Szemer\'edi's Regularity Lemma~\cite{Sze76} allows us to partition the vertex set of a graph into clusters of vertices such that most pairs of clusters are regular.
We will use the following degree form~\cite[Theorem 1.10]{KS96}, which we restate here in terms of $R$-partitions and approximations {as defined in \cref{sec:main-result}.}

{Recall that for $\eps,d\in [0,1]$, a subgraph $G'$ of a graph $G$ is an {$(\eps,d)$-approximation of $G$} if $\deg_{G'}(v) \geq \deg_{G}(v) - d v(G)$ for all $v \in V(G')$ and $v(G') \geq (1 - \eps)v(G)$.}

\begin{lemma}[Regularity Lemma]\label{lem:regularity}
	Let $1/n \ll 1/r_1 \ll 1/r_0, \eps, d$.
	Let $G$ be a graph on $n$ vertices.
	Then there is $r_0 \leq r \leq r_1$, a subgraph $G' \subset G$ with a balanced partition $\cV$ of size $r$ and a graph $R$ on vertex set $[r]$ such that
	\begin{enumerate}[\upshape (S1)]
		\item $G'$ is an $(\eps,d+\eps)$-approximation of $G$ and
		\item $(G',\cV)$ is an $(\eps,d)$-regular $R$-partition.
	\end{enumerate}
\end{lemma}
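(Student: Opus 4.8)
The plan is to deduce this statement directly from the classical degree form of Szemer\'edi's Regularity Lemma (the version recorded in \cite[Theorem~1.10]{KS96}), which already does all the work. The only task is to repackage its output into the language of $R$-partitions and approximations and, in particular, to absorb the exceptional class of the Szemer\'edi partition into the vertex loss that an $(\eps,d+\eps)$-approximation is allowed to incur.

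Concretely, I would first set $\eps_0 := \eps/3$ and apply the degree form with $\eps_0$ in place of $\eps$, with the same density parameter $d$, and with $r_0$ as the lower bound on the number of parts. This produces an integer $r$ with $r_0 \le r \le M(\eps_0, r_0)$, a partition $V_0 \cup V_1 \cup \dots \cup V_r$ of $V(G)$ with $|V_0| \le \eps_0 n$ and $|V_1| = \dots = |V_r| = m$ for some $m \in \NATS$, and a spanning subgraph $G^\ast \subseteq G$ in which every cluster $V_i$ with $i \in [r]$ is independent, every pair $(V_i, V_j)$ with $1 \le i < j \le r$ is $\eps_0$-regular of density either $0$ or greater than $d$, and $\deg_{G^\ast}(v) \ge \deg_G(v) - (d + \eps_0)n$ for all $v \in V(G)$. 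I would then take $r_1 := \max\{M(\eps_0, r_0), r_0\}$ (which is consistent with the hierarchy $1/n \ll 1/r_1 \ll 1/r_0, \eps, d$, since $M$ depends only on $\eps$ and $r_0$), discard the exceptional class by setting $\cV := \{V_i\}_{i\in[r]}$ and $G' := G^\ast\big[\bigcup_{i \in [r]} V_i\big]$, and define the reduced graph $R$ on vertex set $[r]$ by declaring $ij \in E(R)$ precisely when $d_{G'}(V_i, V_j) > 0$.

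The remaining verification is routine. The partition $\cV$ is balanced of size $r$ with $v(G') = rm = n - |V_0| \ge (1-\eps_0)n \ge (1-\eps)n$; and since deleting $V_0$ removes at most $|V_0| \le \eps_0 n$ further non-neighbours from each surviving vertex, $\deg_{G'}(v) \ge \deg_{G^\ast}(v) - |V_0| \ge \deg_G(v) - (d + 2\eps_0)n \ge \deg_G(v) - (d+\eps)n$, so $G'$ is an $(\eps,d+\eps)$-approximation of $G$, giving (S1). No cluster is empty, clusters are independent in $G^\ast$ and hence in $G'$, and by the choice of $E(R)$ every edge of $G'$ joins two clusters that are adjacent in $R$, so $(G',\cV)$ is an $R$-partition; moreover $G'$ induces the same graph as $G^\ast$ on $\bigcup_{i\in[r]} V_i$, so each pair $(V_i,V_j)$ with $ij \in E(R)$ remains $\eps_0$-regular of density greater than $d$, hence $(\eps_0,d)$-regular and a fortiori $(\eps,d)$-regular, giving (S2). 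Finally $r_0 \le r \le r_1$ by the choice of $r_1$. There is no genuine obstacle here: the only point needing any care is the parameter bookkeeping, namely reserving a constant fraction of $\eps$ so that throwing away $V_0$ does not violate the approximation bound, together with the trivial observation that discarding $V_0$ leaves the regularity of the surviving pairs untouched.
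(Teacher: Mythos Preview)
Your proposal is correct and matches the paper's approach: the paper does not give a proof but simply cites \cite[Theorem~1.10]{KS96} and remarks that the statement is a restatement of the degree form in the language of $R$-partitions and approximations, which is precisely the repackaging you carry out. The parameter bookkeeping (reserving a fraction of $\eps$ to absorb the exceptional class $V_0$) is the only thing to check, and you handle it correctly.
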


Next, we state the Blow-Up Lemma, which allows us to embed spanning subgraphs into regular pairs.
{The original Blow-Up Lemma (\cref{lem:simple-blow-up}) was proved by Koml\'os, S\'ark\"ozy and Szemer\'edi~\cite{KSS97}.
For our purposes, it is practical to work with the following variant due} to Allen, B{\"o}ttcher, H{\`{a}}n, Kohayakawa and Person~\cite[Lemma 7.1]{ABH+16}.

Let~$G$, $H$, $R$ be graphs, where $R$ has vertex set $[r]$.
Let $\cV=\{V_i\}_{i \in [r]}$ be a partition of $V(G)$ and $\cX=\{X_i\}_{i \in [r]}$ be a partition of $V(H)$, and let $\tcX=\{\tX_i\}_{i \in [r]}$ be a family of subsets of $V(H)$.
The partitions~$\cX$ and~$\cV$ are \emph{size-compatible} if $|X_i|=|V_i|$ for all $i\in [r]$.
The family $\tcX=\{\tX_i\}_{i \in [r]}$  is an \emph{$(\alpha,R)$-buffer} for $(H,\cX)$ if  $\tX_i\subset X_i$, $|\tX_i|\ge\alpha|X_i|$ for each $i\in [r]$ and for each $x\in\tX_i$ and $xy,yz\in E(H)$ with $y\in X_j$ and $z\in X_k$, we have $ij,jk\in E(R)$.

\begin{lemma}[Blow-Up Lemma]\label{lem:blow-up}
	Let $1/n \ll 1/r \ll \eps \ll  1/\Delta,1/\DeltaRp,1/\kappa,\alpha,d$.
	Let $R$ be a graph on vertex set $[r]$, and let $R'\subset R$ be a spanning subgraph with $\Delta(R')\leq \DeltaRp$.
	Let~$G$ and~$H$ be graphs on $n$ vertices with $\kappa$-balanced size-compatible vertex partitions $\cV=\{V_i\}_{i \in [r]}$ and $\cX=\{X_i\}_{i \in [r]}$, respectively.
	Let $\tcX=\{\tX_i\}_{i \in [r]}$ be a family of subsets of $V(H)$.
	Suppose that
	\begin{enumerate}[\upshape (B1)]
		\item $\Delta(H)\leq \Delta$, $(H,\cX)$ is an $R$-partition, and $\tcX$ is an $(\alpha,R')$-buffer for $(H,\cX)$ and
		\item \label{item:blowup-regsuperreg} $(G,\cV)$ is an $(\eps,d)$-regular $R$-partition, which is $(\eps,d)$-super-regular on~$R'$.
	\end{enumerate}
	Then there is an embedding $\psi\colon V(H)\to V(G)$ with $\psi(X_i) = V_i$ for all $i \in [r]$.
\end{lemma}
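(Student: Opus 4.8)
The plan is to deduce \cref{lem:blow-up} directly from the Blow-Up Lemma of Allen, B{\"o}ttcher, H{\`a}n, Kohayakawa and Person~\cite[Lemma 7.1]{ABH+16}, of which it is essentially the special case without image restrictions. Concretely, I would invoke their statement with the $(\eps,d)$-regular $R$-partition $(G,\cV)$ and the super-regular spanning subgraph $R'$ exactly as here, taking all target sets to be the full clusters $V_i$, so that the image-restriction hypotheses become vacuous. The only things to check are bookkeeping: that our notion of an $(\alpha,R')$-buffer for $(H,\cX)$ coincides with the buffer condition they require (it does, by definition), that $\kappa$-balanced size-compatible partitions are permitted, and that our constant hierarchy $1/n\ll 1/r\ll \eps\ll 1/\Delta,1/\DeltaRp,1/\kappa,\alpha,d$ is at least as strong as theirs, which holds after possibly shrinking $\eps$. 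No genuinely new argument is needed.

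For completeness, here is how I would argue from scratch, following Koml\'os, S\'ark\"ozy and Szemer\'edi~\cite{KSS97}. First I would set aside the buffer: fix for each $i$ a set of buffer vertices inside $\tX_i$ together with their (at most $\Delta$) neighbours, and order $V(H)$ so that these come last, with each buffer vertex placed after all of its neighbours. Property (B1) ensures that all edges incident to buffer vertices run along edges of $R'$, where $(G,\cV)$ is super-regular. Then I would embed the remaining ``main'' vertices one at a time by a \emph{randomised greedy} procedure: when it is the turn of $x\in X_i$, pick $\psi(x)$ uniformly at random from the current \emph{candidate set} $C(x)\subseteq V_i$, consisting of the yet-unused vertices of $V_i$ adjacent in $G$ to every already-embedded neighbour of $x$. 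The key invariant is that $|C(x)|\ge (d-\eps)^{\deg_H(x)}|V_i| - o(|V_i|)$ for every still-unembedded main vertex $x$; using $(\eps,d)$-regularity this holds in expectation, and a Doob-martingale concentration bound (exposing the random choices one at a time) shows it holds with high probability for all but a small ``bad'' set of vertices per cluster.

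The bad main vertices and all buffer vertices are then embedded in a final clean-up phase via a Hall-type matching: in each cluster $V_i$, the still-uncovered portion of $V_i$ and the still-unembedded vertices of $X_i$ form a bipartite graph in which, by super-regularity on $R'$ and the fact that buffer vertices have all their neighbours already embedded, every left vertex still sees a positive proportion of the right side. A defect form of Hall's theorem (or a greedy argument, since the remaining numbers are tiny relative to $|V_i|$) then completes the embedding with $\psi(X_i)=V_i$ for all $i\in[r]$.

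The main obstacle is the concentration argument in the greedy phase: one must show that the cumulative effect of the random choices does not atypically shrink $C(x)$ for more than a few vertices $x$ per cluster, which requires tracking $|C(x)|$ as a martingale with bounded increments together with a union bound over all vertices and all ``time steps'', while simultaneously reserving enough unused vertices in each $V_i$ to make the final matching feasible. Choosing the buffer correctly and balancing these two requirements against one another is the technical heart of the argument; since it is already carried out in~\cite{ABH+16,KSS97}, we simply cite the result.
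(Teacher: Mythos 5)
The paper gives no proof of this lemma: it is stated as a direct citation of Allen, B\"ottcher, H\`an, Kohayakawa and Person~\cite[Lemma 7.1]{ABH+16}, which is exactly your primary plan (invoke their result with vacuous image restrictions and check that the buffer, balancedness and constant-hierarchy hypotheses match). Your proposal is therefore correct and takes essentially the same approach as the paper; the additional from-scratch sketch is not needed.
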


{We also require the following version of the Blow-Up Lemma with image restrictions~\cite[Theorem 1 and Remark 13]{KSS97}, which we restate in the terminology of this paper.
\begin{lemma}[Blow-Up Lemma with image restrictions]\label{lem:simple-blow-up}
		Let $1/n \ll \eps,\zeta \ll  1/r,1/\Delta,d,c$.
		Let $R$ be a graph on vertex set $[r]$.
		Let~$G$ and~$H$ be graphs on $n$ vertices with balanced size-compatible vertex partitions $\cV=\{V_i\}_{i \in [r]}$ and $\cX=\{X_i\}_{i \in [r]}$, respectively.
		Suppose that
		\begin{enumerate}[\upshape (B1)]
			\item $\Delta(H)\leq \Delta$ and $(H,\cX)$ is an $R$-partition,
			\item $(G,\cV)$ is an $(\eps,d)$-super-regular $R$-partition and
			\item for each $i \in [r]$, there is a set $X_i^\ast \subset X_i$ with $|X_i^\ast| \leq \zeta n$ such that for every $x \in X_i^\ast$, there is a set $I_x \subset V_i$ with $|I_x| \geq c |V_i|$.
		\end{enumerate}
		Then there is an embedding $\psi\colon V(H)\to V(G)$ with $\psi(X_i) = V_i$ for all $i \in [r]$.
		Moreover, $\psi(x) \in I_x$ for every $x \in X_i^\ast$ and $i \in [r]$.
\end{lemma}}

\section{Intermediate results}\label{sec:intermediate}
In the following, we show a series of results that act as building blocks for the rest of the paper.

\subsection{Properties of robust tiling frameworks}\label{sec:robust-inheritance}
In this section, we show three propositions about tiling frameworks.
Recall that a vertex $v$ and an edge  $f \not \ni v$ in a $k$-graph $\cJ$ are {linked} if there is an edge $e \in E(\cJ)$ with $v \in e$ and  $|e \cap f| = k-1$.
We also say that a vertex is linked if there exists an edge to which it is linked.
Moreover, in a tiling framework (\cref{def:tiling-framework}), every vertex is linked to some edge of the clique hypergraph.
The following two propositions are adapted from the work of Lang and Sanhueza-Matamala~\cite[Propositions 3.6 and 7.8]{LS23}.
For sake of completeness, their proofs are included in \cref{sec:app-regularity}.
The first result states that in a robust tiling framework, there are in fact many edges linked to each vertex.
\begin{restatable}[Supersaturated linkage]{proposition}{propsupsatlink}\label{prop:supersaturated-linkage}
	{Let $\ell, t \in \NATS$ and $\rho >0$, $1/n  \ll \mu' \ll 1/\chi, \mu$ with $k=\lceil \chi \rceil$.}
	Let $G$ be a $\mu$-robust $(\chi,\rho;t,\ell)$-tiling framework on $n$ vertices.
	Then every vertex of~$G$ is linked to at least $\mu' n^k$ edges in $K_k(G)$.
\end{restatable}
The second result states that the reduced graph of a robust tiling framework is again a tiling framework.
\begin{restatable}[Reduced frameworks]{proposition}{propinheritframework} \label{prop:inherit-framework}
	Let $\ell, t \in \NATS$ and $1/n \ll \rho \ll 1/r  \ll \mu, 1/\chi$.
	Let $G$ be a $4\mu$-robust $(\chi,\rho;t,\ell)$-tiling framework on $n$ vertices.
	Let $(G,\cV)$ be a balanced $R$-partition, where $R$ has vertex set $[r]$.
	Then $R$ is a $\mu$-robust $(\chi,\rho;t,\ell)$-tiling framework.
\end{restatable}

We conclude this section, by showing \cref{pro:backstop}.
For the proof, we require the following result proved implicitly by Lang and Sanhueza-Matamala~\cite[Proposition 3.3]{LS20}.
Its proof can be found in \cref{sec:app-regularity}.
\begin{restatable}{proposition}{propdensetightcomponent}\label{prop:dense-tight-component}
	Let $1/n \ll \eps , \mu,1/k$ and $k \geq 2$. Let $G$ be a $k$-graph on $n$ vertices with at least $(1+\eps )\mu \binom{n}{k}$ edges.
	Then $G$ contains a tight component $T$ with $e(\partial T) \geq \mu^{k-1} \binom{n}{k-1}$.
\end{restatable}

\begin{proof}[Proof of \cref{pro:backstop}]
	Let $J=K_k(G)$.
	By \cref{prop:supersaturated-linkage}, every vertex $v \in V(G)$ can be associated with a subgraph $J_v \subset J$ with at least $\mu' n^k \geq 2\mu' \binom{n}{k}$ edges, each of which is linked to $v$.
	By \cref{prop:dense-tight-component} applied with $\eps=1$, there is a tight component $T_v \subset J_v$ satisfying $e(\partial T_v) \geq {\mu '}^{k-1} \binom{n}{k-1}$.
	Write $T'_v \subset J$ for the tight component of $J$ that contains~$T_v$.
	Let $J'= \bigcup_{v \in V(G)} T'_v$ and denote its tight components by $T_1,\dots,T_t$.
	By {the} definition of linked edges, $J'$ spans the vertices of $J$.
	Moreover, by definition of tight connectivity,
	we have $t \mu'^{k-1} \binom{n}{k-1}\leq \sum_{i \in [t]} e(\partial(T_i)) = e({\partial} J') \leq \binom{n}{k-1}$, which gives {$t \leq \mu'^{1-k}$.}
	
	It remains to show that $J'$ is $(t,\ell)$-connected {or can be transformed into something that is.}
	Let $L_1,\dots,L_{\ell'}$ denote the loose components of $J'$, and note that $\ell'\leq t$.
	(This is because every tight component is contained in a single loose component.)
	If $\ell' \leq \ell$, we are done.
	So suppose that $\ell' > \ell$.
	By assumption, $J$ has at most $\ell$ loose components.
	Take a maximal set of loose components $L'_1, \dots ,L'_s \subset J'$ which are in the same loose component of $J$.
	Note that $s\geq 2 $ because $\ell'>\ell$.
	Take a tight component $T \subset J$ such that $V(T)$ intersects, but is not contained in, $V(L'_s)$.
	{We can find such a component $T$ because $s \geq 2$ and $L'_1, \dots ,L'_s$ are in the same loose component of $J$.
		(It may be the case that $T$ is just a single edge.)
		Since $J'$ is spanning, it follows that $V(T)$ intersects $V(L'_j)$ for some $j\in[s-1]$.}
	Adding $T$ to $J'$ therefore decreases the number of $J'$'s loose components.
	We continue this process (updating the tight and loose components) until $\ell' \leq \ell$.
	We remark that by the end, the number of tight components in $J'$ will have at most doubled.
	
	In conclusion, 	$J' \subset J$ is a (vertex) spanning subgraph which has at most $2t \leq 2\mu'^{1-k} \leq 1/\mu'^k$ tight components and at most $\ell$ loose components.
	Hence $J$ is $(2t,\ell)$-connected, and we may finish with $2t$ playing the role of $t$.
\end{proof}

\subsection{Observations on the critical chromatic number}\label{sec:observation-on-crit-chromatic-number}

Here, we discuss a few facts about the critical chromatic number.
The following lemma in particular implies that if the host graph~$G$ has a  fractional $B_\chi$-tiling, then it also has a fractional $B_{\chi'}$-tiling for $\chi'\leq \chi$.
This also justifies why our frameworks are stated in terms of $B_\chi$-tilings.
\begin{lemma}\label{lem:bottle_frac_H_tiling}
	Let $\chi > 1$ and $F$ be a graph with $\crit(F)\leq \chi$.
	Let $G$ be a graph with a fractional $B_\chi$-tiling of weight $s$.
	Then $G$ has a fractional $F$-tiling of weight $s$.
\end{lemma}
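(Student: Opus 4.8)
The plan is to reduce the statement to a combinatorial fact about colourings of $F$ and then transport the fractional $B_\chi$-tiling to a fractional $F$-tiling along blow-ups. Recall that $B_\chi$ is the complete $\kk$-partite graph with part sizes $(a,b,\dots,b)$ (one part of size $a$ and $\kk-1$ parts of size $b$, where $\chi=\lfloor\chi\rfloor+a/b$), so by the formula \eqref{equ:crit} we have $\crit(B_\chi)=\kk-1+a/b=\chi$. Write $B_\chi[p]$ and $F[q]$ for the $p$- and $q$-fold blow-ups. First I would record three routine transport steps. (i) A fractional $B_\chi$-tiling $\omega$ of $G$ of weight $s$ yields a fractional $B_\chi[p]$-tiling of $G$ of weight $s$: each $\sigma\in\Hom(B_\chi;G)$ induces $\sigma^{(p)}\in\Hom(B_\chi[p];G)$ by composing with the projection $B_\chi[p]\to B_\chi$, and $|(\sigma^{(p)})^{-1}(v)|=p\,|\sigma^{-1}(v)|$, so assigning $\sigma^{(p)}$ the weight $\omega(\sigma)/p$ preserves the vertex constraints and the weight. (ii) If $F[q]$ is isomorphic to a spanning subgraph of $B_\chi[p]$ (forcing $q\,v(F)=p\,v(B_\chi)$), then precomposing each $\tau\in\Hom(B_\chi[p];G)$ with this embedding $\iota$ turns a fractional $B_\chi[p]$-tiling of weight $s$ into a fractional $F[q]$-tiling of weight $s$, since $\iota$ is a bijection on vertex sets and hence $|(\tau\circ\iota)^{-1}(v)|=|\tau^{-1}(v)|$ for all $v$. (iii) A fractional $F[q]$-tiling of $G$ of weight $s$ yields a fractional $F$-tiling of $G$ of weight $s$: every $\psi\in\Hom(F[q];G)$ restricts to $q$ homomorphisms $F\to G$, one per blow-up copy, and their preimage sizes at each vertex of $G$ sum to $|\psi^{-1}(v)|$, so assigning each of these $q$ restrictions the weight $\omega(\psi)$ (adding weights on coinciding homomorphisms) works, with total weight $q\,v(F)\sum_\psi\omega(\psi)=v(F[q])\sum_\psi\omega(\psi)=s$.

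It remains to prove the core claim: \emph{if $\crit(F)\le\chi$, then $F[q]$ is isomorphic to a spanning subgraph of $B_\chi[p]$ for some $p,q\in\NATS$.} Since $B_\chi[p]$ is complete $\kk$-partite with part sizes $(pa,pb,\dots,pb)$, this amounts to producing a proper $\kk$-colouring of $F[q]$ whose $\kk$ colour-class sizes are exactly $(pa,pb,\dots,pb)$, where $q\,v(F)=p\,v(B_\chi)$. Fix a proper $\chi(F)$-colouring of $F$ attaining $\alpha(F)$, let $\vecb c\in\NATSZ^{\kk}$ be its vector of colour-class sizes padded by zeros to length $\kk$ (legitimate since $\crit(F)>\chi(F)-1$ forces $\chi(F)\le\kk$), and set $\vecb t=\tfrac{v(F)}{v(B_\chi)}(b,\dots,b,a)$, which has coordinate sum $v(F)=\sum_i\vecb c(i)$. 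The inequality $\crit(F)\le\chi$ says precisely that the $\chi(F)-1$ largest entries of $\vecb c$ together make up at least a $(\chi(F)-1)/\chi$-fraction of $v(F)$; combined with $\kk-1\le\chi$ this gives, after sorting, that for each $j$ the $j$ largest entries of $\vecb c$ sum to at least the sum of the $j$ largest entries of $\vecb t$, i.e. $\vecb c$ majorises $\vecb t$. By the standard fact that a vector lies in the convex hull of the permutations of another exactly when it is majorised by it, we may write $\vecb t=\sum_\pi\lambda_\pi\,\pi(\vecb c)$ with nonnegative rationals $\lambda_\pi=q_\pi/q$ summing to $1$. Colouring $q_\pi$ of the blow-up copies of $F$ by the colouring obtained from the fixed one by permuting colours according to $\pi$ produces a proper $\kk$-colouring of $F[q]$ with class-size vector $q\vecb t=\tfrac{q\,v(F)}{v(B_\chi)}(b,\dots,b,a)$; choosing $q$ divisible by $v(B_\chi)$ and by all denominators of the $\lambda_\pi$ makes $p:=q\,v(F)/v(B_\chi)$ an integer and the class sizes exactly $(pa,pb,\dots,pb)$, which is the required spanning embedding of $F[q]$ into $B_\chi[p]$ (send colour class $i$ bijectively to part $i$).

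I expect the majorisation step to be the main obstacle: one must verify the partial-sum inequalities uniformly in $j$, the delicate ranges being $j\le\chi(F)-1$ (where one invokes $\crit(F)\le\chi$ via the split of $V(F)$ into the smallest class and the rest) and $\chi(F)\le j\le\kk-1$ (where the padded zeros of $\vecb c$ enter and one uses $j\le\kk-1\le\chi$). The remaining case $\chi\in\NATS$ is degenerate, since then $a=0$ and $B_\chi$ collapses to $K_{\kk-1}$; there the claim is meaningful only when $\chi(F)\le\kk-1$, and the same majorisation argument then applies with the balanced vector $\tfrac{v(F)}{\kk-1}(1,\dots,1)$ in place of $\vecb t$ (and in all applications of this lemma $\chi$ is not an integer in any case). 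Everything outside the core claim is bookkeeping with homomorphisms and preimage counts.
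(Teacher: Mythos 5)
Your transport steps and the majorisation argument constitute a genuinely different route from the paper's: the paper observes that $\delta(B_\chi)=(1-1/\chi)v(B_\chi)\ge(1-1/\crit(F))v(B_\chi)$, applies \cref{lem:degree_frac_bottle_tiling} to obtain a perfect fractional $F$-tiling of $B_\chi$ itself, and then simply composes the two fractional tilings. Your majorisation step is correct and would make the lemma self-contained. However, there is a genuine gap in your core claim, caused by taking $F[q]$ to be the $q$-fold \emph{blow-up} of $F$ rather than the disjoint union of $q$ copies. In the blow-up, every copy of $u$ is joined to every copy of every neighbour of $u$, so in any proper colouring of $F[q]$ the \emph{sets} of colours appearing on the copies of $u$ and on the copies of $v$ must be disjoint whenever $uv\in E(F)$. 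Your construction colours the $i$-th spanning copy of $F$ with $\pi_i\circ\phi$, and for $i\ne j$ nothing prevents $\pi_i(\phi(u))=\pi_j(\phi(v))$ on a cross edge, so the resulting colouring of $F[q]$ need not be proper. Moreover, the claim itself is false, not merely the proof: take $F=C_5$ and $\chi=13/5>\crit(C_5)=5/2$, so that $B_\chi=K_{3,5,5}$ and the divisibility $5q=13p$ forces $q\ge 13$. A short case analysis of the sets $S(u)$ of colours used on the copies of each $u\in V(C_5)$ (adjacent vertices must get disjoint sets, and with three colours at most one vertex of $C_5$ can have $|S(u)|\ge 2$) shows that every proper $3$-colouring of $C_5[q]$ has some colour class of size exactly $2q$; but the parts of $B_{13/5}[p]$ have sizes $15q/13$ and $25q/13$, so $C_5[q]$ is never a spanning subgraph of $B_{13/5}[p]$.

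The repair is immediate: run the whole argument with the disjoint union of $q$ copies of $F$ in place of the blow-up $F[q]$. Steps (ii) and (iii) work verbatim for disjoint unions (restriction of a homomorphism to a component is again a homomorphism, and the components partition the vertex set), and now colouring $q_\pi$ of the copies with $\pi\circ\phi$ does produce a proper $\lceil\chi\rceil$-colouring with class-size vector $\sum_\pi q_\pi\,\pi(\vecb c)=q\vecb t$, since there are no edges between distinct copies. With this change your proof is complete; it is longer than the paper's composition argument but avoids invoking the fractional Koml\'os theorem.
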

\begin{proof}
	Observe that $\delta(B_\chi) \geq (1-1/\crit(F))v(B_\chi)$.
	Therefore, by \cref{lem:degree_frac_bottle_tiling},
	$B_\chi$ has a perfect fractional $F$-tiling, say $\omega \colon \Hom(F;B_\chi)\rightarrow [0,1]$.
	Let $\omega' \colon \Hom(B_\chi;G) \rightarrow [0,1]$ be a fractional $B_\chi$-tiling of weight $s$.
	Define $\omega''\colon \Hom(F;G)\rightarrow [0,1]$ by setting for each $\theta''\in \Hom(F;G)$,
	\[
	\omega''(\theta'')= \sum_{(\theta,\theta')} \omega'(\theta')\omega(\theta)
	\]
	where the sum is taken over all pairs $\theta\in \Hom(F;B_\chi)$ and $\theta'\in \Hom(B_{\chi};G)$ such that $\theta'\circ \theta=\theta''$.
	A straightforward calculation shows that $\omega''$ is a fractional $F$-tiling of weight $s$.\COMMENT{Indeed we can check that for all $u\in V(G)$ we have
		\begin{align*}
			\sum_{\theta''\in\Hom(F;G)}\omega''(\theta'')|\theta''^{-1}(u)| & =   \sum_{v\in V(B_\chi)}\sum_{\theta\in \Hom(F;B_\chi)}\sum_{\substack{\theta' \in \Hom(B_\chi;G) \\
					\theta'(v)=u}}\omega'(\theta')\omega(\theta)|\theta^{-1}(v)|
			\\
			& =\sum_{v\in V(B_\chi)}\sum_{\substack{\theta' \in \Hom(B_\chi;G)                                   \\
					\theta'(v)=u}}\omega'(\theta')\sum_{\theta\in \Hom(F;B_\chi)}\omega(\theta)|\theta^{-1}(v)|
			\\
			& = \sum_{v\in V(B_\chi)}\sum_{\substack{\theta' \in \Hom(B_\chi;G)                                  \\
					\theta'(v)=u}}\omega'(\theta')
			\\
			& = \sum_{\theta' \in \Hom(B_\chi;G)} w'(\theta')|\theta'^{-1}(u)|.
		\end{align*}
		In the first line we used the fact that if $\theta'\circ\theta = \theta''$ then $|\theta''^{-1}(u)|=\sum_{v\in \theta'^{-1}(u)}|\theta^{-1}(v)|$.
	}
\end{proof}

We obtain the following simple consequence.

\begin{corollary}\label{cor:fractional-tiling-weight}
	Let $1 < \chi' \leq \chi$ {with $\chi,\chi' \in \mathbb{Q}$.}
	Let $G$ be a graph with a fractional $B_{\chi}$-tiling of weight $s$.
	Then $G$ has a fractional $B_{\chi'}$-tiling of weight at least $s$.
\end{corollary}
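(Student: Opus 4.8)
The plan is to deduce this directly from \cref{lem:bottle_frac_H_tiling} by taking $F = B_{\chi'}$ there. That lemma asserts that whenever $\crit(F) \le \chi$ and $G$ has a fractional $B_\chi$-tiling of weight $s$, the graph $G$ also has a fractional $F$-tiling of weight $s$. For $F = B_{\chi'}$ a fractional $F$-tiling is precisely a fractional $B_{\chi'}$-tiling, and weight exactly $s$ is in particular weight at least $s$, so the corollary will follow as soon as one checks that $B_{\chi'}$ is an admissible choice of $F$, i.e.\ that $\crit(B_{\chi'}) \le \chi$.

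To see this, I would unwind the definition of $B_{\chi'}$: it is the complete $\lceil\chi'\rceil$-partite graph $B_{a,(\lceil\chi'\rceil - 1)\ast b}$, where $a < b$ are the coprime integers with $\chi' = \lfloor\chi'\rfloor + a/b$ (and $a = 0$, $b = 1$ if $\chi' \in \NATS$). By the formula for the critical chromatic number of a bottle graph recorded in the introduction, $\crit(B_{a,(\lceil\chi'\rceil - 1)\ast b}) = \lceil\chi'\rceil - 1 + a/b$; this equals $\chi'$ when $\chi' \notin \NATS$ (since then $\lceil\chi'\rceil - 1 = \lfloor\chi'\rfloor$) and equals $\chi' - 1$ when $\chi' \in \NATS$. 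In either case $\crit(B_{\chi'}) \le \chi' \le \chi$, so \cref{lem:bottle_frac_H_tiling} applies with $F = B_{\chi'}$ and produces the desired fractional $B_{\chi'}$-tiling of $G$ of weight $s$.

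I do not anticipate any real obstacle: the argument is a one-line application of \cref{lem:bottle_frac_H_tiling}, and the only point requiring a little care is the bookkeeping for $\crit(B_{\chi'})$ — in particular the degenerate case $\chi' \in \NATS$, where one part of $B_{\chi'}$ is empty and the bottle-graph formula must be read accordingly.
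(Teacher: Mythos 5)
Your proposal is correct and matches the paper's (implicit) argument: the corollary is stated as an immediate consequence of \cref{lem:bottle_frac_H_tiling} applied with $F = B_{\chi'}$, which is legitimate since $\crit(B_{\chi'}) \le \chi' \le \chi$. Your bookkeeping for $\crit(B_{\chi'})$, including the degenerate integer case where $B_{\chi'}$ reduces to a clique, is accurate.
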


{Finally, a more specific remark.
\begin{observation}\label{obs:framework-monotone}
	For all $\chi>1$, $t,\ell\geq 1$, $\mu \geq \mu' > 0$ with $\chi + \mu, \chi + \mu' \in \mathbb{Q}$ and $\rho \in [0,1]$ the following holds.
	Suppose $G$ is a $\mu$-robust $({\chi+\mu},\rho;t,\ell)$-tiling framework.
	Then $G$ is also a $\mu'$-robust $({\chi+\mu'},\rho;t,\ell)$-tiling framework.
\end{observation}}
\begin{proof}
	{Consider a $(\mu',\mu')$-approximation $G' \subseteq G$.
	Since $G'$ is also a $(\mu,\mu)$-approximation of $G$, it follows that $G'$ is a $({\chi+\mu},\rho;t,\ell)$-tiling framework.
	So $G'$ has a $(1-\rho)$-perfect fractional {$B_{\chi+\mu}$}-tiling, $K_{\lceil {\chi+\mu} \rceil}(G')$ is $(t,\ell)$-connected and $K_{\lceil {\chi+\mu}  \rceil}(G')$ is linked.}
	
	{By \cref{cor:fractional-tiling-weight}, $G'$ has also a $(1-\rho)$-perfect fractional {$B_{\chi+\mu'}$}-tiling.
	Moreover, $K_{\lceil {\chi+\mu'} \rceil}(G')$ is $(t,\ell)$-connected and $K_{\lceil {\chi+\mu'} \rceil}(G')$ is linked.
	For $\lceil {\chi+\mu'} \rceil = \lceil {\chi+\mu} \rceil$, this is trivial.
	For $\lceil {\chi+\mu'} \rceil < \lceil {\chi+\mu} \rceil$, this follows because every edge of $K_{\lceil {\chi+\mu'} \rceil}(G')$ is contained in an edge of $K_{\lceil {\chi+\mu} \rceil}(G')$.
	Hence $G'$ is also a $({\chi+\mu'},\rho;t,\ell)$-tiling framework.}
\end{proof}

\subsection{Bounded degree covers}
In this section, we show a basic result about covering vertices with the homomorphic images of a graph $F$.
More precisely, we say that a graph $F'$ is a \emph{homomorphic image} of $F$ if there exists a surjective homomorphism from $F$ to $F'$.
Let $R$ be a graph.
We say that a spanning subgraph $R' \subset R$ is a {\emph{$(1-\rho)$-homomorphism $F$-cover} of $R$} if $R'$ is the union of homomorphic images of $F$ such that all but at most $\rho r$ vertices {of $R$} belong to a vertex set of a homomorphic image of $F$.
When $\rho=0$, we just say \emph{homomorphism $F$-cover} instead.
The next lemma states that graphs with almost perfect fractional $F$-tilings admit approximate homomorphism $F$-covers of bounded degree.

\begin{lemma}[Bounded degree cover]\label{lem:bounded-degree-cover}
	Let $1/r \leq \rho < 1/{f^6} \leq 1/2^6$.
	Let $F$ be a graph on $f$ vertices, and let $R$ be a graph on~$r$ vertices with a $(1-\rho)$-perfect fractional $F$-tiling.
	Then $R$ contains a $(1-\rho f^6)$-homomorphism $F$-cover $R'$ with $\Delta(R') \leq f^4$.
\end{lemma}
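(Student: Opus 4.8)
The plan is to turn the given fractional $F$-tiling into an integral one at a small cost, and then to sparsify it so that no vertex lies in too many tiles. First I would apply LP duality / the fact that the polytope of fractional $F$-tilings has a vertex near the optimum whose support is small; more concretely, since a $(1-\rho)$-perfect fractional $F$-tiling exists, there is one $\omega$ whose support $\cS\subseteq\Hom(F;R)$ has size at most $r$ (the number of tight LP constraints, one per vertex of $R$). Rounding such a vertex-supported solution greedily: repeatedly pick a homomorphic image $F'$ (the image of some $\theta$ in the support with $\omega(\theta)$ not too small) that is vertex-disjoint from everything chosen so far, add it to $R'$, and delete its at most $f$ vertices from $R$. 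Each deleted vertex carried at most $1$ unit of fractional weight, so each step of the greedy removes at most $f\cdot f = f^2$ worth of homomorphisms' total coverage from consideration; a standard accounting shows that when the greedy gets stuck, the uncovered vertices carry fractional weight at most $O(\rho r)+O(f^2)\cdot(\text{number of chosen images})$, and one balances these terms to get that all but $\le \rho f^6 r$ vertices are covered. (The exact bookkeeping is the routine calculation I would not grind through here, but the shape is: loss $\le \rho r$ from imperfection plus $\le f^5 r$ vertices "blocked" per ... — tuned so the final bound is $\rho f^6$.)

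Alternatively, and perhaps cleaner, I would avoid LP-vertex arguments and argue directly on the fractional tiling: scale $\omega$ down slightly and use a probabilistic/greedy selection. Order the homomorphisms arbitrarily; process them, and include $\theta$ in $R'$ if its image is disjoint from all previously included images. To control the degree $\Delta(R')$, observe that if some vertex $v$ ended up in more than $f^4$ images of $R'$, then since the images through $v$ are pairwise disjoint off of... — no, they share $v$ — so instead one should enforce disjointness and separately bound how many images can pass through a fixed vertex: since the images in $R'$ are pairwise vertex-disjoint, \emph{every} vertex lies in at most one image, which would give $\Delta(R')\le f-1$, far better than $f^4$. So the honest statement is that the greedy gives pairwise \emph{disjoint} images (hence $\Delta(R')\le f$ trivially), and the content is purely that disjoint greedy still covers a $(1-\rho f^6)$-fraction.

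So the real work is the covering estimate. Here is how I would run it. Normalize so $\omega$ has rational values and think of it as a genuine (multi)collection of weighted homomorphisms with $\sum_\theta \omega(\theta)\,v(F) \ge (1-\rho)r$ and $\sum_{\theta\ni v}\omega(\theta)|\theta^{-1}(v)|\le 1$ for each $v$. Run the greedy that repeatedly selects an image disjoint from those already chosen. Suppose it terminates with chosen images covering a vertex set $U$, $|U|\le f\cdot|R'|$ where $R'$ is the number of selected images, and with $V(R)\setminus (U\cup Z)$ all blocked, $Z$ being the $\le\rho r$ (say) vertices of total fractional deficiency. A vertex $x\notin U$ is blocked means: every $\theta$ in the support with $x\in\mathrm{im}(\theta)$ has $\mathrm{im}(\theta)\cap U\ne\emptyset$. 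Summing the covering inequality $\sum_{\theta:\,x\in\mathrm{im}(\theta)}\omega(\theta)|\theta^{-1}(x)|$ over all blocked $x$ and exchanging the order of summation, each such $\theta$ is charged to $U$ through one of its $\le f$ vertices in $U$; this shows the total fractional weight sitting on blocked vertices is $\le f\cdot\sum_{u\in U}\big(\text{weight through }u\big)\le f|U|\le f^2|R'|$. On the other hand the total weight is $\ge (1-\rho)r/f$ roughly, and the weight on $U\cup Z$ is at most $|U|+|Z|\le f|R'|+\rho r$. Rearranging, the number of uncovered vertices is $\le (\text{const})\cdot(f^2|R'| + \rho r)$, and since trivially $|R'|\le r/1$... — one needs a better bound on $|R'|$: in fact $f|R'|\le r$, so $f^2|R'|\le f r$, which is too weak. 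The fix (and I expect this to be the main obstacle) is to not measure the blocked-out weight against $|U|$ directly but against the \emph{fractional} weight removed, which is $\le |U|\le f|R'|$, combined with the observation that we may first greedily exhaust images with large $\omega$-value before touching the rest; running the greedy on a \emph{scaled-up} $\omega$ (replace $\omega$ by $\min(1,\lambda\omega)$ for suitable $\lambda$) makes "blocked" much more restrictive and pushes the $f r$ down to $\rho f^6 r$. The degree bound $\Delta(R')\le f^4$ is then a comfortable slack (indeed $f$ suffices), included presumably for uniformity with how the cover is used later.

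Therefore the key steps, in order, are: (1) extract a fractional $F$-tiling with small support (size $\le r$), or equivalently pass to a scaled version; (2) run the disjoint-image greedy, obtaining pairwise vertex-disjoint homomorphic images, so $\Delta(R')\le f\le f^4$ automatically; (3) prove the covering estimate by double counting the fractional weight on "blocked" vertices against the covered set, with the scaling chosen so the uncovered fraction is $\le\rho f^6$. The one delicate point — step (3), getting the constant down to $f^6$ rather than an unbounded multiple of $r$ — is handled by the scaling trick in step (1)/(2); everything else is bookkeeping.
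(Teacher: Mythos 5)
There is a genuine gap, and it comes from a misreading of what the lemma is asking for. A homomorphism $F$-cover is allowed to consist of \emph{overlapping} images of $F$; the degree bound $\Delta(R')\leq f^4$ is precisely the price paid for (and the content of) permitting this overlap. Your step (2) insists on pairwise vertex-disjoint images, in which case the conclusion would be a $(1-\rho f^6)$-perfect \emph{integral} $F$-tiling. That is false in general because of the integrality gap for tilings: take $F=K_2$ and $R$ a disjoint union of $r/3$ triangles. Then $R$ has a perfect fractional $F$-tiling (weight $1/2$ on every edge), and $\rho$ can be as small as $1/r$, yet any collection of vertex-disjoint edges misses at least $r/3$ vertices --- a constant fraction, not $\rho f^6 r$. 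The correct cover in this example uses all three edges of each triangle, so every vertex has degree $2$; no disjoint selection works. Consequently the entire covering estimate in your step (3) is aimed at a statement that cannot be true, and the difficulties you run into there (the bound $f^2|R'|\leq fr$ being ``too weak'', the unspecified ``scaling trick'') are symptoms of this, not bookkeeping issues.

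The paper's argument is structured quite differently. It first reduces to the case of a \emph{perfect} fractional tiling by adjoining roughly $\rho r(f-1)$ universal vertices and extending the fractional tiling to be perfect on the augmented graph; deleting these vertices at the end destroys the covering of at most $\rho r(f-1)\cdot f^4\cdot f\leq \rho f^6 r$ vertices, which is where the constant $f^6$ comes from (\cref{lem:bounded-degree-cover}). For the perfect case (\cref{lem:bounded-degree-cover-tight}), it passes to an $f$-blow-up of $R$ so that all weighted homomorphisms become injective copies of $F$, encodes these copies as edges of an $f$-uniform hypergraph with a perfect fractional matching, and invokes a black-box result (\cref{lem:bounded-degree-cover-matching}) that a hypergraph with a perfect fractional matching admits a (possibly overlapping) edge cover of maximum degree at most $f^2+1$; projecting back gives $\Delta(R')\leq f(f^2+1)\leq f^4$. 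If you want to salvage your approach, you would need to replace the disjoint greedy by an argument producing an overlapping cover of bounded multiplicity --- essentially re-proving \cref{lem:bounded-degree-cover-matching} --- and the LP-support/scaling ideas in your step (1) do not by themselves achieve this.
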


We derive  \cref{lem:bounded-degree-cover} from the following `exact' result.

\begin{lemma}\label{lem:bounded-degree-cover-tight}
	Let $F$ be a  graph on $f \geq 2$ vertices.
	Let $R$ be a graph with a perfect fractional $F$-tiling.
	Then $R$ contains a {homomorphism} $F$-cover $R'$ with $\Delta(R') \leq f^4$.
\end{lemma}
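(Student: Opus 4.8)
The plan is to start from a perfect fractional $F$-tiling $\omega\colon\Hom(F;R)\to[0,1]$ and turn it into an integral cover by a weighted "disjointification" argument, while keeping degrees bounded. First I would observe that each homomorphism $\theta\in\Hom(F;R)$ in the support of $\omega$ has a homomorphic image $F_\theta=\theta(V(F))\subseteq R$ which is a homomorphic image of $F$ on at most $f$ vertices; so it suffices to select a subfamily $\cS$ of these images whose union covers $V(R)$ and has maximum degree at most $f^4$. The fractional tiling condition $\sum_\theta\omega(\theta)|\theta^{-1}(v)|\le 1$ for all $v$, together with perfectness ($v(F)\sum_\theta\omega(\theta)=v(R)$), means that every vertex is "fully covered" in the fractional sense: $\sum_{\theta\colon v\in\theta(V(F))}\omega(\theta)|\theta^{-1}(v)|=1$ for all $v$ (if some vertex had slack, the total weight would be strictly less than $v(R)/v(F)$).

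The key step is a greedy/LP-rounding selection. I would process vertices of $R$ one at a time; when I reach a vertex $v$ not yet covered by the images chosen so far, I pick some $\theta$ with $v\in\theta(V(F))$ and $\omega(\theta)>0$, add $F_\theta$ to $\cS$, and then \emph{remove from further consideration} all homomorphisms $\theta'$ whose image intersects $F_\theta$ in "too many" vertices — this is what controls the degree. More precisely, adding $F_\theta$ contributes at most $\binom{f}{2}<f^2$ edges, each incident to at most $f$ vertices of $R$; to keep $\Delta(R')\le f^4$ I need to argue that no vertex is hit by more than $f^4/f = f^3$ chosen images, or something in that range. The cleanest way is a charging argument: show that whenever $v$ triggers a new image $F_\theta$, the fractional weight of homomorphisms still "available" at any fixed vertex $u$ that $F_\theta$ touches drops by a definite amount, bounded below because such $\theta$ has $|\theta^{-1}(u)|$-weight at most $1$ at $u$; since the total available weight at $u$ started at $\le v(R)$-normalised constant, only boundedly many images (in terms of $f$) can ever touch $u$. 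Then $R'=\bigcup_{F_\theta\in\cS}F_\theta$ is a homomorphism $F$-cover (every vertex is covered, since a vertex that was never triggered was already covered by an earlier image) with $\Delta(R')\le f^4$.

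The main obstacle I expect is getting the degree bound $f^4$ exactly right with a clean argument rather than a wasteful one: the naive greedy "pick any image through an uncovered vertex" can stack many images on a single vertex unless the removal rule is chosen carefully, and one must verify that deleting the "conflicting" homomorphisms never destroys the covering property for vertices processed later. I would handle this by only deleting $\theta'$ with $|\theta'(V(F))\cap F_\theta|\ge 2$ (i.e.\ sharing an edge's worth), keeping enough weight around that every still-uncovered vertex retains at least one available image through it — here the fractional condition is used quantitatively, roughly that the weight through $v$ of images meeting a fixed small set is small. Once \cref{lem:bounded-degree-cover-tight} is established this way, \cref{lem:bounded-degree-cover} follows by a standard truncation: delete the $\le\rho r$ vertices of $R$ not covered fractionally (the defect of a $(1-\rho)$-perfect tiling) to pass to a subgraph with an exact perfect fractional tiling on a set of size $\ge(1-\rho f^6)r$ — the $f^6$ slack absorbing the rounding of weights to a common denominator and the removal of homomorphisms straddling the deleted vertices — and apply the exact lemma there.
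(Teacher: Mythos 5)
Your overall strategy differs from the paper's: the paper does not round the fractional tiling greedily at all. It passes to the $f$-blow-up $R^\ast$ of $R$ (so that every homomorphism of positive weight can be replaced by injective lifts, sharing the weight), reformulates the perfect fractional $F$-tiling of $R^\ast$ as a perfect fractional matching of the $f$-uniform hypergraph of copies of $F$, invokes the black-box result (\cref{lem:bounded-degree-cover-matching}) that a $k$-graph with a perfect fractional matching has a cover of maximum degree at most $k^2+1$, and projects back to $R$, losing a factor $f$ in the degree. Your preliminary observation that perfectness forces $\sum_{\theta}\omega(\theta)|\theta^{-1}(v)|=1$ for every $v$ is correct, but the core of your argument is not.

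The gap is in the charging step, which is asserted rather than proved, and whose key quantitative claims are false as stated. First, the weights $\omega(\theta)$ can be arbitrarily small, so selecting an image through an uncovered vertex and deleting its conflicts need not decrease the ``available weight'' at a touched vertex $u$ by any amount bounded below in terms of $f$; the conclusion that only $O_f(1)$ chosen images can touch $u$ does not follow from ``total weight at $u$ is at most $1$.'' Second, the claim that ``the weight through $v$ of images meeting a fixed small set is small'' fails: all of the fractional weight at an uncovered vertex $v$ may be carried by homomorphisms with one and the same image $S\ni v$, and if $|S\cap F_\theta|\ge 2$ for a previously chosen $F_\theta$, your deletion rule removes every homomorphism through $v$ and strands it. What you are implicitly trying to prove by greedy selection — that a perfect fractional matching yields a bounded-degree integral cover — is precisely the nontrivial content of \cref{lem:bounded-degree-cover-matching}, which the paper cites from Allen, Garbe, Lang, Mycroft and Sanhueza-Matamala rather than reproving; a naive greedy with local conflict deletion does not establish it. (Your deduction of \cref{lem:bounded-degree-cover} from the exact lemma is closer in spirit to the paper's, though the paper completes the defective tiling by adding $\rho' r(f-1)$ universal vertices rather than deleting the deficient ones.)
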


\cref{lem:bounded-degree-cover-tight} is a consequence of the following result, due to Allen, Garbe, Lang, Mycroft and Sanhueza-Matamala~\cite{GLL+21}.
{A proof of this can be found in \cite[Lemma 7.7]{LS23}.
	Related work has been recently obtained by Planken and Ueckerdt~\cite{PU23}.}
A \emph{perfect fractional matching} in a $k$-graph $G$ is a function $\omega \colon E(G) \to [0,1]$ such that $\sum_{e\ni v} \omega(e) = 1$ for every vertex $v \in V(G)$.
A \emph{cover} of a $k$-graph $J$ is a spanning subgraph $J' \subset J$ in which every vertex is covered by at least one edge.
The \emph{maximum degree}~$\Delta(J)$ of~$J$ is the maximum number of edges containing a vertex of $J$.

\begin{lemma}\label{lem:bounded-degree-cover-matching}
	Let $J$ be a $k$-graph with a perfect fractional matching.
	Then $J$ admits a cover of maximum degree at most $k^2+1$.
\end{lemma}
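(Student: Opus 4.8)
The plan is to build the cover one vertex at a time: for every vertex $v$ of $J$ choose a single \emph{representative edge} $r(v)\in E(J)$ with $v\in r(v)$, and set $J'=\{r(v):v\in V(J)\}$. Then $J'$ is automatically a cover of $J$, and for every vertex $w$ we have $\deg_{J'}(w)\le L_w:=|\{v\in V(J):w\in r(v)\}|$ (distinct vertices may choose the same edge), so it suffices to choose the representatives so that no vertex is overloaded. I would use the perfect fractional matching $\omega$ both to certify that balanced choices exist and to carry out the rounding.

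First I would set up the fractional relaxation. Introduce a variable $x_{v,f}\in[0,1]$ for each incident pair $v\in f$ with $\omega(f)>0$, subject to $\sum_{f\ni v}x_{v,f}=1$ for every $v$ (``pick exactly one edge at $v$''), and let the load of a vertex $w$ be the linear form $L_w=\sum_{v}\sum_{f\ni w,\,v\in f}x_{v,f}$. Setting $x_{v,f}=\omega(f)$ gives a feasible fractional solution, and since $\omega$ is a perfect fractional matching, $\sum_{f\ni w}\omega(f)=1$, so the fractional load of each vertex is exactly $\sum_{f\ni w}\omega(f)\,|f|=k\sum_{f\ni w}\omega(f)=k$. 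Note that each variable $x_{v,f}$ lies in exactly one equality constraint (the one for $v$) and contributes to exactly $k$ of the load forms (one for each vertex of $f$), and that distinct equality constraints have disjoint variable sets.

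Next I would round $x$ to a $\{0,1\}$-solution by a Beck--Fiala-type iterative argument, keeping the equality constraints satisfied exactly throughout and tracking how far each $L_w$ can drift from its starting value $k$. At a generic step, let $Z$ be the set of variables still strictly between $0$ and $1$; each undecided equality constraint contains at least two variables of $Z$ and these sets are disjoint, so there are at most $|Z|/2$ undecided equalities, while the total number of $(Z\text{-variable},\text{load-form})$ incidences is at most $k|Z|$, so fewer than $|Z|/2$ load forms involve more than $2k$ variables of $Z$. Hence the active constraints --- the undecided equalities together with the load forms that still involve more than $2k$ variables of $Z$ --- number fewer than $|Z|$, so their common null space (restricted to $Z$) is non-trivial, and moving along a direction in it until some coordinate hits $0$ or $1$ rounds a variable while preserving all equalities exactly and all active load forms unchanged. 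Once a load form has dropped to at most $2k$ variables in $Z$ it stays so, hence from then on its value drifts by less than $2k$; therefore the final integral solution satisfies $L_w< k+2k=3k$, and in particular $\deg_{J'}(w)\le L_w\le k^2+1$ for all $k\ge 2$ (the case $k=1$ being trivial, as then each vertex is itself an edge of $J$). The main obstacle is precisely this rounding step: verifying that the iterative relaxation never gets stuck --- i.e.\ that the null-space count goes through at every step while the equality constraints remain pinned exactly --- and that the accumulated drift of each load form stays $O(k)$ rather than growing with the number of iterations; the reduction to representative edges and the feasibility and fractional-load computations are routine. (This argument in fact yields the sharper bound $\Delta(J')=O(k)$, with a little extra bookkeeping even $\le 3k-3$; the weaker $k^2+1$ is all that is needed later.)
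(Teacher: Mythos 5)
Your proof is correct, but note that the paper itself does not prove \cref{lem:bounded-degree-cover-matching} at all: it is imported as a black box from Allen, Garbe, Lang, Mycroft and Sanhueza-Matamala \cite{GLL+21}, so there is no in-paper argument to compare against. What you have written is a self-contained substitute via representative edges plus Beck--Fiala-style iterated rounding, and I checked the two points you flag as the main obstacles. The iteration never gets stuck: an equality constraint whose value is pinned at $1$ and all but one of whose variables are integral forces the last variable to be integral too, so every undecided equality has at least two floating variables; disjointness of the equalities' variable sets gives at most $|Z|/2$ of them, the incidence count gives fewer than $|Z|/2$ heavy load forms, and the strict inequality yields a nontrivial null direction whenever $Z\neq\emptyset$ (the case $|Z|=1$ being impossible). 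The drift control also works: each load form has $0/1$ coefficients, a deactivated form retains at most $2k$ floating variables forever, each of which moves by less than $1$ in total, so the final integral load is less than $3k$, i.e.\ at most $3k-1\le k^2+1$ for $k\ge 2$. So your argument is sound and in fact yields a stronger conclusion ($\Delta(J')\le 3k-1$) than the stated $k^2+1$; the only caveat is that it is a replacement for, not a reconstruction of, the cited proof.
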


\begin{proof}[Proof of \cref{lem:bounded-degree-cover-tight}]
	Let $R^\ast$ be an $f$-blow-up of $R$ and let $\eta$ be the obvious homomorphism from~$R^\ast$ to $R$.
	We can associate to any $\theta\in \Hom(F;R)$, the set $\Phi_\theta$ of all injective $\theta^\ast \colon\Hom(F;R^\ast)$ such that $\eta \circ \theta^\ast = \theta$.
	It is clear that $\frac{f}{|\Phi_\theta|}\sum_{\theta^\ast\in \Phi_\theta}|{\theta^\ast}^{-1}(u)| = |\theta^{-1}(v)|$ for all $v\in V(R)$ and $u\in \eta^{-1}(v)$.
	
	{Let $\omega \colon \Hom(F;R) \to [0,1]$ be a perfect fractional $F$-tiling of $R$.
		Define $\omega^\ast \colon \Hom(F;R^\ast) \to [0,1]$ by assigning weight $\omega(\theta)/|\Phi_\theta|$ to every $\theta^\ast \in \Phi_\theta$.
		Then  $\omega^\ast$ is a perfect fractional $F$-tiling of~$R^\ast$ such that all homomorphisms of positive weight are injective.}
	We construct an $f$-graph $J$ on vertex set $V(R^\ast)$ by adding an edge $e$ to $J$, whenever $R^\ast[e]$ contains a copy of $F$.
	Then $\omega^\ast$ can be used to a construct a perfect fractional matching of $J$.
	By \cref{lem:bounded-degree-cover-matching}, it follows that $J$ has a cover $J' \subset J$ of maximum degree at most $f^2+1$.
	{This in turn translates to a homomorphism $F$-cover $R' \subset R$ with $\Delta(R')  \leq f\cdot {(f-1) \cdot} (f^2+1) \leq f^4$, where the factor $f$ comes from the fact that $R^\ast$ is an $f$-blow-up of $R$ and the factor $(f-1)$ comes from the fact that, for any vertex $v$, each homomorphic image of $F$ contributes at most $f-1$ vertices to $N_{R'}(v)$.}
\end{proof}

\begin{proof}[Proof of \cref{lem:bounded-degree-cover}]
	Let $\rho'$ with $\rho  \leq \rho'\leq f\rho$ such that $\rho'r\in \NATS$.
	We obtain a graph $R_0$ by adding  $\rho' r (f-1)$ vertices to $R$ that are adjacent to all other vertices (including each other).
	Take a $(1-\rho')$-perfect fractional $F$-tiling $\omega$ of $R$ and suppose the weight given to $v\in V(R)$ is $1-c_v \in [0,1]$.
	Let $\Phi_v = \{ \phi \in \Hom(F; {R_{v}}) \colon |\phi^{-1}(v)|=1\}$ where ${R_v} = R_0 - (V(R) \sm \{v\})$.
	
	We define a perfect fractional $F$-tiling $\omega'$ of $R_0$ in two steps.
	First, we let $\omega'(\phi)=\omega(\phi)$ for~$\phi$ in $\Hom(F;R)$ and $\omega'(\phi)=\frac{c_v}{|\Phi_v|}$ for all $\phi \in \Phi_v$ and $v\in V(R)$.
	{So every vertex in $R$ receives weight $1$ by definition of $\Phi_v$.
		Moreover, all vertices in $R_0 - V(R)$ receive the same weight, which we denote by $c \in [0,1]$.
		In a second step, take a perfect fractional $F$-tiling of the complete graph $R_0-V(R)$.
		Rescaling with $1-c$ and adding it to $\omega'$ gives the desired perfect fractional $F$-tiling of $R_0$.}

To finish, we apply \cref{lem:bounded-degree-cover-tight} to obtain a homomorphism $F$-cover $R'_0 $ of $R_0$ with $\Delta(R'_0) \leq f^4$.
Let $R'$ be obtained from $R_0'$ by deleting $V(R_0) \sm V(R)$.
Since every added vertex was incident with at most $f^4$ homomorphic images of $F$ in $R'_0$ and $v(F)=f$, it follows that $R'$ is a $(1-\rho' f^6)$-homomorphism $F$-cover  with $\Delta(R') \leq f^4$.
\end{proof}

\section{Koml\'os' theorem for frameworks}\label{sec:komlos-framework}
Koml\'os' theorem allows us to find an almost perfect $F$-tiling in a graph  $G$ of sufficiently large minimum degree when $F$ is of fixed order.
Here, we extend this result to the setting where~$G$ is a tiling framework.
{In this and the following sections, we consider graphs $B_\chi$ (as in defined \cref{sec:host-graphs}) where $\chi=(k-1) + a/b$ is determined via given parameters $a,b,k$.
We thereby tacitly assume that $a$ and $b$ are always coprime and $a \leq b$, so that $B_\chi = K_{a,(k-1)\ast b}$.}

\begin{theorem}[Koml\'os' theorem for frameworks] \label{thm:Komlos-for-frameworks}
Let $\ell,t\in\NATS$ and $1/n \ll \rho \ll \rho' \ll \mu, 1/k,1/a,1/b$ with $\chi = (k-1) + a/b$.
Let $G$ be a $\mu$-robust $(\chi,\rho;t,\ell)$-tiling framework on $n$ vertices.
Then $G$ contains a $(1-\rho')$-perfect $B_\chi$-tiling.
\end{theorem}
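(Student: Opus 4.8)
The plan is to follow the classical absorption-free strategy for Komlós-type theorems, but using the abstract framework properties in place of a minimum degree condition. First I would apply the Regularity Lemma (\cref{lem:regularity}) to $G$ with suitable constants $1/n \ll \rho \ll \eps \ll d \ll \rho' \ll \mu,1/k,1/a,1/b$, obtaining $r_0\le r\le r_1$, an $(\eps,d+\eps)$-approximation $G'\subset G$ with a balanced $R$-partition $\cV=\{V_i\}_{i\in[r]}$ of size $r$, and a reduced graph $R$ on $[r]$. Since $G'$ is in particular a $(\mu,\mu)$-approximation of $G$, robustness of the framework gives that $G'$ is itself a $(\chi,\rho;t,\ell)$-tiling framework; then by \fr{prop:inherit-framework} the reduced graph $R$ is a $\mu/2$-robust $(\chi,\rho;t,\ell)$-tiling framework on $r$ vertices. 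In particular, $R$ satisfies the space property (F\ref{item:framework-Geometry}): it has a $(1-\rho)$-perfect fractional $B_\chi$-tiling.

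Next I would convert this fractional tiling into an honest, bounded-degree, almost-spanning homomorphic cover. Applying \fr{lem:bounded-degree-cover} to $R$ with $F=B_\chi$ (whose order $f=a+(k-1)b$ is bounded in terms of $a,b,k$), we obtain a $(1-\rho f^6)$-homomorphism $B_\chi$-cover $R'\subset R$ with $\Delta(R')\le f^4$; so all but at most $\rho f^6 r$ clusters lie in the vertex set of some homomorphic image of $B_\chi$ inside $R'$. Each such homomorphic image of $B_\chi$ is a (non-necessarily proper but edge-respecting) image $B'\subset R'$, and pulling back to $G'$, the union $\bigcup_{i\in V(B')}V_i$ carries the blow-up structure of $B'$: by the choice of $\chi=(k-1)+a/b$, one can find inside the clusters corresponding to $B'$ an almost-perfect $B_\chi$-tiling of $G'$ restricted to those clusters, losing only an $\eps$-fraction of vertices per regular pair. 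Here I would use that a single homomorphic image of $B_\chi=K_{a,(k-1)*b}$ blown up into balanced clusters of equal size $m$ admits a near-perfect $B_\chi$-tiling — this is exactly the standard fact underlying Komlós' theorem (partition the blown-up complete $k$-partite structure into copies of the bottle graph), and regularity is used only to pass from the blow-up to $G'$ via a greedy/random embedding of copies, or one invokes the Blow-Up Lemma on small pieces. Summing over the at most $f^4 r$ homomorphic images in $R'$, and absorbing the uncovered clusters and the exceptional vertices $V_0$ (there are only $\eps n$ of them) into the error term, we obtain a $B_\chi$-tiling of $G'$, hence of $G$, covering at least $(1-\rho f^6)(1-O(\eps))n \ge (1-\rho')n$ vertices.

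The main obstacle I anticipate is the middle step: producing, inside the blow-up of a single homomorphic image of $B_\chi$ with clusters of a common size $m$, a $B_\chi$-tiling covering all but a $o(1)$-fraction of the vertices, and doing this simultaneously and consistently across overlapping homomorphic images so that the resulting copies of $B_\chi$ in $G'$ are genuinely vertex-disjoint. The cleanest way around this is to make the $f^4 r$ homomorphic images vertex-disjoint as subgraphs of $R'$ up to a bounded-overlap correction — split each cluster $V_i$ into $\Delta(R')\le f^4$ roughly equal parts, one reserved for each homomorphic image incident to $i$, so that distinct images use disjoint vertex pools — and then within each image use the trivial fact that a balanced blow-up of $B_\chi$ (all parts of size $m'$) has a perfect $B_\chi$-tiling after discarding $O(1)$ vertices from each part, realised inside $G'$ by the Blow-Up Lemma (\cref{lem:blow-up}) or by iterated regularity, since each bipartite pair involved is $(\eps,d)$-regular. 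One must check the arithmetic that $\chi=(k-1)+a/b$ with $\gcd(a,b)=1$ makes the part-size ratios compatible with tiling by $K_{a,(k-1)*b}$ — this is precisely why $B_\chi$ is the right template — and that the small per-cluster losses accumulate only to $O(\eps)n+O(\rho f^6)n$, which is $\le\rho'n$ by the hierarchy. Note that neither the divisibility property (F\ref{item:framework-divisibility}) nor the linkage property (F\ref{item:framework-linkage}) is needed here: only the space property, together with robustness to pass to the reduced graph, is used, which is consistent with the fact that almost-perfect tilings require much less than perfect ones.
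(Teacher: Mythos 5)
Your first two steps (Regularity Lemma, inheriting the framework property to the reduced graph $R$ via \cref{prop:inherit-framework}, and extracting a bounded-degree homomorphism $B_\chi$-cover $R'$ via \cref{lem:bounded-degree-cover}) match the paper. But the middle step — tiling the balanced blow-up of each homomorphic image of $B_\chi$ separately, after splitting each cluster among the images incident to it — has a genuine gap. A homomorphic image of $B_\chi$ need not be an injective copy, and the blow-up of a non-injective image with \emph{equal} cluster sizes in general has no near-perfect $B_\chi$-tiling. Concretely, take $k=3$, $B_\chi=K_{1,3,3}$, and the homomorphic image obtained by collapsing one $b$-part to a single vertex: its balanced blow-up is a complete tripartite graph with parts of sizes $m,m,3m$, and since each copy of $K_{1,3,3}$ places at most $3$ of its $7$ vertices in any one part, any $B_\chi$-tiling leaves a constant fraction ($\geq 6m/7$ vertices) of the large part uncovered. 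Your cluster-splitting workaround makes this worse rather than better: the number of incident images varies from cluster to cluster, so even for injective images the reserved pools do not have the $a:b:\cdots:b$ ratio needed for a near-perfect tiling. Either way the losses are a constant fraction of $n$, not $O(\eps)n + O(\rho)n$, so you do not reach $(1-\rho')$-coverage.

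The paper's proof resolves exactly this point differently: the multiplicities are dictated by the fractional tiling itself, not by the cover. Working in the blow-up $R^\ast$ of $R$, it takes the $(1-\rho)$-perfect fractional $B_\chi$-tiling $\omega\colon\Hom(B_\chi;R)\to[0,1]$ of $R$ and places $\lfloor\omega(\phi)m'\rfloor$ disjoint copies of $B_\chi$ into the blown-up image of each $\phi$; the fractional-tiling constraint $\sum_\theta\omega(\theta)|\theta^{-1}(v)|\leq 1$ guarantees no cluster is overfilled, and the weight bound guarantees coverage of all but $\rho' n/4$ vertices (the rounding loss is only $r^{kb}=o(n)$). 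The bounded-degree cover $R'$ is used only to build a small buffer $H_1$ (a few copies of $B_\chi$ through each vertex of $R$) and to decide which pairs to super-regularise, so that \cref{lem:blow-up} applies with $\tilde X_i=X_i\cap V(H_1)$ as the $(\alpha,R')$-buffer; it is never used to allocate the bulk of the tiling. Your observation that only the space property (F1) plus robustness is needed is correct and consistent with the paper, but the allocation mechanism must come from $\omega$, not from an equal split of the clusters over the cover.
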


For the proof of \cref{thm:Komlos-for-frameworks}, we require the following lemma which establishes the regularity setup that we typically work with.

\begin{lemma}[Lemma for $G$ -- simple form]\label{lem:lemma-for-G-simple}
Let $\ell,t\in\NATS$ and $1/n \ll \rho \ll 1/r_1 \ll 1/r_0 \ll \eps \ll d  \ll \mu, 1/k,1/a,1/b$ with $\chi = (k-1) + a/b$.
Let $G$ be a $\mu$-robust $(\chi, \rho ;t,\ell)$-tiling framework on $n$ vertices.

Then there is $r_0 \leq r \leq r_1$, a subgraph $G' \subset G$ with a balanced partition $\cV$ of size $r$ of $V(G')$, a graph $R$ on vertex set $[r]$ and a homomorphism $B_\chi$-cover $R' \subset R$ {of $R$} with $\Delta(R') \leq (kb)^4$ such that
\begin{enumerate}[\upshape (G1)]
	\item  \label{itm:lemma-for-G-simple-1} $G'$ is an $(\eps,10d)$-approximation of $G$,
	\item  \label{itm:lemma-for-G-simple-2} $(G',\cV)$ is an $(\eps,d)$-regular $R$-partition, which is $(\eps,d)$-super-regular on~$R'$ and
	\item \label{itm:lemma-for-G-simple-3} $R$ is a $(\chi,\rho;t,\ell)$-tiling framework.
\end{enumerate}
\end{lemma}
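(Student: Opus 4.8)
The plan is to feed $G$ into the Regularity Lemma and then post-process the resulting reduced graph using the structural results already at our disposal. First I would apply \cref{lem:regularity} to $G$ with suitable constants $1/n \ll 1/r_1 \ll 1/r_0 \ll \eps \ll d$; this yields some $r_0 \leq r \leq r_1$, a subgraph $G_0 \subset G$ with a balanced partition $\cV_0 = \{V_i\}_{i\in[r]}$ of size $r$, and a reduced graph $R$ on $[r]$ such that $G_0$ is an $(\eps,d+\eps)$-approximation of $G$ and $(G_0,\cV_0)$ is an $(\eps,d)$-regular $R$-partition. Since $G$ is $\mu$-robustly a tiling framework and (after mildly adjusting the hierarchy so that $\eps, d+\eps \le \mu$, or more precisely $2\mu$) $G_0$ is a $(2\mu)$-approximation, we may invoke \cref{prop:inherit-framework} to conclude that $R$ is itself a $\mu$-robust $(\chi,\rho;t,\ell)$-tiling framework; in particular property~\ref{itm:lemma-for-G-simple-3} holds, since a $\mu$-robust framework is in particular a framework.

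Next I would extract the cover $R'$. Because $R$ is a $(\chi,\rho;t,\ell)$-tiling framework, property~\ref{item:framework-Geometry} of \cref{def:tiling-framework} gives $R$ a $(1-\rho)$-perfect fractional $B_\chi$-tiling. Here $B_\chi = B_{a,(k-1)\ast b}$ has $f := a + (k-1)b \le kb$ vertices. Choosing the hierarchy so that $\rho \ll 1/(kb)^6$ and $1/r \le \rho$, \cref{lem:bounded-degree-cover} applies and produces a $(1-\rho f^6)$-homomorphism $B_\chi$-cover $R'_0 \subset R$ with $\Delta(R'_0) \le f^4 \le (kb)^4$. To get an honest homomorphism cover (with $\rho = 0$ as required in the statement), I would simply delete the at most $\rho f^6 r$ uncovered vertices of $R$, together with the corresponding clusters, and continue with the induced reduced graph on the remaining clusters; alternatively, one absorbs these leftover clusters into $G'$'s loss. (Strictly, deleting clusters changes $R$, so a cleaner route is: keep all of $R$ but only demand that $R'$ is a homomorphism cover of the sub-reduced-graph spanned by the covered clusters, and push the $O(\rho r)$ uncovered clusters into the exceptional set absorbed by passing from $G_0$ to $G'$ below — this is why \ref{itm:lemma-for-G-simple-1} allows an $(\eps,10d)$-approximation rather than an $(\eps,d)$-one.) After this step we have a spanning subgraph $R' \subset R$ which is a homomorphism $B_\chi$-cover with $\Delta(R') \le (kb)^4$.

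Finally I would super-regularise along $R'$. Apply \cref{pro:super-regularising-R'} with $\DeltaRp := (kb)^4$ to $(G_0,\cV_0)$ and the spanning subgraph $R' \subset R$: this yields a balanced family $\cV = \{V_i'\}_{i\in[r]}$ with $V_i' \subset V_i$ of size $\lfloor(1-\sqrt\eps)m\rfloor$ such that $(G[\cV],\cV)$ is a $(2\eps,d/2)$-regular $R$-partition which is $(2\eps,d/2)$-super-regular on $R'$. Rename the constants ($2\eps \to \eps$, $d/2 \to d$, and rescale the hierarchy accordingly at the very start) to get exactly \ref{itm:lemma-for-G-simple-2}. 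For \ref{itm:lemma-for-G-simple-1}, we track degrees: $G_0$ was an $(\eps,d+\eps)$-approximation of $G$ and discarded at most $\sqrt\eps m r + \rho f^6 r \le \eps n$ further vertices, so $G' := G[\cV]$ is a $(1-\eps)$-spanning subgraph; and each retained vertex lost at most $(d+\eps)n$ edges to the approximation plus at most $\sqrt\eps n$ to vertex deletions plus at most $(d+\eps)n$ more in the super-regularising step, which is comfortably at most $10 d n$, giving the $(\eps,10d)$-approximation. The routine bookkeeping is absorbing the two sources of leftover clusters consistently; I expect the main (minor) obstacle to be organising the hierarchy of constants so that all of \cref{lem:regularity}, \cref{prop:inherit-framework}, \cref{lem:bounded-degree-cover} and \cref{pro:super-regularising-R'} can be applied in sequence with compatible parameters, and so that the "$\rho$ for the output framework" (coming from \ref{item:framework-Geometry} of the inherited framework on $R$) is not worsened — indeed \cref{prop:inherit-framework} delivers a $(\chi,\rho;t,\ell)$-framework with the \emph{same} $\rho$, which is what the statement demands.
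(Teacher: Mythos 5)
Your proposal is correct and follows essentially the same route as the paper: Regularity Lemma, then \cref{prop:inherit-framework} to transfer the framework property to $R$, then \cref{lem:bounded-degree-cover} to extract a bounded-degree approximate $B_\chi$-cover, discarding the few uncovered clusters by restricting to the induced reduced graph (the paper passes to $R[[r']]$ exactly as you suggest), and finally \cref{pro:super-regularising-R'} plus the degree bookkeeping for the $(\eps,10d)$-approximation. The only wrinkle is that your condition $1/r\le\rho$ is incompatible with the hierarchy $\rho\ll 1/r_1\le 1/r$, so \cref{lem:bounded-degree-cover} must be invoked with a larger parameter (e.g.\ of order $\eps$) in place of $\rho$, which is what the paper does to obtain a $(1-\eps)$-homomorphism cover.
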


\begin{proof}
The strategy is to apply \rf{lem:regularity} to $G$ to obtain a reduced graph $R$ which is also a tiling framework by \rf{prop:inherit-framework}.
Then we apply \rf{lem:bounded-degree-cover} to find an approximate homomorphism $B$-cover $R'$ of $R$ where $B:=K_{a,(k-1) \ast b}$,	 whose corresponding regular pairs we turn super-regular using \rf{pro:super-regularising-R'}.

Turning to the details, {let us set $\eps^\ast = (\eps/2)^2$, $d^\ast = 2d$.}
Given the assumptions of \cref{lem:lemma-for-G-simple}, we apply \cref{lem:regularity} to $G$ to obtain, for some $2r_0 \leq {{r^\ast}} \leq r_1$, a subgraph $G' \subset G$ with a balanced partition $\cV=\{V_i\}_{i \in [{{r^\ast}}]}$ of $V(G')$ and a graph $R$ on vertex set $[{{r^\ast}}]$ such that
\begin{enumerate}[\upshape (S1)]
	\item $G'$ is an $({{\eps^\ast}},{{{d^\ast}}}+{{\eps^\ast}})$-approximation of $G$ and
	\item $(G',\cV)$ is an $({{\eps^\ast}},d)$-regular $R$-partition.
\end{enumerate}
Since ${{\eps^\ast}},{{{d^\ast}}} \ll \mu$, it follows that $G'$ is still a $\mu/2$-robust $(\chi,\rho;t,\ell)$-tiling framework.
Hence,  $R$ is a $\mu/8$-robust $(\chi,\rho;t,\ell)$-tiling framework by \cref{prop:inherit-framework}.
Note that $\crit(B) =  \chi$ and $\Delta(B) \leq kb$.
Since $R$ is a $\mu/8$-robust $(\chi,\rho;t,\ell)$-tiling framework, it contains a $(1-\rho)$-perfect fractional $B$-tiling.
{Using}  \cref{lem:bounded-degree-cover}, we obtain a  $(1- {{\eps^\ast}})$-homomorphism $B$-cover $R' \subset R$ {of $R$} with $\Delta(R') \leq (kb)^4$.

Next, we find a family $\cV'$  of subsets  in the partition $\cV$, which has suitable super-regular properties.
Without loss of generality, we may assume that $R'$ has vertex set $[{{{r}}}]$ for some ${{{r}}} \geq (1-{{\eps^\ast}}){{r^\ast}} \geq r_0$.
{(Here we used that ${{r^\ast}} \geq 2r_0$.)}
{So in particular, $R$ is a homomorphism $B$-cover $R' \subset  R[[{{{r}}}]]$ of $R[[{{{r}}}]]$.}
Let $ G'' = G'[\bigcup_{i \in [{{{r}}}]} V_i]$, and note that $( G'', {\{V_i\}_{i \in [{{{r}}}]}})$ is an $({{\eps^\ast}},{{{d^\ast}}})$-regular {$R'[[{{{r}}}]]$-partition.}
{Denote by $m=|V_1|$ the common cluster size.}
We apply \cref{pro:super-regularising-R'} with $G'',(kb)^4,{R[[{{{r}}}]],R'},{{{r}}}$ playing the role of $G, \DeltaRp,R,{R'},r$.
This gives a balanced family $\cV'=\{V_i'\}_{i \in [{{{r}}}]}$ of subsets $V_i' \subset V_i$ of size $m' =\lfloor (1 - {\sqrt{{{\eps^\ast}}}})m \rfloor$ with the following property.
For $G'''=G''[\cV']$, we have that $(G''',\cV')$ is a $({2{{\eps^\ast}},{{{d^\ast}}}/2})$-regular $R[[{{{r}}}]]$-partition, which is $({2{{\eps^\ast}},{{{d^\ast}}}/2})$-super-regular on~$R'$.

There are three reasons vertices in $G$ may not be in $G'''$. They are removed because $G'$ is an $({{\eps^\ast}},{{{d^\ast}}}+{{\eps^\ast}})$-approximation of $G$ (at most ${{\eps^\ast}} n$ vertices), they are missed by $R'$ (at most ${{\eps^\ast}} mr$ vertices), or they are removed when super-regularising (at most $\sqrt{{{\eps^\ast}}}m{{{r}}}$ vertices).
{Recall that $\eps^\ast = (\eps/2)^2$ and $d^\ast = 2d$.}
Using $m{{{r}}} \leq m{{r^\ast}} \leq n$, it follows that $v(G''') \geq n - 2{{\eps^\ast}} n - \sqrt{{{\eps^\ast}}} n \geq (1-2\sqrt{{{\eps^\ast}}}) n \geq {(1-\eps)n}$.
Since ${{{\eps^\ast}}} \ll {{{d^\ast}}}$,  the graph $G'''$ is an {$(\eps,10d)$-approximation of $G$.}
{Hence we may finish with $G'''$, $\cV'$ and $R[[{{{r}}}]]$ playing the role of $G',\cV$ and $R$, respectively.}
Since $v(R')\geq (1-{\eps^\ast})v(R)$ and $R$ is a $\mu/8$-robust {$(\chi,\rho;t,\ell)$-tiling framework}, it follows that $R[[{{{r}}}]]$ is a $(\chi,\rho;t,\ell)$-tiling framework as well.
\end{proof}

Now we are ready to prove \cref{thm:Komlos-for-frameworks}.

\begin{proof}[Proof of \cref{thm:Komlos-for-frameworks}]
In addition to the constants from the statement, we introduce $\eps,d>0$ and $r_0,r_1 \in \NATS$ such that
\begin{align*}
	1/n \ll \rho \ll 1/r_1 \ll 1/r_0 \ll  \eps  \ll  d \ll \rho' \ll\mu, 1/k,1/a,1/b.
\end{align*}
Let $B = B_\chi$.
By \rf{lem:lemma-for-G-simple}, there is $r_0 \leq r \leq r_1$, a subgraph $G' \subset G$ with a balanced partition $\cV=\{V_i\}_{i \in [r]}$ of $V(G')$ and a graph $R$ on vertex set $[r]$ that has a homomorphism $B$-cover $R' \subset R$ with $\Delta(R') \leq (kb)^4$ such that~\ref{itm:lemma-for-G-simple-1}--\ref{itm:lemma-for-G-simple-3} hold.
{We set $m = |V_1|$ to be the (uniform) size of the clusters.}

Let $R^\ast$ be the $m$-blow-up of $R$, where each vertex $i$ of $R$ is replaced with a set $X_i$ of {order $m$} and the edges are replaced by complete bipartite graphs.
We define $\cX= \{X_i\}_{i \in [r]}$.
Our plan is to find an almost perfect $B$-tiling $H=H_1 \cup H_2 \subset R^\ast$, where $H_1$ accounts for a buffer and $H_2$ contains most of the vertices.
We then use the \nameref{lem:blow-up} to embed $H$ into~$G'$.

We start by constructing  $H_1\subset R^\ast$.
Let $\alpha = \frac{\rho'}{4(kb)^8}$.
Since $R' \subset R$ is a homomorphism $B$-cover, there is for every $i \in V(R)$, a homomorphism $\phi_i\in \Hom(B;R')$ with $ i \in \phi_i(B)$.
Let $H_1 = \bigcup_{i \in [r]} H_{1,i}$ where each $H_{1,i}$ consists of $\alpha m$ disjoint copies of $B$ whose vertices are placed in the clusters {of $R^\ast$} dictated by $\phi_i$.
{Since $\Delta(R') \leq (kb)^4$, $B$ has diameter $2$ {and each homomorphism might place up to $b$ vertices in a cluster}, it follows that $H_1$ has at most {$b(kb)^8\alpha m \leq (\rho'/2) m$} vertices in each cluster of $\cX$.}
We obtain a new balanced partition $\cX' = \{X_i'\}_{i \in [r]}$ with $X_i' \subset X_i$ by deleting from each $X_i$ exactly $(\rho'/2) m$ vertices, including all vertices of $H_1$.
Let $m'=|X_1'| = (1-\rho'/2)m$ denote the cluster sizes of $\cX'$.

Next, we find an almost perfect $B$-tiling $H_2$ of $R^\ast[\cX']$.
Since $R$ is a $(\chi,\rho;t,\ell)$-tiling framework,
there is a $(1-\rho)$-perfect fractional $B$-tiling $\omega\colon  \Hom(B;R) \to  {[0,1]}$.
Thus, for each $\phi \in \Hom(B;R)$, we may embed $\lfloor \omega (\phi) m' \rfloor$ disjoint copies of $B$ into the blown-up image of $\phi$ in  $R^\ast[\cX']$.
As there are certainly at most $r^{kb}$ homomorphisms from $B$ to $R$, we have that $\sum_{\phi\in\Hom(B;R)}\lfloor \omega (\phi) m' \rfloor \geq \sum_{\phi\in\Hom(B;R)} \omega (\phi) m'  -  r^{kb} \geq (1-\rho'/4) m'r$, where the last relation follows from our choice of parameters.
Thus, we have a $(1-\rho'/4)$-perfect $B$-tiling $H_2 \subset R^\ast[\cX']$.
Recall the choice of {$m'=(1-\rho'/2)m$,} the fact that $G'$ is an $(\eps,10d)$-approximation of $G$ and that $\eps$ and $d$ are much smaller than $\rho'$.
Together, these imply that $v(H_2) \geq (1- \rho'/4) m'r \geq (1-3\rho'/4)mr \geq (1-\rho')n$.

To finish, set $H = H_1 \cup H_2$, and note that $H \subset R^\ast$.
We can ensure that $\cX$ is a partition of $H$  by adding isolated dummy vertices to $H$ to obtain size compatibility.
(These dummy vertices are deleted again after embedding $H$ into $G$.)
Moreover, by setting $\tilde X_i = X_i \cap V(H_1)$, we obtain a family of subsets $\tcX=\{\tX_i\}_{i\in[r]}$ such that $\tcX$ is an $(\alpha,R')$-buffer for $(H,\cX)$.
Hence, we can apply \rf{lem:blow-up} with $G',{v(G'),} (kb)^4, \alpha ,1$ playing the role of $G,{n,}\Delta_{R'},\alpha,\kappa$ to obtain the desired embedding $\psi\colon V(H)\to V(G)$.
\end{proof}

Having proved \cref{thm:Komlos-for-frameworks}, we can upgrade \cref{lem:lemma-for-G-simple} so that
$R'$ becomes a perfect $B_\chi$-tiling instead of a homomorphism $B_\chi$-cover of bounded degree.

\begin{lemma}[Lemma for $G$]\label{lem:lemma-for-G-true-form}
Let $\ell,t \in \NATS$ and $1/n \ll \rho \ll 1/r_1 \ll 1/r_0 \ll \eps \ll d  \ll \mu, 1/k,1/a,1/b$ with  $\chi = (k-1) + a/b$.
Let $G$ be a $\mu$-robust $(\chi, \rho ;t,\ell)$-tiling framework on $n$ vertices.

Then there is $r_0 \leq r \leq r_1$, a subgraph $G' \subset G$ with a balanced partition $\cV$ of size $r$ of $V(G')$ and a graph $R$ on vertex set $[r]$ with a perfect $B_\chi$-tiling $R' \subset R$ such that
\begin{enumerate}[\upshape (G1)]
	\item \label{itm:lemma-for-G-cycle-approx} $G'$ is an $(\eps,10d)$-approximation of $G$,
	\item \label{itm:lemma-for-G-cycle-reg} $(G',\cV)$ is an $(\eps,d)$-regular $R$-partition, which is $(\eps,d)$-super-regular on~$R'$ and
	\item \label{itm:lemma-for-G-cycle-frame} $R$ is a $(\chi,\rho;t,\ell)$-tiling framework.
\end{enumerate}
\end{lemma}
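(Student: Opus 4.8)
The plan is to feed \cref{lem:lemma-for-G-simple} into \cref{thm:Komlos-for-frameworks}, upgrading the homomorphism $B_\chi$-cover to a genuine perfect $B_\chi$-tiling at the cost of discarding a negligible number of clusters. Write $B=B_\chi$. First I would apply \cref{lem:lemma-for-G-simple} (with $2r_0$ in place of $r_0$) to obtain some $2r_0\leq r\leq r_1$, a subgraph $G_1\subset G$ with a balanced partition $\cV_1=\{V_i\}_{i\in[r]}$ of $V(G_1)$, and a graph $R$ on vertex set $[r]$ such that $G_1$ is an $(\eps,10d)$-approximation of $G$, $(G_1,\cV_1)$ is an $(\eps,d)$-regular $R$-partition, and $R$ is a $(\chi,\rho;t,\ell)$-tiling framework; the homomorphism cover produced there is discarded. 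Since $G_1$ is an $(\eps,10d)$-approximation of the $\mu$-robust framework $G$ and $\eps,d\ll\mu$, the graph $G_1$ is itself a $\mu/2$-robust $(\chi,\rho;t,\ell)$-tiling framework, so \cref{prop:inherit-framework} shows that $R$ is in fact $\mu/4$-robust.

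Next, as $\rho\ll 1/r_1\leq 1/r\leq 1/r_0\ll\eps$, one may choose auxiliary constants $\rho_1,\rho_2$ with $\rho\leq\rho_1$ and $1/r\ll\rho_1\ll\rho_2\ll\eps$. Enlarging $\rho$ to $\rho_1$ only weakens the fractional-tiling condition in \cref{def:tiling-framework}, so $R$ is a $\mu/4$-robust $(\chi,\rho_1;t,\ell)$-tiling framework; applying \cref{thm:Komlos-for-frameworks} to $R$ with $\rho_1,\rho_2$ in the roles of $\rho,\rho'$ yields a $(1-\rho_2)$-perfect $B$-tiling $R'\subset R$, that is, a union of vertex-disjoint copies of $B$ missing a set $U\subset V(R)$ of at most $\rho_2 r$ clusters. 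Put $\widetilde R=R-U$; it has $\widetilde r:=r-|U|$ vertices with $v(B)\mid\widetilde r$, and $R'$ is now a \emph{perfect} $B$-tiling of $\widetilde R$. Since $|U|\leq\rho_2 r\ll\mu r/4$, the graph $\widetilde R$ is a $(\mu/4,\mu/4)$-approximation of $R$ and hence again a $(\chi,\rho;t,\ell)$-tiling framework. Let $\widetilde G=G_1[\,\bigcup_{i\in V(\widetilde R)}V_i\,]$, so $(\widetilde G,\{V_i\}_{i\in V(\widetilde R)})$ is an $(\eps,d)$-regular $\widetilde R$-partition (restriction preserves the regularity of the surviving pairs).

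Finally, I would make the pairs corresponding to $R'$ super-regular. As $\Delta(R')=\Delta(B)\leq kb$ and the hierarchy gives $1/\widetilde r\ll\eps\ll d,1/(kb)$, \cref{pro:super-regularising-R'} applied with $\widetilde G,kb,R',\widetilde r$ in the roles of $G,\Delta_{R'},R',r$ produces subsets $V_i''\subset V_i$ ($i\in V(\widetilde R)$) of a common size $\lfloor(1-\sqrt\eps)m\rfloor$, where $m=|V_1|$, such that $G'':=\widetilde G[\,\bigcup_i V_i''\,]$ together with $\cV''=\{V_i''\}$ is a $(2\eps,d/2)$-regular $\widetilde R$-partition which is $(2\eps,d/2)$-super-regular on $R'$; this is (G2) after relabelling $V(\widetilde R)$ as $[\widetilde r]$ and absorbing the factors $2$ and $1/2$ into $\eps,d$ (harmless, as the hierarchy leaves room). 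For (G1) one tallies the vertices lost from $G$: at most $\eps n$ in passing to $G_1$, at most $|U|m\leq\rho_2 n$ in passing to $\widetilde G$, and at most $\sqrt\eps\,\widetilde r\,m\leq\sqrt\eps n$ in super-regularising, so $v(G'')\geq(1-3\sqrt\eps)n$ and $\deg_{G''}(v)\geq\deg_G(v)-10dn-3\sqrt\eps n\geq\deg_G(v)-11dn$ for every $v\in V(G'')$, which is the required $(\eps,10d)$-approximation after adjusting the internal $\eps$. Property (G3) was checked for $\widetilde R$ above, and $r_0\leq\widetilde r\leq r_1$ since $\widetilde r\leq r\leq r_1$ and $\widetilde r\geq(1-\rho_2)r\geq(1-\rho_2)\cdot 2r_0\geq r_0$. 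Outputting $\widetilde r$, $G''$, $\cV''$, $\widetilde R$ and $R'$ finishes the proof.

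The substantive point (and the only real obstacle) is that \cref{thm:Komlos-for-frameworks} delivers merely an \emph{almost}-perfect integral $B$-tiling of $R$, whereas (G2) demands a perfect one; there seems to be no way around deleting the few uncovered clusters, which then forces one to re-verify (via robustness of $R$) that the smaller reduced graph $\widetilde R$ is still a tiling framework, and to re-run \cref{pro:super-regularising-R'} on the new tiling $R'$, which was not among the super-regular pairs before. Everything else is routine bookkeeping of how many vertices and how much density are lost at each stage; the one mild care needed is inserting the auxiliary constants $\rho_1,\rho_2$ strictly between $1/r$ and $\eps$, which is possible precisely because \cref{lem:lemma-for-G-simple} was invoked with $\rho\ll 1/r_1$ and $1/r_0\ll\eps$.
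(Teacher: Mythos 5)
Your proposal is correct and follows essentially the same route as the paper: the paper's (omitted) proof is exactly to rerun the argument of \cref{lem:lemma-for-G-simple}, replacing the application of \cref{lem:bounded-degree-cover} by an application of \cref{thm:Komlos-for-frameworks} to the reduced graph $R$ to obtain the perfect $B_\chi$-tiling $R'$, then deleting the few uncovered clusters and super-regularising on $R'$. Your version merely packages this as a black-box post-processing of \cref{lem:lemma-for-G-simple} (with one redundant round of super-regularisation), and the constant bookkeeping you flag is handled the same way the paper handles it, by renaming $\eps$ and $d$ at the end.
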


The proof of \cref{lem:lemma-for-G-true-form} follows almost line by line the proof of \cref{lem:lemma-for-G-simple}.
The only difference is that we apply \cref{thm:Komlos-for-frameworks} instead of \cref{lem:bounded-degree-cover} to obtain the $B_\chi$-tiling~$R'$.
We omit the details.

\section{Mixed Koml\'os' theorem for frameworks}\label{sec:mixed-komlos-framework}
In the following, we extend \cref{thm:Komlos-for-frameworks} to  mixed tilings with components of order linear in the order of $G$.

\begin{theorem}[Koml\'os' theorem for frameworks]\label{thm:mixed-Komlos-for-frameworks}
Let $\ell, t \in \NATS$ and $1/n \ll \xi \ll \rho \ll \rho',   1/k,1/a,1/b ,\mu,1/\Delta$ with $\chi = (k-1) + a/b$.
Let $G$ be a $\mu$-robust $(\chi,\rho;t,\ell)$-tiling framework on $n$ vertices and $H \in \cB((1-\rho')n,\xi n,\Delta)  \cap \cJ(\chi)$.
Then $H\subseteq G$.
\end{theorem}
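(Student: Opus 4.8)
The plan is to run the proof of \cref{thm:Komlos-for-frameworks} essentially unchanged; the one genuinely new point is the allocation of the (possibly large) components of $H$ to clusters, and this is exactly where $\crit(H)\le\chi$ and $w\le\xi n$ are used. To begin, I would introduce constants with
\[
1/n \ll \xi \ll \rho \ll 1/r_1 \ll 1/r_0 \ll \eps \ll d \ll \alpha \ll \rho',1/k,1/a,1/b,\mu,1/\Delta
\]
and invoke \cref{lem:lemma-for-G-true-form} to get $r_0\le r\le r_1$, an $(\eps,10d)$-approximation $G'\subseteq G$ with a balanced partition $\cV=\{V_i\}_{i\in[r]}$ of common cluster size $m$, and a reduced graph $R$ on $[r]$ which is a $(\chi,\rho;t,\ell)$-tiling framework and carries a perfect $B_\chi$-tiling $R'\subseteq R$ (so $\Delta(R')\le(kb)^4$), with $(G',\cV)$ an $(\eps,d)$-regular $R$-partition which is $(\eps,d)$-super-regular on $R'$. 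Let $R^\ast$ be the $m$-blow-up of $R$ with clusters $\cX=\{X_i\}_{i\in[r]}$, and record that $rm=v(G')\ge(1-\eps)n$ while every component of $H$ has at most $\xi n\ll m$ vertices. (We may assume $H$ has an edge; then $\crit(H)\le\chi<k$ gives $\chi(H)\le k$, so $\Hom(H;R)\ne\emptyset$.)

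Turning to the allocation: as $R$ is a framework, it has a $(1-\rho)$-perfect fractional $B_\chi$-tiling, so by \cref{lem:bottle_frac_H_tiling} (using $\crit(H)\le\chi$) it has a fractional $H$-tiling $\omega\colon\Hom(H;R)\to[0,1]$ of weight at least $(1-\rho)r$. Let $F_1,\dots,F_q$ be the components of $H$; each $\theta\in\Hom(H;R)$ restricts to homomorphisms $\theta_\lambda\in\Hom(F_\lambda;R)$ for $\lambda\in[q]$. Put $S=\sum_{\theta\in\Hom(H;R)}\omega(\theta)$ (so $v(H)S\ge(1-\rho)r$) and $\omega_\lambda(\phi)=\sum_{\theta\colon\theta_\lambda=\phi}\omega(\theta)$ for $\phi\in\Hom(F_\lambda;R)$. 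Then $\sum_\phi\omega_\lambda(\phi)=S$ for each $\lambda$, and expanding $|\theta^{-1}(i)|=\sum_\lambda|\theta_\lambda^{-1}(i)|$ gives $\sum_\lambda\sum_\phi\omega_\lambda(\phi)|\phi^{-1}(i)|\le1$ for each $i\in[r]$. I would then pick the $\phi_\lambda\in\Hom(F_\lambda;R)$ independently at random with $\Prob[\phi_\lambda=\phi]=\omega_\lambda(\phi)/S$, and set $Z_i=\sum_\lambda|\phi_\lambda^{-1}(i)|$. One computes
\[
\Exp[Z_i]\le\frac1S\le\frac{v(H)}{(1-\rho)r}\le\frac{1-\rho'}{(1-\rho)(1-\eps)}\,m\le\Big(1-\tfrac34\rho'\Big)m ,
\]
and since $0\le|\phi_\lambda^{-1}(i)|\le v(F_\lambda)\le\xi n$ and $\sum_\lambda v(F_\lambda)^2\le\xi n\cdot v(H)\le\xi n^2$, Hoeffding's inequality gives $\Prob\big[Z_i>\Exp[Z_i]+\tfrac18\rho' m\big]\le\exp\!\big(-\rho'^2m^2/(32\xi n^2)\big)<1/r$, using $m\ge(1-\eps)n/r_1$ and $\xi\ll\rho'^2/(r_1^2\log r_1)$. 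A union bound over $i\in[r]$ then fixes $\phi_1,\dots,\phi_q$ with $Z_i\le(1-\alpha)m$ for all $i\in[r]$.

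It remains to assemble the embedding. Form $X_i$ by putting into cluster $i$ the $Z_i\le(1-\alpha)m$ vertices of $H$ sent there by the $\phi_\lambda$'s together with $m-Z_i\ge\alpha m$ fresh isolated dummy vertices, so that $|X_i|=m=|V_i|$; let $H^+$ be $H$ with all dummies added, $\cX=\{X_i\}$, and let $\tX_i$ consist of $\alpha m$ dummies of $X_i$. Then $\Delta(H^+)=\Delta(H)\le\Delta$; $(H^+,\cX)$ is an $R$-partition, since every $\phi_\lambda$ is a homomorphism and $R$ is loopless; $\tcX=\{\tX_i\}$ is an $(\alpha,R')$-buffer for $(H^+,\cX)$ because its vertices are isolated; and $\cX,\cV$ are balanced and size-compatible. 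Applying the Blow-Up Lemma (\cref{lem:blow-up}) with $G',R,R',(kb)^4,\alpha,1,H^+$ in the roles of $G,R,R',\DeltaRp,\alpha,\kappa,H$ yields an embedding of $H^+$ into $G'$, whose restriction to $V(H)$ is the desired embedding $H\subseteq G'\subseteq G$.

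The step I expect to be the main obstacle is the rounding above: the fractional $H$-tiling of $R$ only promises an average per-vertex load close to $1$, but one must pull out of it a single genuine copy of $H$ that still leaves $\alpha m$ free slots in every cluster. This succeeds because $1/S$ is of order $m$ whereas each component contributes at most $\xi n\ll m$ to any cluster, so each $Z_i$ is a sum of many tiny independent increments and concentrates; the relation $\xi\ll\rho'^2/(r_1^2\log r_1)$ is precisely what the union bound over the $\le r_1$ clusters needs. The remaining matters are routine bookkeeping: mutual consistency of the three invoked hierarchies, integrality of $\alpha m$, and the implication $\crit(H)\le\chi<k\Rightarrow\chi(H)\le k$ (which also licenses \cref{lem:bottle_frac_H_tiling}). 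I note that, just as for \cref{thm:Komlos-for-frameworks}, only the space condition of the framework is used in this proof.
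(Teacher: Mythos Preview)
Your proof is correct, and its overall architecture matches the paper's: set up regularity via \cref{lem:lemma-for-G-true-form}, allocate $H$ to clusters, add dummies, and finish with the Blow-Up Lemma. The genuinely different step is the allocation. The paper proves a dedicated allocation lemma (\cref{lem:lemma-for-H-for-mixed-Komlos}) that works entirely inside the perfect $B_\chi$-tiling $R'\subset R$: it first produces a proper $k$-colouring of $H$ whose class sizes match the proportions of $B_\chi$ (a combinatorial permutation argument, Claim~\ref{cla:komlos-permutation}), and then uses a random partition of the components into $r/v(B_\chi)$ groups (Proposition~\ref{prop:balanced_allocation}), placing each group into one blown-up copy of $B_\chi$. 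You instead stay on the level of $R$, convert its fractional $B_\chi$-tiling into a fractional $H$-tiling via \cref{lem:bottle_frac_H_tiling}, and do randomised rounding by sampling a homomorphism for each component independently, with Hoeffding controlling the per-cluster load. Your route is arguably slicker --- it sidesteps the explicit colouring argument and treats all clusters uniformly --- while the paper's route has the advantage that the allocation lands entirely in $R'$, so the whole of $H$ can serve as the buffer (the paper takes $\tilde X_i=X_i$), whereas you need to reserve dummy vertices for that purpose. Both approaches use only the space property of the framework.
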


We require the following allocation result.
Recall that the $m$-blow-up $R^\ast$ of a graph $R$ is obtained by replacing each vertex of $R$ with a set of order $m$ and the edges with complete bipartite graphs.

\begin{lemma}[Lemma for $H$ -- simple form]\label{lem:lemma-for-H-for-mixed-Komlos}
Let $1/n \ll \xi \ll 1/ r\ll\rho, 1/k, 1/a,1/b,1/\Delta$ with $\chi = (k-1) + a/b$.
Suppose $R$ is a $B_\chi$-tiling on $r$ vertices, $R^\ast$ is the $m$-blow-up of $R$ on $n = mr$ vertices,
and $H \in \cB((1- \rho )n,\xi n,\Delta)  \cap \cJ(\chi)$.
Then $H \subseteq R^\ast$.
\end{lemma}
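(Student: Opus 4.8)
The plan is to embed $H$ into the blow-up $R^\ast$ by first distributing the tiles of $R$ (copies of $B_\chi = K_{a,(k-1)\ast b}$) across $H$, and then using the regularity/blow-up machinery within each copy of $B_\chi$ to place the components of $H$. The key point is that $B_\chi$ is a complete $k$-partite graph with part sizes $a, b, \dots, b$, and $\crit(H)\le\chi$ means $H$ has a proper $k$-colouring with smallest colour class of relative size $\alpha(H) \ge \alpha(\chi) = 1 - (k-1)/\chi$; equivalently, $H$ admits a $k$-colouring whose colour class sizes fit (up to rescaling) into the ratio $a : b : \dots : b$. Since the components of $H$ each have order at most $\xi n \ll m$ and $\Delta(H)\le\Delta$, each component comfortably embeds into the blown-up image of a single copy of $B_\chi$ in $R^\ast$ using the Blow-Up Lemma, provided we allocate the vertices of $H$ to the clusters of $R^\ast$ so that each cluster receives at most $m$ vertices.

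The main steps I would carry out are as follows. First, fix a perfect $B_\chi$-tiling of $R$; this partitions $[r]$ into $r/(a+(k-1)b)$ groups of $k$ clusters each, where within a group one cluster plays the role of the $a$-part and the rest the $b$-parts. In $R^\ast$ this gives disjoint copies of the blown-up $B_\chi$, each spanning $m(a+(k-1)b)$ vertices and containing all edges of the corresponding complete $k$-partite graph between its clusters. Second, greedily partition the components of $H$ among these blown-up copies so that each copy is assigned a subtiling $H_j$ of $H$ with $v(H_j)$ close to but not exceeding $m(a+(k-1)b)$ — there is slack of order $\rho n$ coming from $v(H)\le(1-\rho)n$, and each component has order $\le\xi n\ll \rho m$, so this packing is routine. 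Third, within each blown-up copy, choose for the subtiling $H_j$ a proper $k$-colouring realising $\alpha(H_j)$ close to $\alpha(\chi)$: this is possible because $\crit(H_j)\le\crit(H)\le\chi$ (removing components only decreases $\alpha$, hence $\crit$), and then split the colour classes to match cluster sizes — the colour class of relative size $\approx\alpha(\chi)$ goes into the $a$-cluster and the others into the $b$-clusters, with a small amount of rebalancing using that each component is tiny and can be reassigned freely among clusters of the same colour role, or absorbed into the $\rho n$ slack by leaving some cluster slots empty. Finally, pad $H_j$ with isolated dummy vertices to make $\cX_j$ size-compatible with the clusters, verify the trivial $R'$-partition and buffer conditions (taking $\tcX$ to be, say, a constant fraction of each $X_i$ consisting of dummy/low-degree vertices, which is legitimate since $R^\ast$ is a complete-multipartite blow-up and all relevant pairs are "$(\eps,d)$-super-regular" with $d=1$, or rather we invoke the combinatorial fact that $R^\ast$ contains $H$ iff the allocation exists), and conclude $H_j$ embeds into the blown-up copy; the union over $j$ gives $H\subseteq R^\ast$.

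Actually, since $R^\ast$ is a blow-up with \emph{complete} bipartite graphs between adjacent clusters, no Blow-Up Lemma is needed at all: an embedding of $H$ into $R^\ast$ exists if and only if one can assign each vertex of $H$ to a cluster such that (i) each cluster $X_i$ receives exactly $|X_i|=m$ vertices (after padding $H$ with isolated vertices to order $n$), and (ii) adjacent vertices of $H$ go to adjacent (distinct) clusters, i.e. the assignment restricted to each component is a proper colouring compatible with the edges of $R$. So the whole lemma reduces to a counting/allocation statement: partition $V(H)$ plus $\rho n$ dummy isolated vertices into the clusters of $R^\ast$ respecting the above. The hard part is exactly the arithmetic of making the colour-class sizes of the chosen colourings of the subtilings $H_j$ sum up, across all components assigned to a given blown-up $B_\chi$-copy, to exactly $(a m, bm, \dots, bm)$ — this is where $\crit(H)\le\chi$ enters and where the bound $\xi\ll 1/k,1/a,1/b$ and the $\rho$-slack are used to round. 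I expect this rounding/rebalancing step to be the main technical obstacle; everything else (the structure of $R^\ast$, the triviality of adjacency since we only move inside copies of $B_\chi$, the greedy assignment of tiny components) is routine. One clean way to handle the rounding: first allocate, per blown-up copy, an integer number of vertices to each cluster summing correctly and matching $\alpha(\chi)$-ratios up to error $O(\xi n)$, absorb the discrepancy into the $\rho n$ dummy vertices by under-filling, then top up each cluster to exactly $m$ using the remaining dummies.
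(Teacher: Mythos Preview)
Your overall framing is right: since $R^\ast$ is a complete blow-up, the task reduces to an allocation problem, and no Blow-Up Lemma is needed. The paper proceeds exactly along these lines. However, your execution has a genuine gap.

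The claim ``removing components only decreases $\alpha$, hence $\crit$'' is false, and this is precisely the heart of the difficulty. Take $H$ to be the disjoint union of a $K_{1,10}$ and a $K_{5,5}$: then $\alpha(H) = 6/21$ and $\crit(H) = 7/5$, but the subtiling $H_j = K_{5,5}$ has $\alpha(H_j) = 1/2$ and $\crit(H_j) = 2$. So if your greedy partition (which only balances orders $v(H_j)$) happens to place all the ``balanced'' components into one bucket and all the ``lopsided'' ones into another, some $H_j$ will have $\crit(H_j) > \chi$ and hence cannot be $k$-coloured with smallest class of relative size at most $\alpha(\chi)$; it simply will not fit into a blown-up $B_\chi$, no matter how much $\rho$-slack you have. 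A second, related issue: even when $\crit(H_j)\le \chi$, the optimal $k$-colouring of $H_j$ controls only the smallest class; the remaining $k-1$ classes need not be anywhere near equal, so they need not fit into the $b$-clusters.

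The paper fixes both problems by reversing the order of operations. It first takes a single global $k$-colouring $\phi$ of $H$ achieving $|\phi^{-1}(1)|/v(H) \le \alpha(B_\chi)$, and then (Claim~\ref{cla:komlos-permutation}) permutes colour classes \emph{within each component} to obtain a global colouring $\phi'$ whose class sizes satisfy $\ord(\phi')(1) \approx \alpha(B_\chi)v(H)$ and $\ord(\phi')(2)\approx\dots\approx\ord(\phi')(k)$. Only then does it partition the components among the $s = r/v(B_\chi)$ blown-up copies, using a concentration argument (Proposition~\ref{prop:balanced_allocation}) on the per-component colour vectors $\ord(\phi'|_{F_q})$ to ensure that each bucket $S_j$ inherits approximately the vector $\ord(\phi')/s$. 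The point is that each $H_j$ is embedded using the colouring inherited from the global $\phi'$, not a colouring chosen for $H_j$ in isolation; this sidesteps the failure of $\crit$ under taking subtilings entirely.
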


{To prove \cref{lem:lemma-for-H-for-mixed-Komlos}, we} need the following simple proposition on partitioning vectors.
The proof is a standard application of concentration bounds, and hence we omit it.
\begin{proposition}\label{prop:balanced_allocation}
{Let $1/n,\xi \ll 1/k,1/s$.}
Consider $\vecb x_1 \dots , \vecb x_p \in \NATSZ^k$ and $\vecb  x = {\sum_{i \in [p]} \vecb x_i}$ such that $\onenorm{\vecb x}=n$ and $\maxnorm{ \vecb{x}_i}  \leq   \xi n$ for $i\in [p]$.
Then there is a partition $S_1,\dots,S_s$ of $\{\vecb x_1,\dots,\vecb x_p\}$ such that $\maxnorm{\sum_{\vecb x_i \in S_j}\vecb x_i - \vecb x /s} \leq n /s^2  $ for all $j \in [s]$.
\end{proposition}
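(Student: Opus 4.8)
The plan is to use a direct randomised argument. Assign each vector $\vecb x_i$, $i\in[p]$, independently and uniformly at random to one of $s$ classes; this produces a random partition $S_1,\dots,S_s$ of $\{\vecb x_1,\dots,\vecb x_p\}$. Fix $j\in[s]$ and a coordinate $\ell\in[k]$, and write $X_{j,\ell}=\sum_{\vecb x_i\in S_j}\vecb x_i(\ell)=\sum_{i\in[p]}\vecb x_i(\ell)B_i$, where $B_i$ is the indicator of the event $\vecb x_i\in S_j$. The variables $B_i$ are independent Bernoulli random variables with mean $1/s$, so $\Exp[X_{j,\ell}]=\vecb x(\ell)/s$. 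It therefore suffices to show that with positive probability we have $|X_{j,\ell}-\vecb x(\ell)/s|\le n/s^2$ for all $j\in[s]$ and $\ell\in[k]$ simultaneously, since any such outcome yields a partition with the required property.

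First I would record the relevant second-order bound. Since $0\le\vecb x_i(\ell)\le\maxnorm{\vecb x_i}\le\xi n$ for every $i\in[p]$ and $\sum_{i\in[p]}\vecb x_i(\ell)=\vecb x(\ell)\le\onenorm{\vecb x}=n$, we obtain
\[
	\sum_{i\in[p]}\vecb x_i(\ell)^2 \;\le\; \xi n\sum_{i\in[p]}\vecb x_i(\ell) \;\le\; \xi n^2 .
\]
Consequently $\mathrm{Var}(X_{j,\ell})=\sum_{i\in[p]}\vecb x_i(\ell)^2\cdot\tfrac1s(1-\tfrac1s)\le \xi n^2/s$, so Chebyshev's inequality gives
\[
	\Prob\!\left[\,|X_{j,\ell}-\vecb x(\ell)/s|>n/s^2\,\right] \;\le\; \frac{\xi n^2/s}{(n/s^2)^2} \;=\; \xi s^3 .
\]
One could equally invoke Hoeffding's inequality on the independent sum $\sum_i\vecb x_i(\ell)B_i$, whose $i$-th summand takes values in $[0,\vecb x_i(\ell)]$, which yields the stronger bound $2\exp\!\left(-2/(\xi s^4)\right)$; either estimate is sufficient here.

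Finally I would take a union bound over the at most $sk$ pairs $(j,\ell)$: the probability that the random partition violates the desired estimate for some pair is at most $\xi s^4 k$ (respectively $2sk\exp(-2/(\xi s^4))$ in the Hoeffding version). Since $\xi\ll 1/p\ll 1/k,1/s$, this quantity is strictly smaller than $1$, so a partition with the claimed property exists. There is essentially no obstacle in this argument; the only point to track carefully is the estimate $\sum_i\vecb x_i(\ell)^2\le\xi n^2$ coming from $\maxnorm{\vecb x_i}\le\xi n$, which is precisely why the paper omits the proof.
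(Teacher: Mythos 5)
Your proof is correct and follows the same route the paper intends (the paper omits the proof as "a standard application of concentration bounds", and its own sketch is precisely this uniformly random assignment followed by Hoeffding/Chebyshev and a union bound). The key estimate $\sum_i \vecb x_i(\ell)^2 \le \xi n \cdot \vecb x(\ell) \le \xi n^2$ and the resulting failure probability $\xi s^4 k < 1$ are all in order given the hierarchy $\xi \ll 1/k, 1/s$.
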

\COMMENT{
We will need the following concentration bound.
\begin{lemma}[Hoeffding's Inequality]\label{lem:hoeffding}
	Suppose $X_1,\dots,X_p$ are independent random variables with $X_i$ taking a value in $[0 ,c_i]\subset \REALS$. Let $X=\sum_{i\in [p]} X_i$.
	Then
	\[
	\Prob[|X-\Exp[X]|>t]\leq 2\exp\left[-\frac{2 t^2}{\sum_{i\in [p]} c_i^2}\right].
	\]
\end{lemma}
\begin{proof}
	{We assign each $\vecb x_i$ independently and uniformly at random to one of the parts $S_1,\dots,S_s$.
		Fix an $S_j$ and an index $q \in [k]$.
		Let $X_i$ be the random variable whose value is $\vecb x_i(q)$ if $\vecb x_i$ is assigned to $S_j$.
		Then  $X = \sum_i X_i$ satisfies the conditions of \cref{lem:hoeffding} with $c_i := \vecb x_i(q)$.
		Note that by convexity, $\sum_{i\in [p]} c_i^2 \leq \tfrac{n}{\xi n} (\xi n)^2 =\xi n^2$.
		Hence
		\begin{align*}
			\Prob \left[\left|\sum_{\vecb x_i \in S_j}\vecb x_i(q) -   \vecb x (q) /s \right| > n/s \right] & = \Prob[|X - \exp[X]| > n/s ]
			\\ & \leq 2\exp\left[-\frac{2 (n/s )^2}{\xi n^2}\right]
			\leq 2\exp\left[-\frac{2}{s\xi}\right].
		\end{align*}
		Union bounding over all $S_j$'s and $q$ then shows that the partition fails to satisfy the condition with probability at most $ks  2\exp\left[-2/(s\xi)\right] < 1$.}
\end{proof}
}

\begin{proof}[Proof of \cref{lem:lemma-for-H-for-mixed-Komlos}]
The rough trajectory of the proof is as follows.
We find a proper colouring of $H$ that has approximately the same proportions as a proper colouring of $B:=B_\chi$.
We then apply \cref{prop:balanced_allocation} to partition the components of $H$ into $s:= \frac{r}{v(B)}$ subtilings, each of which fits into a blow-up of $B$.

Let $R$ be a $B$-tiling with vertex set $[r]$, and let $R^\ast$ be the $m$-blow-up of $R$ on  $n = mr$ vertices.
Let  $H \in \cB((1- \rho )n,\xi n,\Delta)  \cap \cJ(\chi)$ with $v(H) \geq n/2$, where the latter can be ensured by adding isolated vertices.
Our goal is to show that $H \subset R^\ast$.

Let $\phi$ be a proper $k$-colouring of $H$ that minimises the number of vertices of colour $1$.
So in particular $|\phi^{-1}(1)|/v(H) \leq \alpha(B)$ as $\crit(H) \leq (k-1) + a/b = \crit(B)$.
Denote the components of $H$ by $F_1,\dots, F_p$, let $\vecb x_q = \ord(\restr{\phi}{V(F_q)})$ for $q\in [p]$ and $\vecb x = \sum_{q\in[p]}\vecb x_q =  \ord (\phi)$.
Note that $\onenorm{\vecb x}=v(H)$ and $\maxnorm{\vecb x_q}\leq \xi n $ for $q\in[p]$.
Next, we find a second proper $k$-colouring $\phi'$ of $H$ with proportions similar to a proper $k$-colouring of $B$.
\begin{claim}\label{cla:komlos-permutation}
	There is a proper colouring  $\phi'$ of $H$
	such that $\vecb x' :=   {\ord} (\phi')$ satisfies
	\begin{enumerate}[\upshape (1)]
		\item \label{itm:komlos-permutation-a}
		$\vecb x'(1) = \alpha(B)\onenorm{\vecb x'} \pm \xi n$ and
		\item \label{itm:komlos-permutation-b}
		$\vecb x'(i) = \vecb x'(j)  \pm \xi n$  for all $i,j \in [k] \sm \{1\}$.
	\end{enumerate}
\end{claim}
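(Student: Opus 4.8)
The plan is to build $\phi'$ component by component, using only the freedom to permute colours inside each component $F_q$ (this preserves properness because distinct components are non-adjacent). Write $\phi_q=\restr{\phi}{V(F_q)}$, so $\vecb x_q=\ord(\phi_q)$ and $\vecb x=\sum_q\vecb x_q$. Two facts drive the argument. The first, $\vecb x(1)=|\phi^{-1}(1)|\le\alpha(B)v(H)$, is already established in the proof of \cref{lem:lemma-for-H-for-mixed-Komlos} (it is exactly where $\crit(H)\le\crit(B)$ is used). The second is $\sum_{q\in[p]}\max_i\vecb x_q(i)\ge v(H)/k\ge\alpha(B)v(H)$, which holds because the largest colour class of any proper $k$-colouring of $F_q$ has at least $v(F_q)/k$ vertices and $\alpha(B)=a/(a+(k-1)b)\le 1/k$ (since $a\le b$).

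First I would fix the size of colour class $1$ by a discrete intermediate-value argument. For each $q$ we may designate, as ``colour $1$ of $F_q$'', either a smallest class of $\phi_q$ (of size $a_q:=\min_i\vecb x_q(i)$) or a largest one (of size $b_q:=\max_i\vecb x_q(i)$); switching from the former to the latter changes the designation by $b_q-a_q\le v(F_q)\le\xi n$. Running through $q=1,\dots,p$ and flipping one component at a time, the total number of vertices designated colour $1$ increases in steps of at most $\xi n$, from $\sum_q a_q\le\sum_q\vecb x_q(1)=\vecb x(1)\le\alpha(B)v(H)$ up to $\sum_q b_q\ge\alpha(B)v(H)$; hence at some configuration it lands in $[\alpha(B)v(H)-\xi n,\alpha(B)v(H)]$. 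Fixing that configuration gives for each $q$ a class $C_q$ of $\phi_q$ (of size $a_q$ or $b_q$) with $\sum_q|C_q|=\alpha(B)v(H)\pm\xi n$, which is \ref{itm:komlos-permutation-a}, using $\onenorm{\vecb x'}=v(H)$.

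Second, I would distribute the remaining $k-1$ classes of each $\phi_q$ among colours $2,\dots,k$ so that these colours end up balanced. Process the components in order, maintaining running counts $T_2,\dots,T_k$ (all initially $0$); for the current component, sort its $k-1$ leftover class sizes decreasingly as $u_1\ge\dots\ge u_{k-1}$ — each is at most $\sum_i u_i\le v(F_q)\le\xi n$ — and assign the largest to the currently least-loaded colour, the next largest to the next, and so on (a sorted greedy matching). The key claim is the invariant $\max_iT_i-\min_iT_i\le\xi n$: if it holds before a step, then after adding $u_i$ to the $i$-th smallest count the new counts form the multiset $\{T_{(i)}+u_i\}_{i\in[k-1]}$, and for $i<j$ the difference $(T_{(i)}+u_i)-(T_{(j)}+u_j)=(T_{(i)}-T_{(j)})+(u_i-u_j)$ lies in $[-\xi n,0]+[0,\xi n]\subseteq[-\xi n,\xi n]$ (by the inductive hypothesis and $0\le u_i-u_j\le u_1\le\xi n$); so the invariant persists. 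At the end $\vecb x'(2),\dots,\vecb x'(k)$ are pairwise within $\xi n$, giving \ref{itm:komlos-permutation-b}. Finally $\phi'$ is proper (properness on each component plus non-adjacency of distinct components), which completes the construction.

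The only genuinely delicate point is this balancing invariant: a naive estimate would let the imbalance grow additively with the number of components, which is fatal since $p$ may be linear in $n$. The sorted matching is exactly what prevents this, since it makes the imbalance after a step at most the \emph{maximum} of the previous imbalance and the largest incoming piece, rather than their sum. Everything else — the intermediate-value step, the inequality $\alpha(B)\le 1/k$, and the bookkeeping with $\onenorm{\vecb x'}=v(H)$ — is routine.
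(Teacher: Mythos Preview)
Your proof is correct and follows the same two-stage outline as the paper (permute colour classes inside components; first fix the size of colour~$1$, then balance colours~$2,\dots,k$), but the implementations of both stages differ. For the first stage, the paper starts from the given $\phi$ (which already minimises colour~$1$ globally) and repeatedly swaps colour~$1$ with a strictly larger colour class in some component until it crosses $\alpha(B)v(H)$; your min-to-max intermediate-value argument is an equivalent but slightly cleaner way to hit the target. For the second stage the paper takes the extremal route: choose the permutation of classes $2,\dots,k$ in each component that minimises the global max--min gap, and argue that if the gap exceeded $\xi n$ one could improve it by a single swap, contradicting minimality. Your sorted-greedy allocation with the invariant $\max_i T_i-\min_i T_i\le\xi n$ is a genuinely different, constructive alternative; the crossing inequality $(T_{(i)}-T_{(j)})+(u_i-u_j)\in[-\xi n,\xi n]$ is exactly right and, as you note, prevents the imbalance from accumulating. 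Both arguments rely on $v(F_q)\le\xi n$ in the same way, so neither is stronger, but your version is more explicit while the paper's is shorter.
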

\begin{proofclaim}
	We find $\phi'$ by permuting the colour classes (under $\phi$) of the components $F_q$ of $H$.
	
	We first ensure that $\vecb x'(1)$ has the correct order and then balance $\vecb x'(2),\dots,\vecb x'(k)$.
	Note that $\alpha(B) \leq 1/k$.
	Observe that as long as $\vecb x(1) < \onenorm{\vecb x}/k $, there exists $q \in [p]$ such that $\vecb x_q(1) < \vecb x_q(j) \leq \vecb x_q(1)+\xi n$ for some $j\in [k]\sm \{1\}$.
	Thus by repeatedly finding such components~{$F_q$} and permuting the colour classes $1$ and $j$, we can obtain a new proper colouring  $\tilde \phi$ for which $\vecb {\tilde x} :=  {\ord}(\tilde \phi)$ satisfies $\vecb {\tilde x}(1) = \alpha(B)\onenorm{\vecb {\tilde x}} \pm \xi n$.
	
	Next, we obtain a proper colouring $\phi'$ from $\tilde \phi$ by reorganising the colour classes $2,\dots,k$.
	More precisely, choose colour classes $2,\dots,k$ of $\phi'$ such that $\max_{i,j\in[k]\sm \{1\}}|\vecb x'(i) - \vecb x'(j)|= \max_{i\in[k]\sm \{1\}}\vecb x'(i)-\min_{j\in[k]\sm \{1\}}\vecb x'(j)$ is minimised, among all proper colourings $\phi'$ in which~$\restr{\phi'}{V(F_q)}$ is a permutation the colour classes $2,\dots, k$ of ${\tilde \phi} {\big|}_{V(F_q)}$ for each $q\in [p]$.
	Let $\vecb x'_q=\ord(\restr{\phi'}{V(F_q)})$ for $q\in[p]$.
	It follows that $\phi'$ has the desired properties.\COMMENT{If there exist $i,j\in[k]\sm \{1\}$ such that
		$\vecb x'(i) - \vecb x'(j) > \xi n$, then there exists a component $F_q$ with $q\in[p]$ such that
		$\vecb x'_q(j) +\xi n \geq \vecb x'_q(i) > \vecb x'_q(j)$.
		Permuting colour classes $i$ and $j$ on the component $F_q$ reduces $\vecb x'(i) - \vecb x'(j)$.
		Note that $\max_{i,j\in[k]\sm \{1\}}|\vecb x'(i) - \vecb x'(j)|= \max_{i\in[k]\sm \{1\}}\vecb x'(i)-\min_{j\in[k]\sm \{1\}}\vecb x'(j)$ and so such a permutation as described above can not increase $\max_{i,j\in[k]\sm \{1\}}|\vecb x'(i) - \vecb x'(j)|$, while at most $k/2$ such permutations are required to decrease it.
		Thus by the minimality of $\phi'$, we have that
		$\vecb x'(i) = \vecb x'(j)  \pm \xi n$  for all $i,j \in [k] \sm \{1\}$
		as desired.}
\end{proofclaim}

Suppose that $R$ consists of copies of $B$ called $B_1,\dots, B_s$.
Note that  $s = r/v(B) \geq 1/\rho^3$.
We now apply \cref{prop:balanced_allocation} to $\vecb x'_1,\dots,\vecb x'_p$ with $v(H) \geq n/2$ playing the role of $n$.
Thus we obtain a partition of $H$ into subtilings $S_1,\dots,S_s$ such that
${\maxnorm{\sum_{\vecb x'_q \in S_j}\vecb x'_q - \vecb x' /s}} \leq v(H) /s^2  $ for each $j\in[s]$.
By definition of $\phi'$, we have $\onenorm{\vecb x'} = \onenorm{\vecb x} = v(H)$.
Moreover,  $v(H) \leq (1-\rho)v(R^\ast)$ where $R^\ast$ is the $m$-blow-up of $R$ on  $n = mr$ vertices.
Putting this together, we obtain that for each $j\in[s]$,
\begin{align}\label{equ:lemma-for-H-for-mixed-Komlos-1}
	{\sum_{\vecb x'_q \in S_j}\vecb x'_q(1)} & \leq \frac{\vecb x'(1)}{s} + \frac{v(H)}{s^2}
	\leq\left(\alpha(B)v(H) + \xi n + \frac{v(H)}{s}\right)   \frac{1}{s}\nonumber
	\\&\leq \left(1 + \frac{\xi n}{\alpha(B)v(H)} + \frac{1}{\alpha(B) s} \right) \frac{ \alpha(B) v(H)}{s} \nonumber
	\\&\leq \left(1 + 2\rho^2 \right)   \frac{ \alpha(B) v(H)}{s} \leq \alpha(B) \frac{v(R^\ast)}{s},
\end{align}
where we used \cref{cla:komlos-permutation}\ref{itm:komlos-permutation-a} in the second inequality, {and in the penultimate inequality that $s \geq 1/\rho^3$, $1/n \ll 1/\xi \ll 1/k,1/a,1/b$ and $v(H) \geq n/2$, which in particular implies that $\alpha(B) = 1/(1+b(k-1)/a)$ is much smaller than $s \geq 1/\rho^{3}$.}
Analogously, by exploiting \cref{cla:komlos-permutation}\ref{itm:komlos-permutation-b}, for each $j\in[s]$ and $i\in[k]\sm \{1\}$, we obtain
\begin{align}\label{equ:lemma-for-H-for-mixed-Komlos-2}
	{\sum_{\vecb x'_q \in S_j}\vecb x'_q(i)}
	& \leq  	{\frac{\vecb x'(i)}{s}} + \frac{v(H)}{s^2}
	\leq  \frac{1}{s}\left(\frac{v(H)- |\phi'^{-1}(1)|}{k-1} + (k-1)\xi n + \frac{v(H)}{s}\right)  \nonumber
	\\&\leq \left(1 + 2\rho^2 \right) \frac{1-\alpha(B)}{k-1} \frac{v(H)}{s} \leq \frac{1-\alpha(B)}{k-1} \frac{v(R^\ast)}{s}.
\end{align}

Fix an arbitrary $j \in [s]$, and denote by $H_j$ the union of the components of $H$ that belong to $S_j$.
Recall that $R$ is a $B$-tiling on vertex set $[r]$ and $R^\ast$ is the $m$-blow-up of $R$ on  ${n =mr}$ vertices.
To finish, it suffices to show that $H_j \subset B_j^\ast$ where $B_j^\ast$ is the $m$-blow-up of $B_j$.
The smallest part of each $B_j^\ast$ contains $\alpha(B)v(R^\ast)/s$ vertices and the other parts each contain $(1-\alpha(B))v(R^\ast)/((k-1)s)$ vertices.
Thus by~\eqref{equ:lemma-for-H-for-mixed-Komlos-1},~\eqref{equ:lemma-for-H-for-mixed-Komlos-2} and as $B_j^\ast$ is a blown-up clique, we can embed $H_j$ into $B_j^\ast$ by assigning the $i\tth$ colour class of $H_j$ under $\phi'$ to the $i\tth$ part of $B_j^\ast$.
Hence $H \subset R^\ast$ as desired.
\end{proof}

\begin{proof}[Proof of \cref{thm:mixed-Komlos-for-frameworks}]
Choose $\alpha,d,\eps, r_0,r_1$ such that
\begin{align*}
	1/n \ll  \xi \ll \rho \ll 1/r_1 \ll 1/r_0 \ll  \eps   \ll d \ll \alpha \ll \rho', 1/k,1/a,1/b,\mu,1/\Delta.
\end{align*}
Let $n' = (1-\rho')n$ and $w \leq \xi n$.
Let $G$ be a $\mu$-robust $(\chi,\rho;t,\ell)$-tiling framework on $n$ vertices and $H \in \cB(n',w,\Delta)  \cap \cJ(\chi)$.

By \rf{lem:lemma-for-G-true-form}, there is $r_0 \leq r \leq r_1$, a subgraph $G' \subset G$ with a balanced partition $\cV=\{V_i\}_{i \in [r]}$ of $V(G')$ and a graph $R$ on vertex set $[r]$ with a perfect  $B_\chi$-tiling $R' \subset R$ such that
\begin{enumerate}[\upshape (G1)]
	\item  $G'$ is an $(\eps,10d)$-approximation of $G$ and
	\item   $(G',\cV)$ is an $(\eps,d)$-regular $R$-partition, which is $(\eps,d)$-super-regular on~$R'$.
\end{enumerate}
Let $m = |V_1|$ denote the common cluster size.
Let $R^\ast$ be the $m$-blow-up of $R'$, where each vertex $i \in V(R')$ is replaced with a set $X_i$ of order $m$.
Note that $v(R^\ast) = v(G')  \geq (1-2\sqrt\eps) n \geq (1-\rho'/2)n'$.

By \rf{lem:lemma-for-H-for-mixed-Komlos} applied with $v(R^\ast),\rho'/2$ playing the role of $n,\rho$, it follows that $H \subset R^\ast$.
By adding additional isolated vertices to $H$, we can assume that $V(H) = V(R^\ast)$.
Hence, we have an $R$-partition $(H,\cX)$ where $\cX=\{X_i\}_{i\in[r]}$.
It is furthermore clear that  $\tcX=\{X_i\}_{i\in[r]}$ are subsets of $V(H)$ such that $\tcX$ is an $(\alpha,R')$-buffer for $(H,\cX)$.
Hence, we can apply \rf{lem:blow-up} to $G'$ and $H$ with $\Delta_{R'} \leq a+(k-1)b$, $\eps,d,\alpha$ and $\kappa = 1$ to obtain an embedding $\psi\colon V(H)\to V(G)$ as desired.
\end{proof}

\section{Proof of \rf{thm:frameworks}}\label{sec:proof-framework-theorem}
In this section, we prove \rf{thm:frameworks}, which is the core result of this paper, assuming two lemmas that we state shortly.
The proof has four steps.
Given graphs~$G$ and $H$, we use \rf{lem:lemma-for-G-true-form} to obtain an approximation~$G'$ of $G$ together with a regular partition $\cV$ and a reduced graph $R$.
We then apply \cref{lem:lemma-for-Exceptional-Vertices} to embed {a part of~$H$ to} the vertices outside of $G'$ using only few vertices of every cluster of $\cV$.
Afterwards, we allocate what remains of $H$ to what remains of $G$ with \rf{lem:lemma-for-H}.
To finish, we convert this allocation into an embedding with the help of  \rf{lem:blow-up}.

Let us now give the formal statements of the aforementioned lemmas.
\begin{lemma}[Lemma for exceptional vertices]\label{lem:lemma-for-Exceptional-Vertices}
	Let $t,\ell \in \NATS$ and $1/n \ll \xi \ll \rho \ll 1/r \ll {\eps \ll d} \ll \mu, 1/\chi,1/\Delta$ with $w\leq \xi n$.
	{Set $\gamma = \eps^{1/4}$.}
	Let $G$ be a $\mu$-robust $(\chi,\rho;t,\ell)$-tiling framework on $n$ vertices.
	Suppose there is a subgraph $G' \subset G$ with a {balanced} partition $\cV=\{V_i\}_{i \in [r]}$ of $V(G')$ and a graph $R$ on vertex set $[r]$ such that
	\begin{enumerate}[\upshape (G1)]
		\item \label{itm:lemma-exc-cycle-approx} $G'$ is an $(\eps,10d)$-approximation of $G$ and
		\item \label{itm:lemma-exc-cycle-reg} $(G',\cV)$ is an $(\eps,d)$-regular $R$-partition.
	\end{enumerate}
	Let $H \in \cB(n,w,\Delta) \cap \cJ(\chi)$ with $v(H) \geq n/2$.
	Then there is a subtiling $H' \subset H$ of order at most $\gamma n$  and an embedding $\psi\colon V(H') \to V(G)$ such that $V(G) \sm V(G') \subset \psi(V(H'))$ and $|V_i \cap \psi(V(H'))| \leq \gamma |V_i|$ for all $i \in [r]$.
\end{lemma}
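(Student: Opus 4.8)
The plan is to cover every vertex of $V_0:=V(G)\setminus V(G')$ as the $\psi$-image of a vertex of a carefully chosen component of $H$, all within one embedding $\psi$ that uses only a $\gamma$-fraction of each cluster. Two facts set this up. By~\ref{itm:lemma-exc-cycle-approx} we have $|V_0|\le\eps n$. Setting $k:=\kk$ and applying~\cref{prop:supersaturated-linkage} with $d$ in the role of its $\mu'$ (valid since $1/n\ll d\ll1/\chi,\mu$), every vertex of $G$ is linked to at least $dn^k$ edges of $K_k(G)$; the count of these linked edges is the key structural input.

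\emph{Good link types and their assignment.} Fix $v\in V_0$. At most $|V_0|\binom{n}{k-1}\le\eps n^k$ of the edges linked to $v$ meet $V_0$, so at least $(d/2)n^k$ of them lie in $V(G')$; as clusters carry no edges, each such edge is a transversal $k$-clique whose $k$ clusters form a $k$-clique of $R$. Classifying these edges by this $k$-clique together with the coordinate dropped by a linking bridge through $v$, and noting there are at most $kr^k$ classes each of size at most $(n/r)^k$, a double counting argument yields a set of at least $(d/4)r^k$ \emph{good types} of $v$: pairs $(Q,i_0)$ where $Q=\{i_1,\dots,i_k\}$ is a $k$-clique of $R$, $i_0\in Q$, and $|N_G(v)\cap V_{i_j}|\ge cn$ for every $i_j\in Q\setminus\{i_0\}$, with $c=c(d,r,k)>0$ chosen so that moreover $cn\gg w$. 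Now assign each $v\in V_0$ independently to a uniformly random good type $(Q_v,i_0^v)$ of $v$. Since every cluster $i$ lies in at most $kr^{k-1}$ good types of a given $v$ while $v$ has at least $(d/4)r^k$ of them, the expected number of $v\in V_0$ with $i\in Q_v$ is $O(k\eps n/(dr))$; Chernoff's bound together with a union bound over the $r$ clusters shows this holds for all $i$ simultaneously with positive probability, so we fix such an assignment.

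\emph{Embedding and bookkeeping.} Process the types one at a time. For a type $(Q,i_0)$ with $Q=\{i_1,\dots,i_k\}$, repeatedly take an unused component $F$ of $H$; since $\crit(F)\le\chi$ forces $\chi(F)\le k$, fix a proper $k$-colouring of $F$ whose largest class $C$ has $|C|\ge v(F)/k$, and inside $C$ pick, greedily using $\Delta(F)\le\Delta$, a set $X$ of pairwise $F$-distance at least $3$ with $|X|\ge v(F)/(k(\Delta+1)^2)$. Map $X$ injectively onto not-yet-covered vertices of $V_0$ assigned to $(Q,i_0)$, send $C\setminus X$ into $V_{i_0}$, send the $j$-th of the remaining $k-1$ colour classes into $V_{i_j}$, and complete this to an embedding of $F$. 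The completion is a routine greedy embedding into regular pairs: for $i,i'\in Q$ the pair $(V_i,V_{i'})$ is $(\eps,d)$-regular and stays regular after the at most $\gamma|V_i|$ already-used vertices are removed (by~\cref{proposition:robust-regular}, as $\gamma\ll d$), while every $F$-neighbour of an $x\in X$ is $F$-adjacent to no other vertex of $X$, hence need only be placed in $N_G(\psi(x))\cap V_{i_j}$, a set of size $\ge cn\gg v(F)$. We continue until all of $V_0$ assigned to $(Q,i_0)$ is covered; there are enough components since $v(H)\ge n/2$ gives total $X$-capacity at least $n/(2k(\Delta+1)^2)\ge|V_0|$. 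Let $H'$ be the union of all components used. Covering the vertices of one type costs components of total order at most $k(\Delta+1)^2$ times their number plus one underfull component of order $\le w$, so summing over the at most $r^{k+1}$ types gives $v(H')\le k(\Delta+1)^2\eps n+r^{k+1}w\le\gamma n$; and a cluster $V_i$ receives at most $k(\Delta+1)^2$ times the number of $v\in V_0$ with $i\in Q_v$, plus $r^{k}w$, which by the previous paragraph is $O(k^2\Delta^2\eps n/(dr))+r^{k}\xi n\le\gamma|V_i|$. This gives the required $H'$ and $\psi$.

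\emph{Main obstacle.} The delicate point is the cluster bound $|V_i\cap\psi(V(H'))|\le\gamma|V_i|$. Since a component of $H$ may have order close to $\xi n$, it must be used to cover $\Omega(v(F))$ exceptional vertices simultaneously, and the ``bulk'' of such a component (the colour classes outside the sparse set $X$) would overload a handful of clusters if the exceptional vertices it covers were tied to a single link structure; spreading $V_0$ over many good link types — which is what the supersaturated linkage buys us — is precisely what avoids this.
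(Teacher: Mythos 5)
Your overall strategy is the same as the paper's: use the supersaturated linkage of the robust framework to spread the exceptional vertices over many $k$-cliques of $R$ to which they are linked (you do this by random assignment to ``good types'' plus Chernoff, the paper by a greedy matching into blown-up clique-slots), then cover each group by a distance-$3$ subset of a colour class of a small subtiling of $H$, completing the embedding inside the corresponding regular $k$-tuple (you do this greedily, the paper via the Blow-Up Lemma with target sets after super-regularising). These are cosmetic differences, and your bookkeeping of the blow-up factor $k(\Delta+1)^2$ and of the per-cluster load is correct.

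There is, however, one genuine gap, at the very first counting step. You apply \cref{prop:supersaturated-linkage} with $d$ in the role of $\mu'$ to get $dn^k$ linked edges of $K_k(G)$, discard the at most $\eps n^k$ that meet $V_0$, and then assert that each surviving linked edge ``is a transversal $k$-clique whose $k$ clusters form a $k$-clique of $R$'' because ``clusters carry no edges''. That inference is false: the $R$-partition property is a property of $G'$, not of $G$, and a $k$-clique of $G$ whose vertices happen to lie in $V(G')$ need not be a $k$-clique of $G'$ --- it may use an edge of $G$ inside a cluster, or between clusters that are not adjacent in $R$, since $G'$ is only an $(\eps,10d)$-approximation and each vertex may be missing up to $10dn$ incident edges. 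To repair this you must also discard the linked edges containing a $G$-edge absent from $G'$; there are up to $5d n^2\cdot n^{k-2}=5dn^k$ such $k$-sets, which swamps your initial count of $dn^k$. The fix is the one the paper uses: take the constant $\mu'=\mu'(\mu,\chi,k)$ actually produced by \cref{prop:supersaturated-linkage} (it does not depend on $d$), observe that the hierarchy $d\ll\mu,1/\chi$ lets you additionally assume $d\ll\mu'$, and then the losses $\eps n^k+O(d)n^k$ are negligible against $\mu'n^k$, leaving $(\mu'/2)n^k$ linked edges of $K_k(G')$, which genuinely are transversal and span $R$-cliques. With that substitution (and $\mu'$ in place of $d$ in the definition of good types, so $c=c(\mu',r,k)$) the rest of your argument goes through.
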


{For graphs $F$ and $R$, a homomorphism $F$-cover $R'$ of $R$ is an \emph{$F$-cover} if all involved homomorphisms are injective.
	In other words, $R' \subset R$ is a spanning subgraph and every vertex in $R'$ is on a copy of $F$.}

\begin{lemma}[Lemma for $H$]\label{lem:lemma-for-H}
	Let $1/n \ll \xi \ll \rho   \ll 1/r \ll \alpha \ll \mu ,1/\chi, 1/\Delta$ and $\rho \ll 1/t,1/\ell$ with  $w \leq \xi n$, $w'\leq \sqrt\xi n$.
	Suppose that $\chi+\mu/2 \leq {\lceil\chi\rceil}$ and that $G$ is a $\mu$-robust $(\chi+\mu,\rho;t,\ell)$-tiling framework on $n$ vertices.
	Let $R$ be a  $(\chi, \rho; t,\ell)$-tiling framework on vertex set $[r]$ containing a {$K_{{\lceil\chi\rceil}}$-cover} $R' \subset R$ with $\Delta(R') \leq \Delta$.
	Let $(G,\cV)$ be a  $2$-balanced $R$-partition.
	Let $H \in \cH(n,w,\Delta; \chi; t,\ell,w')$.
	Then there is a size-compatible (with $\cV$) vertex partition $\cX=\{X_i\}_{i \in [r]}$ of~$H$ and $\tcX= \{\tilde{X}_i\}_{i \in [r]}$  with $\tilde{X}_i\subset X_i$ for each $i\in [r]$ such that $(H,\cX)$ is an $R$-partition and {moreover} $\tcX$ is an $(\alpha,R')$-buffer for $(H,\cX)$.
\end{lemma}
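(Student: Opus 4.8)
The plan is to construct the partition $\cX$ of $V(H)$ by a sequence of allocations that mirror the structural decomposition of $H$ into its buffer tiles, its \fc{} subtilings, and the bulk, while simultaneously tracking the ``space vector'' that records how many vertices of $H$ we have placed in each cluster of $R$. Throughout we think of an allocation as a map $V(H) \to [r]$ and keep two invariants: first, that every edge of $H$ goes between clusters that form an edge of $R$ (so $(H,\cX)$ is an $R$-partition), and second, that a prescribed $\alpha$-fraction of each part $X_i$ consists of buffer vertices whose second neighbourhood in $H$ lands in clusters adjacent in $R'$ (so that $\tcX$ is an $(\alpha,R')$-buffer). The final constraint, that $\cX$ is size-compatible with $\cV$, is equivalent to the space vector equalling the cluster-size vector of $\cV$ exactly; since $\cV$ is only $2$-balanced this is a genuine numerical target, not merely an approximate one.

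First I would reserve, for each copy of $K_{\lceil\chi\rceil}$ in the cover $R' \subset R$, a small collection of ``buffer components'' of $H$ and allocate their vertices greedily across the corresponding cliques in $R'$, using that $H$ has bounded maximum degree $\Delta$ and bounded tile order $w \le \xi n$, so that this uses at most an $\alpha$-fraction of each cluster and the bounded-degree condition on $R'$ keeps the buffer requirement at each vertex satisfiable. This produces $\tcX$. Next I would invoke the framework version of Koml\'os' theorem for mixed tilings, \cref{thm:mixed-Komlos-for-frameworks}, applied to $G$ and the bulk of $H$ (everything except the buffer tiles and the $t+\ell$ \fc{} subtilings): although that theorem actually produces an embedding, I would only record the \emph{induced allocation} of these bulk vertices to the clusters $V_i$ of $\cV$, which (since $G \subset R^\ast$) gives a valid $R$-partition allocation that covers all but at most $O(\rho n + (t+\ell)w')$ vertices of each cluster. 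This is exactly the ``ignore the embedding, keep the allocation'' trick described in the proof outline, and it is legitimate because $G$ is $\mu$-robust so deleting the buffer and \fc{} tiles still leaves a tiling framework with the space property intact.

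The decisive step is then to correct the residual space discrepancy: after placing the bulk, the current space vector differs from the cluster-size vector of $\cV$ by some integer vector $\vecb w$ with $\sum_i \vecb w(i) = 0$ (both sides summing to $v(H)$ after padding $H$ with isolated vertices) and $\maxnorm{\vecb w} = O(\rho n)$. I would decompose $\vecb w$ along the tight and loose component structure of $K_{\lceil\chi\rceil}(R)$: because $R$ is a $(\chi,\rho;t,\ell)$-tiling framework, there is a spanning subgraph of $K_{\lceil\chi\rceil}(R)$ with at most $t$ tight and $\ell$ loose components, and one shows (this is where linkage, via supersaturated linkage of the framework, is used) that within a single tight component one can shift a bounded number of vertices between any two clusters one step at a time through shared $(\lceil\chi\rceil-1)$-cliques, while between distinct tight components inside the same loose component one can shift $\lceil\chi\rceil$-tuples of vertices at a cost of changing colour-class balances. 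Each such shift is realised by recolouring inside one of the reserved proper $(\lceil\chi\rceil,\lceil\chi\rceil w)$-\fc{} subtilings (for the within-tight-component balance corrections, of which there are $t$, matching the $t$ \fc{} subtilings) or the topological $(\ell,w)$-\fc{} subtiling (for the across-tight-component corrections); the $(k,s)$-\fc{} property, with $s$ on the order of $\lceil\chi\rceil w \gg \rho n / t$, is precisely what guarantees the required colour-class adjustments exist. The main obstacle, and the part I expect to need the most care, is the bookkeeping that matches the \emph{number} of \fc{} subtilings to the \emph{number} of components of the host clique hypergraph and shows the total correction needed per component stays within the flexibility budget $s$ — in particular verifying that each of the $t$ tight components and the combined $\ell$ loose components contributes at most one ``divisibility obstruction'' absorbing exactly one \fc{} subtiling, and that the topological \fc{} subtiling suffices to equalise parities/residues across loose connectivity without disturbing the already-fixed within-component allocation. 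Once $\vecb w$ is zeroed out, $\cX$ is size-compatible with $\cV$, the buffer $\tcX$ is untouched by the corrections (they only permute colour classes inside \fc{} subtilings, preserving which clusters edges go between), and $(H,\cX)$ remains an $R$-partition, completing the proof.
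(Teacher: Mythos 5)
Your skeleton matches the paper's: establish the buffer first via the $K_{\lceil\chi\rceil}$-cover, allocate the bulk by running \cref{thm:mixed-Komlos-for-frameworks} and keeping only the induced allocation, then zero out the residual discrepancy using the tight/loose component structure and the \fc{} subtilings. However, there are two genuine gaps in your correction step. First, a quantitative one: you assert that the flexibility budget $\lceil\chi\rceil w$ of the reserved proper $(\lceil\chi\rceil,\lceil\chi\rceil w)$-\fc{} subtilings satisfies $\lceil\chi\rceil w \gg \rho n/t$, but the hierarchy gives $w \le \xi n$ with $\xi \ll \rho$, so in fact $\lceil\chi\rceil w \ll \rho n$, while the post-Koml\'os discrepancy is of order $\rho' n$ with $\rho \ll \rho'$. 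The subtilings guaranteed by $H \in \cH(n,w,\Delta;\chi;t,\ell,w')$ are therefore far too weak on their own. The paper repairs this by exploiting the slack $\crit(H)\le\chi\le\lceil\chi\rceil-\mu/2$: it extracts from the bulk, for each tight component, an additional subtiling of order about $\beta n$ that is approximately \fc{} (\cref{lem:Lat_gap}), and glues it to the given exact \fc{} subtiling (\cref{prop:wild_sum}) to obtain a proper $(\lceil\chi\rceil,\rho'^{1/3}n)$-\fc{} subtiling. Without this boosting step your budget check fails and the argument breaks.

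Second, a mechanism gap: you realise the within-tight-component shifts ``by recolouring inside one of the reserved proper \fc{} subtilings,'' but each such subtiling is allocated to a single $k$-clique $e_j$ of $R$ and recolouring it can only redistribute its vertices among the $k$ clusters of $e_j$; it cannot move mass between arbitrary clusters of the tight component. The paper needs a separate resource for this: the $(\beta n,T_j)$-surjective allocations $\vecb u_j$, which park a reservoir of tiles on \emph{every} edge of $T_j$; the discrepancy is decomposed into a flow (\cref{pro:weights_to_flows}) and implemented by re-allocating reservoir tiles across pairs of edges sharing $\lceil\chi\rceil-1$ vertices (\cref{prop:flow}). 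Moreover, this transfer only works within a single \emph{reachability} component of $T_j$; a tight component has up to $\lceil\chi\rceil$ reachability components (\cref{pro:reachabilitity}), and the imbalance \emph{between} them is exactly the residual that gets concentrated on the distinguished edge $e_0$ and absorbed by the \fc{} recolouring (\cref{lem:allocate-tight}). Your proposal omits both the reservoir and the reachability-component analysis, so the claimed ``one step at a time'' shifting is not justified as written. (Minor additional points: linkage is not what powers these shifts — it is used in the lemma for exceptional vertices — and the loose-component correction is done by a \emph{topological} recolouring of $W$, whose colour classes sit one per loose component, rather than by moving $\lceil\chi\rceil$-tuples between tight components.)
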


The proofs of \cref{lem:lemma-for-Exceptional-Vertices,lem:lemma-for-H} can be found in \cref{sec:lemma-for-Exceptional-Vertices,sec:lemma-for-H}, respectively.
For now, we prove  \cref{thm:frameworks} assuming {the} truthfulness of these lemmas.

\begin{proof}[Proof of \cref{thm:frameworks}]
	{Without loss of generality, we may assume that $\mu \leq 1/4$.
		(By \cref{obs:framework-monotone}, the result only gets stronger for smaller $\mu$.)}
	Given $\chi$ and $\mu$ with $\mu$, let $k = \lceil \chi \rceil$.
	In the following, we assume that $\chi + \mu/2 \leq k$.
	The case where $\chi + \mu/2  > k$ is discussed at the end of this proof.
	Given this, we can assume that {$\chi + \mu/2 \geq (k-1) + a/b$} for (coprime) natural numbers {$a \leq b \leq 8/\mu$}.\footnote{{Let $4/\mu\leq  b:= 4 \lceil \mu^{-1}  \rceil \leq 8/\mu$ and choose $a \in \NATS$ maximal such that $k-1 + a/b \leq \chi +\mu/2$.}}
	We now choose $\alpha,\gamma,d,\eps,r_0, r_1,\rho,\xi,n$ such that
	\begin{align*}
		1/n \ll \xi \ll \rho  \ll 1/r_1 \ll 1/r_0 \ll {\eps \ll  d}   \ll \alpha  \ll \mu,1/k,1/a,1/b,1/\Delta.
	\end{align*}
	Set $t'= \min\{r_1^k,t\}$, $\ell'= \min\{r_1^k,\ell\}$ {and $\gamma = \eps^{1/4}$.}
	Now suppose we are given a $\mu$-robust $(\chi+\mu,\rho;t,\ell)$-tiling framework $G$ on $n$ vertices and $H \in \cH(n,w,\Delta; \chi; t,\ell,w')$ where $w\leq \xi n$ and $w'\leq \sqrt\xi n$.
	{We assume that $H$ has $n$ vertices, adding isolated vertices if necessary.}
	
	\medskip
	\noindent \emph{Step 1: Obtaining a regular partition.}
	By \cref{obs:framework-monotone}, $G$ is also a $\mu/2$-robust $(\chi+\mu/2,\rho;t,\ell)$-tiling framework.
	We apply \cref{lem:lemma-for-G-true-form} to obtain, for some $r_0 \leq r \leq r_1$, a subgraph $G' \subset G$ with a balanced partition $\cV=\{V_i\}_{i \in [r]}$ of $V(G')$, a graph $R$ on vertex set $[r]$ with a perfect $B_{{\chi + \mu/2}}$-tiling $R' \subset R$ such that
	\begin{enumerate}[\upshape (G1)]
		\item \label{itm:proof-lemma-for-G-cycle-approx} $G'$ is an $(\eps,10d)$-approximation of $G$,
		\item \label{itm:proof-lemma-for-G-cycle-reg} $(G',\cV)$ is an $(\eps,d)$-regular $R$-partition, $(\eps,d)$-super-regular on~$R'$ and
		\item \label{itm:proof-lemma-for-G-cycle-frame} $R$ is a $(\chi+\mu/2,\rho;t,\ell)$-tiling framework.
	\end{enumerate}
	Note that $R$ is also a $(\chi,\rho;t',\ell')$-tiling framework, since $K_k(R)$ has (trivially) at most $e(K_k(R)) \leq r_1^k$ tight or loose components.
	
	\medskip
	\noindent \emph{Step 2: Handling the exceptional vertices.}
	Since $H \in \cH(n,w,\Delta; \chi; t,\ell,w')$, we can select a subtiling $W \subset H$ of order at most $(t'+1)\sqrt{\xi} n$ that contains, disjointly, $t'$ proper $(k,kw)$-\fc{} subtilings and one topological $(\ell',w)$-\fc{} subtiling.
	Thus $H-V(W) \in \cB(n, w, \Delta)$ and $v(H-V(W)) \geq n/2$.
	Moreover, $\crit(H-V(W)) \leq \chi + {\mu/8}$ by the choice of $n,\xi,\mu$, and hence $H-V(W)\in \cJ(\chi+\mu/8)$.
	Next, we apply \cref{lem:lemma-for-Exceptional-Vertices} to obtain a subtiling $H' \subset H-V(W)$ of order at most $\gamma n$ and an embedding $\psi\colon V(H') \to V(G)$ such that $V(G) \sm V(G') \subset \psi(V(H'))$ and $|V_i \cap \psi(V(H'))| \leq \gamma|V_i|$ for all $i \in [r]$.
	It follows that $\cV''=\{V_i''\}_{i \in [r]}$ is $2$-balanced where  $V_i'':= V_i \sm \psi(V(H'))$.
	Let $G'' = G' \sm \psi(V(H'))$ and $n'' = v(G'')$.
	Note that $n'' \geq (1 - \gamma  - \eps) n$  by~\ref{itm:proof-lemma-for-G-cycle-approx}.
	So $G''$ is a $(\mu/4,\mu/4)$-approximation of $G$.
	Consequently,~$G''$ is a $\mu/2$-robust $(\chi + \mu,\rho;t,\ell)$-tiling framework.
	Since each cluster $V_i$ has lost a proportion of at most $\gamma$ vertices, it follows that $(G'',\cV'')$ is a $({\eps^{1/8}},d^2)$-regular $R$-partition that is $(\sqrt{\eps},d^2)$-super-regular on $R'$ by \cref{proposition:robust-regular}.
	Let $H''= H - V(H')$.
	We have $\crit(H'') \leq \chi + \mu/4$.
	Since $\chi + {\mu/4} \leq k$ and $W \subset H''$, it follows that $H''$ is also in $\cW(t',\ell',\chi+\mu/4,w,{w'})$.
	So together, we have $H'' \in \cH(n'',w,\Delta; \chi+\mu/4; t',\ell',w')$.

	\medskip
	\noindent \emph{Step 3: Allocating $H''$.}
	It remains to embed $H''$ into $G''$.
	To do this, we begin with an allocation of $H''$ into $R$.
	We apply \cref{lem:lemma-for-H} to $(G'',\cV'')$ with $n''$, $\chi+\mu/4$, $\mu/4$, $\max (\Delta,\Delta(R'))$, $t'$, $\ell'$ playing the role of $n$, $\chi$, $\mu$, $\Delta$, $t$, $\ell$ and the other parameters as they are.
	It follows that there is a size-compatible (with $\cV''$) vertex partition $\cX=\{X_i\}_{i \in [r]}$ of $H''$ and $\tcX= \{\tilde{X}_i\}_{i \in [r]}$  with $\tilde{X}_i\subset X_i$ for each $i\in [r]$ such that $(H'',\cX)$ is an $R$-partition and $\tcX$ is an $(\alpha,R')$-buffer for $(H'',\cX)$.
	
	\medskip
	\noindent \emph{Step 4: Embedding $H''$.}
	We finish by embedding $H''$ into $G''$ using the  \nameref{lem:blow-up}.
	More precisely, we apply \cref{lem:blow-up} to $(H'',\cX)$, $(G'',\cV'')$ with ${\eps^{1/8}},d^2,2, {\Delta(R')},v(G'')$ playing the role of $\eps,d, \kappa, \Delta_{R'},n$ and $\Delta,\alpha,r$ as themselves.
	This yields  an embedding $\psi'\colon V(H'')\to V(G'')$.
	As $H$ is the disjoint union of $H'$ and $H''$ and moreover $\psi(V(H'))$ and $V(G'')$ form a partition of $V(G)$,  we can combine $\psi$ and $\psi'$ to obtain an embedding of $H$ into $G$.
	This completes the proof.
	
	\medskip
	\noindent \emph{Case: $\chi \approx k$.}
	It remains to discuss the case when $\chi + {\mu}/2 > k$.
	Here, we replace $(\chi,{\mu})$ by $(\chi+{\mu}/2,{\mu}/2)$.
	Since {$\mu \leq 1/4$}, we have $\lceil \chi + {\mu}/2 \rceil = k+1$.
	{Moreover,  $(\chi + {\mu}/2) - {\mu}/4 \leq k+1$.}
	{Let $t'$ be defined} as above (with respect to the updated parameters).
	Now consider $H \in \cH(n,w,\Delta; \chi; t,\ell,w')$ as in the lemma input.
	{We can assume that $H$ has $n$ vertices, adding isolated vertices otherwise.}
	{Note that, trivially, $H \in \cJ(\chi +{\mu}/2)$.}
	
	{We claim that $H$ contains $t'$ proper $(k+1, (k+1) w)$-\fc{} subtilings.
		To see this, pick a subtiling $H^\ast \subset H$ on $\sqrt{\xi} n \leq n'   \leq \sqrt{\xi} n + w$ vertices.
		Since $w \leq \sqrt{\xi} n'$, we can apply \cref{lem:k+1_wildcard} to find that $H^\ast$ is properly $(k+1, n'/(k+1)^5)$-\fc{}.
		Since $(k+1)w \leq n'/(k+1)^5$, this implies that $H^\ast$ is properly $(k+1, (k+1) w)$-\fc{}.
		We repeat this process $t'-1$ further times using fresh subtilings of $H$.
		This shows that $H$ contains $t'$ proper $(k+1, (k+1) w)$-\fc{} subtilings.
		Moreover, since $H \in \cW(t,\ell,\chi,w,w')$ it follows that $H$ contains a {topological $(\ell,w)$-\fc{} subtiling} of order at most $w'$.
		(We can assume that all these subtilings are pairwise disjoint.)}
	Together, this gives $H \in \cH(n,w,\Delta; \chi+{\mu}/2; t',{\ell},w')$.
	We then continue with the rest of the proof as before.
\end{proof}

\section{Proof of \cref{lem:lemma-for-Exceptional-Vertices}} \label{sec:lemma-for-Exceptional-Vertices}

We devote this section to the proof of Lemma~\ref{lem:lemma-for-Exceptional-Vertices}, which is used for embedding the exceptional vertices in the proof of \cref{thm:frameworks}.

\begin{proof}[Proof of \cref{lem:lemma-for-Exceptional-Vertices}]
	We introduce $\mu' $ such that
	$$1/n \ll \xi \ll \rho \ll 1/r \ll {\eps \ll d}  \ll  \mu'\ll\mu, 1/\chi,1/\Delta.$$
	Given the statement, let $m=|V_1|$ be the common cluster size.
	Let $L = V(G) \sm V(G')$ be the set of leftover vertices, and let $\cK$ be the set of {vertex-ordered} $k$-cliques in $R$.
	{(So each element of $\cK$ is a clique whose vertices are ordered.)}
	
	\begin{claim}
		There is a partition $\cL = \{L_K\}_{K \in \cK}$ of $L$ with parts of size at most $\sqrt{\eps} m/r^{k-1}$ such that for each $K \in \cK$, every vertex in $L_K$ has at least $2d m$ neighbours in $V_i$ where $i$ ranges over the first $k-1$ vertices of $K$.
	\end{claim}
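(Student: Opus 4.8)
The plan is to process the leftover vertices $v\in L:=V(G)\setminus V(G')$ one at a time, attach to each a large family of ordered $k$-cliques of $R$ (where $k:=\lceil\chi\rceil$) along whose first $k-1$ clusters $v$ has at least $2dm$ neighbours, and then to spread the resulting assignment evenly over $\cK$ with a Hall-type argument. First I would record the elementary consequences of $G'$ being an $(\eps,10d)$-approximation of $G$: since $v(G')\ge(1-\eps)n$ we get $|L|\le\eps n$ and $mr=v(G')\in[n/2,n]$ (so $n/m\ge r$), and double-counting the degree defect $\sum_{u\in V(G')}(\deg_G(u)-\deg_{G'}(u))\le 10dn\cdot v(G')$ shows that at most $5dn^2$ pairs $\{x,x'\}\subseteq V(G')$ have $xx'\in E(G)\setminus E(G')$.

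The core step is, for a fixed $v\in L$, to produce many ordered $k$-cliques of $R$ with the desired neighbourhood property. Here I invoke \cref{prop:supersaturated-linkage} (valid since $1/n\ll\mu'\ll 1/\chi,\mu$): $v$ is linked to at least $\mu'n^k$ edges of $K_k(G)$. For each such edge $f$ fix a witness $e\ni v$ with $|e\cap f|=k-1$; this yields a pair $(S,y)$ with $S:=e\cap f$ a $(k-1)$-set, $y:=f\setminus e$ a vertex outside $e$, and both $\{v\}\cup S$ and $\{y\}\cup S$ $k$-cliques of $G$. Since $f=S\cup\{y\}$ is recovered from the pair, this produces at least $\mu'n^k$ such pairs. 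Call a pair \emph{good} if in addition $\{y\}\cup S$ induces a $k$-clique in $G'$ and every vertex of $S$ lies in a cluster $V_i$ with $|N_G(v)\cap V_i|\ge 2dm$. A routine count discards the bad ones: at most $k\eps n^k$ pairs have a vertex outside $V(G')$; at most $5kdn^k$ of the rest use a non-$G'$-edge of $G$ inside $V(G')$ (so fail to induce a $G'$-clique); and, since every $z\in S$ is a $G$-neighbour of $v$, at most $2(k-1)dn^k$ put a vertex of $S$ in a cluster with fewer than $2dm$ neighbours of $v$ (choose its position in $S$, the offending cluster among the at most $r$ of them, the vertex among the at most $2dm$ candidates, then the remaining $k-1$ vertices, using $mr\le n$). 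As $\eps,d\ll\mu'$ these error terms sum to at most $\mu'n^k/2$, so at least $\mu'n^k/2$ good pairs survive. To a good pair I attach the ordered $k$-clique $K(S,y)=(i_1,\dots,i_{k-1},i_k)$ of $R$ whose first $k-1$ coordinates are the cluster indices of the vertices of $S$ listed in increasing order (these are distinct and pairwise $R$-adjacent, because $\{y\}\cup S$ induces a $G'$-clique and $G'$ has no edges inside clusters) and whose last coordinate is the cluster of $y$; by goodness $v$ has at least $2dm$ neighbours in each of $V_{i_1},\dots,V_{i_{k-1}}$. At most $m^k$ good pairs map to a fixed ordered $k$-clique, so the family $\cK(v)$ of cliques obtained satisfies $|\cK(v)|\ge\mu'n^k/(2m^k)\ge\mu'r^k/2$, using $n/m\ge r$.

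It remains to balance. I would choose, for every $v\in L$, one $K\in\cK(v)$ so that no clique of $R$ is chosen more than $B:=\lfloor\sqrt\eps m/r^{k-1}\rfloor$ times, and set $L_K$ to be the preimage of $K$; this gives the required partition, since every part has size at most $B\le\sqrt\eps m/r^{k-1}$ and $v\in L_K$ forces $v$ to have at least $2dm$ neighbours in each of the first $k-1$ clusters of $K$. Such a selection exists by the $b$-matching form of Hall's theorem: for every nonempty $L'\subseteq L$, writing $N(L')=\bigcup_{v\in L'}\cK(v)$ we have $|N(L')|\ge\mu'r^k/2$, hence $B\,|N(L')|\ge(\sqrt\eps m/2r^{k-1})\cdot(\mu'r^k/2)=\sqrt\eps\mu'mr/4\ge\eps n\ge|L|\ge|L'|$, where we used $mr\ge n/2$ and $\sqrt\eps\le\mu'/8$ (which follows from $\eps\ll\mu'$).

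I expect the decisive difficulty to be the counting step in the second paragraph: a single linked edge only certifies that $v$ has \emph{one} neighbour in each relevant cluster, whereas the claim demands $2dm$ of them, so one genuinely needs the supersaturated linkage of \cref{prop:supersaturated-linkage} and then must prune the ``bad-cluster'' pairs sharply enough that the error terms are $o(\mu'n^k)$ rather than merely $O(n^{k-1})$ — this is exactly where the hierarchy $\eps,d\ll\mu'$ is used, and getting every pruning bound to beat $\mu'n^k$ is the delicate part.
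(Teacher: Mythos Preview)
Your proof is correct and follows essentially the same route as the paper's: both invoke supersaturated linkage (\cref{prop:supersaturated-linkage}) to get $\Omega(\mu' n^k)$ linked edges per leftover vertex, prune down to ordered $k$-cliques of $R$ whose first $k-1$ clusters each contain at least $2dm$ neighbours of $v$, and finish with a matching argument in the bipartite auxiliary graph between $L$ and $\cK$. The paper compresses the pruning into one line and carries out the matching by blowing up each $K\in\cK$ to capacity $\sqrt{\eps}m/r^{k-1}$ and matching greedily, whereas you make the bad-pair counts explicit and verify Hall's condition directly --- these variations are interchangeable and the substance is identical.
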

	\begin{proofclaim}
		Let $W$ denote the bipartite graph with colour classes $L$ and $\cK$.
		We add an edge between $v \in L$ and $K \in \cK$ if $v$ has at least $2d m$ neighbours in each $V_{i}$ where $i$ ranges over the first $k-1$ vertices of $K$.
		Recall that $G$ is a $\mu$-robust $(\chi,\rho;t,\ell)$-tiling framework.
		By \cref{prop:supersaturated-linkage}, every vertex in $G$ is linked to $\mu' n^k$ edges in $K_k(G)$.
		By \ref{itm:lemma-exc-cycle-approx} and $\eps \ll d \ll \mu'$, it follows that every vertex in $G$ is linked to $(\mu'/2) n^k$ edges in $K_k(G')$.
		{This together with~\ref{itm:lemma-exc-cycle-reg} implies} that each vertex in $L$ has at least $(\mu'/4) r^k$ neighbours in $\cK$ {in the bipartite auxiliary graph~$W$.}
		{Indeed, the clusters of every $k$-clique in $R$ can host at most $m^k$ $k$-cliques of $G'$. 
			Moreover, whenever there is a $k$-clique in $G'$, there is also one in $R$ in the corresponding clusters, since $(G',\cV)$ is an $R$-partition.
			So the claim follows from the pigeonhole principle together with the constant hierarchy.}
		We obtain a bipartite graph~$W'$ from $W$ by blowing up each vertex $K \in \cK$ to a set $K'$ of size $\sqrt{\eps} m/r^{k-1}$.
		Let $\cK' = \bigcup K'$.
		So vertices in $L$ have at least $(\mu'/4) \sqrt{\eps} mr \geq \eps n$ neighbours in $W'$.
		Since $|L| \leq \eps n$ by~\ref{itm:lemma-exc-cycle-approx}, we may greedily find a matching in $W'$ between $L$ and $\cK'$ covering $L$.
		We then define~$L_K$ as the vertices matched into $K'$ for each $K \in \cK$.
	\end{proofclaim}
	
	Now we show that one can individually cover the vertices of each part $L_K \in \cL$ by embedding a suitable subtiling $H_K \subset H$ into the clusters  corresponding to $K$.
	To this end, fix $L_K \in \cL$.
	Without loss of generality, let $V(K)=[k]$.
	Let $\cW = \{W_i\}_{i \in [k]}$ where $W_i=V_i$.
	We apply \cref{pro:super-regularising-R'} with $G'[\cW]$ and $k$ playing the role of $G$ and $\DeltaRp$.
	This gives a family of disjoint sets $\cW' = \{W_i'\}_{i \in [k]}$ with balanced clusters $W_i' \subset W_i$ of size $m' =\lfloor (1 - {\sqrt{\eps}})m \rfloor$ such that $(G'[\cW'],\cW')$ is a $({2\eps,d/2})$-super-regular $K$-partition.
	{Let  $\cW'' = \{W_i''\}_{i \in [k]}$ be obtained by deleting $|L_K|$ arbitrary vertices from $W'_k$ and then adding the vertices of $L_K$.}
	Let $G''$ be obtained from $G'[\cW'']$ by adding the edges of $G$ between $L_K$ and the clusters $W_1'',\dots,W_{k-1}''$.
	Note that, $(G'',\cW'')$ is  a balanced $(\sqrt{\eps} , d/4)$-{super}-regular $K$-partition by \cref{proposition:robust-regular}.
	
	Next, we select a subtiling $H_K \subset H$ of order $ q/2  \leq v(H_K) \leq q$ where  $q := (\eps^{1/4}/(k r^{k-1} )) m$.
	(Note that $w\leq \xi n \ll q$.)
	By assumption, $H_K$ has a $k$-colouring, whose colour classes we denote by $C_1,\dots,C_k$.
	Since $\eps  \ll 1/\Delta,1/k$, we can assume that $|C_k| \geq \Delta^2 \sqrt{\eps} m/r^{k-1}$.
	So  we can greedily find a set $C_k' \subset C_k$ of size $|C_k'| \geq \sqrt{\eps} m/r^{k-1}$ such that vertices in $C_k'$ are at distance $3$ in $H_K$ (meaning that their neighbourhoods do not intersect).
	We greedily embed the vertices of $C_k$ into $W''_k$ covering the vertices of $L_k$ with $C_k'$.
	Let $\psi$ denote this partial embedding.
	It remains to extend $\psi$ to a full embedding of~$H_K$, where $C_1,\dots,C_{k-1}$ are embedded into $W_1'',\dots,W_{k-1}''$, respectively.
	{This can be done by applying \cref{lem:simple-blow-up} with $\zeta = \Delta \sqrt{\eps}/r^{k-1}$ and $c=d/4-\sqrt{\eps}$ to $(G'',\cW'')$.
	The image restricting sets are defined as follows. 
	For each for every $y \in C_k'$, $x \in N_{H}(y)$ and $i \in [k]$, we set $I_x = N(\phi(y))\cap W''_i$.
	Note that as $\Delta(H) \leq \Delta$ the number of restricted vertices is at most $\zeta m$ for each cluster, while each image restricting set $I_x$ has size at least $cm$ due to super-regularity and the fact that the neighbourhoods of the vertices of $C_k'$ do not intersect.
	Hence the conditions of \cref{lem:simple-blow-up} are satisfied and the desired extension of $\psi$ exists.}
	
	Finally, we argue that the $H_K$'s can be chosen to be pairwise vertex-disjoint.
	This is indeed possible by restricting the sets $\cW$ to the yet unused vertices, where we use that each $L_K$ requires at most {$(\eps^{1/4}/(kr^{k-1})) m  = \gamma m / (kr^{k-1})$} vertices of any particular cluster $V_i$ and each $i \in V(R)$ is on at most $kr^{k-1}$ ordered $k$-cliques of $R$.
	It follows that $H' = \bigcup_{K\in \cK} H_K$ has the desired properties.
	Since $\gamma m r\leq \gamma n$ this finishes the proof.
\end{proof}

\section{Allocations}\label{sec:allocations}
In the proof of \cref{lem:lemma-for-H}, we aim to embed a graph $H$ into a (mostly) balanced blow-up~$R^\ast$ of a graph $R$.
We can express this with a homomorphism $\eta\colon H\rightarrow R$ that sends the correct number of vertices of $H$ to every vertex of $R$.
In the following, we introduce some further vocabulary to formalise this.
For a family of graphs $\cF$, we write $\Hom(\cF;R)=\bigcup_{F\in\cF}\Hom(F;R)$.
Recall that $\cF=\cF(k,w)$ is the family of properly $k$-colourable graphs of order at most $w$.
Suppose that $H$ is an $\cF$-tiling and let $R' \subset R$.
Hence we can express a homomorphism $\eta\colon H\rightarrow R'$ by a vector $\vecb u\in \NATSZ^{\Hom(\cF;R)}$ where $\vecb u (\phi)$ counts the number of tiles $F$ of $H$ for whom $\eta $ restricted to $F$ is $\phi \in \Hom(F;R')$.
In this situation, we say that $\vecb u$ is an \emph{allocation of $H$ to $R'$} and the vector $\vecb u$ \emph{encodes} the homomorphism $\eta$.\footnote{Strictly speaking, the encoding is only up automorphisms that permute isomorphic tiles of $H$, but this is irrelevant for our argument.}

It will be useful to track how many vertices of a  tile $F \subset H$  are mapped to a particular vertex in $R$ by a particular homomorphism $\theta \in \Hom(F;R)$.
To this end, we define $A=A_{\cF,R}\in \NATSZ^{V(R) \times \Hom(\cF;R)}$ to be the \emph{{homomorphism} incidence matrix} by setting $A(v,\theta)=|\theta^{-1}(v)|$ for each $v\in V(R)$ and $ \theta\in \Hom(\cF;R)$.
Hence for a blow-up $R^\ast$ of $R$ with cluster sizes described by $\vecb x \in \NATSZ^{V(R)}$, an allocation $\vecb u$ \emph{encodes an embedding} of $H$ into $R^\ast$ provided that $A\vecb u \leq \vecb x$.

Recall that for sets $T \subset S$ and a vector $\vecb w \in \REALS^{S}$, we obtain the restricted vector $\restr{\vecb w}{T}$ by setting all entries that do not correspond to an element of $T$ to be zero.
Suppose $k\geq 2$ and let $J \subset K_k(R)$.
We say that the allocation $\vecb u$ is \emph{$(s,J)$-surjective} if we can write $\vecb u$ as the sum  of (non-negative) ${\vecb v^\ast,} \vecb v_{e,v} \in {\NATSZ}^{\Hom(\cF;R)}$ ranging over all $e \in E(J)$ and {all choices} $v \in e$ such that $A  \vecb v_{e,v} (v) \geq s$ and $A  \vecb v_{e,v} (w) = 0$ for all $w \notin e$.
{(The additional summand $\vecb v^\ast$ allows for some more flexibility in the use of this definition.)}
In practice, a surjective allocation will act as a reservoir of tiles that can be used for local adjustments.

To state the following results, we need the concept of reachability, which is inspired by the work of Lo and Markström~\cite{LM15}.
Let us recall the definition of tight components.
Recall from \cref{sec:host-graphs} that For a $k$-graph~$J$, the vertices of the graph  $T(J)$ are the edges of $J$ where $e,f \in E(J)$ are adjacent whenever $|e\cap f| = k-1$.
The {tight components} of~$J$ are the subgraphs $T \subset J$ whose edges form the connectivity components of $T(J)$.
Let $J$ be a $k$-graph and $C \subset J$ be a tight component.
We define $\cR(C)$ to be the graph on the same vertex set as $C$ where two vertices $x,y$ are adjacent if there are edges $e,f\in E(C)$ such that $x\in e \sm f$, $y\in f\sm e$ and $|f\cap e|=k-1$.
The \emph{reachability components} of $C$ are the components of $\cR(C)$.

\begin{proposition}\label{pro:reachabilitity}
	A tight component in a $k$-graph has at most $k$ reachability components.
	Moreover, each of its edges contains a vertex of each reachability component.
\end{proposition}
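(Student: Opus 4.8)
The claim is purely combinatorial, about the structure of a single tight component $T$ of a $k$-graph $J$. Recall that $\cR(T)$ is the graph on $V(T)$ where $x\sim y$ iff there exist $e,f\in E(T)$ with $|e\cap f|=k-1$, $x\in e\sm f$ and $y\in f\sm e$; the reachability components are its connected components. The plan is to show first that \emph{every edge $e\in E(T)$ is a transversal of the reachability components} (i.e.\ $e$ meets each one), and then deduce from this that there are at most $k$ reachability components, since $|e|=k$.

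First I would establish the following intermediate fact: \emph{for any two edges $e,f\in E(T)$ with $|e\cap f|=k-1$, the symmetric-difference vertices $x\in e\sm f$ and $y\in f\sm e$ lie in the same reachability component} — this is immediate from the definition of $\cR(T)$ — and moreover \emph{every vertex of $e\cap f$ lies in the same reachability component as some vertex outside, in a way that links up $e$'s partition into reachability components with $f$'s}. The key propagation step is: if $e,f$ are tight-adjacent, then the partition of $e$ into reachability components and the partition of $f$ into reachability components ``agree'' on the shared $(k-1)$-set $e\cap f$, and the two singleton leftover vertices $x,y$ are in a common reachability component. Hence $e$ is a transversal of the reachability components (meets each exactly once) if and only if $f$ is. Since $T(J)$ restricted to $E(T)$ is connected by definition of a tight component, and since trivially a single edge $e$ — viewed on its own — has its $k$ vertices distributed among at most $k$ reachability components with $e$ meeting each such component it touches, we can propagate along a path in $T(J)$ from any fixed edge to conclude that \emph{every} edge of $T$ meets every reachability component that any edge meets, and that this count is the same for all edges.

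To turn this into the bound, fix one edge $e_0\in E(T)$ (nonempty since $T$ is a component of $T(J)$, hence has at least one edge — if $T$ has no edges the statement is vacuous). By the propagation argument every reachability component of $T$ contains a vertex of $e_0$ (otherwise take a vertex $v$ in a reachability component disjoint from $e_0$; $v$ lies in some edge $g\in E(T)$, and walking a tight path from $g$ back to $e_0$ and applying the agreement step at each adjacency shows $v$'s reachability component must already be represented in $e_0$, a contradiction). Therefore the number of reachability components is at most $|e_0|=k$, and simultaneously we have shown each edge of $T$ contains a vertex of each reachability component, which is the ``moreover'' clause.

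The main obstacle I expect is making the propagation step fully rigorous: one must check carefully that when $e,f$ are tight-adjacent with $e\cap f = S$, $e=S\cup\{x\}$, $f=S\cup\{y\}$, the reachability-component labels of the vertices of $S$ computed ``within the combinatorics of $e$'' match those computed ``within $f$'', and that $x$ and $y$ receive the same label. The subtlety is that $\cR(T)$ is defined using \emph{all} edges of $T$, not just $e$ and $f$, so a priori one vertex of $S$ could be joined to another vertex of $S$ via some third edge; one needs to argue this does not break the transversal property. The clean way is to argue directly: suppose for contradiction there are $k+1$ reachability components; pick representatives and trace each back, via a tight path in $T(J)$ rooted at a fixed edge $e_0$, to conclude all $k+1$ are represented among the $k$ vertices of $e_0$ — a contradiction. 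This reduces everything to the single adjacency step, where one just notes: $x\sim y$ directly in $\cR(T)$; and for $s\in S$, $s$ is joined (in $\cR(T)$) to $x$ via the pair $(f,e)$ unless $s\in f$ — but $s\in S\subset f$, so instead one uses that $s$ stays fixed and compares, using that any edge tight-adjacent to $e$ through a set containing $s$ is also tight-adjacent (through the same reasoning) to $f$. Spelling this bookkeeping out carefully is the only real work; the rest is immediate.
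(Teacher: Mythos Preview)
Your approach is correct and, at its core, the same observation the paper uses: if $e,f\in E(T)$ are tight-adjacent with $e=S\cup\{x\}$, $f=S\cup\{y\}$, then $x\sim y$ in $\cR(T)$, so the set of reachability components met by $e$ equals the set met by $f$; connectivity of $T(J)$ on $E(T)$ then forces every edge to meet every reachability component, giving both the bound $k$ and the ``moreover''. The paper formalizes this via induction on $|E(T)|$ (peeling off a leaf of a spanning tree of $T(J)$), whereas you formalize it by direct propagation along tight paths --- these are two packagings of the same step.

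Your final paragraph overcomplicates the adjacency step. You do not need that $e$ is a transversal in the strong sense of meeting each component \emph{exactly once}, nor do you need to worry that two vertices of $S$ might be $\cR(T)$-adjacent via a third edge. All you need is the set equality $\{\text{components met by }e\}=\{\text{components met by }f\}$, and this is immediate: the $k-1$ shared vertices contribute the same components to both sides, and the leftover pair $x,y$ are in one common component by definition. Nothing more is required; the bookkeeping you anticipate never arises.
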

\begin{proof}
	Without loss of generality, we can assume that $J$ is a tight component, meaning that~$T(J)$ is connected.
	We proceed by induction on the number of edges of $J$.
	Let $J'$ be obtained by deleting the edge corresponding to a leaf $e$ of a spanning tree of $T(J)$.
	So $J'$ is still a tight component.
	If $J'$ is empty we are done as $e$ only has $k$ vertices, so assume otherwise.
	By induction, $J'$ has at most $k$ reachability components.
	Moreover, each edge of $J'$ contains a vertex of each reachability component of $J'$.
	By choice of $e$, there is an edge $f$ in $J'$ with $|e \cap f| = k-1$.
	It follows that the vertex of $e \sm f$ is in the same reachability component as the vertex of $f \sm e$.
	Since $f \sm e \subset V(J')$, it is clear that the vertex of $e\sm f$ belongs to one of the (at most $k$) existing tight components, and $e$ contains a vertex of each reachability component of~$T$.
\end{proof}

We also need the following simple fact.
For a graph $G$,
we denote by $\vec E(G)$ the set of directed edges of $G$, that is, $\{(u,v)\colon uv\in E(G)\}$.
A function $f \colon   \vec E(G) \to \INTS$ is a \emph{flow} if $f(u,v)=-f(v,u)$ for all $uv\in E(G)$ where we write $f(x,y)$ instead of $f((x,y))$.

\begin{proposition}\label{pro:weights_to_flows}
	Suppose $G$ is a connected graph and $\vecb w \in \INTS^{V(G)}$ satisfies $\sum_{v\in V(G)}\vecb w(v)=0$ and $\onenorm{\vecb w}\leq s$.
	Then there exists a flow $f \colon   \vec E(G) \to \INTS$ on $G$ satisfying $|f(u,v)|\leq s$ for all $uv\in \vec E(G)$ and $\sum_{u\in N(v)}f(u,v)=\vecb w(v)$ for all $v\in V(G)$.
\end{proposition}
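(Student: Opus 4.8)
The plan is to realise $f$ as a superposition of unit flows routed along paths. First I would separate vertices by the sign of $\vecb w$: set $P=\{v:\vecb w(v)>0\}$ and $N=\{v:\vecb w(v)<0\}$, and observe that $\sum_{v\in V(G)}\vecb w(v)=0$ forces $q:=\sum_{v\in P}\vecb w(v)=\sum_{v\in N}(-\vecb w(v))=\onenorm{\vecb w}/2\le s/2$. I would then form a multiset of \emph{sinks} consisting of $\vecb w(v)$ copies of each $v\in P$ and a multiset of \emph{sources} consisting of $-\vecb w(v)$ copies of each $v\in N$; both have size $q$, so any bijection between them yields pairs $(a_j,b_j)_{j\in[q]}$ with $a_j\in N$ and $b_j\in P$ (in particular $a_j\neq b_j$).

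Next, since $G$ is connected, for each $j\in[q]$ I would fix a simple path $Q_j$ from $a_j$ to $b_j$ and let $f_j\colon\vec E(G)\to\INTS$ be the unit flow that pushes one unit along $Q_j$ from $a_j$ to $b_j$ --- that is, $f_j$ takes value $+1$ (resp.\ $-1$) on each edge of $Q_j$ oriented forwards (resp.\ backwards) along the path, and $0$ on every directed edge not lying on $Q_j$. Checking the three cases (the endpoint $a_j$, the endpoint $b_j$, an interior vertex of $Q_j$) shows $f_j$ is a flow with $\sum_{u\in N(v)}f_j(u,v)=\ind[v=b_j]-\ind[v=a_j]$ for all $v$. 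Setting $f=\sum_{j\in[q]}f_j$, antisymmetry is inherited term by term, and for each $v\in V(G)$
\[
\sum_{u\in N(v)}f(u,v)=\sum_{j\in[q]}\big(\ind[v=b_j]-\ind[v=a_j]\big)=\vecb w(v),
\]
since $v$ appears among the $b_j$ exactly $\max\{\vecb w(v),0\}$ times and among the $a_j$ exactly $\max\{-\vecb w(v),0\}$ times. Finally, because each $Q_j$ is a simple path we have $|f_j(u,v)|\le1$ on every directed edge, hence $|f(u,v)|\le q\le s/2\le s$, which gives the claimed bound.

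I do not expect any genuine obstacle. The two points that need a little care are: insisting that the $Q_j$ be \emph{simple} paths (an arbitrary walk would still produce the correct divergence but could traverse an edge repeatedly and violate the bound $|f(u,v)|\le s$), and the elementary bookkeeping that the multiset of sinks minus the multiset of sources reproduces $\vecb w$ exactly, which is precisely the content of $\sum_{v}\vecb w(v)=0$. The degenerate case $\vecb w\equiv0$ (in particular when $G$ is a single vertex and has no edges) is covered by taking the empty sum $f\equiv0$.
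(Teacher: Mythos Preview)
Your proposal is correct and follows essentially the same approach as the paper: decompose $\vecb w$ into matched source--sink unit pairs, route each pair along a path in the connected graph $G$, and superpose the resulting unit flows. You are in fact slightly more careful than the paper (insisting on simple paths and thereby obtaining the sharper bound $|f(u,v)|\le \onenorm{\vecb w}/2\le s/2$), but the underlying idea is identical.
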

\begin{proof}
	We construct the flow as follows.
	Consider a set of $\onenorm{\vecb w}$ positive and negative units assigned to $V(G)$.
	The vertex $x\in V(G)$ receives $|\vecb w(x)|$ units whose sign is determined by the sign of $-\vecb w(x)$.
	We match each negative unit to a positive unit, say at $x$ and $x'$ respectively, and then put a flow of one unit on each edge of some  (directed) path from $x$ to $x'$.
	This is possible because $G$ is connected.
	Next, we take the union of all these paths; that is, for each such path~$P$ and each (directed) edge $(y,y')$ in $P$, add $1$ to $(y,y')$ and subtract $1$ from $(y',y)$.
	Taking the sum of the units assigned at each edge gives rise to a flow $f\colon \vec E(G) \to \INTS$  such that $\sum_{u\in N(v)}f(u,v)=\vecb w(v)$ and $|f(u,v)|\leq s$ for all $uv\in E(G)$.
\end{proof}

In the following we prove for a graph $R$ and a tight component $T \in K_k(R)$ that surjective allocations to $R[V(T)]$ can be altered within a reachability component $C$ of~$T$.

\begin{proposition}\label{prop:flow}
	{Let $s,s',r,w,k \in \NATS$ with $k \geq 2$ and $r (rs+w) \leq s'/2$}.
	Let $R$ be a graph on $r$ vertices.
	Let $T$ be a tight component of $K_k(R)$, and let $C$ be a reachability component of $T$.
	Let $\vecb b \in  \mathbb{Z}^{V(R)}$ satisfy $\vecb b (v) = 0$ for all $v \in V(R) \sm V(C)$ as well as $\sum_{v\in V(C)}\vecb b(v)=0$ and $\maxnorm{\vecb b}  \leq s$.
	Let $\cF= {\cF{(k,w)}}$, $A = A_{\cF,R}$ and $H$ be an $\cF$-tiling.
	Let $\vecb u \in\NATSZ^{\Hom(\cF;R)}$ be an $(s',T)$-surjective allocation of $H$ to $R[V(T)]$.
	Then there exists an $(s'/2,T)$-surjective allocation $\vecb w \in \NATSZ^{\Hom(\cF;R)}$ of $H$ to $R[V(T)]$ with $A (\vecb u -\vecb w) =\vecb b$.
\end{proposition}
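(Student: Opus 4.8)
The idea is to reduce the statement to redistributing vertices of $H$ along the edges of the reachability graph $\cR(T)$, which is possible because adjacent vertices of a reachability component are witnessed by two $k$-cliques of $T$ sharing $k-1$ vertices, and a colour class of a homomorphism into a clique is an independent set that can be "slid" into an adjacent clique.

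First I would dispose of the degenerate case where $V(C)$ is a single vertex: then $\vecb b=0$ (it is supported on $V(C)$ and sums to zero), so $\vecb w:=\vecb u$ works. Otherwise, note that $\cR(C)$ is connected by definition of a reachability component, and apply \cref{pro:weights_to_flows} to $\cR(C)$ with the restriction $\restr{\vecb b}{V(C)}$, which satisfies $\sum_{v\in V(C)}\vecb b(v)=0$ and $\onenorm{\restr{\vecb b}{V(C)}}\le |V(C)|\cdot\maxnorm{\vecb b}\le rs$. This yields a flow $f\colon\vec E(\cR(C))\to\INTS$ with $|f(u,v)|\le rs$ on every edge and $\sum_{u}f(u,v)=\vecb b(v)$ for all $v\in V(C)$. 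Orient each edge of $\cR(C)$ as $(a,b)$ so that $m_{ab}:=f(a,b)\ge 0$; by definition of $\cR(T)$ there are edges $e_{ab},f_{ab}\in E(T)$ with $a\in e_{ab}\sm f_{ab}$, $b\in f_{ab}\sm e_{ab}$ and $S_{ab}:=e_{ab}\cap f_{ab}$ of size $k-1$.

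The key local move is the following. Let $\theta$ be a homomorphism of some tile of $H$ with image inside the clique $f_{ab}$, and let $Y'\subseteq\theta^{-1}(b)$ be arbitrary. Define $\theta''$ to agree with $\theta$ except that $\theta''(u)=a$ for $u\in Y'$. Since $\theta^{-1}(b)$ is independent in the tile (no loop at $b$ in $R$) and $a$ is adjacent in $R$ to every vertex of $S_{ab}$ (because $e_{ab}$ is a clique), $\theta''$ is again a homomorphism, with image inside $S_{ab}\cup\{a,b\}\subseteq V(T)$, covering $a$ with $|Y'|$ more and $b$ with $|Y'|$ fewer vertices than $\theta$. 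Because $Y'$ may be any sub-independent-set there is no divisibility obstruction: starting from the reserved part $\vecb v_{f_{ab},b}$, which covers $b$ at least $s'$ times, we can move exactly $m_{ab}$ vertices from $b$ to $a$, using entire tiles until this would overshoot and then a partial tile. Doing this for every oriented edge, and distributing the moved vertices among the (at most $\deg_{\cR(C)}(y)$) targets for each clique $f$ and vertex $y\in f$, we touch tiles of $\vecb v_{f,y}$ of total $y$-coverage at most $\sum f(a,y)+w\le \deg_{\cR(C)}(y)\cdot rs+w< r^2s+w\le r(rs+w)\le s'/2$. Replacing in $\vecb u$ each touched homomorphism by its modification yields the candidate allocation $\vecb w$.

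It then remains to verify three things, each routine given the setup. (i) $\vecb w$ is an allocation of $H$ to $R[V(T)]$: the moves change only how individual tiles are mapped (not the multiset of tiles), keep homomorphisms valid, and keep all images inside $V(T)$. (ii) $A(\vecb u-\vecb w)=\vecb b$: no coverage changes off $V(C)$, while for $v\in V(C)$ each incident edge of $\cR(C)$ contributes exactly $f(u,v)$ to the net coverage removed from $v$ (regardless of its orientation), so the total is $\sum_{u\sim v}f(u,v)=\vecb b(v)$. (iii) $\vecb w$ is $(s'/2,T)$-surjective: put $\vecb w^\ast:=\vecb v^\ast$ together with all modified homomorphisms (no constraint applies to $\vecb w^\ast$), and let $\vecb w_{e,v}$ be the untouched part of $\vecb v_{e,v}$; then $\supp(\vecb w_{e,v})\subseteq e$ and, by the bound above, $A\vecb w_{e,v}(v)\ge s'-s'/2=s'/2$. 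The main obstacle — and the heart of the argument — is obtaining this \emph{exact} redistribution; the observation that a colour class is an independent set, so that an arbitrary subset of it relocates wholesale into an adjacent clique, is precisely what removes the chunk/divisibility issue, and the hypothesis $r(rs+w)\le s'/2$ is exactly what keeps the reservoir only mildly depleted.
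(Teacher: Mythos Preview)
Your proof is correct and follows essentially the same approach as the paper: translate $\vecb b$ into a flow on the reachability component via \cref{pro:weights_to_flows}, then realise each flow value by sliding the independent set $\theta^{-1}(y)$ from one clique into the adjacent one, drawing the tiles from the reservoir $\vecb v_{f,y}$. Your verification of the three closing items and your bookkeeping (placing modified homomorphisms into $\vecb w^\ast$) are in fact slightly cleaner than the paper's, which leaves the destination of the altered tiles implicit. The only point worth tightening is the coverage estimate: if you process each oriented edge separately you should budget one extra $+w$ per edge (for the partial tile), giving $\deg_{\cR(C)}(y)\cdot(rs+w)$ rather than $\deg_{\cR(C)}(y)\cdot rs+w$; either way this is at most $r(rs+w)\le s'/2$, so the conclusion is unaffected. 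Your phrasing ``distributing the moved vertices among the targets'' suggests you may have intended to process all edges at a given $(f,y)$ jointly, which does yield a single partial tile and hence only one $+w$; if so, the bound is correct as written.
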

\begin{proof}
	Let us give a brief overview of the proof.
	We start by viewing $\vecb b$ as a description of sinks and sources, before translating it into an integer flow along the edges of $C \subset \cR(T)$.
	This gives us a local representation of $\vecb b$ and allows us to treat the flow at each edge of $C$ essentially independently.
	For each $xx'\in E(C)$, we find a small set of graphs, part of whom is allocated to $x$ or $x'$ (depending on the flow direction at $xx'$) by $\vecb u$.
	We alter the allocation of these graphs at $x$ and $x'$, `transferring' vertices between $x$ and $x'$ in order to mimic the flow at $xx'$.
	Repeating this for each edge, we mimic the entire flow and so the altered allocations; this gives the desired~$\vecb w \in \NATSZ^{\Hom(\cF;R)}$.

	As $C$ is connected we can apply \cref{pro:weights_to_flows} to find a flow  $f\colon \vec{E}(C) \rightarrow \INTS$ on $C$  satisfying ${\sum_{x' \in N_C(x)} f(x',x)} = \vecb{b}(x)$ and $|f(x,x')|\leq \maxnorm{\vecb b}v(R) \leq rs$ for all  $xx'\in E(C)$.
	So we now have a local representation of our vector $\vecb b$.

	Fix an edge $xx'\in E(C)$.
	We aim to find a set of tiles $S_{xx'}$ of $H$ that will be allocated differently in $\vecb u$ and $\vecb w$ so that $A (\vecb u -\vecb w)$ mimics $f(x,x')$.
	Suppose, without loss of generality, that $f(x,x') = c \ge 0$ (recall that $f(x,x') \leq rs$).
	By the definition of reachability components, there are $k$-cliques $K,K' \subset R$ such that $V(K),V(K') \in E(T)$ with $V(K)\sm V(K')=\{x\}$ and $V(K')\sm V(K) = \{x'\}$.
	Given any $F\in \cF$ and $\theta\in \Hom(F;K)$, we can find $\theta'\in \Hom(F; K\cup K')$ such that $\theta^{-1}(v)=\theta'^{-1}(v)$ for all $v\in V(K)\cap V(K')$ with $|\theta'^{-1}(x)|=|\theta^{-1}(x)|-p$ and ${|\theta'^{-1}(x')|=p}$ for each $0\leq p \leq |\theta^{-1}(x)|$.
	Indeed, $K\cup K'$ is a $(k+1)$-clique minus the edge $xx'$ and as $\theta^{-1}(x)$ is an independent set, it is clear that such a $\theta'$ exists.
	We choose $S_{xx'}$ to be a set of tiles $F$ in $H$ that are mapped to $R[V(K)]$ by
	some $\theta_F\in \Hom(\cF;K)$ and
	such that $c \leq \sum_{F\in S_{xx'}}|\theta_F^{-1}(x)| \leq c+ w$.
	This is possible as the allocation {$\vecb u = {\vecb u^\ast +} \sum_{e \in E(T),v \in e} \vecb u_{e,v}$} of $H$ to $R$ is ${(s',T)}$-surjective and each tile $F$ of $H$ has order at most $w$.
		{So in particular, we choose $F$ as a subset of the tiles allocated by $\vecb u_{V(K),x}$.}
	We choose $\theta_F' \in \Hom(F;K\cup K')$ for each $F\in S_{xx'}$ such that
	$\theta_F^{-1}(v)=\theta_F'^{-1}(v)$ for each $v\in V(K)\cap V(K')$,
	while $\sum_{ F \in  S_{xx'}}(| \theta_F^{-1}(x)|-|\theta'^{-1}_F(x)|)=c$ and
	$\sum_{F\in S_{xx'}}|\theta_F'^{-1}(x')|=c$.

	We perform this procedure for each edge $xx'\in E(C)$ in such a way that for all distinct $xx',yy'\in E(C)$, we have $S_{xx'}\cap S_{yy'}=\emptyset$.
	This is possible by ${(s',T)}$-surjectivity of $\vecb u$ and since {$s'\geq 2r (rs+w)$.}
	We then define our new allocation $\vecb w = \vecb u^\ast + { \vecb w^\ast} + \sum_{e \in E(R), v \in e} \vecb w_{e,v}$ of~$H$ to~$R$ as described above
	{where $\vecb w^\ast$ represents the allocation of all tiles that we changed and $\vecb w_{e,v}$ represents the allocation of all tiles represented by $\vecb u_{e,v}$ that are not changed}.
	The resulting $\vecb{w}$ satisfies
	\[
		\left(A (\vecb u-\vecb w)\right)(x) = \sum_{x'\in N_C(x)}f(x',x) = \vecb b(x)
	\]
	for all $x\in V(C)$, by construction.
	For each $e \in E(T)$ and $v \in e$, 
	the vector $\vecb w_{e,v}$ allocates at most $r({rs}+w) \leq s'/2$ fewer vertices to $v$ than $\vecb u_{e,v}$.
	Thus $\vecb w$ is ${(s'/2,T)}$-surjective.
\end{proof}

In \cref{prop:flow}, we require $\vecb b$ to be zero outside of the vertex set of the reachability component $C$ of the tight component $T$.
If $H$ contains a proper \fc{} subtiling, we can  weaken this assumption to being zero outside of $V(T)$.

For the following definition we extend the notion of topological colourings to hypergraphs. That is, a \emph{topological $k$-colouring of hypergraph $\cJ$} is a colouring  $\phi:V(\cJ)\rightarrow [k]$ of the vertices of $\cJ$ with the property that all edges of $\cJ$ are monochromatic.
It is worth noting that in a  topological colouring all loose components are monochromatic.
Let $\cF=\cF(k,w)$, $W$ be an $\cF$-tiling and $R, R'$ be graphs with $R' \subset R$.
Suppose $\vecb u \in  \NATSZ^{\Hom( \cF;R)}$ is an allocation of~$W$ to~$R'$.
We say that $\vecb u$ \emph{encodes a proper colouring} $\phi$ of $W$ if $\vecb u$ encodes a (restricted) homomorphism $\eta\in \Hom(W;R')$ such that there exists a proper colouring $\phi'$ of $R'$ with $\phi' \circ \eta = \phi$.

\begin{proposition}\label{lem:allocate-tight}
	{Let $s,s',r,w,k \in \NATS$ with $k\geq2 $ and $r (r^2s+w) \leq s'/2^k$}.
	Let $R$ be a graph on $r$ vertices, $T$ a tight component of $K_k(R)$ and $e_0\in E(T)$.
	Let $\cF = \cF(k,w)$ and $A = A_{\cF,R}$.
	Let $H$ be an $\cF$-tiling, and let $\vecb u,\vecb u_1,\vecb u_2\in \NATSZ^{\Hom( \cF;R)}$ be as follows. The vector $\vecb u=\vecb u_1+\vecb u_2$ is an allocation of $H$ to $R[V(T)]$.
	The vector $\vecb u_1$ is an $(s',T)$-surjective allocation of a subtiling $H'\subset H$ to $R[V(T)]$ and $\vecb u_2$ is an allocation of a proper $(k,{rs})$-\fc{} subtiling  $W=H - V(H')$ to $R[e_0]$ that encodes a proper central $k$-colouring of $W$.

	Suppose $\vecb c \in \INTS^{V(R)}$  satisfies $\vecb c (v) = 0$ for all $v \in V(R) \sm V(T)$ as well as $\maxnorm{\vecb c}\leq s$ and 
	$\sum_{v\in {V(R[V(T)])}}\vecb c(v)=0$. Then there exists an allocation $\vecb w\in \NATSZ^{\Hom(\cF;R)}$ of $H$ to $R[V(T)]$ with $A(\vecb u-\vecb w)=\vecb c$.
\end{proposition}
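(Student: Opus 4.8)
The plan is to reduce, via the reachability structure of $T$, to one application of \cref{prop:flow} per reachability component, using the proper \fc{} subtiling $W$ to first absorb a divisibility obstruction that would otherwise block those applications. By \cref{pro:reachabilitity}, $T$ has reachability components $C_1,\dots,C_m$ with $m\le k$, their vertex sets partition $V(T)$, and $e_0$ contains a vertex of each $C_i$; fix a representative $v_i\in e_0\cap V(C_i)$ for each $i\in[m]$. Since $\vecb c$ is supported on $V(T)$ we may write $\vecb c=\sum_{i\in[m]}\restr{\vecb c}{V(C_i)}$, but the summands need not be zero-sum — and being zero-sum is exactly the hypothesis of \cref{prop:flow}. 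Put $\sigma_i=\sum_{v\in V(C_i)}\vecb c(v)$, so $\sum_{i\in[m]}\sigma_i=0$ and $|\sigma_i|\le |V(C_i)|\,s\le rs$.

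Step 1 (repairing the defects via $W$). The allocation $\vecb u_2$ encodes a proper central $k$-colouring $\phi$ of $W$ through a homomorphism into $R[e_0]$ and a proper colouring $\phi'$ of the clique $R[e_0]$; as $\phi'$ is a bijection $e_0\to[k]$, the entries of $A\vecb u_2$ on $e_0$ are the colour-class sizes $\ord(\phi)$ read off via $\phi'$. Define $\vecb z\in\INTS^k$ by $\vecb z(\phi'(v_i))=\sigma_i$ for $i\in[m]$ and $\vecb z(c)=0$ otherwise; then $\sum_{c\in[k]}\vecb z(c)=0$ and $\maxnorm{\vecb z}\le rs$. Since $W$ is properly $(k,rs)$-\fc{} with central colouring $\phi$, there is a proper $k$-colouring $\psi$ of $W$ with $\ord(\phi)-\ord(\psi)=\vecb z$; re-encoding $W$ through $\psi$ and $\phi'$ (here $\phi'^{-1}\circ\psi$ is a homomorphism into $R[e_0]$ because $R[e_0]$ is complete) gives an allocation $\vecb w_2$ of $W$ to $R[e_0]$ with $A(\vecb u_2-\vecb w_2)=\vecb d$, where $\vecb d$ is supported on $e_0$ and $\vecb d(v_i)=\sigma_i$. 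Set $\vecb c'=\vecb c-\vecb d$. Then $\vecb c'$ is supported on $V(T)$, $\sum_{v}\vecb c'(v)=0$, each $\restr{\vecb c'}{V(C_i)}$ is zero-sum, and $\maxnorm{\vecb c'}\le rs$: at $v_i$ it equals $-\sum_{u\in V(C_i)\setminus\{v_i\}}\vecb c(u)$ (absolute value at most $(r-1)s$), and off $\{v_1,\dots,v_m\}$ it agrees with $\vecb c$.

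Step 2 (flows within components) and conclusion. Apply \cref{prop:flow} successively to $C_1,\dots,C_m$: at step $i$ feed in the currently held $(s'/2^{i-1},T)$-surjective allocation of $H'$ (starting from $\vecb u_1$) together with $\restr{\vecb c'}{V(C_i)}$, which vanishes off $V(C_i)$, is zero-sum, and has $\maxnorm{}\le rs$; this is legitimate since $r(r\cdot rs+w)=r(r^2s+w)\le s'/2^k\le s'/2^i$. After $m\le k$ steps we obtain an allocation $\vecb w_1$ of $H'$ to $R[V(T)]$ with $A(\vecb u_1-\vecb w_1)=\sum_{i\in[m]}\restr{\vecb c'}{V(C_i)}=\vecb c'$. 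Finally $\vecb w:=\vecb w_1+\vecb w_2$ is an allocation of $H=H'\cup W$ to $R[V(T)]$, and $A(\vecb u-\vecb w)=A(\vecb u_1-\vecb w_1)+A(\vecb u_2-\vecb w_2)=\vecb c'+\vecb d=\vecb c$, as required.

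The main obstacle is purely bookkeeping: bounding $\maxnorm{\vecb c'}$ by $rs$ (rather than something larger, which is what lets the hypothesis of \cref{prop:flow} be met), tracking the halving of the surjectivity parameter across up to $k$ applications of \cref{prop:flow} (this is what forces the factor $2^k$ in the hypothesis), and checking that the \fc{}-recolouring of $W$ really does produce a valid allocation into $R[e_0]$ that realises exactly $\vecb d$, so that the two pieces $\vecb w_1$ and $\vecb w_2$ recombine to give $\vecb c$. None of these steps is deep, but the interaction of the two correction mechanisms — $W$ handling the per-component defects $\sigma_i$ and \cref{prop:flow} handling the remaining zero-sum part within each $C_i$ — has to be arranged carefully.
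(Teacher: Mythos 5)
Your proposal is correct and follows essentially the same route as the paper: decompose $\vecb c$ along the reachability components via representatives $v_i\in e_0\cap V(C_i)$, handle the zero-sum part within each component by iterated applications of \cref{prop:flow} (halving the surjectivity parameter each round, whence the $2^k$), and absorb the residual defect supported on $e_0$ using the proper $(k,rs)$-\fc{} subtiling $W$. The only difference is that you apply the \fc{}-recolouring of $W$ before the flow corrections rather than after, which is immaterial since the two corrections are independent; your $\restr{\vecb c'}{V(C_i)}$ and $\vecb d$ coincide with the paper's $\vecb b_i$ and residual vector.
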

\begin{proof}
	The proof traces the following outline.
	We begin by restricting the problem to each of the reachability components in turn.
	This allows us to apply \cref{prop:flow} separately to each of them and find an allocation that is as desired everywhere but at $e_0$.
	In some sense, we transfer all the imbalance, encoded by $\vecb c$, to $e_0$.
	Once we have all the imbalance at $e_0$, we use the property of the proper \fc{} subtiling at $e_0$ to correct it, and thus obtain an allocation that is as desired everywhere.

	\newcommand{\kl}{k'}
	We divide the problem between the reachability components as follows.
	Denote the reachability components of $T$ by $C_1,\dots,C_{\kl}$, where $\kl \leq k$ by \cref{pro:reachabilitity}.
	We deal with each reachability component sequentially.
	Let $e_0=\{v_1,\dots,v_k\}$ such that $v_i\in V(C_i)$ for all $i\in [\kl]$.
	Note that this is possible  by \cref{pro:reachabilitity}.
	To this end, for each $j\in [{\kl}]$, we define $\vecb b_j\in \INTS^{V(R)}$ by setting
	\begin{align*}
		\vecb b_j(v)=
		\begin{cases}
			\vecb   c (v) - \sum_{u\in V(C_j)} \vecb c(u) & \text{if $v=v_j$},                     \\
			\vecb   c (v)                                 & \text{if $v\in V(C_j)\sm \{v_j\}$ and} \\
			0                                             & \text{otherwise}.
		\end{cases}
	\end{align*}
	Thus, for each reachability component $C_j$, we can think of $\vecb b_j$ as encoding `transfers' of the value $\vecb c(v)$ to $v_j\in e_0$ from each vertex in $v\in V(C_j)\sm \{v_j\}$.
	In particular, for all $j\in [{\kl}]$, we have $\sum_{v\in V(C_j)}\vecb b_j(v)=0$, $\vecb b_j (v) = 0$ for all $v \in V(R) \sm V(C_j)$; in addition, we have $\sum_{j \in [\kl]}\maxnorm{\vecb b_j}\leq r  \maxnorm{\vecb c} \leq rs$.
	We now have ${\kl}$ vectors $\vecb b_1,\dots,\vecb b_{{\kl}}$, one for each reachability component, each satisfying the conditions of \cref{prop:flow}.

	Our next task is to find a set of altered allocations that, when combined, implement the `transfers' encoded by $\vecb b_j$ on each reachability component.
	Starting with $C_1$ this means we want to change the allocation of some tiles such that the resulting change after we apply the map $A$ is $\vecb b_1$.
	Let $\vecb w_0= \vecb u_1$.
	We apply \cref{prop:flow} in ${\kl}$ rounds with input
	$H',C_{i},\vecb b_{i},\vecb w_{i-1},rs,s'/2^{i-1}$ playing the role of $H,C,\vecb b,\vecb u,s,s'$ in the $i\tth$ round and the remaining parameters and objects keeping their names, which is possible as $r (r^2s+w) \leq s'/2^k$.

	After the $i\tth$ round, for $i\in [{\kl}]$, we obtain a $(k,s'/2^{i})$-surjective allocation $\vecb w_{i} \in \NATSZ^{\Hom(\cF;R)}$ of $H'$ to $R[V(T)]$ with $A(\vecb w_{i-1}-\vecb w_{i}) = \vecb b_{i}$.
	Thus in $\vecb w_{{\kl}}$ we have implemented all the transfers $\vecb b_1.\dots, \vecb b_{{\kl}}$ and $A(\vecb u_1 - \vecb w_{{\kl}})=\sum_{j \in [\kl]}\vecb b_j$.
	Note that here we also use the fact that $\vecb b_j(v)$ is non-zero for at most one $j \in [{\kl}]$ for all $v \in V(R)$.
	At this point we have found an allocation $\vecb w_{{\kl}}$ of $H'$ such that $\vecb w_{{\kl}}+\vecb u_2$ is the desired allocation everywhere with the exception of the vertices of $v_1,\dots,v_{{\kl}}$.
	The final step is to use the \fc{} property to find a new allocation of $W$ to replace $\vecb u_2$ and so fix the only remaining imbalances, which are  at $e_0$.
	Define $\vecb c' :=\vecb c - A(\vecb u_1 - \vecb w_{{\kl}})=\vecb c - \sum_{j \in [\kl]}\vecb b_j$ and note that $\vecb c'(v) \neq 0$ implies $v \in \{v_1,\dots,v_{{\kl}}\}$.
	By assumption $\sum_{v\in V(R)}\vecb c(v)=0$ and so
	\[\sum_{v\in V(R)}\vecb c'(v)=\sum_{j \in [\kl]}\left( \vecb c(v_j) - \vecb b_j(v_j)\right)=\sum_{j \in [\kl]} \sum_{v\in V(C_j)} \vecb c(v) =0.
	\]
	Observe that $\maxnorm{\vecb c'}\leq r\cdot \maxnorm{\vecb c} \leq rs$ and that $\vecb u_2$ encodes a central $k$-colouring, say $\phi$, of the proper $(k,rs)$-\fc{} subtiling $W$ to $R[e_0]$.
	If we now consider $\restr{\vecb c'}{e_0}$ and view it as living in $\INTS^k$, then by the definition of $(k,rs)$-\fc{} graphs, we can find a colouring $\phi'$ of $W$ with $\ord(\phi)-\ord(\phi')=\restr{\vecb c'}{e_0}$.
	This colouring $\phi'$ is encoded by an allocation $\vecb u'_2\in \NATSZ^{\Hom(\cF;R)}$ of $W$ to $R[e_0]$ that satisfies $A(\vecb u_2-\vecb u'_2)=\vecb c'$.
	We combine the new allocation $\vecb w_{{\kl}}$ of $H'$ and $\vecb u_2'$ of $W$ into an allocation $\vecb w \in \NATSZ^{\Hom(\cF;R)}$ of $H$ to $R$ defined by $\vecb w  = \vecb w_{{\kl}} +\vecb u'_2$.
	Since
	\begin{equation*}
		A(\vecb u - \vecb w)=A((\vecb u_1 -\vecb w_{{\kl}}) + (\vecb u_2 - \vecb u'_2))=\sum_{j \in [\kl]}\vecb b_j +\vecb c' =\vecb c,
	\end{equation*}
	this is the desired allocation $\vecb w$ of $H$.
\end{proof}

\section{Properties of \fc{} graphs}\label{sec:wildcards}
In the following, we establish a few further properties of \fc{} graphs.
We start by extending the original definition (\cref{def:wildcardS}) slightly.
A graph $W$ is $p$-\emph{approximately $(k,s)$-\fc{}} ($(k,s,p)$-\emph{\fc{}} for short) with central colouring $\phi$ if the following holds.
For all $\vecb w\in\INTS^k$ with $\sum_{i \in [k]}\vecb w(i)=0$ and $\maxnorm{\vecb w}\leq s$, there exists a second $k$-colouring~$\phi'$ of $W$ with $\maxnorm{\ord(\phi)-\ord(\phi')-\vecb w} \leq p$.
If the colourings are proper, then $W$ is properly \fc{}, and if they are topological, then $W$ is topologically \fc{}.
In particular a proper $(k,s,0)$-\fc{}  graph is equivalent to, and is still called, a proper $(k,s)$-\fc{} graph (as defined in \cref{def:wildcardS}).

We begin by proving the following result, which holds for both proper and topological colourings and which implies \cref{prop:build_wildcards-simple}.
\begin{proposition}\label{prop:build_wildcards}
	Let $k,w \in \NATS$ and $p \in \NATSZ$ with $p < w$.
	Let $H_1,\dots,H_{k^2}$ be graphs such that $H_i$ has a proper (topological) $k$-colouring $\theta_i$ for each $i\in[k^2]$ satisfying the following property.
	For all $y \in [w]$, there exists a second proper (topological) $k$-colouring $\theta'_i$ of $H_i$ such that $\ord(\theta'_i)(j)=\ord(\theta_i)(j)$ for $j\in[k-2]$ and $\ord(\theta'_i)(k)=\ord(\theta_i)(k)+y \pm p$.
	Then $\bigcup_{i \in [k^2]}H_i$ is properly (topologically) $(k,w/k,kp)$-\fc{}.
\end{proposition}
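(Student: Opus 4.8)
The plan is to reduce the statement about $\vecb w \in \INTS^k$ with arbitrary (balanced) coordinates to the special $2$-coordinate moves that the hypothesis on each $H_i$ provides. First I would observe that the space of balanced vectors $\{\vecb w \in \INTS^k : \sum_i \vecb w(i) = 0\}$ is spanned (over $\INTS$) by the $\binom{k}{2}$ "elementary transfers" $\de_{j} - \de_{k}$ for $j \in [k-1]$ together with all their images under permuting the $k$ colours; in fact a cleaner route is to note that any balanced $\vecb w$ with $\maxnorm{\vecb w} \le w/k$ can be written as a sum of at most $k$ vectors, each of which is supported on only two coordinates and is itself balanced on those two coordinates, with each such two-coordinate vector having $\ell_\infty$-norm at most $w$. (Concretely: repeatedly pair up a positive coordinate with a negative one and move $\min$ of their absolute values; since $\onenorm{\vecb w} \le k \cdot \maxnorm{\vecb w} \le w$ and each step zeroes out at least one coordinate, at most $k-1$ such two-coordinate moves suffice, and each individual move has magnitude at most $\onenorm{\vecb w}/2 \le w$.) This is where the $k^2$ copies come in: we have $k$ moves to realise, and each move between colours $\{a,b\}$ will be implemented using a fresh block of $k$ of the $H_i$'s, so $k \cdot k = k^2$ copies suffice.

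Next I would handle a single two-coordinate move, say we must change the colour-class vector by $+y$ on colour $a$ and $-y$ on colour $b$ (with $0 \le y \le w$), using a block of $k$ of the graphs $H_i$ that have not yet been touched. The hypothesis gives, for each such $H_i$, a recolouring that fixes colours $1,\dots,k-2$ and shifts colour $k$ up by $y \pm p$ (equivalently shifts colour $k-1$ down by the same amount). By composing the central colouring $\theta_i$ with an appropriate permutation of $[k]$ — one that sends $\{k-1,k\}$ to $\{a,b\}$ with the correct orientation — we get a recolouring of $H_i$ that fixes all colours except $a$ and $b$ and moves the $a$-class up by $y \pm p$ (hence the $b$-class down by $y \pm p$), while leaving the other $k-2$ classes exactly as in the central colouring $\theta_i$. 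Doing this on a \emph{single} $H_i$ already realises the required move up to an additive error of $p$ in coordinates $a$ and $b$; the point of taking a block of $k$ of them is purely cosmetic bookkeeping so that distinct moves use disjoint sets of $H_i$'s and nothing is recoloured twice. (One could in principle do it with one $H_i$ per move, i.e.\ $k$ copies total, but having $k^2$ is harmless and matches the clean statement of \cref{prop:build_wildcards-simple} where $p=0$.) I would take the central colouring $\phi$ of $\bigcup_i H_i$ to be $\bigcup_i \theta_i$.

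Then I would assemble the global recolouring $\phi'$ as the union over all $k^2$ blocks: on the $j$-th block of $k$ graphs we perform the $j$-th two-coordinate move (via the permuted recolourings just described), and on any leftover untouched $H_i$ we keep $\theta_i$. Because proper (resp.\ topological) colourings of a disjoint union are exactly disjoint unions of proper (resp.\ topological) colourings of the pieces, and each piece is recoloured properly (resp.\ topologically) by hypothesis, $\phi'$ is again proper (resp.\ topological). Summing contributions, $\ord(\phi) - \ord(\phi')$ equals the sum of the $k$ two-coordinate moves, which is $\vecb w$, with a total error of at most $p$ in each of the (at most $k$) coordinates involved in some move — so $\maxnorm{\ord(\phi) - \ord(\phi') - \vecb w} \le kp$, as required; when $p=0$ this is exact, giving \cref{prop:build_wildcards-simple}. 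The only genuinely fiddly point — and the main obstacle — is the decomposition step: one must make sure the two-coordinate moves can all be chosen balanced and of magnitude $\le w$ simultaneously, so that each fits inside the hypothesis's range $[w]$; this follows from the bound $\onenorm{\vecb w} \le k\maxnorm{\vecb w} \le w$ together with the observation that the greedy pairing procedure never needs a move larger than $\onenorm{\vecb w}/2$. Everything else is routine bookkeeping about permutations of colour classes and disjoint unions.
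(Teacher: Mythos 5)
Your overall route is the same as the paper's: decompose the balanced vector $\vecb w$ into two-coordinate transfers of magnitude at most $\onenorm{\vecb w}\le k\cdot (w/k)= w$, realise each transfer on a dedicated graph by permuting the roles of colours $k-1,k$, take unions, and bound the error by $p$ per coordinate per transfer. (The paper obtains the decomposition from \cref{pro:weights_to_flows} applied to the complete graph on $[k]$ rather than from your greedy pairing; both yield balanced two-coordinate transfers of magnitude at most $w$, so that difference is immaterial.)

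There is, however, one step that fails as written: your choice of central colouring. You set $\phi=\bigcup_i\theta_i$ and then, to implement a transfer on the pair $\{a,b\}$ using $H_i$, you compose with a permutation $\sigma$ sending $\{k-1,k\}$ to $\{a,b\}$. But $\ord(\sigma\circ\theta'_i)(m)=\ord(\theta'_i)(\sigma^{-1}(m))$, which for $m\notin\{a,b\}$ equals $\ord(\theta_i)(\sigma^{-1}(m))$ and not $\ord(\theta_i)(m)$; so the difference $\ord(\sigma\circ\theta'_i)-\ord(\theta_i)$ is in general supported on all of $[k]$, not on $\{a,b\}$. To confine the difference to $\{a,b\}$ you must permute the central colouring of $H_i$ by the same $\sigma$ --- but the central colouring has to be fixed once and for all before $\vecb w$ is revealed, whereas in your scheme the assignment of graphs to pairs (hence the required $\sigma$) depends on $\vecb w$ through the greedy pairing. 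This also shows that your parenthetical claim that $k$ copies would suffice is not right: one needs a dedicated graph, with a pre-permuted central colouring, for every pair of colours that could possibly occur. The paper's fix is exactly this device: relabel the graphs as $H_{i,j}$ for $1\le i<j\le k$, define the central colouring $\phi_{i,j}$ to be $\theta_{i,j}$ with classes $k-1,k$ sent to $i,j$ from the outset (this is why $\binom{k}{2}\le k^2$ graphs are needed), and for any given $\vecb w$ route the transfer on $\{i,j\}$ through $H_{i,j}$. With that modification your argument goes through.
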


\begin{proof}
	We relabel the input so that we have a graph $H_{i,j}$ with a $k$-colouring $\theta_{i,j}$ for each $1 \leq  i < j \leq k$ (the remaining $k^2 - \binom{k}{2}$ graphs of $H$ are not used).
	Let $H=\bigcup_{i \in [k^2]}{ H_{i,j}}$.
	In order to show that $H$ is \fc{} we have to provide {a} central colouring $\phi$.
	We construct the (candidate for our) central colouring $\phi$ by defining its restrictions $\phi_{i,j}=\restr{\phi}{H_{i,j}}$ as follows.
	For $1 \leq i<j \leq k$ and  $F \in H_{i,j}$, we obtain $\phi_{i,j}$ from $\theta_{i,j}$ by switching colour class $k-1$ with $i$ and colour class $k$ with $j$.
	For all other tiles of $H$, $\phi$ and $\theta$ coincide.

	Let us now show that $\phi$ has the properties of a central colouring of a  $(k, w/k, kp)$-\fc{} graph.
	Suppose we are given some $\vecb w\in \INTS^k$, with $\sum_{i \in [k]}\vecb w(i)=0$ and $\maxnorm{\vecb w} \leq w/k$.
	Our goal is to construct a second $k$-colouring $\phi'$ of $H$ that satisfies $\maxnorm{\ord(\phi)-\ord(\phi')-\vecb w} \leq kp$.

	Let us first reformulate $\vecb w$ as follows.
	We apply \cref{pro:weights_to_flows} to the complete graph with vertex set $[k]$ and weights $\vecb w(1),\dots, \vecb w(k)$.
	This allows us to write  $\vecb w = \sum_{1\leq i < j \leq k}  \vecb w_{i,j}$ as the sum of vectors $\vecb w_{i,j}\in \INTS^k$  with  {$\vecb w_{i,j}(j)=-\vecb w_{i,j}(i)$}, $\maxnorm{\vecb w_{i,j}}\leq w$ and $\vecb w_{i,j}(\ell)=0$ for all $\ell \in  [k]\sm \{i,j\}$.
	(The factor $k$ is due to the change of norm.)

	Next, we define $\phi'$ via its restriction to the $H_{i,j}$'s using the weights $\vecb w_{i,j}$.
	Now let $i,j \in [k]$ be distinct.
	Suppose {first} that $0\leq y=\vecb w_{i,j}(j)  = - \vecb w_{i,j}(i) \leq w$.
	It remains to define $\phi'_{i,j}$.
	By assumption there is a colouring $\theta'_{i,j}$ of $H_{i,j}$, satisfying $\ord(\theta'_{i,j})(k)=\ord(\theta_{i,j}){(k)}+y\pm p$, {while the colour classes $1\ldots, k-2$ stay the same.}
	We obtain $\phi'_{i,j}$ from $\theta'_{i,j}$ by permuting the colour classes taking $k-1$ to {$j$} and $k$ to {$i$}.
	Thus $\ord(\phi'_{i,j})({i}) = \ord(\phi_{i,j})({i}) + y \pm p$ and {$\ord(\phi'_{i,j})(j) = \ord(\phi_{i,j})(j) -y \pm p$} and all colour class orders other than $i,j$ {are} as before.
	Finally, let $\phi' = \bigcup \phi_{i,j}'$.
	{If $\vecb w_{i,j}(j)<0$, we can proceed similarly but permuting $k-1$ to $i$ and $k$ to $j$.} 

	To verify that $\phi'$ has the desired property, we compute that for all $i\in [k]$
	\begin{align*}
		| & \ord(\phi)(i)-\ord(\phi')(i)-\vecb w(i) |\leq    \sum_{\substack{j \in [k]}}\Maxnorm{\ord(\phi_{i,j}) - \ord(\phi'_{i,j})-\vecb w_{i,j}}
		\leq kp,
	\end{align*}
	which completes the proof.
\end{proof}

We obtain \cref{lem:k+1_wildcard} as a corollary.

\begin{proof}[Proof of \cref{lem:k+1_wildcard}]
	We choose $(k+1)^2$ subtilings $S_i$ for $i\in [(k+1)^2]$ each of order at least  $n/(k+1)^2 - 2w$.
	We can do this greedily as each tile has order at most $w$.
	For each $S_i$, we choose a $k$-colouring with largest colour class $k$ and empty colour class $k+1$.
	As we can recolour vertices from colour $k$ to $k+1$ freely, and as the colour class $k$ has order at least $n/(k+1)^4$, we can apply \cref{prop:build_wildcards} with $k+1,n/(k+1)^4,0$ as $k,w,p$.
	It follows that $H$ is properly $(k+1, n/(k+1)^5 , 0)$-\fc{}.
\end{proof}

The following is a simple but pertinent property of \fc{} graphs.

\begin{proposition}\label{prop:wild_sum}
	Suppose $W_1$ is properly (topologically) $(k,s_1,p_1)$-\fc{} and $W_2$ is {properly (topologically)} $(k,s_2,p_2)$-\fc{} with central colourings $\phi_1$ and $\phi_2$, respectively.
	If $s_2 \geq p_1$, then $W=W_1\cup W_2$ is properly (topologically) $(k,s_1,p_2)$-\fc{}. Further, $\phi_1 \cup \phi_2$ is a central colouring of $W$.
	\begin{proof}
		Let $\phi = \phi_1 \cup \phi_2$, and fix a $\vecb w\in\INTS^k$ with
		$\sum_{i \in [k]}\vecb w(i)=0$ and
		$\maxnorm{\vecb w}\leq s_1$.
		We seek a $k$-colouring $\phi'$ of $W_1 \cup W_2$ with $\maxnorm{ \ord(\phi)-\ord(\phi')-\vecb w } \leq p_2$.
		We obtain this by first finding $\phi'_1:=\restr{\phi'}{W_1}$ and then using $\phi'_1$ to find $\phi'_2:=\restr{\phi'}{W_2}$ such that $\phi'$ has the desired properties.

		As $W_1$ is $(k,s_1,p_1)$-\fc{}, with  central $k$-colouring  $\phi_1$, we can find a $k$-colouring $\phi'_1$ of $W_1$ such that $\sum_{i \in [k]}\vecb w'(i)=0$ and $\maxnorm{\vecb w'}\leq p_1$ where $\vecb w' :=-(\ord(\phi_1)-\ord(\phi'_1)-\vecb w) \in \INTS^k$.
		By assumption $p_1\leq s_2$ and so we can find a colouring $\phi'_2$ of $W_2$ such that $\maxnorm{\ord(\phi_2)-\ord(\phi'_2)-\vecb w'}\leq p_2$.
		We see that $\phi'$ is the desired colouring as $\ord(\phi)=\ord(\phi_1)+\ord(\phi_2)$, $\ord(\phi')=\ord(\phi'_1)+\ord(\phi'_2)$ and
		\begin{align*}
			\maxnorm{\ord(\phi)-\ord(\phi')-\vecb w}
			\leq &
			\maxnorm{\left(\ord(\phi_2)-\ord(\phi'_2)\right) + \left(\ord(\phi_1)-\ord(\phi'_1) - \vecb w\right)}
			\\
			\leq &
			\maxnorm{\ord(\phi_2)-\ord(\phi'_2) -\vecb w'}\leq p_2.\qedhere
		\end{align*}
	\end{proof}
\end{proposition}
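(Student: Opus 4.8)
The plan is to take $\phi=\phi_1\cup\phi_2$ as the candidate central colouring of $W$ and to realise a prescribed shift in two stages: first absorb (almost all of) the shift by recolouring $W_1$, paying an error of size at most $p_1$; then hand this error over to $W_2$, which by hypothesis can process shifts of size up to $s_2\geq p_1$, paying a final error of size at most $p_2$.

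Concretely, I would fix $\vecb w\in\INTS^k$ with $\sum_{i\in[k]}\vecb w(i)=0$ and $\maxnorm{\vecb w}\leq s_1$, and seek a $k$-colouring $\phi'$ of $W$ with $\maxnorm{\ord(\phi)-\ord(\phi')-\vecb w}\leq p_2$. Applying the flexi-chromatic property of $W_1$ (with central colouring $\phi_1$), there is a $k$-colouring $\phi_1'$ of $W_1$ — proper, respectively topological, if $W_1$ is properly, respectively topologically, flexi-chromatic — with $\maxnorm{\ord(\phi_1)-\ord(\phi_1')-\vecb w}\leq p_1$. Set $\vecb w':=-(\ord(\phi_1)-\ord(\phi_1')-\vecb w)$. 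Two bookkeeping points are needed: $\maxnorm{\vecb w'}\leq p_1\leq s_2$, which is immediate, and $\sum_{i\in[k]}\vecb w'(i)=0$, which holds because $\ord(\phi_1)$ and $\ord(\phi_1')$ both sum to $v(W_1)$ while $\vecb w$ sums to $0$. Now feed $\vecb w'$ into the flexi-chromatic property of $W_2$: since $\maxnorm{\vecb w'}\leq s_2$, there is a $k$-colouring $\phi_2'$ of $W_2$ (of the same type again) with $\maxnorm{\ord(\phi_2)-\ord(\phi_2')-\vecb w'}\leq p_2$.

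To conclude, set $\phi':=\phi_1'\cup\phi_2'$. Since $W_1$ and $W_2$ are vertex-disjoint, $\phi'$ is proper (respectively topological) whenever $\phi_1'$ and $\phi_2'$ are, and $\ord(\phi')=\ord(\phi_1')+\ord(\phi_2')$, $\ord(\phi)=\ord(\phi_1)+\ord(\phi_2)$. Hence
\[
\ord(\phi)-\ord(\phi')-\vecb w=\bigl(\ord(\phi_1)-\ord(\phi_1')-\vecb w\bigr)+\bigl(\ord(\phi_2)-\ord(\phi_2')\bigr)=\ord(\phi_2)-\ord(\phi_2')-\vecb w',
\]
whose max-norm is at most $p_2$ by the previous step. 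This shows $\phi$ witnesses that $W$ is properly (topologically) $(k,s_1,p_2)$-\fc{}.

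I do not expect a genuine obstacle here: the content is a two-step telescoping estimate on $\ord(\cdot)$. The only points requiring care are (i) verifying that the intermediate shift $\vecb w'$ still has zero coordinate-sum, so that it is a legitimate input to the flexi-chromatic property of $W_2$; (ii) using the hypothesis $s_2\geq p_1$ at exactly the hand-off, which is what makes the second application admissible; and (iii) running the proper and the topological cases in parallel, which works because a disjoint union of colourings inherits properness and being topological componentwise.
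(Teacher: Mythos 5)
Your proposal is correct and follows essentially the same route as the paper's proof: define $\phi=\phi_1\cup\phi_2$, absorb $\vecb w$ with $W_1$ up to an error $\vecb w'$ of max-norm at most $p_1$, note $\vecb w'$ has zero coordinate-sum, hand it to $W_2$ using $s_2\geq p_1$, and telescope. The extra care you take in verifying the zero-sum of $\vecb w'$ and the type-preservation under disjoint union is exactly what the paper's argument implicitly relies on.
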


We now prove that sufficiently large $\cF(k,w)$-tilings are properly $kw$-approximately \fc{}.

\begin{proposition}\label{lem:Lat_gap}
	Let $1/n\leq \xi  \ll \tau \ll 1/\chi$ {with $k = \kk$ and $\chi < k-\tau$.}
	Let $w\leq \xi n$ and $H$ be an $ \cF(k,w)$-tiling  on $n$ vertices with $\crit(H) < k-\tau$. Then $H$ is properly $(k,\tau n/(2k^5), kw)$-\fc{}.
\end{proposition}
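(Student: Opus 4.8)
The plan is to mimic the strategy behind \cref{lem:hcf-gives-flexi-chrom-mixed}, but without assuming that the tiles lie in $\cFcr$; instead we exploit the gap $\crit(H) < k - \tau$ to manufacture a single colour class whose size we can slide freely by a large amount. First I would fix a proper $k$-colouring $\phi$ of $H$ with smallest colour class $\phi^{-1}(k)$ of size $\alpha(H) v(H)$, where $\alpha(H) = 1 - (k-1)/\crit(H) > (k-1)/(k-1) \cdot$ (something bounded away below $1/(k-1)$); more precisely, since $\crit(H) < k-\tau$, solving \eqref{equ:crit} gives a quantitative lower bound on the slack $\frac{1}{k-1} - \frac{1-\alpha(H)}{k-1}$, equivalently an upper bound on $\alpha(H)$ bounded away from the extreme value. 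The point is that there is room to move $\Omega(\tau n)$ vertices out of some colour class $j \in [k]\setminus\{1\}$ and into a new, auxiliary colour — but here we do not have a spare colour, so instead we argue combinatorially at the level of tiles.

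The key step is an analogue of \cref{prop:max_degree_gives_wildcards} that does not assume bounded degree: given the critical-chromatic slack, I would show that by permuting colour classes \emph{within individual tiles} one can realise a controlled shift. Concretely, partition the tiles of $H$ greedily into $k^2$ groups $H_1,\dots,H_{k^2}$, each of total order $\ge n/(2k^2)$ (possible since each tile has order at most $w \le \xi n \ll n/k^2$). For each group $H_i$, using that the average colour-class proportions across its tiles reflect $\alpha(H) < 1/(k-1) - \Omega(\tau)$, I would show there is a proper $k$-colouring $\theta_i$ of $H_i$ such that for every $y$ in a range of length $w^\ast := \tau n/(4 k^4)$ there is a second proper $k$-colouring $\theta_i'$ with $\ord(\theta_i')(j)=\ord(\theta_i)(j)$ for $j \in [k-2]$ and $\ord(\theta_i')(k) = \ord(\theta_i)(k) + y \pm kw$. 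The error term $kw$ is unavoidable because we can only swap whole colour classes of whole tiles, and a single tile contributes up to $w$ to any class; this is exactly why the conclusion is \emph{approximately} \fc{} with error parameter $kw$. To get the range: one orders the tiles of $H_i$ and, for each, either leaves it or permutes colour class $k$ with the (within-tile) smallest-indexed of $k-1,\dots$; summing the achievable increments gives a set of sums containing an interval — here no additive-combinatorics input is needed because we may keep permuting in the same direction until we have shifted $\Omega(\tau n)$ vertices, the slack guaranteeing we never run out.

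Having produced $H_1,\dots,H_{k^2}$ and the colourings $\theta_i,\theta_i'$ with the stated property (with error $p := kw < w^\ast$), I would simply invoke \cref{prop:build_wildcards} with $k, w^\ast, p$ playing the roles of $k, w, p$. Its conclusion is that $\bigcup_i H_i = H$ is properly $(k, w^\ast/k, k p)$-\fc{}, i.e.\ properly $(k,\ \tau n/(4k^5),\ k^2 w)$-\fc{}. Since $k^2 w \le kw \cdot k \le kw$ is not quite the claimed $kw$, I would absorb the constant by choosing the group count, range, and error bookkeeping slightly more generously (e.g.\ taking $w^\ast = \tau n/(2k^4)$ and noting $k\cdot p \le k^2 w \le \tau n/(2k^5) \cdot$ nothing — rather: one checks $k^2 w \le kw$ fails, so instead state the error as $k^2 w$ and observe $k^2 w \le kw$ only if $k\le 1$; the honest move is that the lemma as stated tolerates error $kw$, so I would run \cref{prop:build_wildcards} with only $k$ groups used effectively, giving error $k\cdot p$ where $p$ can be taken to be $w$ itself by working tile-by-tile with single-tile swaps, yielding final error $k\cdot w \le kw$). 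The main obstacle is precisely this last bookkeeping: controlling the approximation error so that it comes out as $kw$ rather than $k^2 w$, which requires arranging that the per-group shift is realised with error at most a single tile's worth, not $k$ tiles' worth — achievable by doing the increment construction one tile at a time and stopping as soon as the target is reached within $w$. The slack $\tau$ must be large enough (in terms of $1/\chi$) that the greedy shifting never exhausts a colour class before covering the full range $[w^\ast]$, which is where the hypothesis $\xi \ll \tau \ll 1/\chi$ is used.
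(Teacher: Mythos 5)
Your proposal follows essentially the same route as the paper's proof: take the colouring achieving $\crit(H)$, note that the slack $\crit(H)<k-\tau$ forces the largest colour class to exceed the smallest by at least $\tau n/k^2$, split the tiles into $k^2$ groups each carrying a proportional share of that gap, realise any prescribed shift $y$ by swapping the two relevant colour classes on whole tiles one at a time (so the overshoot is at most $w$, a single tile's worth), and then invoke \cref{prop:build_wildcards} with $p=w$ to obtain the stated error $kw$ --- exactly the bookkeeping fix you arrive at after your detour through the $k^2w$ dead end. The only blemish is your garbled bound on $\alpha(H)$ in the second paragraph; solving \eqref{equ:crit} with $\crit(H)=k-x$ gives $\alpha(H)=\frac{1-x}{k-x}<\frac1k-\frac{\tau}{k^2}$ (not anything involving $1/(k-1)$), but this does not affect the argument.
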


\urldef\myurl\url{https://www.wolframalpha.com/input/?i=k-x%3D%28k-1%29+%2B+%28k-1%29a%2F%28n-a%29+solve+for+a}
\begin{proof}
	Our strategy for the proof is as follows.
	We first find a colouring $\theta$ of $H$ in which the size of the largest and smallest colour classes differ by at least $ \frac{\tau}{2k^2} n $.
	We then use this colouring to partition $H$ into  subtilings that satisfy the conditions of \cref{prop:build_wildcards} and conclude.

	Observe that we can assume $\crit(H) \geq k-1$ as we can simply {select $k-1$ vertices in $H$ and add edges so that these $k-1$ vertices induce a $(k-1)$-clique (call this new graph again~$H$)}.
	We may thus write $\crit (H) = k-x$ for some $\tau < x < 1$.
	By definition, there is a $k$-colouring~$\theta$ of $H$ such that $\crit(H) = (k-1) + \frac{(k-1)a}{n - a}$ where $a=|\theta^{-1}({k})|$.
	Solving for $a$,\COMMENT{\myurl} this gives $|\theta^{-1}({k})| = \frac{1-x}{1-x/k} \frac{n}{k}  \leq (1-\frac{x}{k})\frac nk  <  (1- \frac{\tau}{k}) \frac{n}{k}$, where we used that $k\geq 2$ in the first inequality.\COMMENT{$1-x \leq 1 - 2x/k + (x/k)^2= (1-x/k)^2$}
	Moreover, by averaging, there is also some colour, say ${k-1}$, such that $|\theta^{-1}({k-1})| \geq \frac{n}{k} $ and so $|\theta^{-1}({k-1})|-|\theta^{-1}({k})|>{\frac{\tau}{k}  \frac{n}{k}} $.
	For each tile~$F$ of $H$, we denote by $\theta_F= \restr{\theta}{V(F)}$ the colouring of~$F$ induced by $\theta$.
	Summing over the tiles of $H$, this gives
	\begin{equation*}
		\sum_{F}|\theta_F^{-1}({k-1})|-|\theta_F^{-1}({k})| =  |\theta^{-1}({k-1})|-|\theta^{-1}({k})| \geq  \frac{\tau}{k} \frac{n}{k}.
	\end{equation*}

	We greedily partition the tiles of $H$ into $k^2$ subtilings $H_i$ for $i \in[k^2]$ such that for each such~$H_i$, we have
	\begin{equation*}
		  \sum_{F\in H_i}|\theta_F^{-1}({k-1})| - |\theta_F^{-1}({k})| \geq \frac{1}{k^2}\frac{\tau}{k} \frac{n}{k}-w.
	\end{equation*}
	This is possible because $v(F)\leq w$ {for each tile $F$}.
	Let $\theta_i = \bigcup_{F \subset H_i} \theta_F$.

	We intend to apply \cref{prop:build_wildcards} to the $H_i$'s with $k,\frac{\tau n}{2k^4},w$ playing the role of $k,w,p$.
	To verify the {assumptions}, fix $H_i$ and consider an arbitrary $y \in [{\frac{\tau n}{2k^4}}]$.
	We have to show that there exists a second $k$-colouring $\theta'_i$ of $H_i$ such that $\ord(\theta'_i)(j)=\ord(\theta_i)(j)$ for $j\in[k-2]$ and $\ord(\theta'_i)(k)=\ord(\theta_i)(k)+y \pm w$.
	It is not hard to see that one can obtain such a colouring by flipping the colours $k-1$ and $k$ of tiles $F \in H_i$ which {have} more vertices of colour $k$ than colour $k-1$ under $\theta_F$.
	Since $v(F) \leq w$, we overshoot by at most $w$ when this process ends.
	Hence \cref{prop:build_wildcards}  applies and it follows that $H$ is $(k,\tau n/(2k^5), kw)$-\fc{}.
\end{proof}

\section{Proof of \nameref{lem:lemma-for-H}}\label{sec:lemma-for-H}

In the following, we show \rf{lem:lemma-for-H}, which states that given a certain $R$-partition of $G$, we can find a size-compatible vertex partition of the given graph $H$.

\begin{proof}[Proof of \cref{lem:lemma-for-H}]
	Let $k={\lceil\chi\rceil}$.
	Given $\mu$ with $\chi+\mu/2 \leq \lceil\chi\rceil$, we can assume that $\chi + \mu/2 = (k-1) + a/b$ for (coprime) natural numbers {$a \leq b \leq 8/\mu$}.\footnote{{Let $4{\mu^{-1}}\leq  b:= 4 \lceil \mu^{-1}  \rceil \leq 8/\mu$ and choose $a \in \NATS$ maximal such that $k-1 + a/b \leq \chi +\mu/2$.}}
	{(As in the proof of \cref{thm:frameworks}, we used that by \cref{obs:framework-monotone} the result only gets stronger for smaller $\mu$.)}
	We choose $\rho',\beta>0$ such that
	\begin{align*}
		1/n \leq \xi \ll  \rho \ll \rho' \ll \beta \ll 1/r  \ll \alpha \ll \mu, 1/\Delta,1/\chi.
	\end{align*}
	Let $w \leq \xi n$.
	Suppose that $G$ is a $\mu$-robust $(\chi+\mu,\rho;t,\ell)$-tiling framework on $n$ vertices.
	Let $R$ be a  $({\chi}, \rho; t,\ell)$-tiling framework on vertex set $[r]$
	{with a $K_{k}$-cover $R' \subset R$}
	with $\Delta(R') \leq \Delta$.
	For $\cV=\{V_i\}_{i \in [r]}$, let $(G,\cV)$ be a  {$2$-balanced} $R$-partition,  where $m\leq|V_i|\leq 2m$ for $i\in [r]$.
	So in particular $m \geq n/(2r)$.
	Let $H \in \cH(n,w,\Delta; \chi; t,\ell,w')$.
	{Let $J = K_k(R)$.
	Since $R$ is a $({\chi}, \rho; t,\ell)$-tiling framework, we know that $J$ is $(t,\ell)$-connected.
	Hence there is a (vertex) spanning subgraph $J' \subset J$ with tight components $T_1,\dots, T_{t'} \subset J'$ with $t' \leq t$ and loose components $L_1,\dots,L_{\ell'}$ with $\ell' \leq \ell$.
	Observe that $H \in \cH(n,w,\Delta; \chi; t',\ell',w')$.
	In the following, we hence may assume that $t'=t$ and $\ell'=\ell$ as otherwise we simply redefine $t$ and $\ell$.
	Thus $t\leq r^k$ and $\ell\leq t$ as $e(J)\leq r^k$.}

	Our objective is now to find a size-compatible (with $\cV$) vertex partition $\cX=\{X_i\}_{i \in [r]}$ of $H$ and $\tcX= \{\tilde{X}_i\}_{i \in [r]}$  with $\tilde{X}_i\subset X_i$ for each $i\in [r]$ such that $(H,\cX)$ is an $R$-partition and $\tcX$ is an $(\alpha,R')$-buffer for $(H,\cX)$.
	We reformulate this problem in vector notation.
	Let $\vecb b \in \mathbb{N}^{V(R)}$ with $\vecb{b}(i)=|V_i|$.
	Set $\cF = \cF(k,w)$ and $A=A_{\cF,R}\in \NATSZ^{V(R) \times \Hom(\cF;R)}$.
	Our goal is to find an allocation $\vecb v^\ast \in \NATSZ^{\Hom(\cF;R)}$ of $H$ to $R$ {(as defined in \cref{sec:allocations})} such that
	\begin{enumerate}[(1)]
		\item  \label{itm:lem-H-matrix} $A \vecb v^\ast = \vecb b$ and
		\item \label{itm:lem-H-buffer} $A ( \restr{ \vecb v^\ast}{\Hom(\cF;R')}) {\geq {2}\alpha m}$.
	\end{enumerate}
	Given such vectors we generate the desired $R$-partition $(H,\cX)$ as follows.
	Let $\eta$ be a homomorphism from $H$ to $R$ encoded by $\vecb v^\ast$ {(as defined in \cref{sec:allocations})}.
	We set $X_i  =\eta^{-1}(i)$ for each $i\in [r]$, and let
	$\cX=\{X_i\}_{i \in [r]}$.
	It follows that $(\cX,R)$ is an $R$-partition.
	Moreover, $\cX$ is size-compatible with $\cV$ by~\ref{itm:lem-H-matrix}.
	For the buffer,
	observe that the restriction $\restr{\vecb v^\ast}{\Hom(\cF;R')}$ encodes homomorphisms from some tiles of $H$ to $R'$.
	Thus $\restr{\vecb v^\ast}{\Hom(\cF;R')}$ is an allocation of some subtiling~$F$ of $H$ to $R'$ and encodes a homomorphism $\eta'$ from $F$ to $R'$.
	We define $\tilde{X}_i=\eta'^{-1}(i)$.
	When combined with~\ref{itm:lem-H-buffer}, this states that $|\tilde{X}_i|\geq {\alpha} |X_i|$ for all $i\in [r]$.

	Now we give an overview of the proof.
	We first ensure that~\ref{itm:lem-H-buffer} is satisfied by finding a $\vecb v_2^\ast$ that allocates a subtiling of $H$ (\cref{cla:buffer}).
	The remainder of $H$ is denoted by $H'$.
	We then find a $\vecb v'$, which allocates $H'$ to $R$ such that $A (\vecb v' + \vecb v_2^\ast) \approx \vecb b$  (\cref{clm:allocation_vectors}).
	In addition to this,~$\vecb v'$ also matches the loose and tight components of the tiling framework $R$  with the topological and proper \fc{} subtilings of $H'$.
	We finish by applying \cref{lem:allocate-tight} separately to each tight component (\cref{clm:split_between_tight}) to obtain $\vecb v_1^\ast$ from $\vecb v'$ such that $A (\vecb v_1^\ast + \vecb v_2^\ast) = \vecb b$.
	Now come the details.

	The following claim establishes~\ref{itm:lem-H-buffer}.
	\begin{claim} \label{cla:buffer}
		There is a {subtiling} $H' \subset H$ and an allocation $\vecb v_2^\ast\in \NATSZ^{\Hom(\cF;R)}$ of $H - V(H')$ to $R'$ with
		\begin{enumerate}[\upshape (1)]\addtocounter{enumi}{2}
			\item $H' \in \cH(n,w,\Delta; \chi+\sqrt{ \alpha}; t,\ell,w')$ and
			\item \label{itm:claim-buffer-bounded} ${2}\alpha m\leq A ( \restr{ \vecb v_2^\ast}{\Hom(\cF;R')} ) \leq \sqrt{\alpha }m$.
		\end{enumerate}
	\end{claim}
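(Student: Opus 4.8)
The plan is to split $H$ into a small ``buffer'' part $W$, which we will map homomorphically into $R'$ in a carefully balanced way, and the remainder $H':=H-V(W)$, which should inherit enough structure to remain in $\cH$. Write $k=\kk$. Since $\crit(H)\le\chi$ forces $\chi(H)\le k$ (if $\chi(H)\ge k+1$ then $\crit(H)\ge k/(1-\alpha(H))>k\ge\chi$, a contradiction), every tile $F$ of $H$ admits a proper $k$-colouring, and mapping its colour classes injectively onto the vertices of a $k$-clique of $R'$ yields a homomorphism $F\to R'$. As $R'$ is a $K_{k}$-cover of $R$ with $\Delta(R')\le\Delta$, I would fix for every $i\in V(R)$ a $k$-clique $K_i\subseteq R'$ with $i\in V(K_i)$; the crucial observation is that a copy of $F$ placed in $K_i$ with a \emph{largest} colour class of $F$ sitting on $i$ contributes at least $v(F)/k$ of its vertices to $i$ itself.

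First I would reserve the $t+1$ flexi-chromatic subtilings guaranteed by $H\in\cW(t,\ell,\chi,w,w')$; they have total order at most $(t+1)w'\le(t+1)\sqrt\xi n$, which is negligible by the hierarchy. Then process the vertices $i=1,\dots,r$ in turn, keeping track of the current number $c_i$ of buffer vertices placed at each $i$: while $c_i<\alpha m$, pick an unused non-reserved tile $F$ of $H$ and add a copy of it into $K_i$ with a largest colour class on $i$, updating all counts. Because $v(H)=n$, $rm\le n$ and $w,w',rw\ll\alpha m$, at most $\sum_{i\in[r]}(\alpha m+w)+(t+1)w'<n$ vertices of $H$ are ever used, so a free tile is always available; and once $i$ has been processed we have $c_i\ge\alpha m$, a property preserved by all later steps. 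For the upper bound, processing $i$ inserts copies into $K_i$ of total order at most $k(\alpha m+w)$ (the $i$-count overshoots $\alpha m$ by at most $w$, and each inserted tile puts a $\ge1/k$ fraction of its vertices on $i$), while a fixed vertex $j$ lies in $K_i$ for at most $\Delta+1$ indices $i$ (namely $i=j$, or $ij\in E(R')$, using $\Delta(R')\le\Delta$). Hence the final count satisfies $c_j\le(\Delta+1)k(\alpha m+w)\le\sqrt\alpha m$, where the last inequality uses $w\ll\alpha m$ and $\alpha\ll1/\Delta,1/\chi$. Taking $\vecb v_2^\ast\in\NATSZ^{\Hom(\cF;R)}$ to be the allocation encoding all these homomorphisms — all of which land in $R'$, so $\restr{\vecb v_2^\ast}{\Hom(\cF;R')}=\vecb v_2^\ast$ — we obtain $A\vecb v_2^\ast(i)=c_i\in[\alpha m,\sqrt\alpha m]$ for every $i\in V(R)$, which is \ref{itm:claim-buffer-bounded}.

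It remains to check $H'\in\cH(n,w,\Delta;\chi+\sqrt\alpha;t,\ell,w')$. Membership in $\cB(n,w,\Delta)$ is immediate since $H'$ is a subtiling of $H$. Because $H'$ retains the reserved flexi-chromatic subtilings and $\lceil\chi+\sqrt\alpha\rceil=k$ (indeed $k-1<\chi\le\chi+\sqrt\alpha<\chi+\mu/2\le k$), those subtilings witness $H'\in\cW(t,\ell,\chi+\sqrt\alpha,w,w')$ with unchanged parameters $kw$ and $w$. For $\cJ(\chi+\sqrt\alpha)$ I would use $\crit(\cdot)=(k-1)/(1-\alpha(\cdot))$: restricting to $H'$ a proper $k$-colouring of $H$ realising $\alpha(H)$ gives $\alpha(H')\le\alpha(H)\,v(H)/v(H')$; combined with $v(H')\ge(1-C_0(\Delta,\chi)\alpha)n$ (the buffer uses at most $\sum_j c_j\le(\Delta+1)k(\alpha m+w)r\le C_0\alpha n$ vertices) and $\alpha(H)=1-(k-1)/\crit(H)\le1-(k-1)/\chi$, a short computation yields $\crit(H')\le\chi+C(\Delta,\chi)\alpha\le\chi+\sqrt\alpha$ once $\alpha$ is small in terms of $\Delta$ and $\chi$, which it is.

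The genuinely delicate point is the balancing in the second paragraph: the lower bound ``$c_i\ge\alpha m$ for all $i$'' forces the buffer to have order at least roughly $r\alpha m$, but a homomorphism into $K_i$ necessarily spills weight onto the other $k-1$ vertices of $K_i$, so iterating over all $i$ can push some counts up to order $\Delta k\alpha m$; one must verify that this spillover neither breaches the upper bound $\sqrt\alpha m$ nor inflates $v(H)-v(H')$ enough to make $\crit(H')$ exceed $\chi+\sqrt\alpha$. Both follow by choosing $\alpha$ sufficiently small in terms of $\Delta$ and $\chi$, which is consistent with the hierarchy $\alpha\ll\mu,1/\Delta,1/\chi$. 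The remaining steps are routine bookkeeping.
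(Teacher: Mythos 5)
Your proposal is correct and follows essentially the same route as the paper: for each $i\in V(R)$ you allocate a subtiling of order roughly $k\alpha m$ into a $k$-clique $K_i\subset R'$ containing $i$ with a largest colour class on $i$, use $\Delta(R')\leq\Delta$ to bound the spillover at each vertex by $O(k\Delta\alpha m)\leq\sqrt{\alpha}m$, and then check that $H'$ keeps the reserved \fc{} subtilings and that $\crit(H')\leq\crit(H)+\sqrt{\alpha}$. The only differences are cosmetic (your sequential ``while $c_i<\alpha m$'' loop versus the paper's fixed allocation of $k\alpha m$ per vertex, and a harmless missing factor of $k$ in your displayed count of used vertices, which you correct later).
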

	We assume the truth of the claim for the time being and prove it later.
	Let $H'$ and $\vecb v_2^\ast$ be as in \cref{cla:buffer}.
	Set $\vecb b' = \vecb b - {A}\vecb v_2^\ast$; that is, $\vecb b'(i)$ equals the number of vertices of 	$V_i$ that are not yet used by $H - V(H')$ for the allocation.
	We now seek an allocation $\vecb v_1^\ast\in \NATSZ^{\Hom(\cF;R)}$ of $H'$ to~$R$ with $A \vecb v_1^\ast = \vecb b'$.
	We fix a  $k$-edge $e_j \in E(T_j)$ for each $j\in[t]$ and a $k$-edge $f_q \in E(L_q)$ for each $q\in [\ell]$.

	The next claim {provides} an allocation {$\vecb v'\in \NATSZ^{\Hom(\cF;R)}$} of $H'$ to $R$, which almost satisfies $\vecb b'$ and prepares the \fc{} subtilings for their upcoming uses.
	Note that the proper \fc{} subtilings are somewhat stronger than those ensured by $H \in \cH(n,w,\Delta; \chi; t,\ell,w')$.
	\begin{claim} \label{clm:allocation_vectors}
		There are disjoint subtilings $W,W_1,\dots,W_t$ of $H$ such that $W$ is topologically $(\ell,w)$-\fc{} and $W_j$ is properly $(k, {\sqrt{\rho'}r n})$-\fc{} for each {$j \in [t]$}.
		Moreover, there exist $\vecb w, \vecb w_{j},\vecb u_{j}, \vecb r \in \NATSZ^{\Hom(\cF;R)}$ for each {$j \in [t]$} with $\vecb v' := \vecb r  +\vecb w + \sum_{j\in [t]} \vecb u_j +\vecb w_j$ and  $\vecb c ': = A\vecb v' - \vecb b' \in \NATSZ^{V(R)}$ such that {for each $j\in [t]$}
		\begin{enumerate}[\upshape (1)] \addtocounter{enumi}{4}
			\item \label{itm:allocation_vectors-loose-wildcard} $\vecb w$ is an allocation of $W$ to $R$ that encodes a {topological} central $\ell$-colouring of $W$ such that each $R[f_1],\dots,R[f_\ell]$ receives one colour class.\footnote{{That is, $\vecb w$ assigns to each $R[f_{\ell'}]$ for $\ell'\in[\ell]$ a set of components of $W$ and as $R[f_{\ell'}]\in E(K_k(R))$ is a $k$-clique in $R$, the assignment inside this $k$-clique can be done in such a way that each colour class of a proper colouring of this set of components is assigned to one unique vertex of this $k$-clique.}}
			\item $\vecb w_j$ is an allocation of  $W_j$ to $R[e_j]$ that encodes a {proper} central $k$-colouring of $W_j$,
			\item \label{itm:allocation_vectors-u}  $\vecb u_j$ is a $(\beta n,T_j)$-surjective allocation of a subtiling of $H'$ to $R[V(T_j)]$,
			\item $\vecb v'$ is an allocation of $H'$ to $R$ and
			\item \label{itm:allocation_vectors-wildcard-distance} ${\sum_{i\in V(L_q)}|\vecb c'(i)|}\leq w$ for each $q\in [\ell]$ {and $\sum_{q\in [\ell]} \sum_{i\in V(L_q)}\vecb c'(i)=0$.}
		\end{enumerate}
	\end{claim}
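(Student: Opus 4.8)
The plan is to assemble $\vecb v'$ out of four kinds of building blocks, one kind for each group of properties the claim demands: allocations $\vecb w$ and $\vecb w_j$ of two families of \fc{} subtilings, to be kept in reserve for the later repair step \cref{clm:split_between_tight}; local reservoir allocations $\vecb u_j$ supplying slack inside each tight component; and a bulk allocation $\vecb r$ that fills almost everything. I would carry this out in three stages.

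\emph{The \fc{} subtilings.} Since $H'\in\cH(n,w,\Delta;\chi+\sqrt\alpha;t,\ell,w')$, it contains disjointly a topological $(\ell,w)$-\fc{} subtiling $W$ of order at most $w'$ and $t$ proper $(k,kw)$-\fc{} subtilings. I keep $W$ for $\vecb w$; the proper ones are far too weak, so I strengthen them. Delete $W$ and the $t$ weak subtilings, split what remains of $H'$ into $t$ disjoint subtilings $H_1',\dots,H_t'$ of linear order (of size about ${\rho'}^{1/4}n$, say — possible since all tiles have order at most $w\le\xi n$), and note that each $H_j'$ is still an $\cF(k,w)$-tiling with $\crit(H_j')<k-\mu/4$: this uses $\chi+\mu/2\le k$ (so $\crit(H')\le\chi+\sqrt\alpha<k-\mu/4$) together with the fact that deleting a sufficiently small linear fraction changes $\crit$ by at most $\sqrt\alpha$, which is arranged by taking $\rho'$ small relative to $t,k,\mu$. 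By \cref{lem:Lat_gap} (with $\mu/4$ playing the role of $\tau$) each $H_j'$ is properly $(k,\Omega_{k,\mu}(n),kw)$-\fc{}, and combining it with one of the reserved $(k,kw)$-\fc{} subtilings through \cref{prop:wild_sum} — whose hypothesis $s_2\ge p_1$ holds with $s_2=p_1=kw$ — removes the error term and yields a properly $(k,\Omega_{k,\mu}(n))$-\fc{} subtiling $W_j$; as $\rho'\ll\mu,1/k,1/r$, this is in particular properly $(k,\sqrt{\rho'}rn)$-\fc{}. Fix central colourings $\psi$ of $W$ and $\psi_j$ of $W_j$.

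\emph{The reserved and reservoir allocations.} Each distinguished edge $f_q\in E(L_q)$ and $e_j\in E(T_j)$ is a $k$-clique of $R$, so $R[f_q]$ and $R[e_j]$ are copies of $K_k$, which homomorphically host any $k$-colourable graph. Mapping the $q$th (monochromatic) class of $\psi$ into $R[f_q]$ yields an allocation $\vecb w$ of $W$ encoding $\psi$ with exactly one class per $f_q$, giving (5); mapping the $i$th class of $\psi_j$ onto the $i$th vertex of $e_j$ yields $\vecb w_j$ encoding $\psi_j$, giving (6). For the reservoirs: for each tight component $T_j$, each $e\in E(T_j)$ and each $v\in e$, reserve $\Theta(\beta n)$ further tiles of $H'$ and send each homomorphically into $R[e]=K_k$ with one colour class sent to $v$; summing the resulting nonnegative vectors over $e\in E(T_j)$ and $v\in e$ gives a $(\beta n,T_j)$-surjective allocation $\vecb u_j$ of a subtiling of $H'$, which is (7). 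The tiles consumed by $W$, the $W_j$ and the reservoirs form only a small linear fraction of $v(H')$ (using $\beta\ll 1/(rk)$, so the reservoirs cost $O(\beta r^kn)=o(n)$ in total), so all of this can be done disjointly inside $H'$ and still leave a bulk $H^\circ$ on at least $n/2$ vertices with $\crit(H^\circ)\le\chi+2\sqrt\alpha$.

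\emph{The bulk allocation, and the main obstacle.} It remains to allocate $H^\circ$ by $\vecb r$ so that $\vecb v':=\vecb r+\vecb w+\sum_j(\vecb u_j+\vecb w_j)$ allocates all of $H'$ and nearly fills each cluster. Since $R$ is a tiling framework it has an almost-perfect fractional tiling by the pertinent bottle graph, which by \cref{lem:bottle_frac_H_tiling} (and \cref{cor:fractional-tiling-weight} to reconcile critical chromatic numbers) transfers to a fractional tiling of $R$ by the tile-types of $H^\circ$; scaling it by the residual capacities $\vecb b'-A\vecb w-\sum_jA(\vecb u_j+\vecb w_j)$ and rounding, exactly as in the proofs of \cref{thm:Komlos-for-frameworks} and \cref{lem:lemma-for-H-for-mixed-Komlos}, allocates all but $O(\rho n)$ of $H^\circ$, and the leftover tiles are placed greedily, yielding $\vecb v'$ with residual $\vecb c'=A\vecb v'-\vecb b'$ of total absolute value $O(\rho n)$. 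The real work, and the step I expect to be the main obstacle, is to shrink this residual to at most $w$ on every loose component, rather than merely $o(n)$: I would exploit that every positive-weight fractional tile is supported on a single $k$-clique, hence lies inside a single tight component, so that the rounding loss is confined to tight components; I would then absorb the loss of each tight component into its reservoir $\vecb u_j$ (kept slightly over-strength so that it survives a flow correction in the spirit of \cref{prop:flow} and is still $(\beta n,T_j)$-surjective afterwards), and transfer mass between the tight components of a common loose component through their shared vertices, so that what remains on each loose component is carried by a single tile. Coordinating these moves so that (8) and (9) both hold, while keeping each tight component's residue small enough for \cref{prop:flow}/\cref{lem:allocate-tight} and the proper \fc{} subtiling $W_j$ to repair it in \cref{clm:split_between_tight}, is the delicate point; note that $\vecb c'$ automatically sums to zero over all of $V(R)$ because $\vecb v'$ allocates all of $H'$ and $\sum_i\vecb b'(i)=v(H')$, which is precisely what makes a per-loose-component bound of $w$ consistent.
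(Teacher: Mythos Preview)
Your treatment of the \fc{} subtilings and of the reservoir allocations $\vecb u_j$ is essentially the paper's: combine the given $(k,kw)$-\fc{} subtilings with linear-sized pieces via \cref{lem:Lat_gap} and \cref{prop:wild_sum}, and build the $(\beta n,T_j)$-surjective vectors by greedily placing small subtilings onto each $k$-clique with a designated ``heavy'' vertex. (One caution: when you split the remainder into $t$ pieces you must \emph{choose} each piece so that its critical chromatic number is still at most $k-\mu/4$; an arbitrary split need not preserve this. The paper does this greedily.)

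The real gap is in your bulk allocation. You take an almost-perfect fractional $B_\chi$-tiling of $R$ and propose to ``scale it by the residual capacities $\vecb b'-A\vecb w-\sum_jA(\vecb u_j+\vecb w_j)$''. But this is a \emph{vector} of capacities, not a scalar: the partition $\cV$ is only $2$-balanced, so the residuals range between roughly $m$ and $2m$. A fractional tiling of $R$ gives each vertex weight at most $1$; multiplying by a common scalar fills every cluster to the same level and leaves a $\Theta(1)$ fraction of the larger clusters empty, while there is no coherent way to multiply the weight of a homomorphism by different factors at different vertices. Your appeal to \cref{thm:Komlos-for-frameworks} and \cref{lem:lemma-for-H-for-mixed-Komlos} does not help, since in both of those proofs the partition is balanced and a scalar suffices.

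The paper sidesteps this by returning to $G$. It deletes from each $V_i$ exactly the number of vertices already allocated, obtaining a subgraph $G'$ that is a $(\mu/8,\mu/8)$-approximation of $G$ and hence still a $\mu/2$-robust $(\chi+\mu,\rho;t,\ell)$-tiling framework. Then it applies \cref{thm:mixed-Komlos-for-frameworks} to embed an almost-spanning subtiling $H^\ast$ directly into $G'$; reading this embedding through $\cV$ yields $\vecb r'$. Because the embedding lives in $G'$, it automatically respects the unequal residual cluster sizes --- that is precisely what working in $R$ alone cannot do.

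On property~\ref{itm:allocation_vectors-wildcard-distance}, you are over-engineering, and in fact chasing a typo: what is actually proved (and all that is used afterwards) is the weaker bound $\bigl|\sum_{i\in V(L_q)}\vecb c'(i)\bigr|\le w$, with the absolute value outside the sum. Since $\sum_{i\in[r]}\vecb c'(i)=0$, this follows by a one-line greedy argument: repeatedly move a single tile (of order at most $w$) from the loose component that currently maximises $\sum_{i\in V(L_q)}\vecb c'(i)$ to the one that minimises it. The flow-through-reservoirs correction you sketch is exactly the content of \cref{clm:split_between_tight} and \cref{lem:allocate-tight}, which come \emph{after} this claim.
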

	We also  defer the proof of this claim and continue with the main argument.
	Fix $\vecb w_1,\dots,\vecb w_t$, $\vecb u_1,\dots,\vecb u_t$, $\vecb r  ,\vecb v', \vecb c'$ and $W, W_1,\dots,W_t$ as in \cref{clm:allocation_vectors}.

	Next, we use the definition of topologically $(\ell,w)$-\fc{} graphs together with~\ref{itm:allocation_vectors-loose-wildcard} to reallocate $W$.
	By~\ref{itm:allocation_vectors-loose-wildcard}, there is a central topological $\ell$-colouring $\phi$ of $W$ encoded by $\vecb w$.
	By \cref{def:wildcardS} and \ref{itm:allocation_vectors-wildcard-distance}, there is a second topological $\ell$-colouring $\phi'$ of $W$ with $(\ord(\phi))(q)-(\ord(\phi'))(q)=\sum_{i\in V(L_q)}\vecb c'(i)$.
	Let $\vecb w'\in \NATSZ^{\Hom(\cF;R)}$ be an allocation of $W$ to $R$ that encodes $\phi'$.
	(Such an allocation exists as $W$ is properly $k$-colourable, {as it is a subtiling of $H$}, and each $R[f_q]$ induces a copy of $K_k$ to which we can allocate $W[\phi'^{-1}(q)]$.)
	Define
	\begin{align}\label{equ:def-v''-c}
		\text{$\vecb v''=\vecb r +  \vecb w' + \sum_{j \in [t]} (\vecb u_j +\vecb w_j)$ and $\vecb c=A\vecb v'' -  \vecb b'$.}
	\end{align}
	By construction, we have
	\begin{align}\label{equ:vecc-sums}
		\text{$\onenorm{\vecb c}\leq r{\rho'} n$ and   $\sum_{i\in V(L_q)}\vecb c(i) = 0$ for each $q\in [\ell]$.}
	\end{align}

	We may think of $\vecb c$ as the imbalance that is still present in our current allocation and~\eqref{equ:vecc-sums} says that there is no imbalance across loose components.
	The following claim states that the imbalance can be divided into $t$ parts such that there is no imbalance across tight components any more.
	The imbalance inside each tight component can then be handled by a single \fc~subtiling via \cref{lem:allocate-tight}.

	\begin{claim}\label{clm:split_between_tight}
		There exist $\vecb c_1,\dots,\vecb c_{t}\in \INTS^{V(R)}$ such that
		\begin{enumerate}[\upshape (1)] \addtocounter{enumi}{9}
			\item  \label{itm:balanced} $\sum_{j \in [t]}\vecb c_j=\vecb c$,
			\item  \label{itm:clm-split-simple} {$\maxnorm{\vecb c_j}  \leq \sqrt {\rho'} n$ for all $j \in [t]$,}
			\item $\sum_{i \in [r]}\vecb c_j(i) = 0$   for all $j\in [t]$ and
			\item  $i \in V(T_j)$ whenever $ \vecb c_j(i) \neq 0$ for all $j\in [t]$.
		\end{enumerate}
	\end{claim}

	As before, we assume the truth of the claim for now and prove it later.
	Let $H_j'$ be the subtiling of $H'$ allocated by $\vecb w_j +\vecb u_j$ for each $j \in [t]$.
	For each tight component $T_j$, we apply \cref{lem:allocate-tight} with
	\begin{center}
		\begin{tabular}{ c  *{10}{|c} }
			object/parameter    & $W_j$ & {$R$} & $H_j'$ & $T_j$ & $e_j$ & $\vecb u_j$ & $\vecb w_j$ & $\vecb c_j$ & $\sqrt{\rho'} n$ & $\beta n$ \\
			\hline
			playing the role of & $W$   & $R$         & $H$    & $T$   & $e_0$ & $\vecb u_1$ & $\vecb u_2$ & $\vecb c$   & $s$              & $s'$
		\end{tabular}
	\end{center}
	to obtain a second allocation $\vecb v_j$ of $H_j'$ such that
	\begin{align}\label{equ:lemma-H-Auwv=c}
		A( \vecb u_j + \vecb w_j-\vecb v_j) = \vecb c_j.
	\end{align}

	We claim that these allocations combine to give the result.
	Note that as $\vecb v''=\vecb r +  \vecb w'  +\sum_{j \in [t]} (\vecb u_j +\vecb w_j) $ is an allocation of $H'$ to $R$, so is
	$\vecb v_2^\ast :=\vecb r + \vecb w' + \sum_{j \in [t]} \vecb v_j $.
	Indeed we change neither $\vecb r $ nor $\vecb w'$, and $\vecb v_j$ allocates the same subtiling $H_j'\subset H'$ as $\vecb u_j + \vecb w_j$ {does}.
	Further,
	\begin{align*}
		A\vecb v_2^\ast - \vecb b'
		                                     & = A(\vecb v_2^\ast -  \vecb v '' ) + (A \vecb v '' - \vecb b')                                                                                          \\
		\overset{\eqref{equ:def-v''-c}}      & {=} A\left(\vecb r + \vecb w'+ \sum_{j \in [t]} \vecb v_j  - \left( \vecb r + \vecb w'+ \sum_{j \in [t]}  (\vecb u_j + \vecb w_j)\right)\right) + \vecb c \\
		\overset{\eqref{equ:lemma-H-Auwv=c}} & {=}  - \sum_{j \in [t]} \vecb c_j + \vecb c 
		= 0,
	\end{align*}
	{where we used \ref{itm:balanced} for the last equality.}
	Thus $\vecb v_2^\ast$ is the desired allocation of $H'$, and so $\vecb v^\ast = \vecb v_1^\ast + \vecb v_2^\ast$ satisfies both~\ref{itm:lem-H-matrix} and \ref{itm:lem-H-buffer}.
	To finish the proof, it remains to show the three claims.
	
	{We use the following easy observation. Given a graph $\tilde{H}$ and we delete at most a $\gamma\leq 1/2$ fraction of its vertex set to obtain $\tilde{H}'$,
	then $\crit(\tilde{H}')\leq \crit(\tilde{H})+1/(1-\gamma)\leq \crit(\tilde{H})+2\gamma$.
	This follows easily from \eqref{equ:crit}.}

	\begin{proof}[Proof of \cref{cla:buffer}]
		Consider any $i \in [r]$.
		Choose a subtiling $H_i \subset H$ of order ${2}k\alpha m \leq v(H_i) \leq  {2}k\alpha m +w \leq {3}k\alpha m $.
		(This is possible, since the tiles of $H$ have size at most $w \leq \xi n \leq \alpha m$.)
		We do so such that each subtiling is disjoint and does not intersect the \fc{} subtilings of $H$ (which in total span at most $(t+1)w'\leq(t+1)\sqrt\xi n$ vertices).
		Consider a (proper) $k$-colouring of $H_i$ and note that its largest colour class has at least $\alpha m$ vertices.
		Since $R'$ is a $K_k$-cover, there is some $k$-clique $K_i \subset R'$ which contains {(vertex)} $i$.
		Let $\vecb q_i$ be the allocation of $H_i$ to the clique $K_i$ such that the largest colour class is mapped to $i\in V(K_i)$.
		Note that  at most~${3}k\alpha m$ vertices are allocated to any vertex $j$ of $K_i$.
		
		{Altogether}, since $\Delta(R') \leq \Delta$, this allocates in total at most ${3}k\Delta\alpha m$ vertices to any cluster.
		{In addition,} $\sum_{i\in [r]}v(H_i) \leq {3k\alpha mr} \leq {\alpha^{2/3}}n$.
		Consequently, we may assume the $H_i$'s are pairwise disjoint.
		It follows that $\vecb v_2^\ast := \sum_{i\in [r]} \vecb q_i$ satisfies  ${\alpha^{2/3}}m \geq {3}k\Delta\alpha m \geq  A  ( \restr{ \vecb v_2^\ast}{\Hom(\cF;R')} )\geq {2}\alpha m$.
		Let $H'= H - \bigcup V(H_i)$.
		
		{By construction,} $H'$ (like $H$) contains $t$ disjoint proper $(k,kw)$-\fc{} subtilings and one topological $(\ell,w)$-\fc{} subtiling each of order at most $w'\leq\sqrt\xi n$.
		{Moreover,} $\crit(H') \leq \crit(H) + \sqrt\alpha$.
		Thus $H' \in \cH(n,w,\Delta; \chi+\sqrt{ \alpha}; t,\ell,w')$.
	\end{proof}

	\begin{proof}[Proof of \cref{clm:allocation_vectors}]
		Let us first sketch the proof of the claim.
		We begin by finding and allocating the \fc{} subtilings $W,W_1,\dots,W_t$ with $\vecb w, \vecb w_1,\dots,\vecb w_t$.
			{Note that the subtilings $W_1,\dots,W_t$ are proper $(k, {\sqrt{\rho'}r n})$-\fc{}, whereas $H' \in \cH(n,w,\Delta; \chi+\sqrt{ \alpha}; t,\ell,w')$ only guarantees proper $(k, kw)$-\fc{} subtilings.}
		We achieve this by combining the \fc{} subtilings given by the assumptions with approximate \fc{} subtilings generated by \cref{lem:Lat_gap}.

		Recall that we aim to find an allocation of $H'$ to $R$.
		Next, we find ivive allocations $\vecb u_1,\dots,\vecb u_t$ as in~\ref{itm:allocation_vectors-u} by a simple greedy argument.
		We then delete vertices from each cluster of $\cV$ according to the allocation so far.
		This yields an approximation $G' \subset G$ which is still a tiling framework.
		To finish, {we} use \cref{thm:mixed-Komlos-for-frameworks} to find an almost perfect tiling of the yet--to--be--allocated vertices {in} $G'$.
		From this, we derive an allocation of almost all of the vertices of $H'$ to $R$.
		Arbitrarily allocating the remaining vertices gives $\vecb r$ and thus $\vecb v'$.

		Let us now turn to the details.
		We begin by finding the \fc{} subtilings.
		Write $H'=H^{(1)}$ for sake of notational symmetry.
		Since $H^{(1)} \in \cH(n,w,\Delta; \chi+\sqrt{ \alpha}; t,\ell,w')$, there are pairwise disjoint subgraphs ${W},W'_1,\dots, W'_t \subset H^{(1)}$ {each of order at most $w'$} such that $W'_1,\dots, W'_t$ are properly $(k,kw,0)$-\fc{} and  ${W}$ is topologically $(\ell,w,0)$-\fc{}.
		Let  $H^{(2)} = H^{(1)} - V({W}\cup \bigcup_{i \in [t]} W'_i)$.
		As each {$W_j'$} has order at most $w'\leq \sqrt\xi n \leq \beta n$, it follows that $\crit(H^{(2)})\leq \crit(H^{(1)})+ \sqrt \beta$, {again by our observation above}.
		Now choose disjoint subtilings $W_1'',\dots,W_t''\subset H^{(2)}$, each on $\beta n +w \pm w$ vertices such that $W_j''$ has $\crit(W_j'') {<} k - \mu/4$ for $j\in[t]$.
		{This is possible because $\beta t \leq 1/20$, say, and in particular $\chi \leq k - \mu/2$.}
		{We apply \cref{lem:Lat_gap} with $W_j''$, $\sqrt\xi $ and {$3k^5\rho'^{1/3}\beta^{-1}$}  playing the role of $H$, $\xi$ and $\tau$.}
		It follows that $W_j''$ is properly $(k,\rho'^{1/3} n, kw)$-\fc{}.
		Let $W_j=W'_j\cup W_j''$ for $j\in [t]$.
		By \cref{prop:wild_sum}, we see that each $W_j$ is properly $(k,{\rho'}^{1/3} n,0)$-\fc{}.
		Let  $H^{(3)} = H^{(2)} - \bigcup_{j \in [t]} V(W_j'')$ and observe that $\crit(H^{(3)})\leq \crit(H^{(2)}) + 4t\beta \leq \crit(H) + \mu/4$.
		For each $j\in[t]$, consider a central proper $k$-colouring $\phi_j$ of $W_j$, and let $\vecb w_j$ be an allocation of~$W_j$ to $R[e_j]$ that encodes $\phi_j$.
		Similarly, let $\vecb w$ be an allocation of $W$ to $R$ that encodes a central topological $\ell$-colouring of $W$ via $J'$, so $\vecb w$ places one colour class into each $R[f_q]$ for $q\in[\ell]$.
		(This is possible, since $W$ is properly $k$-colourable and $R[f_q]$ is a $k$-clique.)
		Since $W,W_1,\dots,W_t$ have together at most {$ 2t\beta n \leq \alpha m$} vertices, it follows that {$A  (\vecb w + \sum_{j \in [t]} \vecb w_j)  \leq \alpha m$.}

		Next we find $(k,\beta n)$-surjective allocations $\vecb u_1,\dots,\vecb u_t$ as in~\ref{itm:allocation_vectors-u}.
		The argument is the same as in \cref{cla:buffer}.
		Fix an arbitrary $j \in [t]$.
		Consider any $k$-clique $K \subset R$ with $V(K) \in E(T_j)$ and a vertex $i \in V(K)$.
		Choose a subtiling $H_{K,i} \subset H^{(3)}$ of order $k\beta n \leq v(H_i) \leq 2k\beta n$;
		consider a (proper) $k$-colouring of $H_{K,i}$ and note that its largest colour class has at least $\beta n$ vertices.
		Let $\vecb q_{K,i}\in \NATSZ^{\Hom(\cF;R)}$ be the allocation of $H_{K,i}$ to the clique $K$ such that the largest colour class is mapped to $i\in V(K)$.
		Note that this allocates at most $2k\beta n$ vertices to any vertex $j$ of $K_i$.
		Since the number of $k$-cliques in $R$ is at most $r^k$, this allocates in total at most $2kr^k \beta n \leq \sqrt{\beta} m$ vertices to any cluster.
		Similarly, the union of the $H_{K,i}$'s has order at most $2kr^k\beta nt \leq \sqrt{\beta} m$ where {we used that $t\leq r^k$.}
		Consequently, we may assume the $H_{K,i}$'s are pairwise disjoint.
		It follows that $\vecb u_j = \sum_{V(K) \in E(T_j)}{\sum_{i\in V(K)}} \vecb q_{K,i}$ is a $(\beta n,T_j)$-surjective allocation of a subtiling of $H^{(1)}$ to $R$.
		We remark that {$A  (\sum_{j \in [t]}   \vecb u_j ) \leq  t\sqrt{\beta} m \leq \alpha m$.}
		Let $H^{(4)}= H^{(3)} - \bigcup V( H_{K,i})$ and note that $\crit(H^{(4)})\leq \crit(H^{(3)}) + {2\sqrt{\beta}t} \leq \crit(H) + \mu/2$.

		All that remains in the proof of \cref{clm:allocation_vectors} is to find $\vecb r$.
		Recall that $G$ is a $\mu$-robust $(\chi+\mu,\rho;t,\ell)$-tiling framework on $n$ vertices and $(G,\cV)$ is a $2$-balanced $R$-partition.
		Note that {$\vecb a := A (\vecb v_2^\ast + \vecb w +\sum_{j \in [t]}  ( \vecb u_j + \vecb w_j))$} records the overall number of vertices {allocated to each $i\in V(R)$ so far.}
		Moreover, by the above choices and~\ref{itm:claim-buffer-bounded}, we have {$\vecb a \leq 2{\alpha }m + \sqrt{\alpha }m \leq 2{\sqrt{\alpha }m}$.}
		We obtain a subgraph $G' \subset G$ by deleting for each $i \in [r]$, exactly $\vecb a(i)$ vertices of $V_i$.
		This is possible as $|V_i| \geq m$.
		Note that $G'$ is a $(\mu/8,\mu/8)$-approximation of $G$, {since 2$\sqrt{\alpha}m$ is much smaller than $\mu n/8$.}
		{Together with \cref{obs:framework-monotone}, we derive} that $G'$ is a $\mu/2$-robust $(\chi+\mu/2,\rho;t,\ell)$-tiling framework.
		Choose an arbitrary subtiling $H^*\subset H^{(4)}$ such that $(1-{\rho'/2})v(G')-w \leq v(H^*) \leq (1-{\rho'/2})v(G')$ and $\crit(H^*)\leq \crit(H) + \mu/2$.
		Note that $H^* \in \cB(v(H^\ast), w,\Delta)  \cap \cJ(\chi + \mu/2)$.
		Hence, we can apply \cref{thm:mixed-Komlos-for-frameworks}  with
		\begin{center}
			\begin{tabular}{ c  *{11}{|c} }
				object/parameter    & $G'$ & $H^*$ & $v(G')$ & $2 \xi$ & $\chi+\mu/2$ & $a$ & $b$ & $\rho$ & $\rho'/2$ & $\mu/2$ & $\Delta$ \\
				\hline
				playing the role of & $G$  & $H$   & $n$     & $\xi$   & $\chi$       & $a$ & $b$ & $\rho$ & $\rho' $  & $\mu $ & $\Delta$
			\end{tabular}
		\end{center}
		to find an embedding of $H^\ast$ to $G'$.
		We encode this embedding by an allocation $\vecb r'\in \NATSZ^{\Hom(\cF;R)}$ of $H^*$ to $R$.
		Finally, we allocate the tiles in {$H^{(4)}- V(H^*)$} arbitrarily to obtain an allocation $\vecb r\in \NATSZ^{\Hom(\cF;R)}$ of $H^{(2)}$ to $R$.
		As ${v(H^{(4)})}-v(H^*)\leq \rho' n/2$ and by the construction of $\vecb r$, we have that $\vecb v' := \vecb r + \vecb w + \sum_{j \in [t]}  (\vecb u_j + \vecb w_j)$ satisfies
		$\onenorm{A\vecb v'- \vecb b'} \leq \rho' n$.

		Finally, let $\vecb c' = A\vecb v' - \vecb b' \in \NATSZ^{V(R)}$.
		By the last paragraph, we have ${|\sum_{i\in V(L_q)}\vecb c'(i)|} \leq  {\sum_{i\in V(L_q)}|\vecb c'(i)|}\leq {\rho'} n$ for each $q\in [\ell]$.
		In fact, we can assume that ${|\sum_{i\in V(L_q)}\vecb c'(i)|}\leq w$ for each $q\in [\ell]$.
		This is indeed possible as $\sum_{i \in [r]}\vecb c'(i)= 0 $ which holds as $\vecb v'$ is an allocation of all of $H$ to~$R$.
		So repeatedly reallocating a tile of $H$ (of order at most $w$) from the induced graph $R[V(L_q)]$ that maximises the sum of $\vecb c'$ on its vertices to the  $R[V(L_{q'})]$ that minimises it, eventually gives the desired result.
	\end{proof}

	\begin{proof}[Proof of \cref{clm:split_between_tight}]
		The idea is to construct the $\vecb c_j$'s inductively with the additional condition that $\maxnorm{\vecb c_j} \leq (2r)^t  {\rho'} n$.
		Note that this gives~\ref{itm:clm-split-simple}, since $(2r)^t  {\rho'}  n \leq \sqrt {\rho'} n$.
		Recall $T_1,\dots, T_t$ are the tight components of $J'$ and that $L(J')$ is the graph with vertex set $\{T_1,\dots, T_t\}$ and $T_iT_j$ is an edge if $V(T_i)\cap V(T_j) \neq \emptyset $ and $i\neq j$.
		Moreover, we call the union of a set of $T_j$'s that form a connected component in $L(J')$, a loose component, and denote these by $L_1,\dots,L_\ell\subset J'$.
		By \eqref{equ:vecc-sums}, we have ${\maxnorm{\vecb c} \leq} \onenorm{\vecb c}\leq r{\rho'} n$ and   $\sum_{i\in V(L_q)}\vecb c(i) = 0$ for all $q\in[\ell]$.
		{In addition, we have $\sum_{i \in [r]}\vecb c(i) = 0$.}
		For simplicity, let us first assume that $\ell = 1$.

		We prove the claim by induction on $t$.
		The base case is trivial as we then set $\vecb c_1 =\vecb c$.
		Suppose now that the claim holds for $t' < t$.
		Consider a leaf {$T_t \in V(L(J'))$} of a spanning tree in $L(J')$, which is  adjacent (in the spanning tree) to, say, {$T_{t-1}\in V(L(J'))$}.
		By definition of $L(J')$, there is some $i^\ast \in V(T_t) \cap V(T_{t-1})$.
		We then define $\vecb c_t \in \INTS^{V(R)}$ by setting {$\vecb c_t (i^\ast) = \vecb c(i^\ast) - \sum_{i \in V(T_t)} \vecb c(i)$, $\vecb c_t (j)= \vecb c(j)$ for all $j \in V(T_t)\sm \{i^\ast\}$ and $\vecb c_t (j)=0$ otherwise.}
		Clearly, {$\sum_{i \in [r]}\vecb c_t(i) = 0$.}
		Moreover, $\maxnorm{\vecb c_t} \leq v(T_t)\maxnorm{\vecb c}  \leq r \maxnorm{\vecb c}$.

		Now let ${\tilde J}$ be obtained from $J'$ by deleting the edges of $T_t$ and the vertices contained in no other component than $T_t$.
		Since $T_t$ was a leaf in a spanning tree of  $L(J')$, it follows that $L({\tilde J})$ is still connected.
		Let $\vecb {\tilde c} = \vecb c - \vecb c_t$, and note that {$\maxnorm{\vecb {\tilde c}} \leq 2v(T_t)\maxnorm{\vecb c} \leq 2r\maxnorm{\vecb c}$.}
		Hence we may use induction where $\vecb {\tilde c}$ plays the role of $\vecb c$ to obtain $\vecb c_1,\dots,\vecb c_{{t-1}}\in \INTS^{V(R)}$ satisfying the statement (for $t'=t-1$).
		In particular, we have $\maxnorm{\vecb c_j} \leq (2r)^{t-1}{\maxnorm{\vecb {\tilde c}}}  \leq (2r)^{t}\maxnorm{\vecb c}$ for all $j \in [t-1]$.
		Thus  $\vecb c_1,\dots,\vecb c_{t}$ present the desired solution.
		This concludes the analysis for $\ell = 1$.

		When $\ell > 1$ we can use the same arguments separately on each of the loose components, which is possible since $\sum_{i\in V(L_q)}\vecb c(i) = 0$ for all $q\in[\ell]$ and the loose components are vertex-disjoint.
	\end{proof}
	This completes the proof of \cref{lem:lemma-for-H}.
\end{proof}

\section{Conclusion}\label{sec:conclusion}

In this paper, we conduct a systematic investigation of the question {of} when a dense host graph~$G$ contains a (potentially spanning) guest graph $H$ where $H$ has bounded maximum degree and where each component of $H$ may be huge, but of smaller magnitude than the order of $G$.
In particular, we study this topic beyond the usual setting where one seeks for the optimal minimum degree bound that guarantees the existence of a copy of $H$ in~$G$.
This has several advantages as for example it covers many other natural conditions a host graph may satisfy apart from (optimal) minimum degree conditions
and it makes our proof more modular.
Nevertheless, our paper is by no means the end of the story and there are a few interesting questions that remain.

We applied our framework to degree conditions and degree sequences.
	A related {direction}, introduced by Ore~\cite{Ore60}, imposes a lower bound on the sum of the degrees of any two non-adjacent vertices.
	Such Ore-type conditions have been investigated for perfect matchings~\cite{KK08} and $F$-tilings~\cite{KOT09}.
	Our framework generally applies to such problems after minor workarounds (see~\cite{LS23}).
	Hence, one can extend the results of Kühn, Osthus and Treglown~\cite{KOT09} to mixed tilings using already established structural insights similar as in the proof of \cref{thm:deg-seq-komlos}.
	On the other hand, in its current state our framework is unable to reproduce these results for $F$-tilings with fixed $F$.
	This is because the Ore-type `threshold' that guarantees that every vertex of the host graph lies in a copy of $F$ can be significantly lower than the `threshold' for the linkage property, in the cases when $F$ is not a bottle.
	To overcome this limitation, we can define \emph{$F$-frameworks} by replacing the linkage condition with the property that every vertex is on a copy of $F$.
	It is then not hard to see that the proof of \cref{thm:frameworks} can be modified to show that robust $F$-frameworks lead to perfect $F$-tilings.
	Given this, the results of Kühn, Osthus and Treglown~\cite{KOT09} can be fully replicated {extending it to $F$ with at most $o(n)$ vertices}.

	As mentioned before, the original K\"uhn--Osthus Tiling Theorem is more precise in comparison to our work as it is stated in terms of an (additive) constant error.
	It is plausible that our results can potentially be refined further via a stability analysis, which could lead to sharper error bounds in certain situations and, in particular, replicate the results of K\"uhn and Osthus for minimum degree conditions.

Turning to more general questions,
	the literature on spanning substructures has so far identified three natural barriers to finding perfect tilings, which correspond to obstacles in space, divisibility and local conditions.
	Very recently, Freschi and Treglown~\cite{FT22} asked whether (in an approximate sense) it can be proved that these are the only obstructions.
	Our work solves this `meta problem' for mixed tilings, since the space, divisibility and linkage conditions of \cref{def:tiling-framework} certify that each of the above obstacles can be (individually) overcome.

In a similar vein, there has also been recently quite some research activity on graph tilings in related settings such as directed graphs~\cite{CDM+18,Tre16}, ordered graphs~\cite{BLT20,FT22}, multipartite graphs~\cite{MS17a}, random (hyper)graphs~\cite{FK20} and in the discrepancy setting~\cite{BCP+21}.
We expect that many ideas of our paper also apply in these cases and think it is an interesting and fruitful task to set up respective frameworks.

While tilings of graphs have been an integral and well-studied question for graphs, not so much is known for uniform hypergraphs~\cite{KM15,LM13a,Myc16}; even an analogous Hajnal--Szemer\'edi result seems currently out of reach.
It would be very interesting, but potentially also very challenging, to approach hypergraph tilings from a framework viewpoint.
The idea would be to identify similar, rather abstract, conditions as in \cref{def:tiling-framework} and prove that these are sufficient to find a mixed tiling.
Once such a result is obtained, it only remains to verify that these abstract conditions in fact are satisfied given the host hypergraph admits certain minimum degree properties;
however, this might turn out to be the most difficult part.
{For recent progress in this direction see the work of Lang~\cite{Lan23}.}

\section*{Acknowledgments}
We would like to thank Andrew Treglown for {stimulating discussions and} bringing the work of Ishigami~\cite{Ish02,Ish02b} to our attention.
{Finally, we express our gratitude to the anonymous referees who provided a large number of helpful remarks that improved the presentation and correctness of the paper.}

\bibliographystyle{amsplain}
\bibliography{bibliography}

\providecommand{\bysame}{\leavevmode\hbox to3em{\hrulefill}\thinspace}
\providecommand{\MR}{\relax\ifhmode\unskip\space\fi MR }
% \MRhref is called by the amsart/book/proc definition of \MR.
\providecommand{\MRhref}[2]{%
  \href{http://www.ams.org/mathscinet-getitem?mr=#1}{#2}
}
\providecommand{\href}[2]{#2}
\begin{thebibliography}{10}

\bibitem{Abb98}
S.~Abbasi, \emph{The solution of the {E}l-{Z}ahar problem}, Ph.D. thesis,
  Rutgers University, 1998.

\bibitem{ABH+16}
P.~Allen, J.~B{\"o}ttcher, H.~H{\`{a}}n, Y.~Kohayakawa, and Y.~Person,
  \emph{Blow-up lemmas for sparse graphs}, arXiv:1612.00622 (2016).

\bibitem{AY96}
N.~Alon and R.~Yuster, \emph{{$H$}-factors in dense graphs}, J. Combin. Theory
  Ser. B \textbf{66} (1996), 269--282.

\bibitem{BCP+21}
J.~Balogh, B.~Csaba, A.~Pluhár, and A.~Treglown, \emph{A discrepancy version
  of the {H}ajnal--{S}zemer{\'e}di theorem}, Combin. Probab. Comput.
  \textbf{30} (2021), 444–459.

\bibitem{BLT20}
J.~Balogh, L.~Li, and A.~Treglown, \emph{Tilings in vertex ordered graphs}, J.
  Combin. Theory Ser. B \textbf{155} (2022), 171--201.

\bibitem{BST08}
J.~B\"{o}ttcher, M.~Schacht, and A.~Taraz, \emph{Spanning 3-colourable
  subgraphs of small bandwidth in dense graphs}, J. Combin. Theory Ser. B
  \textbf{98} (2008), 752--777.

\bibitem{BST09}
\bysame, \emph{Proof of the bandwidth conjecture of {B}ollob{\'a}s and
  {K}oml{\'o}s}, Mathematische Annalen \textbf{343} (2009), 175--205.

\bibitem{CFP21}
D.~Conlon, J.~Fox, and H.~Pham, \emph{Subset sums, completeness and colorings},
  arXiv:2104.14766 (2021).

\bibitem{CDM+18}
A.~Czygrinow, L.~DeBiasio, T.~Molla, and A.~Treglown, \emph{Tiling directed
  graphs with tournaments}, Forum Math. Sigma \textbf{6} (2018), e2.

\bibitem{EMR+20}
O.~Ebsen, G.~Maesaka, Chr. Reiher, M.~Schacht, and B.~Sch{\"u}lke,
  \emph{Embedding spanning subgraphs in uniformly dense and inseparable
  graphs}, Random Structures Algorithms \textbf{57} (2020), 1077--1096.

\bibitem{EFR86}
P.~Erd{\"o}s, P.~Frankl, and V.~R{\"o}dl, \emph{The asymptotic number of graphs
  not containing a fixed subgraph and a problem for hypergraphs having no
  exponent}, Graph. Combinator. \textbf{2} (1986), no.~1, 113--121.

\bibitem{FK20}
A.~Ferber and M.~Kwan, \emph{Dirac-type theorems in random hypergraphs}, J.
  Combin. Theory Ser. B \textbf{155} (2022), 318--357.

\bibitem{FranklTokushige2018}
P.~Frankl and N.~Tokushige, \emph{Extremal problems for finite sets}, vol.~86,
  American Mathematical Soc., 2018.

\bibitem{FT22}
A.~Freschi and A.~Treglown, \emph{Dirac-type results for tilings and coverings
  in ordered graphs}, Forum Math. Sigma \textbf{10} (2022), e104.

\bibitem{GLL+21}
F.~Garbe, R.~Lang, A.~Lo, R.~Mycroft, and N.~Sanhueza-Matamala, \emph{Lehel's
  conjecture for $3$-uniform hypergraphs}, in preparation (2021).

\bibitem{HS70}
A.~Hajnal and E.~Szemer\'{e}di, \emph{Proof of a conjecture of {P}.
  {E}rd{\H{o}}s}, Combinatorial theory and its applications, {II} ({P}roc.
  {C}olloq., {B}alatonf\"{u}red, 1969), 1970, pp.~601--623.

\bibitem{HLT19}
J.~Hyde, H.~Liu, and A.~Treglown, \emph{A degree sequence {K}oml{\'o}s
  theorem}, SIAM J. Discrete Math. \textbf{33} (2019), 2041--2061.

\bibitem{HT20}
J.~Hyde and A.~Treglown, \emph{A degree sequence version of the
  {K}{\"{u}}hn-{O}sthus tiling theorem}, Electron. J. Combin. (2020), P3--48.

\bibitem{Ish02}
Y.~Ishigami, \emph{Almost-spanning subgraphs with bounded degree in dense
  graphs}, European J. Combin. \textbf{23} (2002), 583--612.

\bibitem{Ish02b}
\bysame, \emph{A common extension of the {E}rd{\H{o}}s--{S}tone {T}heorem and
  the {A}lon--{Y}uster theorem for unbounded graphs}, European J. Combin.
  \textbf{23} (2002), 431--448.

\bibitem{KM15}
P.~Keevash and R.~Mycroft, \emph{A geometric theory for hypergraph matching},
  vol. 233, Amer. Math. Soc., 2015.

\bibitem{KK08}
H.~A. Kierstead and A.~V. Kostochka, \emph{An {O}re-type theorem on equitable
  coloring}, J. Combin. Theory Ser. B \textbf{98} (2008), no.~1, 226--234.

\bibitem{Kom00}
J.~Koml\'{o}s, \emph{Tiling {T}ur\'{a}n theorems}, Combinatorica \textbf{20}
  (2000), 203--218.

\bibitem{KSS97}
J.~Koml\'{o}s, G.~N. S\'{a}rk\"{o}zy, and E.~Szemer{\'e}di, \emph{{B}low-{U}p
  {L}emma}, Combinatorica \textbf{17} (1997), 109--123.

\bibitem{KSS01}
J.~Koml\'{o}s, G.~N. S\'{a}rk\"{o}zy, and E.~Szemer\'{e}di, \emph{Proof of the
  {A}lon-{Y}uster conjecture}, vol. 235, 2001, Combinatorics (Prague, 1998),
  pp.~255--269.

\bibitem{KS96}
J.~Koml{\'o}s and M.~Simonovits, \emph{{S}zemer{\'e}di's {R}egularity {L}emma
  and its applications in graph theory}, Combinatorics, Paul Erd{\H o}s is
  Eighty (D.~Mikl{\'o}s, V.~T. S{\'o}s, and T.~Sz{\H o}nyi, eds.), vol.~2,
  Bolyai Society Mathematical Studies, 1996, pp.~295--352.

\bibitem{KO09}
D.~K{\"u}hn and D.~Osthus, \emph{The minimum degree threshold for perfect graph
  packings}, Combinatorica \textbf{29} (2009), 65--107.

\bibitem{KOT05}
D.~K\"{u}hn, D.~Osthus, and A.~Taraz, \emph{Large planar subgraphs in dense
  graphs}, J. Combin. Theory Ser. B \textbf{95} (2005), 263--282.

\bibitem{KOT09}
D.~K{\"u}hn, D.~Osthus, and A.~Treglown, \emph{An {O}re-type theorem for
  perfect packings in graphs}, SIAM J. Discrete Math. \textbf{23} (2009),
  1335--1355.

\bibitem{Lan23}
R.~Lang, \emph{Tiling dense hypergraphs}, arXiv:2308.12281 (2023).

\bibitem{LS20}
R.~Lang and N.~Sanhueza-Matamala, \emph{Minimum degree conditions for tight
  {H}amilton cycles}, J. Lond. Math. Soc. (2) \textbf{105} (2022), 2249--2323.

\bibitem{LS23}
\bysame, \emph{On sufficient conditions for spanning structures in dense
  graphs}, Proc. Lond. Math. Soc. \textbf{127} (2023), 709--791.

\bibitem{Lev10}
V.~F. Lev, \emph{Consecutive integers in high-multiplicity sumsets}, Acta Math.
  Hungar. \textbf{129} (2010), 245--253.

\bibitem{LM13a}
A.~Lo and K.~Markstr{\"o}m, \emph{A multipartite version of the
  {H}ajnal--{S}zemer{\'e}di theorem for graphs and hypergraphs}, Comb. Probab.
  Comput. \textbf{22} (2013), 97--111.

\bibitem{LM15}
\bysame, \emph{{$F$}-factors in hypergraphs via absorption}, Graphs Combin.
  \textbf{31} (2015), 679--712.

\bibitem{MS17a}
R.~Martin and J.~Skokan, \emph{Asymptotic multipartite version of the
  {A}lon--{Y}uster theorem}, J. Combin. Theory Ser. B \textbf{127} (2017),
  32--52.

\bibitem{Myc16}
R.~Mycroft, \emph{Packing k-partite k-uniform hypergraphs}, J. Combin. Theory
  Ser. A \textbf{138} (2016), 60--132.

\bibitem{Ore60}
O.~Ore, \emph{Note on {H}amilton circuits}, Amer. Math. Monthly \textbf{67}
  (1960), 55.

\bibitem{PS19}
D.~Piguet and M.~Saumell, \emph{A median-type condition for graph tiling},
  European J. Combin. \textbf{77} (2019), 90--101.

\bibitem{PU23}
T.~Planken and T.~Ueckerdt, \emph{{S}hallow {H}itting {E}dge {S}ets in
  {U}niform {H}ypergraphs}, arXiv:2307.05757 (2023).

\bibitem{Pos62}
L.~P{\'o}sa, \emph{A theorem concerning {H}amilton lines}, Magyar Tud. Akad.
  Mat. Kutat{\'o} Int. K{\"o}zl \textbf{7} (1962), 225--226.

\bibitem{SZ03}
A.~Shokoufandeh and Y.~Zhao, \emph{Proof of a tiling conjecture of
  {K}oml{\'o}s}, Random Structures Algorithms \textbf{23} (2003), 180--205.

\bibitem{ST20}
K.~Staden and A.~Treglown, \emph{The bandwidth theorem for locally dense
  graphs}, Forum Math. Sigma \textbf{8} (2020), 42 pp.

\bibitem{Sze76}
E.~Szemer{\'e}di, \emph{Regular partitions of graphs}, Colloq. Internat. CNRS
  \textbf{260} (1976), 399--401.

\bibitem{Tre16}
A.~Treglown, \emph{A degree sequence {H}ajnal--{S}zemer{\'e}di theorem}, J.
  Combin. Theory Ser. B \textbf{118} (2016), 13--43.

\end{thebibliography}

\appendix

\section{Reformulating Koml\'os' conjecture}\label{sec:reformulating-komlos-conjecture}

In this section, we give an alternative formulation of Koml\'os' conjecture from \cref{sec:introduction}.
More precisely, we derive inequality~\eqref{eq:Kom_conj2} from inequality~\eqref{eq:Kom_conj}.
Recall that in this context $H$ is a graph on $n-f(w)$ vertices with components $F_1,\ldots, F_r$ where $f(\cdot)$ is a function of the maximum component order~$w$.
Suppose that $\chi(F_i) = \lceil \crit(H) \rceil$ for all $i\in [r]$.
Let $v= v(H)$, $v_i=v(F_i)$, 
$\alpha_i=\alpha(F_i)$ and recall that
$\crit(F_i)=\frac{k-1}{1-\alpha_i}$.
Then 
\begin{align*}
	\sum_{i\in [r]}v(F_i)\left(1- \frac{1}{\crit(F_i)}\right)
	&=\sum_{i \in [r]} v_i \left(1-\frac{1-\alpha_i}{k-1}\right) \\
	&=v - \frac{1}{k-1}\sum_{i \in [r]}  v_i(1-\alpha_i) \\
	&= v - v\frac{1-\alpha(H)}{k-1} \\
	&=\left(1-  \frac{1}{\crit(H)} \right)v.
\end{align*}
So from \eqref{eq:Kom_conj} and $v = n-f(w)$, we obtain that 
\begin{align*}
	\MT(n,H) &\leq \left(1-  \frac{1}{\crit(H)} \right)\NEW{v} + f(w)  
	= \left(1-  \frac{1}{\crit(H)} \right) n + \frac{f(w)}{\crit(H)}.
\end{align*}

\section{Universally dense graphs}\label{sec:uni-dense-explanation}

In this section, we sketch the proofs of \cref{cor:dense-many-well-connected-cliques,prop:dense-inseperable-matching}, which we restate here for convenience of the reader.

{\cordensemanywellconnectedcliques*}

{\propdenseinseperablematching*}

The formal arguments can be found in the work of Lang and Sanhueza-Matamala~\cite[Corollary 5.27 and Proposition 5.28]{LS23}.
For the first result, a straightforward calculation shows that $G[S]$ is $(\sqrt{\rho},d)$-dense.
\cref{cor:dense-many-well-connected-cliques} thus follows from the following lemma due Ebsen, Maesaka, Reiher, Schacht and Schülke.

\begin{lemma}[{\cite[Lemma 2.1]{EMR+20}}] \label{lemma:EMRSScliquecounting}
	Let $k \ge 1$, $d \in [0,1]$ and $\rho > 0$.
	Every $(\rho, d)$-dense graph $G$ on $n$ vertices contains at least $( d^{\binom{k}{2}} - (k-1)k\rho) n^k$ ordered $k$-cliques.
\end{lemma}

We now turn to the proof of \cref{prop:dense-inseperable-matching}.
By \cref{cor:dense-many-well-connected-cliques}, a largest $k$-clique-tiling covers already all but $\eta^{1/(2k)} n$ vertices.
From this, one can easily construct a perfect fractional $k$-clique-tiling using an absorption argument.
The idea is to first reserve a small set $A$ of pairwise disjoint $k$-cliques such that every vertex of $G$ forms a $(k+1)$-clique with at least $\eta^{1/(2k)} n$ of the elements of $A$.
This can be done using a standard probabilistic argument and the fact that every vertex has many $k$-cliques in its neighbourhood, which is guaranteed by \cref{cor:dense-many-well-connected-cliques} and the minimum degree of $G$.
We then pick an almost perfect $k$-clique-tiling $H$ in $G-A$.
To finish, the remaining vertices are absorbed into $A$ by matching each vertex to a (private) $k$-clique and picking a perfect fractional $k$-clique-tiling in the resulting $(k+1)$-clique.

\section{Robustness and regularity}\label{sec:app-regularity}

In this section, we show \cref{prop:supersaturated-linkage,prop:inherit-framework,prop:dense-tight-component}, which we restate in each case for convenience of the reader.
The proofs of the first two results are direct adaptations of former work~\cite{LS23}.
For \cref{prop:supersaturated-linkage}, we shall use the following lemma, which is a particular case of the Graph Removal Lemma~\cite{EFR86}.

\begin{lemma} \label{lemma:graphremoval}
	For every $\eps >0$ and $k \in \NATS$, there exists $\delta >0$ and $n_0 \in \NATS$ such that for all graphs $G$ on $n\geq n_0$ vertices the following holds:
	if one needs to delete more than $\eps n^2$ edges to eliminate all $k$-cliques of $G$, then $G$ contains at least $\delta n^k$ $k$-cliques.
\end{lemma}

{\propsupsatlink*}

\begin{proof}[Proof of \cref{prop:supersaturated-linkage}] 
	Let $v \in V(G)$ be an arbitrary vertex.
	Let $G_v \subseteq G -  v$ be the subgraph formed by all the edges which intersect $N_G(v)$.
	Note that each edge in $K_k(G_v)$ intersects $N_G(v)$ in at least $k-1$ vertices and does not contain $v$,
	and thus corresponds to a linked edge of $v$ in $K_k(G)$.
	It is therefore enough to show that $G_v$ contains at least $\mu' n^k$ many $k$-cliques.
	
	We want to show that, after the removal of any $(\mu^2/8) n^2$ edges from $G_v$, at least one $k$-clique remains in $G_v$.
	Indeed, suppose we remove $(\mu^2/8) n^2$ edges $Y$ from $G_v$.
	A double-counting argument reveals that at most $(\mu/2) n$ vertices had more than $(\mu/2) n$ edges removed from them, let $X$ be those vertices.
	Note that, in particular since $v \notin V(G_v)$, we have $v \notin X$.
	In $G$, remove the edges corresponding to $Y$ and also remove the vertices corresponding to $X$,
	to obtain a subgraph $G' \subseteq G$.
	Note that $v \in V(G')$.
	Clearly, $G'$ is a $\mu$-approximation of $G$.
	Since $G$ is $\mu$-robust $(\chi,\rho;t,\ell)$-tiling framework,
	we deduce that $G'$ is a $(\chi,\rho;t,\ell)$-tiling framework.
	It follows that $v$ is linked to an edge in $K_k(G')$.
	By construction, this corresponds to a $k$-clique in $G_v - Y$, as required.
	
	Assuming $n$ is sufficiently large, what we have shown implies that $G_v$ satisfies the assumptions of \cref{lemma:graphremoval}.
	Thus there exists $\mu' > 0$ such that the number of $k$-cliques in $G_v$ is at least $\mu' n^k$, as desired.
\end{proof}

{\propinheritframework*}

\begin{proof}[Proof of \cref{prop:inherit-framework}]
	{Introduce $\mu'$ with $1/r\ll \mu' \ll 1/\chi, \mu$.}
	Let $R' \subset R$ be an arbitrary $\mu$-approximation of $R$.
	Let $\cV'$ be obtained from $\cV$ by deleting the clusters whose indices are not in $V(R')$.
	We obtain $G' \subset G$ by deleting all vertices of clusters $V_i$ with $i \notin V(R')$ and all edges between $V_i$ and $V_j$ whenever $ij \notin E(R')$.
	Let $m$ denote the common cluster size of $\cV$.
	Since $n=mr$ and $1/n \ll 1/r \ll \mu$, it follows that $G'$ is a  $(2\mu)$-approximation of $G$.
	Since $G$ is {$(4\mu)$-robust} $(\chi,\rho;t,\ell)$-tiling framework, $G'$ is a {$(2\mu)$-robust} $(\chi,\rho;t,\ell)$-tiling framework.
	
	We have to show that  $R'$ is a $(\chi,\rho;t,\ell)$-tiling framework.
	Observe that the divisibility (as detailed in \cref{def:tiling-framework}) follow immediately from the respective property of $G'$ and the definition of $R$-partitions.
	
	For the linkage property, note that by \cref{prop:supersaturated-linkage}, every vertex of~$G'$ is linked to at least $\mu' n^k$ edges in $K_k(G')$.
	There are at most $r^{k-1}m^{k} = (1/r) n^k$ edges in $K_k(G')$, which have two or more vertices in the same cluster of $\cV'$.
	Similarly, there are at most $r^{k-1} m^{k} = (1/r) n^{k}$ edges in $K_k(G')$, which intersect a particular cluster.
	Since $\mu' > 2/r$, it follows that every vertex~$v$ of $K_{k}(G')$ is linked to an edge, which has at most one vertex in each cluster and does not intersect with the cluster of $v$.
	It follows that every vertex of $R'$ is linked in $K_{k}(R')$.
	
	For the space property, consider a $(1-\rho)$-perfect fractional $B_\chi$-tiling $\vecb w\colon  \Hom(B_\chi;G') \to [0,1]$.
	We then construct a $(1-\rho)$-perfect fractional $B_\chi$-tiling $\vecb w'\colon  \Hom(B_\chi;R') \to [0,1]$  as follows.
	For each $\phi' \in \Hom(B_\chi;R')$, we set $\vecb w' ( \phi' )$ to be the sum of the weights $\vecb w(\phi )/m$ over all  $\phi \in \Hom(B_\chi;G')$ that send, for each $i \in V(R)$, the same number of vertices of $B_\chi$ to $V_i$ as $\phi'$ sends to $i$.  
\end{proof}

{\propdensetightcomponent*}

The proof of \cref{prop:dense-tight-component} has also been adapted from former work~\cite[Proposition 3.3]{LS20}.

\begin{proof}[Proof of \cref{prop:dense-tight-component}]
	Let $T_1,\dots,T_s$ be the tight components of $G$.
	Fix $1 \leq q \leq s$ which maximises $e(T_q) / e_{}(\partial T_q)$.
	By the definition of tight components, we have $\sum_{i=1}^s e(T_i) = e(G)$ and $\sum_{i=1}^s e(\partial T_i) = e_{}(\partial G)$.
	It follows that
	\begin{align*}
		e(G) = \sum_{i \in [s]} e(T_i) \leq \frac{e(T_q)}{e_{}(\partial T_q)} \sum_{i \in [s]}e_{}(\partial T_i) = \frac{e(T_q)}{e_{}(\partial T_q)}  e(\partial G).
	\end{align*}
	
	Now let $T=T_q$.
	By the above, we have $e(T) \geq e(\partial T) e(G) / \binom{n}{k-1} \geq e(\partial T) (1+\eps )\mu \binom{n}{k} / \binom{n}{k-1}$.
	Let $x \in \REALS$ satisfy $e(T) = \binom{xn}{k}$.
	It follows by the \cref{thm:KK} that $e(\partial T) \geq \binom{xn}{k-1}$.
	Hence $\binom{xn}{k}  \geq \binom{xn}{k-1} (1+\eps ) \mu \binom{n}{k} / \binom{n}{k-1} $.
	Using that $\binom{yn}{\ell} \approx y^\ell \binom{n}{\ell}$ for $\ell \leq k$ and our choice of $\eps$, this solves to $x\geq \mu$.
	Hence the result follows.
\end{proof}

\section{Construction}\label{sec:constructions}

In this section we give constructions related to \cref{thm:extended-KO}.
The \emph{square of a cycle} is obtained from a cycle by connecting every vertex $v$ with the vertices of distance $2$ to $v$ (in the cyclic order).

\subsection{Large tiles}\label{sec:sizes-tiles}

In the following, we give a construction, which shows that \cref{thm:extended-KO} fails
if tiles of order $10\sqrt{n}$ are permitted, even in the case when all components are isomorphic.

Choose (large) $w \in \NATS$ divisible by $72$ and $n \in  \NATS$ divisible by $3 w$ such that $10\sqrt{n}  \leq  w$.
Observe that there exists a $3$-chromatic graph~$F$ of order $w$ with {$\Delta(F) \leq 12$} and a unique proper $3$-colouring (up to relabelling colours) such that the colour classes have size $w/4-1, w/4, w/2+1$.
{So $F\in  \cFcr$ and $\crit(F) \leq 2 / (1-1/4)=8/3$.}
(Indeed, consider the square of a cycle on $2w/3$ vertices and carefully blow up some vertices by $2$ or $3$.)

Let $G$ be a complete tripartite graph on $n$ vertices with parts of order $n/3$, $n/3+w/8$ and $n/3-w/8$.
{Then $\delta(G) \geq (2/3-o(1))n$, which is much larger than $5n/8 \geq (1-1/\crit(F))n$.}

A perfect $F$-tiling $H$ of $G$ would contain fewer than $\sqrt{n}/10 \leq w/100$ copies of $F$, and
so in any proper $3$-colouring of $H$ and any colour $i$,
the number of vertices coloured with $i$ satisfies $0 \pm w/100 \bmod w/4$.
This shows that $H$ is not a subgraph of $G$.

\subsection{Many tight components with divisibility obstruction}
Let $m \in \NATS$ be even.
Let $F'$ be a square of a cycle of length $6$.
Note that there is exactly one (proper) $3$-colouring of $F'$ (up to relabelling) with each colour class having size $2$.
Let $F$ be obtained by blowing up each vertex of two colour classes of~$F'$ by $m$.
We have $\Delta(F) = 4m$.
Moreover, it follows that $\alpha(F) = \frac{2}{4m+2} = \frac{1}{2m+1}$ and hence $1-\frac{1}{\crit(F)} = \frac{m+1}{2m+1}$.

For a vertex $x$, let $G'$ be the union of triangles $T_1,\dots,T_{m/2}$ such that each triangle contains~$x$ and any two distinct triangles are vertex-disjoint apart from $x$.
Denote the vertices of $T_i$ by $\{x,v_i,v_i'\}$.
For $d \in \NATS$ divisible by $v(F)=4m+2$, let $G$ be a graph obtained from $G'$ by 
\begin{itemize}
	\item blowing up  $x$ by $d(m-1)$,
	\item blowing up  $v_i$ by $d$ for each $1 \leq i \leq m/2$,
	\item blowing up $v_i'$ by $d-1$ for each $1 \leq i \leq m/4$,
	\item blowing up $v_j'$ by $d+1$ for each $m/4+1 \leq j \leq m/2$.
\end{itemize}
So $G$ has order $n:= d(2m-1)$.
For $d$ large enough, it follows that $$\delta(G) = dm-1 = \frac{m-1/d}{2m-1} n > \frac{m+1}{2m+1} n = \left(1- \frac{1}{\crit(F)}\right)n.$$

Note that the blown-up triangles present the tight components of $G$.
Moreover, each copy of~$F$ in $G$ must be contained in one of the blown-up triangles.
On the other hand, each blow-up of a triangle has one part of odd order (as $d$ is even), which makes it impossible to cover its vertices with (vertex-disjoint) copies of $F$.
Hence $G$ contains $m/2$ tight components, which each come with their own divisibility obstruction to a perfect $F$-tiling of $G$.
Lastly, we note that in this construction the number of tight components grows with $\Delta(F)$.

\end{document}